\documentclass[11pt,reqno]{amsart}
\usepackage{amsthm, amsmath}
\usepackage{amssymb}
\usepackage{color}
\usepackage[margin=1in, footskip=0.25in]{geometry}

\usepackage[pagebackref=true]{hyperref}
\usepackage{booktabs}
\usepackage{subcaption}
\usepackage{enumerate}

\newtheorem{theorem}{Theorem}[section]
\newtheorem{lemma}[theorem]{Lemma}

\newtheorem{hypothesis}[theorem]{Hypothesis}

\newtheorem{remark}[theorem]{Remark}

\newtheorem{proposition}[theorem]{Proposition}

\newcommand{\Soc}{\mathrm{Soc}}
\newcommand{\G}{\mathrm{G}}
\newcommand{\Sy}{\mathrm{S}}
\newcommand{\A}{\mathrm{A}}
\newcommand{\C}{\mathrm{C}}

\newcommand{\Aut}{\mathrm{Aut}}
\newcommand{\Out}{\mathrm{Out}}

\newcommand{\SU}{\mathrm{SU}}
\newcommand{\GL}{\mathrm{GL}}
\newcommand{\PGL}{\mathrm{PGL}}
\newcommand{\PSp}{\mathrm{PSp}}

\newcommand{\PGU}{\mathrm{PGU}}
\newcommand{\PSU}{\mathrm{PSU}}
\newcommand{\PSL}{\mathrm{PSL}}

\newcommand{\ICF}{\mathrm{ICF}}
\newcommand{\m}{\mathrm{m}}

\newcommand{\N}{\mathrm{N}}
\newcommand{\SL}{\mathrm{SL}}
\newcommand{\AGL}{\mathrm{AGL}}
\newcommand{\ppd}{\mathrm{ppd}}

\newcommand{\POmega}{\mathrm{P\Omega}}
\newcommand{\E}{\mathrm{E}}

\newcommand{\Rad}{\mathrm{R}}
\newcommand{\Sp}{\mathrm{Sp}}

\usepackage{dynkin-diagrams}

\begin{document}
\title{Exceptional groups and the s-arc-transitivity of vertex-primitive digraphs, II}
\thanks{$^*$Corresponding author. Email address: Lei.Chen@math.uni-bielefeld.de}
\thanks{\textbf{Acknowledgements}: The authors are also grateful to Prof David Craven, Dr Mikko Korhonen, Prof Martin Liebeck and Prof Cheryl E Praeger for their precious discussions and comments. The first author is also grateful to Dr Friedrich Rober for his valuable discussions and computation assistance. The research of Lei Chen is supported by the Deutsche Forschungsgemeinschaft (DFG), German Research Foundation, Project-ID: 491392403-TRR 358. The research of Fu-Gang Yin is supported by the National Natural Science Foundation of China (12301461,12331013)
}
\author{Lei Chen\(^*\), Fu-Gang Yin}
\address{Lei Chen, Faculty of Mathematics, Bielefeld University, Bielefeld 33615, Germany.}
\address{Fu-Gang Yin,  School of Mathematics and Statistics, Beijing Key Laboratory of Biological Big Data and Topological Statistics, Beijing Jiaotong University, Beijing, P.R. China.}

\begin{abstract}
In 1989, Praeger showed that for any positive integers \(s,k\geqslant2\), there exists an infinite family of \(G\)-vertex-transitive \((G,s)\)-arc-transitive digraphs of valency \(k\). However, the existence of vertex-primitive $2$-arc-transitive digraphs remained open for nearly three decades until Giudici, Li, and Xia  constructed the first infinite family in 2017. In 2018, Giudici and Xia asked a question concerning the upper bound of $s$ for vertex-primitive $s$-arc-transitive digraphs that are not directed cycles. This question was reduced to the case where the automorphism group is almost simple, and has since been settled for several families of almost simple groups.
In this paper, we proved that, if $\Gamma$ is a \(G\)-vertex-primitive \((G,s)\)-arc-transitive digraph and $G$ is almost simple of socle \(E_7(q)\) or \(E_8(q)\), then $s\leqslant 2$.
Together with our previous work and a 2023 paper of Chen, Giudici, and Praeger, these results settle Giudici-Xia's conjecture for all exceptional groups of Lie type.

\bigskip
\noindent {\bf Key words:} $s$-arc-transitive digraph,  primitive group, almost simple group, exceptional group of Lie type, maximal subgroup.\\
{\bf 2010 Mathematics Subject Classification:}  20B25, 20D06, 05C25.
\end{abstract}
\maketitle
\tableofcontents
\section{Introduction}

A digraph $\Gamma$ is defined as a pair $(V, \to)$, where $V$ is a set of vertices and $\to$ is an irreflexive antisymmetric binary relation defined on $V$. 
For a positive integer $s$, an \emph{$s$-arc} of $\Gamma$ is a sequence of $s+1$ vertices $(v_0,v_1, \dots, v_s)$ such that $v_i\rightarrow v_{i+1}$
for each $i\in\{0,1, \dots, s-1\}$. 
For a group $G$ of automorphisms of  $\Gamma$, if $G$ acts transitively on the set of $s$-arcs, then $\Gamma$ is said to be \emph{$(G,s)$-arc-transitive}.
The terms $G$-vertex-transitive and  $G$-vertex-primitive are defined analogously, according to whether $G$ acts transitively or primitively on $V$. 

If every vertex in $\Gamma$ has both out-degree and in-degree equal to $k$, then we say that $\Gamma$ is \emph{$k$-valent}. 
Note that a $k$-valent \((s+1)\)-arc-transitive digraph is necessarily \(s\)-arc-transitive.

The study of connected vertex-primitive \(s\)-arc-transitive digraphs traces back to \cite{Praeger} in 1989, in which Praeger proved that, unlike the case for graphs (see \cite{weiss}), \(s\) can be arbitrarily large for digraphs. 
In the same paper, Praeger also noted the difficulty of constructing a vertex-primitive example for \(s\geqslant2\). 
This motivates the following two questions: the first concerns the existence of \(G\)-vertex-primitive \((G,2)\)-arc-transitive digraphs with $G$ an almost simple group (\cite[Question 5.9]{Praeger1989}), and the second asks whether \(s\) is bounded above for \(G\)-vertex-primitive \((G,s)\)-arc-transitive digraphs (\cite[Question 1.1]{quasi}).

The first question was eventually answered in \cite{example} (2017), where an infinite family of such digraphs with \(G = \PSL_3(p^2)\) and vertex-stabiliser \(G_v = \A_6\) was constructed, where \(p\equiv\pm 2\pmod{5}\) and \(p\geqslant7\).
In fact, it was proven in \cite{small} that the smallest vertex-primitive \(2\)-arc-transitive digraph also belongs to this family, having \(30,758,154,560\) vertices.

The second question has also been addressed in a series of papers. Giudici and Xia \cite{quasi} reduced the problem to the case where \(G\) is almost simple. Moreover, it was shown in \cite{CGP2023}  that \(s = 1\) when \(\Soc(G) = {}^2G_2(q)\) or \({}^2B_2(q)\), and in \cite{alter2,  CGP2024, linear,alter, ex1} that \(s \leqslant 2\) when 
\[
\Soc(G) \in \{\A_n,\PSL_n(q), \PSp_{2n}(q),G_2(2)', G_2(q), {}^3\!D_4(q),{}^2\!F_4(2)', {}^2\!F_4(q), F_4(q), E_6(q), {}^2\!E_6(q)\}.
\] 
This paper addresses the case where \(\Soc(G) = E_7(q)\) or \(E_8(q)\). Together with the aforementioned results \cite{CGP2024, ex1}, this answers the second question for all almost simple groups of exceptional type. 

The main result of this paper is the following.
 
\begin{theorem}\label{mainthm}
    Let \(G\) be an almost simple group with socle \(E_7(q)\) or \(E_8(q)\), and let \(\Gamma\) be a  \(G\)-vertex-primitive \((G,s)\)-arc-transitive digraph. Then \(s\leqslant 2\). Moreover, if \(\Gamma\) is a \(G\)-vertex-primitive \((G,2)\)-arc-transitive digraph with \(\Soc(G)\) a simple exceptional group of Lie type, then \(L_v\)
 is one of the following, as listed in Table \ref{tab:Lvcandidate} in Appendix.
 
 \end{theorem}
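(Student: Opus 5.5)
We work with the standard criterion for $(G,2)$-arc-transitivity of a $G$-vertex-primitive digraph, as used in the earlier papers in this series. Such a $\Gamma$ exists only if there is $g\in G$ with
\[
G_v = (G_v\cap G_v^{g})\,(G_v\cap G_v^{g^{-1}}),
\]
that is, $G_v=AB$ is a homogeneous factorisation in which $A$ and $B$ are conjugate in $G$ and proper in $G_v$. Moreover, if $s\geqslant 3$, then the stabiliser of a $2$-arc gives a further refined factorisation. In particular, $A=(G_v)_{w}$ must itself factorise as $A=(A\cap B)\,C$ with $|A:A\cap B|=|G_v:A|=|B:A\cap B|$. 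The plan therefore has three parts: run through the maximal subgroups $G_v$ of $G$; decide which admit such a factorisation; and, for those that do, rule out the extra factorisation needed for a $3$-arc.

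\textbf{Step 1: list the maximal subgroups.} We take the classification of maximal subgroups of almost simple groups with socle $E_7(q)$ or $E_8(q)$. These are the parabolics, the subgroups of maximal rank, the subfield subgroups, the exotic local subgroups, the subgroups $(\mathrm{Alt}_5\times \mathrm{Alt}_6).2^2$ and similar ones, and the almost simple subgroups, either of Lie type in the same characteristic and of small rank, or not of Lie type. We treat them in the order below.

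\textbf{Step 2: parabolic and maximal-rank subgroups.} If $G_v$ is parabolic, its unipotent radical $R$ is normal. A homogeneous factorisation then forces $A$ and $B$ to be parabolic-type or to factorise the Levi quotient. Using the known classification of factorisations of the (almost simple or central product) Levi factors, together with the order equality $|A|=|B|$ and $G$-conjugacy, we would expect to eliminate these cases, as was done for $E_6$ in \cite{ex1}. For maximal-rank subgroups $N_G(T)$ or $N_G(H_1\circ\cdots\circ H_k)$, and for subfield subgroups, we use the factorisation theory of almost simple groups (Liebeck--Praeger--Saxl) component by component. The homogeneity condition $|A|=|B|$ with $A,B$ conjugate, combined with primitive prime divisors $\ppd(q^{e})$ for suitable $e$, eliminates almost everything. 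Any surviving configurations go into Table~\ref{tab:Lvcandidate}.

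\textbf{Step 3: almost simple $G_v$ and exceptional local subgroups.} For almost simple $G_v$ with socle $L_v$, a factorisation $G_v=AB$ into two conjugate (hence isomorphic) proper subgroups must come from the LPS list with isomorphic factors; such factorisations are rare, for example $\A_6$, $M_{12}$, $\Sp_4(2^f)$ and $\POmega_8^+(q)$. So we cross-check the lists of candidate $L_v$ in $E_7(q)$ and $E_8(q)$ against these. The solvable local subgroups, such as $2^{5+10}.\SL_5(2)$ and $5^3.\SL_3(5)$, are handled by direct order and structure arguments. The resulting candidates are recorded in the Appendix table.

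\textbf{Step 4: bound $s\leqslant 2$.} For each candidate, show that no $3$-arc-transitive structure exists. Following \cite{CGP2024,ex1}, this reduces to showing that $A$ admits no factorisation $A=(A\cap B)C$ with $C\cong A\cap B$ conjugate in $G_v$. For the known families, such as $L_v=\A_6$ and $\POmega_8^+(q)$-type, this follows from the fact that the two-fold factorisations are not iterable; for small cases it is a direct or computational check.

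\textbf{Main obstacle.} I expect the hardest step to be Step 3: the almost simple maximal subgroups whose existence or uniqueness in $E_7(q)$ and $E_8(q)$ is not fully determined. There I would rely only on the weak information that $L_v$ lies in a known finite list, and apply the factorisation criterion uniformly to every $L_v$ on that list, whether or not it is actually maximal.
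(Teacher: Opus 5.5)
Your outline has the right global shape: run through the maximal subgroups, use $G_v=G_{uv}G_{vw}$ with $G_{uv}^g=G_{vw}$, and use a further factorisation when $s\geqslant 3$. But it never supplies the mechanism that produces contradictions, so Steps 2 and 4 do not close as written.

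\textbf{The missing criterion.} Homogeneity, LPS and ppd primes do not ``eliminate almost everything''. Their typical output is that one factor, hence (as $G_{uv}\cong G_{vw}$) both, contain a unique normal subgroup $M$ of $G_v$, e.g.\ $\Omega_{12}^+(q)$, $E_7(q)$, or $\mathbf{\Omega}_r$ of a torus. Orders and isomorphism types cannot exclude this, since $G_v/M$ may itself factorise homogeneously, as $\PSL_2(9)\cong\A_6=\A_5\A_5$ does. The contradiction comes from Lemma~\ref{factor}:
\begin{itemize}
\item $M\leqslant G_{uv}$ gives $M^g\leqslant G_{vw}\leqslant G_v$;
\item uniqueness of $M$ then gives $M^g=M$;
\item so $M\trianglelefteq\langle G_v,g\rangle=G$, against core-freeness of $G_v$.
\end{itemize}
You note that $A,B$ are $G$-conjugate but never turn this into that criterion, and it drives almost every case in the paper.

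\textbf{Torus normalisers and parabolics.} For $T.W$, LPS ``component by component'' says nothing, since there are no components. One needs the $\mathbb{F}_rW$-module structure of $\mathbf{\Omega}_r(T)$: irreducibility, or for $r=2$ in $E_7$ the chain $0<U<R$. For parabolics, your Levi-quotient sketch never identifies the factors. The paper uses:
\begin{itemize}
\item the Bruhat decomposition $L_vgL_v=P_J\dot xP_J$;
\item non-self-pairedness to force $|x|>2$;
\item the structure $L_{vw}=P_J\cap P_J^{\dot x}=Q{:}L_K$ with $K=J\cap J^{x^{-1}}$.
\end{itemize}
A ppd prime $r$ dividing $|M_J|$ but not $|M_K|$ then contradicts $r\mid|G_{vw}|$.

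\textbf{The bound $s\leqslant 2$.} Consider configurations that survive at $s=2$, such as cases (9)--(13) of Table~\ref{tab:E8maximalrank} or $\PSL_2(q)\times G_2(q)^2<E_8(q)$. The paper passes to the socle:
\begin{itemize}
\item by Lemma~\ref{pro:HsMs-1}, $s\geqslant 3$ makes $\Gamma$ $(L,2)$-arc-transitive, so $L_v=L_{uv}L_{vw}$;
\item hence $|L_v|_r^2\leqslant|L_{uv}|_r^3|\Out(L)|_r$ by Lemma~\ref{3ATprime};
\item with ppd primes this forces each factor to meet every simple component nontrivially, hence, via the tables of \cite{transitive}, to contain it;
\item Lemma~\ref{factor} then finishes.
\end{itemize}
Your Step 4 points in this direction, but ``not iterable'' is not an argument. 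The reduction is also misstated: $C=G_{vwx}=G_{uvw}^{\,g}$ is conjugate to $A\cap B$ only via $g\notin G_v$. So excluding complements $C$ that are $G_v$-conjugate to $A\cap B$ does not exclude $s\geqslant 3$; only the order consequence $|G_v|^2\leqslant|G_{uv}|^3$ is robust.

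\textbf{Your main obstacle.} The step you flag is the easy one. Almost simple $G_v$ is dispatched at once by Lemma~\ref{lm:qsimple}(a) from \cite{small}, whether or not maximality is settled. The lower-rank rows of Table~\ref{tab:Lvcandidate} come from \cite{ex1,CGP2023}, not from this analysis.
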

 
Let $G$ be an almost simple group of exceptional socle $L$.  
Despite the possibilities listed in Table \ref{tab:Lvcandidate}, it remains unknown whether such $G$-vertex-primitive $(G,2)$-arc-transitive digraphs actually exist.

In this paper, we adapt the approach of our previous work \cite{ex1}. For the parabolic case, however, we adopt a new method based on investigating factorisations of the Levi factor and analysing primitive prime divisors. The structure of maximal rank subgroups for \(E_7(q)\) and \(E_8(q)\) is also much more sophisticated than that for the smaller rank exceptional groups, involving deeper number-theoretic and representation-theoretic arguments.

\section{Preliminary}

We introduce some notation and results that will be used throughout the paper.

For a group \(G\) and a prime \(p\), we denote by  \(\mathbf{\Omega}_p(G)\), \(\mathbf{O}_p(G)\), \(\mathrm{Z}(G)\), \(\Soc(G)\), $\mathrm{F}^{*}(G)$, \(\Rad(G)\) and \(G^{(\infty)}\) the subgroup of \(G\) generated by the elements of order \(p\), the largest normal \(p\)-subgroup, the centre of $G$, the socle of $G$ (that is, the product of the minimal normal subgroups of \(G\)), the generalised Fitting subgroup of $G$, the largest soluble normal subgroup of \(G\) and the smallest normal subgroup of \(G\) such that \(G/G^{(\infty)}\) is soluble, respectively; by \(\ICF(G)\) the set of insoluble composition factors of \(G\); and by \(\m_{G}(T)\) the multiplicity of a simple group \(T\) as an insoluble composition factor of \(G\).

For two groups \(A\) and \(B\), we denote by \(A \times B\), \(A \circ B\), \(A : B\), \(A\wr B\) and \(A.B\) the direct product of \(A\) and \(B\), the central product of \(A\) and \(B\), the semidirect product of \(A\) by \(B\), the wreath product of \(A\) and \(B\), and an unspecified extension of \(A\) by \(B\), respectively.

Our notation for simple groups is standard. 
Moreover, for \(\varepsilon = \pm 1\), \(E^{\varepsilon}_6(q)\) denotes \(E_6(q)\) if \(\varepsilon = +1\) or \({}^2\!E_6(q)\) if \(\varepsilon = -1\); similarly, \(\mathrm{PSL}_n^{\varepsilon}(q)\) denotes \(\mathrm{PSL}_n(q)\) if \(\varepsilon = +1\) or \(\mathrm{PSU}_n(q)\) if \(\varepsilon = -1\). 
Lie types are  denoted by italicised letters: \(A_n, B_n, C_n, D_n, E_6, E_7, E_8, F_4, G_2\).
For a group \(X(q)\) of Lie type, we write \({X(q)}_{\mathrm{sc}}\) for its simply connected version. In particular, \({E_7(q)}_{\mathrm{sc}} = (2, q-1).E_7(q)\) is quasisimple, whereas \({E_8(q)}_{\mathrm{sc}} = E_8(q)\) is simple.

Let \(n \geqslant 2\) be a positive integer.
We use \(\mathrm{A}_n\) and \(\mathrm{S}_n\) for the alternating and symmetric groups of degree \(n\), respectively.
We write \(\mathrm{C}_n\), \(\mathrm{D}_n\), \(\mathrm{Q}_n\) for the cyclic group, the dihedral group, the generalised quaternion group of order \(n\), respectively.
We also use \(n\) as a shorthand for a cyclic group of order \(n\), and use \([n]\) for a soluble group of order \(n\) with unspecified structure.

\subsection{Number-theoretic results}
For two positive integers $m$ and $n$, their greatest common divisor is denoted by $(m,n)$.
For a finite set \(X\) and a finite integer \(n\), we denote by \(\pi(X)\) and \(\pi(n)\) the set of prime divisors of \(|X|\) and \(n\), respectively; and by \(n_{r}\) the \(r\)-part of \(n\), where \(r\) is a prime.

The next lemma is well-known, see for instance~\cite[Lemma 2.1]{linear}.  

\begin{lemma}[Legendre's formula]\label{sizeppd}
For any positive integer \(n\) and prime \(r\) we have \((n!)_{r}<r^{n/(r-1)}\).
\end{lemma}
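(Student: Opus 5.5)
The plan is to compare $v_r(n!)$ with a geometric series. By Legendre's formula,
\[
(n!)_r = r^{e}, \qquad e=\sum_{i\geqslant 1}\left\lfloor \frac{n}{r^i}\right\rfloor ,
\]
because $\lfloor n/r^i\rfloor$ counts the integers in $\{1,\dots,n\}$ that are divisible by $r^i$. Each such integer $m$ contributes exactly $v_r(m)$ to $e$: it is counted once for each $i\leqslant v_r(m)$. I will prove this identity first by this double-counting argument, since the paper cites it only as ``well known''.

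Next I bound the exponent by dropping the floors:
\[
e\leqslant \sum_{i\geqslant1}\frac{n}{r^i}=\frac{n}{r-1}.
\]
To get strict inequality, note that the sum defining $e$ is finite. Only indices with $r^i\leqslant n$ contribute, so there is some index $i$ with $r^i>n$. For that index $\lfloor n/r^i\rfloor=0<n/r^i$, and since every other term satisfies $\lfloor n/r^i\rfloor\leqslant n/r^i$, we get $e<n/(r-1)$. As $r>1$, the map $x\mapsto r^x$ is strictly increasing, and hence $(n!)_r=r^e<r^{n/(r-1)}$.

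There is no real obstacle here. The only point needing care is strictness, and it comes from the infinitely many vanishing floor terms, which all lose something against $n/r^i$ (a single such term already suffices). No earlier results of the paper are needed.
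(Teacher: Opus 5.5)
Your argument is correct and complete: Legendre's formula, the bound $\sum_{i\geqslant 1} n/r^i = n/(r-1)$, and strictness from any index with $r^i>n$ (where $\lfloor n/r^i\rfloor = 0 < n/r^i$) make up the standard proof. The paper gives no proof of its own and only cites the fact as well known via \cite[Lemma 2.1]{linear}, so your write-up supplies the routine argument behind that citation.
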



For a prime power \(q\geqslant2\) and an integer \(n\geqslant2\), a prime number \(r\) is called a \emph{primitive prime divisor} of \(q^{n}-1\) if \(r\) divides \(q^{n}-1\) but does not divide \(q^{i}-1\) for any positive integer \(i<n\). We denote by \(\ppd(q,n)\) the set of primitive prime divisors of \(q^{n}-1\).

\begin{lemma}[Zsigmondy, \cite{Zsigmondy}]\label{existppd}
For a prime power \(q\geqslant2\) and an integer \(n\geqslant2\), the set \(\ppd(q,n)\neq \varnothing\) except when \((q,n)=(2,6)\) or \((q,n)=(2^k-1,2)\) for some integer \(k\geqslant2\).
\end{lemma}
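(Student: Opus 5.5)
The statement to prove is Zsigmondy's theorem (Lemma \ref{existppd}). I would use the classical cyclotomic-polynomial argument.

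\textbf{Setup.} Write \(q^n-1=\prod_{d\mid n}\Phi_d(q)\), where \(\Phi_d\) is the \(d\)-th cyclotomic polynomial.

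\textbf{Step 1: which primes divide \(\Phi_n(q)\).} Let \(r\) be a prime dividing \(\Phi_n(q)\) and let \(e\) be the multiplicative order of \(q\) modulo \(r\). Then \(e\mid n\), and \(r\in\ppd(q,n)\) exactly when \(e=n\). Suppose \(e<n\). Lifting the exponent gives \(n=e\,r^{j}\) with \(j\geqslant 1\), so \(r\) is the largest prime divisor of \(n\). It also gives \(r^{2}\nmid\Phi_n(q)\), except in the case \(r=2\), \(n=2\). Hence:
\begin{itemize}
\item if \(n>2\) and \(\ppd(q,n)=\varnothing\), then \(\Phi_n(q)\) equals \(1\) or the largest prime factor \(P\) of \(n\);
\item if \(n=2\) and \(\ppd(q,2)=\varnothing\), then \(q+1\) is a power of \(2\).
\end{itemize}

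\textbf{Step 2: the case \(n=2\).} Here \(q+1=2^k\) forces \(q=2^k-1\) with \(k\geqslant2\). This is precisely the second exceptional family in the statement.

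\textbf{Step 3: a lower bound for \(n>2\).} Write \(\Phi_n(q)=\prod(q-\zeta)\) over the primitive \(n\)-th roots of unity \(\zeta\). For \(n>2\) each such \(\zeta\) is non-real, so \(|q-\zeta|>q-1\) strictly. This gives \(\Phi_n(q)>(q-1)^{\varphi(n)}\).

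\textbf{Step 4: the case \(q\geqslant 3\), \(n>2\).} The bound gives \(\Phi_n(q)>2^{\varphi(n)}\). We need \(2^{\varphi(n)}\geqslant P\) to reach a contradiction.
\begin{itemize}
\item Since \(P-1\mid\varphi(n)\), we have \(\varphi(n)\geqslant P-1\), and \(2^{P-1}\geqslant P\) for every prime \(P\). So \(\Phi_n(q)>P\), contradicting Step 1.
\item To exclude \(\Phi_n(q)=1\), note that \(\Phi_n(q)>(q-1)^{\varphi(n)}\geqslant 1\).
\end{itemize}

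\textbf{Step 5: the case \(q=2\), \(n>2\).} Now the crude bound only gives \(\Phi_n(2)>1\), so I need a sharper estimate. I would use
\[
\Phi_n(q)=\prod_{d\mid n}(q^{d}-1)^{\mu(n/d)}
\]
to bound \(\Phi_n(2)\) below by roughly \(2^{\varphi(n)}/C\) for a small constant \(C\). This yields \(\Phi_n(2)>P\) once \(\varphi(n)\) is moderately large. The finitely many remaining \(n\) can be checked by hand, for example \(\Phi_3(2)=7\), \(\Phi_4(2)=5\), \(\Phi_6(2)=3\). Only \(n=6\) fails, since \(\Phi_6(2)=3\) is the largest prime factor of \(6\) and \(63=3^2\cdot7\) has no primitive prime divisor.

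\textbf{Main obstacle.} The delicate parts are two. First, getting the valuation claim in Step 1 exactly right at \(r=2\). Second, making the \(q=2\) estimate in Step 5 tight enough that the residual finite check is genuinely small.
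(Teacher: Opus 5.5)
The paper does not prove this lemma. It quotes Zsigmondy's original paper and uses the result as a black box, adding the convention \(\ppd(2,6)=\{7\}\); that is all the later arguments need. You instead reconstruct the standard cyclotomic proof, and your outline is correct. It is self-contained and works verbatim for every integer base \(q\geqslant 2\), not only prime powers.

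Step 1 is sound as stated, including at \(r=2\):
\begin{itemize}
\item \(n/e\) is a power of \(r\). For a prime \(s\neq r\) dividing \(n/e\), put \(a=q^{n/s}\); then \(r\mid a-1\), so \(v_r(a^s-1)=v_r(a-1)\). Since \(\Phi_n(q)\mid (q^n-1)/(q^{n/s}-1)\), this contradicts \(r\mid\Phi_n(q)\).
\item The maximality of \(r\) among the prime divisors of \(n\) comes from \(e\mid r-1\).
\item The bound \(v_r(\Phi_n(q))\leqslant 1\) is LTE for odd \(r\). For \(r=2\) and \(n=2^j\) with \(j\geqslant 2\), it is \(q^{2^{j-1}}+1\equiv 2\pmod 4\).
\end{itemize}

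The only step you leave schematic is Step 5, and it closes with an explicit constant. Drop the factors with \(\mu(n/d)=-1\), which exceed \(1\). Then use \(\prod_{d\geqslant 3}(1-2^{-d})\geqslant 1-\sum_{d\geqslant 3}2^{-d}=\frac34\) to get
\[
\Phi_n(2)=2^{\varphi(n)}\prod_{d\mid n}\bigl(1-2^{-d}\bigr)^{\mu(n/d)}\geqslant 2^{\varphi(n)}\prod_{d\geqslant 1}\bigl(1-2^{-d}\bigr)\geqslant \frac12\cdot\frac34\cdot\frac34\cdot 2^{\varphi(n)}=\frac{9}{32}\,2^{\varphi(n)}.
\]
Hence \(\Phi_n(2)>2^{\varphi(n)-2}\geqslant\varphi(n)+1\geqslant P\) once \(\varphi(n)\geqslant 6\). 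For \(n>2\) this leaves only \(\varphi(n)\in\{2,4\}\), i.e.\ \(n\in\{3,4,5,6,8,10,12\}\). There \(\Phi_n(2)=7,5,31,3,17,11,13\) respectively, and only \(n=6\) has \(\Phi_n(2)=P\).

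If you also want to confirm that the listed exceptions are genuine, add one line for the \(n=2\) family. For \(q=2^k-1\), every prime divisor of \(q^2-1=2^k(q-1)\) divides \(q-1\).
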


For \(r\in\ppd(q,n)\) we derive from Fermat's Little Theorem that \(r\equiv 1\pmod{n}\); in particular, \(r>n\). For convenience, we set \(\ppd(2,6)=\{7\}\) throughout this paper.

Let \(\Phi_n(q)\) denote the \(n\)-th cyclotomic polynomial, that is, 
\(\Phi_n(q) = \prod_{\zeta} (x - \zeta)\) where $\zeta$ ranges over the primitive complex $n$-th roots of unity. 
It is well-known that \(q^n - 1 = \prod_{d\mid n} \Phi_d(q)\).
Consequently, if \(r\) belongs to \(\ppd(q,n)\), then $r \mid \Phi_n(q)$.

\subsection{Graph-theoretic results}
Let $G$ be a permutation group on a finite set $V$ and let $v\in V$.
The stabiliser of $v$ in $G$ is denoted by $G_v$, and the $G$-orbit containing $v$ is denoted by $v^G$.
Suppose that $G$ is transitive on $V$ and let $w\in V$.
A \(G_v\)-orbit \(w^{G_{v}}\) is called a \emph{\(G\)-suborbit} relative to \(v\), and it is said to be \emph{self-paired} if there exists \(h\in G\) such that \((w,v)^h=(v,w)\) (if no such \(h\) exists, then \(w^{G_{v}}\) is said to be \emph{non-self-paired}). 
If \(w^{G_{v}}\) is non-self-paired, then the digraph with vertex set \(V\) and arc set \((v, w)^G\) is a \(G\)-arc-transitive digraph. 
Conversely,  any $G$-arc-transitive digraph arises in this way. 
Note tha the $G_v$-orbits in $V$ are in one-to-one correspondence with the double cosets $G_vgG_v$ as $g$ runs over $G$.
In particular, if $w^{G_{v}}$ is non-self-paired, then it corresponds to $G_vgG_v$ with $g^{-1}\notin G_vgG_v$, where $g$ is any element such that $v^g=w$.

Let $\Gamma$ be a digraph. We use $V(\Gamma)$ and $\mathrm{Aut}(\Gamma)$ to denote the vertex set and the full automorphism group of $\Gamma $, respectively.
Let  $(v_0,v_1, \dots, v_s)$  be an $s$-arc of $\Gamma$ and $G\leqslant \mathrm{Aut}(\Gamma)$.
We use $G_{v_0v_1\cdots v_s}$ to denote the stabiliser of the $s$-arc; in particular, $G_{v_0v_1\cdots v_s}= G_{v_0}\cap G_{v_1} \cap \cdots \cap G_{v_s}$.
Recall that a \emph{factorisation} of a group $G$ is an expression $G = AB$ with $A, B \leqslant G$; if in addition $A \cong B$, then $G = AB$ is called a \emph{homogeneous factorisation}.

\begin{lemma}[{\cite[Lemma 2.2]{quasi}}] \label{pro:homofac}
Let $\mathit{\Gamma}$ be an $G$-arc-transitive digraph, and let $ u\to  v\to w $ be a $2$-arc of $\mathit{\Gamma}$.
Then  $\mathit{\Gamma}$ is $(G,2)$-arc-transitive if and only if  $G_v=G_{uv}G_{vw}$. 
\end{lemma}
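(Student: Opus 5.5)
The plan is to reduce $2$-arc-transitivity to a transitivity statement for the stabiliser $G_v$ acting on the $2$-arcs whose middle vertex is $v$. That statement then becomes a product-set condition, which is the standard criterion for a group to be transitive on a product of two of its orbits. Throughout, let $\Gamma^-(v)$ and $\Gamma^+(v)$ denote the sets of in-neighbours and out-neighbours of $v$. We assume, as is implicit for arc-transitive digraphs in this context, that $\Gamma$ has no isolated vertices. Every vertex then lies on an arc, and by $G$-arc-transitivity together with the existence of the $2$-arc $u\to v\to w$, every vertex is both the head and the tail of some arc.

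\emph{Step 1 (reduction to the middle vertex).} First I will show that $\Gamma$ is $(G,2)$-arc-transitive if and only if $G_v$ is transitive on the set $\mathcal{A}_v=\{(u',v,w') : u'\in\Gamma^-(v),\ w'\in\Gamma^+(v)\}$ of $2$-arcs with middle vertex $v$.
\begin{itemize}
\item For the forward direction: if $(u',v,w')^g=(u'',v,w'')$ with $g\in G$, then $g$ fixes $v$, so $g\in G_v$.
\item For the converse: $G$-arc-transitivity makes $G$ transitive on the vertices that are heads of arcs, which by the remark above is all of $V$. Given any $2$-arc $(a,b,c)$, choose $g\in G$ with $b^g=v$; then $(a,b,c)^g\in\mathcal{A}_v$. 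Transitivity of $G_v$ on $\mathcal{A}_v$ now gives transitivity of $G$ on all $2$-arcs.
\end{itemize}

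\emph{Step 2 (splitting into two orbits).} Arc-transitivity implies that $G_v$ is transitive on $\Gamma^-(v)$ and on $\Gamma^+(v)$. Indeed, for arcs $(u',v)$ and $(u'',v)$ there is $g\in G$ mapping one to the other, and such a $g$ fixes $v$; the argument for out-neighbours is the same. So $\Gamma^-(v)=u^{G_v}$ and $\Gamma^+(v)=w^{G_v}$, and $\mathcal{A}_v$ is identified with the product $u^{G_v}\times w^{G_v}$ under the diagonal action of $G_v$. A group is transitive on such a product if and only if the stabiliser of $u$, namely $G_{uv}=(G_v)_u$, is transitive on $w^{G_v}$. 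To see this, first move the first coordinate to $u$ using transitivity on $u^{G_v}$; the remaining freedom is exactly $(G_v)_u$.

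\emph{Step 3 (product criterion).} Finally, $G_{uv}$ is transitive on $w^{G_v}$ if and only if $G_v=G_{uv}G_{vw}$, where $G_{vw}=(G_v)_w$. If $G_v=G_{uv}G_{vw}$, then every $x\in G_v$ can be written $x=yz$ with $y\in G_{uv}$ and $z\in G_{vw}$. Applying $x^{-1}=z^{-1}y^{-1}$ (also in $G_v$) gives $w^{x^{-1}}=w^{y^{-1}}$, so $w^{G_v}=w^{G_{uv}}$. Conversely, if $w^{G_{uv}}=w^{G_v}$, then for each $x\in G_v$ there is $y\in G_{uv}$ with $w^x=w^y$. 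Hence $xy^{-1}\in G_{vw}$, which gives $x\in G_{vw}G_{uv}$, and inverting yields $x^{-1}\in G_{uv}G_{vw}$ for all $x$. Chaining Steps 1–3 proves the lemma.

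None of the steps is a real obstacle. The only point needing care is Step 1: arc-transitivity of a digraph only directly gives transitivity on heads and tails of arcs, so we need the no-isolated-vertex convention, or more simply note that we only ever need to move the middle vertex $b$ of a $2$-arc to $v$, and $b$ is the head of the arc $(a,b)$, which arc-transitivity maps onto $(u,v)$.
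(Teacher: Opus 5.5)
Your proof is correct and complete. The paper gives no proof of its own; it cites the lemma from Giudici and Xia \cite{quasi}. Your route is the standard argument behind that result: use arc-transitivity to reduce $(G,2)$-arc-transitivity to transitivity of $G_{uv}$ on $\Gamma^+(v)=w^{G_v}$, then apply the Frattini-type criterion that this holds if and only if $G_v=G_{uv}G_{vw}$. Your closing remark also correctly removes any need for the no-isolated-vertex convention: the middle vertex $b$ of any $2$-arc is the head of an arc, which arc-transitivity maps onto $(u,v)$.
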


\begin{lemma}[{\cite[Corollary 2.11]{quasi}}]\label{pro:HsMs-1}
 Let $\Gamma$ be an $(G,s)$-arc-transitive digraph with $s\geqslant 2$, and let $L$ be a vertex-transitive normal subgroup of $G$. 
Then  $\Gamma$ is $(L,s-1)$-arc-transitive.
\end{lemma}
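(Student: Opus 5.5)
We argue by induction on $t$ that $L$ is transitive on the $t$-arcs of $\Gamma$ for every $0\leqslant t\leqslant s-1$. The case $t=0$ is the hypothesis that $L$ is vertex-transitive. Throughout we use the following observation. Since $L\trianglelefteq G$ and $G$ is transitive on $t$-arcs for $t\leqslant s$, the $L$-orbits on $t$-arcs are blocks permuted transitively by $G$. Hence $L$ is transitive on $t$-arcs if and only if $G=LG_a$ for one (equivalently, every) $t$-arc $a$.

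For the inductive step, fix $1\leqslant t+1\leqslant s-1$ and a $(t+1)$-arc $\alpha=(v_0,\dots,v_{t+1})$. Put $a=(v_0,\dots,v_t)$, $b=(v_1,\dots,v_{t+1})$ and $c=(v_1,\dots,v_t)$, where $c$ is a single vertex $v_1$ when $t=1$. Then $G_\alpha=G_a\cap G_b$, and $G_a,G_b\leqslant G_c$. The induction hypothesis gives $G=LG_a=LG_b=LG_c$. Intersecting with $G_c$ (a Dedekind-type argument: write $g\in G_c$ as $lh$ with $h\in G_a\leqslant G_c$, so $l\in L_c$) yields
\[
G_c=L_cG_a=L_cG_b .
\]
Since $G$ is transitive on $(t+1)$-arcs, $G_a$ is transitive on the set $N$ of out-neighbours of $v_t$. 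Also $L_a=L\cap G_a\trianglelefteq G_a$, so the $L_a$-orbits on $N$ are blocks of equal size permuted transitively by $G_a$. The step is complete once we show that $L_a$ is transitive on $N$, since this is equivalent to $G_a=L_aG_\alpha$, and then $G=LG_a=LG_\alpha$.

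To prove this transitivity we use the extra level of arc-transitivity, namely $t+2\leqslant s$. Transitivity of $G$ on $(t+2)$-arcs, applied through the analogue of Lemma~\ref{pro:homofac} for longer arcs, gives the factorisation $G_b=G_{\alpha}G_{b'}$, where $b'=(v_1,\dots,v_{t+2})$ for any out-neighbour $v_{t+2}$ of $v_{t+1}$. It also gives the corresponding factorisation of $G_c$ as $G_aG_b$. We then combine $G_c=L_cG_b$ with $G_c=G_aG_b$ and project onto the coset space $G_c/G_b$, which is in natural bijection with $N$ via $x\mapsto v_{t+1}^x$. The aim is to show that the image of $L_a$ is all of $G_c/G_b$. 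Concretely, the plan is to take $x\in G_c$, write $x=hy$ with $h\in L_c$ and $y\in G_b$, and then use the factorisation at the next level to replace $h$ by an element of $L_c\cap G_a=L_a$ without changing its coset modulo $G_b$. One could then apply Lemma~\ref{pro:HsMs-1}'s standard conclusion with $t+1=s-1$ at the top of the induction.

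The main obstacle is exactly this last transfer: passing from transitivity of $L_c$ on cosets of $G_b$ to transitivity of the smaller group $L_a$. This step genuinely needs $t+2\leqslant s$, which is why one level of arc-transitivity is lost. If the direct coset manipulation does not close, I would fall back on counting. By the block argument, the number $k$ of $L_a$-orbits on $N$ equals the number of $L$-orbits on $(t+1)$-arcs. By the symmetric argument it also equals the number of $L_b$-orbits on in-neighbours of $v_1$. I would then show that $(t+2)$-arc-transitivity forces the $L$-orbit of $\alpha$ to contain all extensions of $b$ by a common in-neighbour, which gives $k=1$.
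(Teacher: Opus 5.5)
There is a genuine gap. Your reductions are fine: the block argument, $G_c=L_cG_a=L_cG_b$, the equivalence of ``$L_a$ transitive on $\Gamma^+(v_t)$'' with $G=LG_\alpha$, and the factorisation $G_b=G_\alpha G_{b'}$ from $(t+2)$-arc-transitivity. But the decisive step is never proved. You call it the main obstacle and offer only a plan. Replacing $h\in L_c$ by an element of $L_a$ modulo $G_b$ is just the goal $G_c=L_aG_b$ restated. Your counting fallback (``the $L$-orbit of $\alpha$ contains all extensions of $b$'') restates it too.

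Moreover, the facts you assembled cannot close the step on their own. Write $\bar X=XL/L$. Your relations only give $\bar G=\overline{G_a}=\overline{G_b}=\overline{G_c}=\overline{G_\alpha}\,\overline{G_{b'}}$, and a factorisation $\bar G=\bar A\bar B$ does not force $\bar A=\bar G$. For instance, take $G=\mathrm{C}_2^2=\langle l\rangle\times\langle q\rangle$, $L=\langle l\rangle$, $G_a=\langle lq\rangle$, $G_b=G_{b'}=\langle q\rangle$, $G_\alpha=G_a\cap G_b=1$ and $G_c=G$. Every relation you list holds, yet $LG_\alpha\neq G$. Separately, appealing to the lemma's own conclusion at the top of the induction is circular.

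The missing idea is that the two factors of $G_b=G_\alpha G_{b'}$ are \emph{conjugate}. Both $\alpha$ and $b'$ are $(t+1)$-arcs, so $G_{b'}=G_\alpha^{g}$ for some $g\in G$, and hence $\overline{G_{b'}}=\overline{G_\alpha}^{\,\bar g}$. The induction hypothesis gives $G=LG_b$, so projecting the factorisation to $G/L$ yields $\bar G=\bar A\,\bar A^{\bar g}$ with $\bar A=\overline{G_\alpha}$. No group is a product of two conjugates of a proper subgroup. Indeed, $\bar G=\bar A\bar g^{-1}\bar A\bar g$ gives $\bar G=\bar A\bar g^{-1}\bar A$, so $\bar g^{-1}\in\bar A$ and $\bar G=\bar A$. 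Thus $G=LG_\alpha$, i.e.\ $L$ is transitive on $(t+1)$-arcs. This uses exactly $t+2\leqslant s$, and it makes $c$, $G_c$ and the action of $L_a$ on out-neighbours unnecessary. The paper itself gives no proof, only the Giudici--Xia citation; this is the standard argument. In your counting language the same idea reads as follows. Colour each $(t+1)$-arc by its $L$-orbit. Since $G=LG_b$, $G_b$ is transitive on the colour set $C$, so every colour occurs among both the backward and the forward extensions of $b$. By $(t+2)$-arc-transitivity, $G_b$ is then transitive on $C\times C$ under the diagonal action. The diagonal of $C\times C$ is invariant, so this forces $|C|=1$.
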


The next result plays an important role in our proof of Theorem~\ref{mainthm}.

\begin{lemma}{\rm \cite[Lemma 2.14]{linear}\label{factor}}
Let \(\Gamma\) be a connected $G$-vertex-primitive \(G\)-arc-transitive digraph with arc \(v\rightarrow w\). Let \(g\in G\) such that \(v^{g}=w\). Then \(g\) normalises no proper nontrivial normal subgroup of \(G_{v}\).
\end{lemma}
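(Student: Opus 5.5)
The statement to be proved is Lemma~\ref{factor}: $g$ normalises no proper nontrivial normal subgroup of $G_v$. The plan is to argue by contradiction. The key point is that such a $g$ would make $N$ fix every vertex of $\Gamma$, contradicting that $G$ acts faithfully.

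Suppose $1\neq N\lhd G_v$ is proper and $N^g=N$. Since $N\lhd G_v$ and $g$ normalises $N$, the group $N$ is normal in $\langle G_v,g\rangle$. By primitivity $G_v$ is maximal in $G$. Moreover $g\notin G_v$, because $v^g=w\neq v$ (the relation $\to$ is irreflexive). Hence $\langle G_v,g\rangle=G$, and so $N\lhd G$.

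Next we use that $G$ is primitive and the digraph is arc-transitive, so $G$ is not regular of prime degree in a way that would trivialise the setting. A nontrivial normal subgroup of a primitive group is transitive. But $N\leqslant G_v$ fixes $v$, so a transitive $N$ would force $|V|=1$. That is impossible, since $\Gamma$ has an arc $v\to w$ with $w\neq v$.

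Alternatively and more directly, $N$ fixes $v$ and is normal in $G$. Hence $N$ fixes $v^x$ for every $x\in G$, so $N$ fixes every vertex because $G$ is transitive. As $G\leqslant\Aut(\Gamma)$ acts faithfully on $V$, this gives $N=1$, a contradiction.

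Properness of $N$ is not even needed beyond excluding the degenerate case. If $N=G_v$ itself, the same argument shows $G_v\lhd G$ and hence $G_v=1$. Then $G$ is regular of prime order, and $\Gamma$ is a directed cycle; this case is excluded by the phrase ``proper'' in the statement. Connectedness is not used.

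There is no real obstacle; the only care needed is justifying $\langle G_v,g\rangle=G$ via maximality of $G_v$ together with $g\notin G_v$.
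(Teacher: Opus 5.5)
Your argument is correct. The paper gives no proof of its own and only cites \cite[Lemma 2.14]{linear}; your route is the standard argument for it: $N$ is normalised by $\langle G_v,g\rangle=G$, obtained from maximality of $G_v$ and $g\notin G_v$ (which is why connectedness is not needed), so $N\trianglelefteq G$ lies in $G_v$, fixes every vertex, and is trivial by faithfulness. One small slip in your side remark: when $G_v=1$ the case $N=G_v$ is excluded by ``nontrivial'' rather than ``proper'', and in fact properness is never needed, since a nontrivial $N=G_v$ normalised by $g$ would give $G_v\trianglelefteq G$ and hence $G_v=1$.
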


\begin{lemma}[{\cite[Lemma~2.13]{linear}}]\label{pro:valency}
For each vertex-primitive arc-transitive digraph $\mathit{\Gamma}$, either $\mathit{\Gamma}$ is a directed cycle of prime length
or $\mathit{\Gamma}$ has valency at least $3$.
\end{lemma}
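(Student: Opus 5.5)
The idea is to split on the valency $k$ of $\Gamma$. The case $k=1$ will give the directed cycle of prime length, and the case $k=2$ will be excluded using the normal-subgroup idea behind Lemma~\ref{factor}. Let $G$ be a group of automorphisms acting primitively on $V$ and transitively on arcs. Since $G$ is vertex-transitive, every vertex has the same out-degree $k^+$ and in-degree $k^-$. Counting arcs gives $|V|k^+=|V|k^-$, so $\Gamma$ is $k$-valent with $k=k^+=k^-$. We assume $\Gamma$ has at least one arc, so $k\geqslant 1$. Because $\Gamma$ is antisymmetric and $G$ preserves the arc set, no $G$-suborbit that is a set of out-neighbours is self-paired.

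Connectedness and the case $k=1$. The weakly connected components of $\Gamma$ form a $G$-invariant partition of $V$. By primitivity, and since there is an arc, $\Gamma$ is weakly connected. Now suppose $k=1$. Each vertex has exactly one out-neighbour and one in-neighbour, so $\Gamma$ is a disjoint union of directed cycles; by connectedness it is a single directed cycle of length $n=|V|$. The automorphism group of a directed $n$-cycle is $\C_n$ acting regularly. Hence $G=\C_n$, and a regular group is primitive only when $n$ is prime. So $\Gamma$ is a directed cycle of prime length.

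The case $k=2$ (the main step). Let $v\to w$ be an arc. The arc stabiliser $G_{vw}=G_v\cap G_w$ has index $k^+=2$ in $G_v$, since $G_v$ is transitive on the out-neighbours of $v$. Likewise it has index $k^-=2$ in $G_w$, since $G_w$ is transitive on the in-neighbours of $w$. So $G_{vw}$ is normal in both $G_v$ and $G_w$.

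Since $k>1$, the group $G$ is not regular, so $G_v\neq 1$. In a primitive group that is not regular, distinct points have distinct stabilisers, so $G_w\neq G_v$. Maximality of $G_v$ then gives $\langle G_v,G_w\rangle=G$, and therefore $G_{vw}\trianglelefteq G$. A normal subgroup of a transitive group that fixes one point fixes every point, so $G_{vw}=1$ and $|G_v|=2$.

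A primitive group whose point stabiliser has order $2$ is a Frobenius-type group $\mathrm{D}_{2p}$ acting on $p$ points, with $p$ an odd prime. In this action, for $g\in G$ the double coset $G_vgG_v$ of size $2$ contains $g^{-1}$, because the involution $t\in G_v$ inverts the rotations. So every suborbit of length $2$ is self-paired, contradicting the observation in the first paragraph.

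The only place needing care is identifying primitive groups with stabiliser of order $2$. I would justify it by letting $T$ be a minimal normal subgroup. Then $T$ is transitive and $|T|\leqslant 2|V|$. If $T=G$, then $G$ is simple with a maximal subgroup of order $2$, so $G$ is generated by two involutions and is dihedral, a contradiction. Hence $T$ is regular, so $T$ is elementary abelian of prime order $p$, and $G=T{:}\C_2=\mathrm{D}_{2p}$. With $k\neq 1$ and $k\neq 2$, we conclude $k\geqslant 3$.
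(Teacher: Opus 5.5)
The paper gives no proof of this lemma; it quotes it from \cite[Lemma~2.13]{linear}, so there is no in-text argument to compare against. Your proof is correct and self-contained. The valency-$1$ case is handled properly: a weakly connected $1$-valent digraph is a single directed cycle, its automorphism group is regular, and a regular group is primitive only in prime degree. The key step for valency $2$ is also sound, including your justification that $G_w\neq G_v$: $G_{vw}$ has index $2$ in both $G_v$ and $G_w$, hence is normal in $\langle G_v,G_w\rangle=G$, and is therefore trivial.

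There are three small points. First, ``$T$ is regular, so $T$ has prime order $p$'' is not automatic, since regular minimal normal subgroups can be nonabelian in general, and it needs a line of proof. Write $G_v=\langle t\rangle$. Any $t$-invariant $X\leqslant T$ gives an overgroup $X\langle t\rangle$ of $G_v$, so primitivity forces $C_T(t)\in\{1,T\}$. If $C_T(t)=T$, then $G_v$ would be normal, which is impossible. So $t$ acts fixed-point-freely on $T$ and hence inverts $T$. Then $T$ is abelian, every subgroup of $T$ is $t$-invariant, and so $|T|=p$ is an odd prime.

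Second, you can skip the minimal-normal-subgroup analysis altogether. You have already shown $G=\langle G_v,G_w\rangle$ with $|G_v|=|G_w|=2$, so $G=\langle t,s\rangle$ is dihedral and $t$ inverts $r=ts$. Every element of $G$ is either a power of $r$, which $t$ inverts, or an involution. Hence $g^{-1}\in G_vgG_v$ for all $g\in G$, and every suborbit is self-paired, without invoking primitivity again.

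Third, a minor slip: for $g\notin G_v$ the double coset $G_vgG_v$ has $4$ elements. It is the corresponding suborbit that has length $2$.
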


The following result can be derived from~\cite[Lemma 2.11]{ex1}. Recall that $\mathrm{Out}(L)$ is the outer automorphism group of a group $L$.

\begin{lemma}\label{3ATprime}
Let  $\Gamma$ be a connected $(G, s)$-arc-transitive digraph with $s\geqslant 2$, and let $u\to v \to w$ be a $2$-arc of $\Gamma$.
Suppose that \(G\) is an almost simple group with socle \(L\) and $s\geqslant 3$.  Then $|L_v|_r^2\leqslant|L_{uv}|_r^3|\Out(L)|_r$ for each prime $r$.
\end{lemma}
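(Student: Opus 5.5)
The plan is to derive a pure order inequality $|G_v|^2\leqslant|G_{uv}|^3$ from $3$-arc-transitivity, and then pass from $G$ to $L$ by controlling the indices $|G_v:L_v|$ and $|G_{uv}:L_{uv}|$ through $\Out(L)$. Since $s\geqslant 3$, the $2$-arc $u\to v\to w$ extends to a $3$-arc $u\to v\to w\to x$, and $\Gamma$ is both $(G,2)$- and $(G,3)$-arc-transitive.

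\emph{Step 1 (the inequality for $G$).} By Lemma~\ref{pro:homofac}, $G_v=G_{uv}G_{vw}$. Since $G$ is arc-transitive, $|G_{uv}|=|G_{vw}|$, so
\[
|G_{uvw}|=|G_{uv}\cap G_{vw}|=|G_{uv}|^2/|G_v|.
\]
Next, $(G,3)$-arc-transitivity means that $G_{vw}$ is transitive on the $3$-arcs whose middle arc is $(v,w)$. These $3$-arcs correspond to the pairs $(u',x')$ with $u'\to v$ and $w\to x'$, and the out- and in-neighbourhoods involved are independent of each other. The usual argument, as in Lemma~\ref{pro:homofac}, then gives $G_{vw}=G_{uvw}G_{vwx}$. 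By arc-transitivity, $G_{vwx}$ is conjugate to $G_{uvw}$, since both are stabilisers of $2$-arcs. Hence
\[
|G_{vw}|=|G_{uvw}|^2/|G_{uvwx}|\leqslant |G_{uvw}|^2 .
\]
Substituting the expression for $|G_{uvw}|$ and using $|G_{vw}|=|G_{uv}|$ gives $|G_{uv}|\leqslant |G_{uv}|^4/|G_v|^2$, that is, $|G_v|^2\leqslant|G_{uv}|^3$. Note that this step does not use that the inequality is taken $r$-locally.

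\emph{Step 1$'$ (an $r$-local version).} I expect to need the $r$-local inequality, and a plain order inequality does not split into $r$-parts automatically. However, every quantity above arises from an exact identity among subgroup orders: the product formula $|AB|=|A||B|/|A\cap B|$ in both factorisations, together with conjugacy. Taking $r$-parts of these exact identities gives
\[
|G_{uvw}|_r=|G_{uv}|_r^2/|G_v|_r
\quad\text{and}\quad
|G_{vw}|_r=|G_{uvw}|_r^2/|G_{uvwx}|_r\leqslant|G_{uvw}|_r^2 .
\]
Therefore $|G_v|_r^2\leqslant |G_{uv}|_r^3$ for every prime $r$.

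\emph{Step 2 (passing to $L$).} Since $L\trianglelefteq G$, we have $G_v/L_v\cong G_vL/L\leqslant G/L\lesssim\Out(L)$, because $G$ is almost simple with socle $L$. So $t:=|G_v:L_v|$ divides $|\Out(L)|$. Likewise $G_{uv}/L_{uv}\cong G_{uv}L/L\leqslant G_vL/L$, so $t':=|G_{uv}:L_{uv}|$ divides $t$. No transitivity of $L$ is needed for this. Substituting into Step~1$'$ gives
\[
|L_v|_r^2t_r^2\leqslant |L_{uv}|_r^3t_r'^3\leqslant |L_{uv}|_r^3t_r^3 ,
\]
hence $|L_v|_r^2\leqslant|L_{uv}|_r^3t_r\leqslant |L_{uv}|_r^3|\Out(L)|_r$.

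\emph{Main obstacle.} The only delicate point is justifying the factorisation $G_{vw}=G_{uvw}G_{vwx}$ from $3$-arc-transitivity. I would argue it by counting. The group $G_{vw}$ acts transitively on the set of pairs $(u',x')$, which has size $k^2$, where $k$ is the valency. The stabiliser of $u$ has index $k$ in $G_{vw}$, as does the stabiliser of $x$, and the stabiliser of the pair has index $k^2$. The product formula then yields $|G_{uvw}G_{vwx}|=|G_{vw}|$, which gives the factorisation.
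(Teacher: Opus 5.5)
Your proof is correct. The paper gives no argument of its own and only says the statement follows from \cite[Lemma 2.11]{ex1}; your route is a self-contained version of that derivation. You obtain the exact identities $|G_v|\,|G_{uvw}|=|G_{uv}|^2$ and $|G_{vw}|\,|G_{uvwx}|=|G_{uvw}|^2$ from the $2$- and $3$-arc factorisations, which give $|G_v|_r^2\leqslant|G_{uv}|_r^3$, and then pass to $L$ via $|G_{uv}:L_{uv}|\mid|G_v:L_v|\mid|\Out(L)|$.
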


Now we state some results about vertex stabilisers of $2$-arc-transitive digraphs.

\begin{lemma}[{\cite[Lemma 2.13]{ex1}}]\label{lm:m2.o}
Let  $\Gamma$ be a connected $(G, s)$-arc-transitive digraph with $s\geqslant 2$, and let $u\to v \to w$ be a $2$-arc of $\Gamma$.
Suppose that $L$ is a vertex-transitive normal subgroup of $G$ and  $L_{v}=\mathrm{C}_{m}^2.\mathcal{O}$.
If there exists prime divisor $r$ of $m$ with $\vert \mathcal{O}\vert_{r}=1$ and $\vert G/L\vert_{r}<m_{r}$, then $s\leqslant 2$. 
\end{lemma}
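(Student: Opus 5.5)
Suppose for a contradiction that $s\geqslant 3$, and fix a $3$-arc $t\to u\to v\to w$. The plan is to reduce everything to the Sylow $r$-subgroup of $L_v$, which is abelian and of a very restricted shape. Write $L_v=\mathrm{C}_m^2.\mathcal{O}$ and let $P$ be the Sylow $r$-subgroup of the normal subgroup $\mathrm{C}_m^2$, so $P\cong \mathrm{C}_{m_r}^2$. Since $\vert\mathcal{O}\vert_r=1$, $P$ is a normal Sylow $r$-subgroup of $L_v$. It is characteristic in $L_v$, and hence normal in $G_v$. By Lemma~\ref{pro:HsMs-1}, $\Gamma$ is $(L,2)$-arc-transitive, so Lemma~\ref{pro:homofac} gives $L_v=L_{uv}L_{vw}$. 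Choose $g\in G$ with $(u,v)^g=(v,w)$; then $L_{vw}=L_{uv}^g$. Put $A=P\cap L_{uv}$ and $B=P\cap L_{vw}=A^g$. These are the unique Sylow $r$-subgroups of $L_{uv}$ and $L_{vw}$, and the factorisation forces $P=AB$, with $A\cong B$ and $A\cap B=P\cap L_{uvw}$.

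First I show that $A\neq P$. If $P\leqslant L_{uv}$, then $P=\mathbf{O}_r(L_{uv})$. Conjugating by $g$ gives $P^g=\mathbf{O}_r(L_{vw})$. Also $P\leqslant L_{vw}$, since $B\cong A$ has the same order as $P$, so $P^g=P$. Then $g$ normalises the proper nontrivial normal subgroup $P$ of $G_v$, contradicting Lemma~\ref{factor}. (The $G$-vertex-primitivity and connectivity hypotheses needed there are in force in our setting; $P$ is proper because $L_v$ is not an $r$-group, otherwise the statement is vacuous.) Hence $\vert A\vert<m_r^2$.

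Next I use the counting behind Lemma~\ref{3ATprime}. $(G,3)$-arc-transitivity gives $\vert G_{uv}\vert=\vert G_v\vert/k$ and $\vert G_{uvw}\vert=\vert G_v\vert/k^2$, together with $G_{uv}=G_{tuv}G_{uvw}$. Passing from $G$ to $L$ costs at most a factor $\vert G/L\vert_r$ in $r$-parts. The argument of \cite[Lemma 2.11]{ex1} does not use almost simplicity, so it yields
\[
\vert L_v\vert_r^2\leqslant \vert L_{uv}\vert_r^3\,\vert G/L\vert_r .
\]
With $\vert L_v\vert_r=m_r^2$, $\vert L_{uv}\vert_r=\vert A\vert$ and $\vert G/L\vert_r<m_r$, this gives $\vert A\vert^3>m_r^3$, so $m_r<\vert A\vert<m_r^2$.

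The main obstacle is turning this into a contradiction, since the order bound alone does not rule out such $A$. I would exploit the structure of the abelian group $P\cong\mathrm{C}_{m_r}^2$ together with the second factorisation. Restricting $G_{uv}=G_{tuv}G_{uvw}$ to Sylow $r$-subgroups of $L$ (up to the factor $\vert G/L\vert_r$) writes $A$ as a product of $A^{g^{-1}}\cap A$ and $A\cap B$. Iterating the same index computation one level down should give $\vert A\cap B\vert\geqslant \vert A\vert^2/m_r^2$ but with an additional index loss $\vert A\vert/\vert A\cap B\vert=m_r^2/\vert A\vert$ at each step. Combining the two inequalities should force $\vert A\vert\geqslant m_r^2$, i.e.\ $A=P$, which was excluded above. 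If this bookkeeping fails, my fallback is to compare exponents. A subgroup of $\mathrm{C}_{m_r}^2$ of order greater than $m_r$ contains $\mathbf{\Omega}_r(P)$, which is characteristic in $P$ and hence normal in $G_v$. Then $\mathbf{\Omega}_r(P)\leqslant A\cap B$, so $g$ normalises $\mathbf{\Omega}_r(P)$, and Lemma~\ref{factor} again gives a contradiction.
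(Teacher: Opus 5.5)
Your fallback is a correct proof and is presumably the intended one; the paper only cites \cite{ex1} for this lemma, so the comparison cannot be checked directly. The route you put first cannot work and should be deleted. The two relations you plan to combine, $|A\cap B|\geqslant |A|^2/m_r^2$ and $|A|/|A\cap B|=m_r^2/|A|$, are both the single identity $|A\cap B|=|A|^2/m_r^2$ coming from $P=AB$. Order bookkeeping alone cannot exclude $m_r<|A|<m_r^2$. For example, take $m_r=r^3$, $|A|=r^4$, $|A\cap B|=r^2$ and $|G/L|_r=1$. This satisfies $|L_v|_r^2\leqslant |L_{uv}|_r^3|G/L|_r$ and the $3$-arc condition $k_r\leqslant |G_{uvw}|_r$, both with equality. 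It is realised in $P=\langle e_1\rangle\times\langle e_2\rangle$ by $A=\langle e_1,e_2^{r^2}\rangle\cong B=\langle e_2,e_1^{r^2}\rangle$. So the contradiction has to come from Lemma~\ref{factor}, as in your fallback. That argument also makes your separate step $A\neq P$ unnecessary.

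In the fallback, tighten three points.
\begin{enumerate}
\item Containment $\mathbf{\Omega}_r(P)\leqslant A\cap B$ does not by itself give $g$-invariance. Argue as follows. Since $|A|>m_r=\exp(P)$, the group $A$ is non-cyclic, so $\mathbf{\Omega}_r(A)=\mathbf{\Omega}_r(P)\cong\mathrm{C}_r^2$, and likewise for $B$. Hence $\mathbf{\Omega}_r(P)^g=\mathbf{\Omega}_r(A^g)=\mathbf{\Omega}_r(B)=\mathbf{\Omega}_r(P)$. It is proper in $G_v$ because it lies in $L_{uv}<L_v$.
\item Lemma~\ref{factor} is stated for vertex-primitive digraphs, but the lemma you are proving assumes only connectivity. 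So ``in force in our setting'' is not justified by its hypotheses. The fix is easy.
\begin{itemize}
\item Put $H=\langle G_v,g\rangle$. Then $\Gamma^+(v)=v^{gG_v}$ and $\Gamma^-(v)=v^{g^{-1}G_v}$, so $v^H$ is closed under taking in- and out-neighbours.
\item By connectivity $H$ is vertex-transitive, and since $H\geqslant G_v$ this gives $H=G$.
\item Consequently a $g$-invariant normal subgroup of $G_v$ is normal in $G$, fixes every vertex, and is therefore trivial.
\end{itemize}
\item Do not just assert that the proof of \cite[Lemma 2.11]{ex1} avoids almost simplicity; give the short derivation.
\begin{itemize}
\item $(L,2)$-arc-transitivity gives $|L_v|\,|L_{uvw}|=|L_{uv}|^2$ and $|G_{uvw}|=|L_{uvw}|\,|G/L|$.
\item $(G,3)$-arc-transitivity gives $k\mid |G_{uvw}|$.
\item Together these show that $|L_{uv}|^3|G/L|/|L_v|^2=|G_{uvw}|/k$ is an integer, which yields the $r$-part inequality you use.
\end{itemize}
\end{enumerate}
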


\begin{lemma}[{\cite[Lemma 2.14]{ex1}}] 
\label{lm:m.o}
Let  $\Gamma$ be a connected $(G, s)$-arc-transitive digraph with $s\geqslant 1$, and let $u\to v \to w$ be a $2$-arc of $\Gamma$.
Suppose that $L$ is a vertex-transitive normal subgroup of $G$ and  $L_{v}=\mathrm{C}_{m}.\mathcal{O}$.
If there exists a prime divisor $r$ of $m$ with $\vert \mathcal{O}\vert_{r}=1$ and $\vert G/L\vert_{r}<m_{r}$, then $s\leqslant 1$. 
\end{lemma}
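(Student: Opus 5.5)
We argue by contradiction: assume \(s\geqslant 2\). Since \(r\mid m\), the cyclic normal subgroup \(\mathrm{C}_m\) of \(L_v\) has a unique subgroup \(R\) of order \(m_r\), and \(R\) is characteristic in \(\mathrm{C}_m\). Because \(|\mathcal{O}|_r=1\), \(R\) is the unique (normal) Sylow \(r\)-subgroup of \(L_v\), hence characteristic in \(L_v\). As \(L_v\trianglelefteq G_v\), we get \(R\trianglelefteq G_v\).

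\textbf{Step 1: the \(r\)-part of the valency.} By Lemma~\ref{pro:HsMs-1}, \(\Gamma\) is \(L\)-arc-transitive, so the valency is \(k=|L_v:L_{uv}|\). Since \(R\) is the Sylow \(r\)-subgroup of \(L_v\), the Sylow \(r\)-subgroup of \(L_{uv}\) is \(R\cap L_{uv}=R_u\). Therefore
\[
k_r=|R|/|R_u|=m_r/|R_u|.
\]

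\textbf{Step 2: \(R_u=1\).} This is the main point. As a subgroup of the cyclic group \(R\), \(R_u\) is characteristic in \(R\), hence \(R_u\trianglelefteq G_v\). Since \(\Gamma\) is \(G\)-arc-transitive, \(G_v\) is transitive on the in-neighbours of \(v\). So \(R_u\), being normal in \(G_v\) and fixing \(u\), fixes every in-neighbour \(u'\) of \(v\). Fix such a \(u'\). Then \(L_{u'}=\mathrm{C}_m^{x}.\mathcal{O}^{x}\) for some \(x\in G\) with \(v^x=u'\), so \(L_{u'}\) has a cyclic normal Sylow \(r\)-subgroup \(R^x\). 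The \(r\)-subgroup \(R_u\le L_{u'}\) lies in \(R^x\), and it is the unique subgroup of \(R^x\) of order \(|R_u|\). Hence \(R_u\) is characteristic in \(R^x\trianglelefteq G_{u'}\), so \(R_u\trianglelefteq G_{u'}\). Repeating the argument, \(R_u\) fixes all in-neighbours of \(u'\), and is normal in their stabilisers. Iterating along reversed arcs, \(R_u\) fixes every vertex reachable from \(v\) by directed paths traversed backwards. A finite connected arc-transitive digraph is strongly connected, so \(R_u\) fixes all of \(V(\Gamma)\). As \(G\) acts faithfully, \(R_u=1\), and so \(k_r=m_r\) by Step 1.

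\textbf{Step 3: contradiction.} By Lemma~\ref{pro:homofac}, \(G_v=G_{uv}G_{vw}\), so \(G_{uv}\) is transitive on the \(k\) out-neighbours of \(v\). Hence \(k\mid |G_{uv}|\). On the other hand, \(G_{uv}\cap L=L_{uv}\), which gives
\[
|G_{uv}|_r=|L_{uv}|_r\,|G_{uv}L/L|_r\leqslant |R_u|\,|G/L|_r=|G/L|_r .
\]
Combining, \(m_r=k_r\leqslant |G/L|_r<m_r\), a contradiction. Thus \(s\leqslant 1\).

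The step needing the most care is Step~2, namely transporting normality of \(R_u\) from \(G_v\) to \(G_{u'}\). This works because, in a cyclic Sylow subgroup, each subgroup is determined by its order, so uniqueness of subgroups of given order in \(R^x\) makes the argument go through.
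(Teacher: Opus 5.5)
The paper does not prove this lemma; it is quoted from \cite{ex1}, so I am judging your argument on its own. It is correct in outline. Steps 1 and 3 are sound: the divisibility $k\mid |G_{uv}|$, which comes from $G_v=G_{uv}G_{vw}$, combined with $|G_{uv}|_r\leqslant |L_{uv}|_r\,|G/L|_r$, is the right way to finish. You have also correctly identified $R_u=R\cap L_{uv}=1$ as the key claim.

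One step in Step 2 needs repair. After showing $R_u\trianglelefteq G_{u'}$, you write ``repeating the argument, $R_u$ fixes all in-neighbours of $u'$''. The argument at $v$ used two facts: $R_u$ is normal in $G_v$, and $R_u$ fixes \emph{some} in-neighbour of $v$ (namely $u$, by definition). At $u'$ you have only established normality, and normality alone does not make $R_u$ fix anything.

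The missing fact is true and follows from the same cyclicity principle you highlight:
\begin{itemize}
\item Given an in-neighbour $u''$ of $u'$, choose $y\in G$ with $(u,v)^y=(u'',u')$.
\item Then $R^y$ and $R^x$ are both the unique Sylow $r$-subgroup of $L_{u'}=L_v^x=L_v^y$, so $R^y=R^x$.
\item Hence $R^x\cap G_{u''}=(R\cap G_u)^y$ has order $|R_u|$.
\item Since $R_u\leqslant R^x$ has the same order and $R^x$ is cyclic, $R^x\cap G_{u''}=R_u$.
\end{itemize}
So $R_u$ fixes every in-neighbour of $u'$ directly, which makes the transport of normality unnecessary. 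With this line added, the induction along reversed arcs and strong connectivity give $R_u=1$ as you claim.

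For comparison, the paper handles analogous situations (e.g.\ cases (15)--(19) for $E_8(q)$) through Lemma~\ref{factor}. Suppose $R_u\neq 1$. Then the subgroup $R_0$ of order $r$ in $R$ lies in $L_{uv}$. For $g$ with $(u,v)^g=(v,w)$ you get $R_0^g\leqslant L_{vw}\leqslant L_v$, so $R_0^g=R_0$ because $L_v$ has a unique subgroup of order $r$. This contradicts Lemma~\ref{factor}.

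That route is shorter, but Lemma~\ref{factor} as quoted assumes vertex-primitivity, which is not a hypothesis of the present statement. Your propagation argument uses only connectivity and arc-transitivity. In effect it is a self-contained proof of the special case of Lemma~\ref{factor} needed here, so it proves the lemma in the generality in which it is stated.
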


Recall that a group $G$ is called \emph{quasisimple} if $G=G'$ and $G/\mathrm{Z}(G)$ is isomorphic to a finite non-abelian simple group.

\begin{lemma}[{\cite[Lemma~3.3]{small}}]\label{lm:qsimple}
Let $\mathit{\Gamma}$ be a connected $(G, s)$-arc-transitive digraph with $s\geqslant 2$, and let $u\to v \to w$ be a $2$-arc of $\mathit{\Gamma}$.  
Suppose that  $G_v^{(\infty)}$ is quasisimple.
Let $S$ be the socle  of $G_v/\Rad(G_v)$. Then the following statements hold:
\begin{enumerate}[\rm (a)]   
\item  The simple group  $S$ is  isomorphic to one of the following:
\begin{equation}\label{eq:groupsqs}
\mathrm{A}_6,\mathrm{M}_{12},\mathrm{Sp}_4(2^t),\mathrm{P\Omega}^{+}_8(q), \mathrm{PSL}_2(q),\mathrm{PSL}_3(3),\mathrm{PSL}_3(4),\mathrm{PSL}_3(8),\mathrm{PSU}_3(8),\mathrm{PSU}_4(2). 
\end{equation}  
Moreover, if $G_v$ is almost simple, then $S$ is isomorphic to $\mathrm{A}_6$, $\mathrm{M}_{12}$, $\mathrm{Sp}_4(2^t)$ or $\mathrm{P\Omega}^{+}_8(q)$ and $s\leqslant 2$.

\item Suppose that $S $ is one in~\eqref{eq:groupsqs}. 
Let  
\[ \overline{G_v}=G_v/\mathrm{R}(G_v),\,\overline{G_{uv}}=G_{uv} \mathrm{R}(G_{ v})/\mathrm{R}(G_{ v}),\, \overline{G_{vw}}= \overline{G_{vw}} \mathrm{R}(G_{ v})/\mathrm{R}(G_{v}).\]
Then both $\overline{G_{uv}}$ and $\overline{G_{vw}}$ are core-free in $\overline{G_{v}}$, and the factorisation $\overline{G_{v}}=\overline{G_{uv}}\,\overline{G_{vw}}$ satisfies
one of the following (interchanging $\overline{G_{uv}}$ and $\overline{G_{vw}}$ if necessary):
\begin{enumerate}[\rm (b.1)] 
\item $S=\mathrm{Sp}_4(2^f)$ with $f\geqslant 2$, $\overline{G_{v}}\leqslant\mathrm{\Gamma Sp}_4(q)$, and $(\overline{G_{uv}}\cap S,\overline{G_{vv_1}}\cap S)\cong(\mathrm{Sp}_2(q^2).2,\mathrm{Sp}_2(q^2))$ or $(\mathrm{Sp}_2(q^2).2,\mathrm{Sp}_2(q^2).2)$; 
\item $S=\mathrm{P\Omega}_8^{+}(q)$, $\overline{G_{v}}\leqslant  \mathrm{P\Gamma O}_8^{+}(q) $, and $(\overline{G_{uv}}\cap S,\overline{G_{vw}}\cap S)\cong (\Omega_7(q),\Omega_7(q))$.
 \item $S=\mathrm{PSL}_2(q)$, $\overline{G_{uv}}\cap S\leqslant \mathrm{D}_{2(q+1)/d}$ and $ \overline{G_{vw}}\cap S \leqslant q{:}((q-1)/d)$, where $d=(2,q-1)$.
\item $(\overline{G_{v}},\overline{G_{uv}},\overline{G_{vw}})$ satisfies Table \ref{tb:exceptionalfacs}, where $\mathcal{O}\leqslant \mathrm{C}_2$, and $\mathcal{O}_1$ and $\mathcal{O}_2$ are subgroups of $\mathcal{O}$ such that $\mathcal{O}=\mathcal{O}_1\mathcal{O}_2$.

\end{enumerate} 

\end{enumerate} 
\end{lemma}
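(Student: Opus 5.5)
The plan is to reduce everything to a core-free factorisation of the almost simple group $\overline{G_v}=G_v/\Rad(G_v)$, whose socle is $S$ since $G_v^{(\infty)}$ is quasisimple, and then to read off the possibilities from the classification of factorisations of almost simple groups. By Lemma~\ref{pro:homofac}, $G_v=G_{uv}G_{vw}$. Choose $g\in G$ with $(u,v)^g=(v,w)$, which exists by arc-transitivity; then $G_{uv}^g=G_{vw}$. So the factorisation is homogeneous, and $G_{uv}$, $G_{vw}$ have the same order and the same insoluble composition factors. Since $\Rad(G_v)$ is soluble, $\overline{G_{uv}}$ and $\overline{G_{vw}}$ have the same multiset of insoluble composition factors, namely $\ICF(G_{uv})$, and $\overline{G_v}=\overline{G_{uv}}\,\overline{G_{vw}}$.

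\emph{Core-freeness.} Suppose $\overline{G_{uv}}\geqslant S$.
\begin{enumerate}[\rm (i)]
\item Then $G_{uv}\Rad(G_v)\geqslant G_v^{(\infty)}\Rad(G_v)$. Since $G_v^{(\infty)}$ is perfect, this gives $G_v^{(\infty)}\leqslant G_{uv}$, and comparing derived series yields $G_{uv}^{(\infty)}=G_v^{(\infty)}$.
\item Conjugating by $g$ gives $|G_{vw}^{(\infty)}|=|G_v^{(\infty)}|$. Also $G_{vw}^{(\infty)}\leqslant G_v^{(\infty)}$ because $G_{vw}\leqslant G_v$. Hence $G_{vw}^{(\infty)}=G_v^{(\infty)}$.
\item Therefore $g$ maps $G_v^{(\infty)}=G_{uv}^{(\infty)}$ onto $G_{vw}^{(\infty)}=G_v^{(\infty)}$, that is, $g$ normalises $G_v^{(\infty)}$.
\item $G_v^{(\infty)}$ is a nontrivial normal subgroup of $G_v$. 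It is proper: otherwise $G_v$ is perfect, so $G_{uv}=G_v$ and the valency is $1$, contradicting Lemma~\ref{pro:valency}.
\item This contradicts Lemma~\ref{factor}.
\end{enumerate}
The same argument applies to $\overline{G_{vw}}$, so both factors are core-free.

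\emph{Classification.} We now have an almost simple group $\overline{G_v}$ with a factorisation into two core-free subgroups having identical insoluble composition factors. Each factor lies in a maximal core-free subgroup, so we run through the Liebeck--Praeger--Saxl classification of maximal factorisations of almost simple groups. For exceptional socles we also use the known classification of all factorisations.

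In each case we test whether the two factors can have the same insoluble composition factors while their product still has order divisible by $|S|$. Both factors insoluble should force the pairs $\A_6=\A_5\A_5$, $\mathrm{M}_{12}=\mathrm{M}_{11}\mathrm{M}_{11}$, $\Sp_4(2^f)=\Sp_2(q^2).2\cdot\Sp_2(q^2)$ and $\POmega_8^+(q)=\Omega_7(q)\Omega_7(q)$, the last via triality. Both factors soluble should force $\PSL_2(q)$ with a dihedral factor and a Borel-type factor, plus the finitely many small cases $\PSL_3(3)$, $\PSL_3(4)$, $\PSL_3(8)$, $\PSU_3(8)$, $\PSU_4(2)$ of Table~\ref{tb:exceptionalfacs}, each checked by order arithmetic or computer. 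Mixed cases (one factor soluble, the other not) are impossible by the composition-factor condition. This gives (a) and (b.1)--(b.4).

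\emph{Almost simple $G_v$ and $s\leqslant2$.} If $\Rad(G_v)=1$, then $G_{uv}\cong G_{vw}$ genuinely, so the two factors have equal orders. In the soluble cases the orders do not match: for instance $2(q+1)/d\neq q(q-1)/d$ up to outer parts, and the orders are too small to give a factorisation. This leaves $\A_6,\mathrm{M}_{12},\Sp_4(2^t),\POmega_8^+(q)$.

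For $s\leqslant 2$, suppose $s\geqslant3$. Then the $(G_v,2)$-structure implies that $G_{uv}$ itself acquires a homogeneous factorisation of the same type: apply Lemma~\ref{pro:homofac} to a $2$-arc starting at $u$, together with the core-free argument above. But $G_{uv}$ is a modest extension of $\A_5$, $\mathrm{M}_{11}$, $\Sp_2(q^2)$ or $\Omega_7(q)$. By part (a) it admits no such factorisation: $\Omega_7(q)$ and $\mathrm{M}_{11}$ are absent from the list, and $\PSL_2(q^2)$ would need soluble factors. Alternatively, Lemma~\ref{3ATprime} fails for a suitable prime $r$.

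The main obstacle is the exhaustive pass through the factorisation classification, especially excluding mixed and unequal-order factorisations of classical groups and handling the outer automorphisms.
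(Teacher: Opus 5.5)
The paper does not prove this statement; it is quoted from \cite{small}, so your argument has to stand on its own. Your reductions are sound in outline: $G_v=G_{uv}G_{vw}$ with $G_{uv}^g=G_{vw}$, insoluble composition factors pass unchanged to $\overline{G_{uv}},\overline{G_{vw}}$, a factor containing $S$ would force $g$ to normalise $G_v^{(\infty)}$, and when $\Rad(G_v)=1$ equality of orders removes the soluble cases (up to small-$q$ checks).

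The genuine gap is the classification step. It is the entire content of (a) and (b.1)--(b.4), and you only announce it (``should force''). Scanning the Liebeck--Praeger--Saxl tables of maximal factorisations \cite{LPS} cannot produce the list. The factors $\overline{G_{uv}}$ and $\overline{G_{vw}}$ are in general far below the maximal core-free subgroups $A,B$ containing them. Knowing $G=AB$ tells you neither which $H<A$, $K<B$ still satisfy $G=HK$, nor what their insoluble composition factors are. You need classifications of arbitrary factorisations:
\begin{enumerate}
\item[(1)] for two soluble factors, the determination of the almost simple groups that are products of two soluble subgroups (Kazarin; Li--Xia). This is where (b.3), the entries of Table~\ref{tb:exceptionalfacs} and the soluble part of the list in (a) come from;
\item[(2)] for insoluble factors, a descent from maximal to arbitrary factorisations such as that in \cite{transitive}, followed by the comparison of composition factors.
\end{enumerate}
Your sketch also never addresses the conditions $\overline{G_v}\leqslant\mathrm{\Gamma Sp}_4(q)$ in (b.1) and $\overline{G_v}\leqslant\mathrm{P\Gamma O}_8^{+}(q)$ in (b.2). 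Excluding a graph automorphism, respectively a triality automorphism, from $\overline{G_v}$ is a separate part of the classification. It is precisely the clause this paper invokes for case (5) of Table~\ref{tab:E8maximalrank} and case (x) of Subsection~\ref{ome7}.

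Several secondary points also need fixing.

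\emph{The $s\leqslant 2$ argument.} Applying Lemma~\ref{pro:homofac} to a $2$-arc starting at $u$ only reproduces $G_v=G_{uv}G_{vw}$. What you need is $G_{vw}=G_{uvw}G_{vwx}$ for a $3$-arc $(u,v,w,x)$. This is the statement applied to the arc digraph of $\Gamma$ (vertices the arcs, $(a,b)\to(b,c)$), which is connected and $(G,s-1)$-arc-transitive with vertex stabiliser $G_{vw}\cong G_{uv}$. Core-freeness there must come from $\langle G_{vw},g\rangle=G$ in that connected digraph, not from Lemma~\ref{factor}, because the arc digraph is not vertex-primitive. With that in place, $\Omega_7(q)$ and $\mathrm{M}_{11}$ are excluded by the list in (a). $\mathrm{A}_5$ and $\Sp_2(q^2)$ are excluded by the almost simple clause, once you check that $G_{uv}$ is almost simple in those cases.

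\emph{Lemma~\ref{3ATprime}.} Your alternative via this lemma is unavailable. It concerns an almost simple $G$ and its socle, whereas here $G$ is arbitrary.

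\emph{The inclusion $G_v^{(\infty)}\leqslant G_{uv}$.} In the core-freeness step this does not follow from perfectness alone. You need that the $\infty$-residual of $G_{uv}\cap G_v^{(\infty)}\Rad(G_v)$ lies in $G_v^{(\infty)}$ and supplements $\mathrm{Z}(G_v^{(\infty)})$. Then use that only the whole group supplements the centre of a quasisimple group.

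\emph{Vertex-primitivity.} Lemmas~\ref{factor} and~\ref{pro:valency} are stated for vertex-primitive digraphs, which this statement does not assume. Argue directly from $\langle G_v,g\rangle=G$, which holds for every connected $G$-arc-transitive digraph.
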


\begin{table}[ht]
\centering
\caption{Some factorisations for $\overline{G_v} = \overline{G_{uv}} \overline{G_{vw}}$}
\label{tb:exceptionalfacs}
\begin{tabular}{lll}
\hline 
\noalign{\vspace{0.5ex}}
$\overline{G_v}$ & $\overline{G_{uv}}$ & $\overline{G_{vw}}$ \\
\hline
$\mathrm{A}_6.\mathcal{O} (\leqslant \mathrm{S}_6)$ & $\mathrm{A}_5.\mathcal{O}$ & $\mathrm{A}_5.\mathcal{O}$ \\
$\mathrm{M}_{12}$ & $\mathrm{M}_{11}$ & $\mathrm{M}_{11}$ \\
$\mathrm{PSL}_2(7).\mathcal{O}$ & $7, 7:(3 \times \mathcal{O})$ & $\mathrm{S}_4$ \\
$\mathrm{PSL}_2(11).\mathcal{O}$ & $11, 11:(5 \times \mathcal{O})$ & $\mathrm{A}_4$ \\
$\mathrm{PSL}_2(23).\mathcal{O}$ & $23:(3 \times \mathcal{O})$ & $\mathrm{S}_4$ \\
$\mathrm{PSL}_3(3).\mathcal{O}$ & $13, 13:(3 \times \mathcal{O})$ & $3^2.2.\mathrm{S}_4$ \\
$\mathrm{PSL}_3(3).\mathcal{O}$ & $13:(3 \times \mathcal{O})$ & $\mathrm{A\Gamma L}_1(9)$ \\
$\mathrm{PSL}_3(4).(\mathrm{S}_3 \times \mathcal{O})$ & $7:(3 \times \mathcal{O}).\mathrm{S}_3$ & $2^4:(3 \times \mathrm{D}_{10}).2$ \\
$\mathrm{PSL}_3(8).(3 \times \mathcal{O})$ & $57:(9 \times \mathcal{O}_1)$ & $2^{3+6}:7^2.(\mathrm{D}_{10} \times \mathcal{O}_2)$ \\
$\mathrm{PSU}_3(3).\mathcal{O}$ & $57:(9 \times \mathcal{O}_1)$ & $2^{3+6}:(63:3).\mathcal{O}_2$ \\
$\mathrm{PSU}_4(2).\mathcal{O}$ & $2^4.5$ & $3^{1+2}.2:(\mathrm{A}_4.\mathcal{O})$ \\
$\mathrm{PSU}_4(2).\mathcal{O}$ & $2^4.5$ & $3^{1+2}.2:(\mathrm{A}_4.\mathcal{O})$ \\
$\mathrm{PSU}_4(2).\mathcal{O}$ & $2^4:\mathrm{D}_{10}.\mathcal{O}_1$ & $3^{1+2}.2:(\mathrm{A}_4.\mathcal{O}_2)$ \\
$\mathrm{PSU}_4(2).2$ & $2^4.5:4$ & $3^{1+2}:\mathrm{S}_3, 3^3:(\mathrm{S}_3 \times \mathcal{O}),$ \\
& & $3^3:(\mathrm{A}_4 \times 2), 3^3:(\mathrm{S}_4 \times \mathcal{O})$ \\
$\mathrm{M}_{11}$ & $11:5$ & $\mathrm{M}_9.2$ \\
\hline
\end{tabular}
\end{table}

\section{Proof of Theorem \ref{mainthm}}
Throughout this section, we work under the following hypothesis.

\begin{hypothesis}\label{hy:1}
Let $G$ be an almost simple group with socle $L \in \{ E_7(q), E_8(q) \}$, where $q = p^f$ with $p$ a prime, and let $\Gamma$ be a connected $G$-vertex-primitive $(G, s)$-arc-transitive digraph with $s \geqslant 2$. 
Fix a $2$-arc $u \rightarrow v \rightarrow w$ of $\Gamma$ and let $g \in L$ be such that $(u,v)^g = (v,w)$ (this is, $u^g=v$ and $v^g=w$).  
\end{hypothesis}

Note that the existence of such an element $g \in L$ satisfying $u^g = v$ and $v^g = w$  is guaranteed by Proposition~\ref{pro:HsMs-1}.

Since \(\Gamma\) is \(G\)-vertex-primitive,  all vertex stabilisers in $G$ are conjugate and the vertex-stabiliser $G_v$ is a core-free maximal subgroup of \(G\).
 
We will examine all conjugacy representatives of the core-free maximal subgroups of $G$, which  are classified in \cite{alex, LS}.
According to \cite[Theorem 2]{LS}, either $G_v$ is almost simple, or one of the following holds:

\begin{enumerate}[\rm (a)] 
\item $G_v$ is a parabolic subgroup, or a subgroup of maximal rank.

\item $G_v $ is the normaliser of an elementary abelian group given in~\cite[Theorem 1(II)]{CLSS}.

\item $G_v $ is the centraliser of a graph, field, or graph-field automorphism of $L$ of prime order. 
\item $L=E_8(q)$, $p>5$ and $\mathrm{F}^{*}(G_v )$ is either $\mathrm{A}_5 \times \mathrm{A}_6$ or  $\mathrm{A}_5 \times \mathrm{PSL}_2(q)$.
\item $(L,\mathrm{F}^{*}(G_v ))$ is one of the following:
\begin{itemize}
\item $L=E_7(q)$, and $\mathrm{F}^{*}(G_v)=\mathrm{PSL}_2(q)\times \mathrm{PSL}_2(q)$ ($p>3$), 
$\mathrm{PSL}_2(q)\times G_2(q) $ ($p>2$, $q>3$), 
$\mathrm{PSL}_2(q)\times F_4(q) $ ($q>3$),
$\mathrm{PSp}_6(q)\times G_2(q) $ ($q>3$).
\item $L=E_8(q)$, and $\mathrm{F}^{*}(G_v)= \mathrm{PSL}_2(q)\times \mathrm{PSL}_3(q) $ ($p>3$), 
$\mathrm{PSL}_2(q)\times \mathrm{PSU}_3(q) $ ($p>3$),  
$G_2(q)\times F_4(q) $,
$\mathrm{PSL}_2(q)\times G_2(q)^2 $ ($p>2$, $q>3$),
$\mathrm{PSL}_2(q)\times G_2(q^2) $ ($p>2$, $q>3$).
\end{itemize} 
\end{enumerate}

All maximal subgroups appearing in (a)--(e) have been determined up to conjugacy.
In the parabolic case, the group structure can be read directly from the corresponding Dynkin diagram.
The subgroups of maximal rank are recorded
in \cite[Tables 5.1 and 5.2]{LSS}.
The subgroups in (b), also known as exotic $r$-local subgroups (for a prime $r\neq p$), satisfy, by \cite[Theorem 1(II)]{CLSS}, that $L=E_8(q)$ and $L_v=5^3.\mathrm{SL}_3(5)$ or $(2^5.2^{10}).\mathrm{SL}_5(2)$. 
Since  neither $E_7(q)$ nor $E_8(q)$  admits graph or graph-field automorphisms, the subgroups in (c) are of the same type as $G$, i.e., $G_v=E_7(q^{1/r})$ or $E_8(q^{1/r})$ for some prime $r$. 

For those maximal subgroups that are almost simple and are not covered by cases (a)--(e), see~\cite[Table 1.1]{Craven} for  $E_7(q)$ and see \cite[Remark 2.16]{BK2025} for a detailed overview for $E_8(q)$. 
For convenience, we let $\mathcal{S}$ be the set of those almost simple maximal subgroups not covered by cases (a)--(e).
 
Note that
\[
|\mathrm{Out}(L)|=
\begin{cases}
(2, q-1)t, &\text{if } L = E_7(q),\\[2mm]
 t, &\text{if } L = E_8(q).\\ 
\end{cases}
\]

The group $L$ possesses a $(B,N)$-pair (see for example \cite[Section 2.5]{Carter2}); in particular, $L = BNB$, where $B$ is a Borel subgroup. 
Let $W = N/(B \cap N)$  denote the Weyl group of $L$, let $\Phi$ be the set of roots corresponding to $W$, and let $\Pi = \{\alpha_1, \ldots, \alpha_\ell\}$ (with $\ell = 7$ or $8$) be a set of simple roots.
For each root \(r \in \Phi\), denote by \(X_r\) the associated root subgroup.
Then \(B = U : (B \cap N)\), where \(U = \langle X_r \mid r \in \Phi^+ \rangle\) (with \(\Phi^+\) the set of positive roots).  
The Dynkin diagram of \(L\) is shown in Figure~\ref{fg:Dynkindigrams}. 

\begin{figure}[h]
\centering
\caption{Dynkin diagrams of $E_\ell$ ($\ell = 7,8$)}\label{fg:Dynkindigrams}
\begin{tikzpicture}[scale=0.4]
  \draw [thick] (2,-2.5) -- (-2,-2.5);
  \draw [thick] (2,-2.5) -- (2,-6.5); 
  \draw [thick] (-2,-2.5) -- (-6,-2.5);
  \draw [thick] (2,-2.5) -- (6,-2.5);
  \draw [thick] (6,-2.5) -- (8,-2.5);
  \draw [thick] (12,-2.5) -- (14,-2.5);

  \filldraw[thick,fill=white] (2,-2.5) circle (10pt);
  \node at (2,-1.5) {\normalsize $\alpha_4$};
  \filldraw[thick,fill=white] (-2,-2.5) circle (10pt);
  \node at (-2,-1.5) {\normalsize $\alpha_3$};
  \filldraw[thick,fill=white] (-6,-2.5) circle (10pt);
  \node at (-6,-1.5) {\normalsize $\alpha_1$};
  \filldraw[thick,fill=white] (6,-2.5) circle (10pt);
  \node at (6,-1.5) {\normalsize $\alpha_5$};
  \filldraw[thick,fill=white] (2,-6.5) circle (10pt);
  \node at (3.2,-6.5) {\normalsize $\alpha_2$};
  \filldraw[thick,fill=white] (14,-2.5) circle (10pt);
  \node at (14,-1.5) {\normalsize $\alpha_\ell$};
  \filldraw[thick,fill=white] (8.667,-2.5) circle (1pt);
  \filldraw[thick,fill=white] (9.334,-2.5) circle (1pt);
  \filldraw[thick,fill=white] (10,-2.5) circle (1pt);
  \filldraw[thick,fill=white] (10.667,-2.5) circle (1pt);
  \filldraw[thick,fill=white] (11.334,-2.5) circle (1pt);
 
  \node at (-11.5,-2.5) {\normalsize $E_\ell$ ($\ell=7,8$)};
\end{tikzpicture}
\end{figure}

In Subsection~\ref{subsec:parabolic}, we consider parabolic subgroups. 
The remaining two subsections deal with non-parabolic subgroups of $E_7(q)$ and $E_8(q)$, respectively.

\subsection{Parabolic maximal subgroups}\label{subsec:parabolic}
In this subsection, we show that, under Hypothesis~\ref{hy:1}, the case where \(G_v\) is a maximal parabolic subgroup cannot occur.
A similar result was established for \(L \in \{{}^3D_4(q), G_2(q), {}^2F_4(q), F_4(q), E_6(q), {}^2E_6(q)\}\) in \cite[Subsection 3.1]{ex1}.  
Here, however, we present a different and more efficient approach.

Suppose that Hypothesis~\ref{hy:1} holds and that \(G_v\) is a maximal parabolic subgroup of \(G\).
Then \(L_v\) is conjugate to a standard maximal parabolic subgroup \(P_J\) of \(L\) for some \(J \subseteq \Pi\) with \(|J| = |\Pi| - 1\).  
Let \(W_J\) denote the standard parabolic subgroup of \(W\) associated with \(J\), i.e. the subgroup generated by the reflections corresponding to the simple roots in \(J\).
By \cite[Propositions 2.7.5 and 2.8.1]{Carter2}, there is a bijection between \((W_J, W_J)\)-double cosets in \(W\) and \((P_J, P_J)\)-double cosets in \(L\): $W_J x W_J \leftrightarrow P_J \dot{x} P_J$, where \(x\) is the unique element of minimal length in \(W_J x W_J\)  (see \cite[Proposition 2.8.1]{Carter2}) and \(\dot{x}\) is a preimage of \(x\) in \(N\). Let \(D_{J,J}\) be the set of all such minimal-length representatives, and let \(\Phi_J\) be the set of roots that are integral combinations of roots in \(J\).

By~\cite[Theorem 2.6.5]{CFSG}, \(P_{J}=U_{J}:L_{J}\) where \(U_{J}=\prod_{r\in\Phi^{+}\cap\Phi_{J}}X_{r}\) and \(L_{J}=M_J(B\cap N)\) where $M_J=\langle X_{r}: r\in \Phi_{J}\rangle $ and is a central product of certain groups of Lie type. In the simply connected version of \(L\), the \(M_J\)-factor is a direct product of the simply connected versions of those Lie-type groups. For instance, if \(L = E_7(q)\) and \(J = \Pi \setminus \{\alpha_4\}\), then \(J\) has type \(A_1 A_2 A_3\), and hence the \(M_J\)-factor in the parabolic subgroup corresponding to \(J\) inside \({E_7(q)}_{\mathrm{sc}}\) is \(
{A_1(q)}_{\mathrm{sc}} \times {A_2(q)}_{\mathrm{sc}} \times {A_3(q)}_{\mathrm{sc}}\cong \mathrm{SL}_2(q) \times \mathrm{SL}_3(q) \times \mathrm{SL}_4(q)\).
Consequently, the \(M_J\)-factor in \(E_7(q)\) is the quotient of \( \mathrm{SL}_2(q) \times\mathrm{SL}_3(q) \times \mathrm{SL}_4(q)\) by its intersection with the centre of \({E_7(q)}_{\mathrm{sc}}\).

Since $\Gamma$ is an $L$-arc-transitive digraph, the double coset $L_v g L_v$ satisfies $g^{-1} \notin L_v g L_v$.
Let $x\in D_{J,J}$ be the corresponding Weyl element such that $L_v g L_v=P_J\dot{x}P_J$.
Because of this uniqueness, for any $y \in D_{J,J}$, we have $y^{-1} \in W_J y W_J$ if and only if $y$ is the identity or an involution.  
Therefore, the condition $g^{-1} \notin P_J \dot{x} P_J$ forces $|x|>2$. We note that only some subsets $J$ yields such Weyl element $x$ with $|x|>2$, see for example \cite[Proposition 10.4.11]{DRG}; in particular, one of the following occurs:
  \begin{enumerate}
      \item[(i)] \(L=E_7(q)\), \(J=\Pi\setminus\{\alpha_i\}\) for \(i=3,4,5,6\);
      \item[(ii)] \(L=E_8(q)\), \(J=\Pi\setminus\{\alpha_i\}\) for \(i=2,3,4,5,6,7\).
  \end{enumerate}

The group structure of \(P_J \cap P_J^{\dot{x}}\) is determined in \cite[Section 2.8]{Carter2}.
According to \cite[Theorem 2.7.4]{Carter2}, we have \(W_J \cap W_J^x = W_K\) with \(K = J \cap J^{x^{-1}}\).
Moreover, by \cite[Theorem 2.8.7]{Carter2}, \(P_J \cap P_J^{\dot{x}} = Q : L_K\), where \(Q\) is a normal \(p\)-subgroup and \(L_K = M_K (B \cap N)\).

For a given \(J \subset \Pi\), the set \(D_{J,J}\) can be  easily computed using {\sc Magma}~\cite{magma}.
For each \(x \in D_{J,J}\) with \(|x| > 2\), we compute \(K = J \cap J^{x^{-1}}\), from which we read off the type of \(K\) and thus obtain the structure information of \(M_K\).
The following sample {\sc Magma} code computes \(D_{J,J}\) and the types of \(K\) for all such \(x\):
 
\begin{verbatim}
W0:=CoxeterGroup(GrpFPCox, "E7"); J:={1..7} diff {4};
W,phi := CoxeterGroup(GrpPermCox, W0); R:=RootSystem(W); 
DJJ,_:=Transversal(W0, J, J); 
phiDJJ:=[phi(x): x in DJJ | Order(phi(x)) gt 2]; 
print "number of non-self-paired orbits:", #phiDJJ;  
list:=[]; Re:={}; 
for idw in [1..#phiDJJ] do
  w:=phiDJJ[idw]; K:=J meet J^(w^(-1)); 
  RK:=sub<R|K>; RK_name:=CartanName(RK);
  Append(~list,<K,RK_name>); 
  if not exists{i: i in Re | IsIsomorphic(i,RK)} then
	   Include(~Re,RK);
  end if; end for; 
[CartanName(i): i in Re]; list;  
\end{verbatim}
The computation shows that \(D_{J,J}\) contains \(92\) elements \(x\) with \(|x| > 2\). Across these \(92\) elements, the type of \(K\) is one of the following:
\[
A_1,\; A_1A_1,\; A_1A_1A_1,\; A_1A_1A_1A_1,\; A_1A_2,\; A_1A_1A_2.
\]

Table~\ref{tab:typeofK} lists, for a fixed subset \(J \subset \Pi\) under consideration,  all possible Dynkin types of \(K = J \cap J^{x^{-1}}\) arising from elements \(x \in D_{J,J}\) with \(|x| > 2\). 
For each row, we also record the type of $J$, the number of such elements \(x\), and a prime \(r\) dividing \(|M_J|\) but not \(|M_K|\). 
Such primes \(r\) are obtained directly by inspecting the order formulae for groups of Lie type and applying Lemma~\ref{existppd} on primitive prime divisors.

\begin{table}[h]
\centering
\caption{Types of \(K = J \cap J^{x^{-1}}\) for all $x\in D_{J,J}$ with $|x|>2$, and primes \(r\) dividing \(|M_J|\) but not \(|M_K|\)}\label{tab:typeofK}
\begin{tabular}{lllllll}

\toprule
 \(L\)  & \(J\) & number of \(x\) & type of \(J\) & type of \(K\) & prime \(r\) \\
\midrule
 \(E_7\) & \(\Pi\setminus\{\alpha_3\}\) & 8 & \(A_1A_5\) & 
\(A_2A_1\), \(A_3A_1\) & 
\(r\in\ppd(p,5f)\) \\
 \(E_7\) & \(\Pi\setminus\{\alpha_4\}\) & 92 & \(A_1A_2A_3\) & 
\(A_2A_1A_1\), \(A_1A_1A_1A_1\), \(A_2A_1\),  & 
\(r\in\ppd(p,4f)\) \\
  &  &   &  &  \(A_1A_1A_1\), \(A_1A_1\), \(A_1\) &  \\
 \(E_7\) & \(\Pi\setminus\{\alpha_5\}\) & 20 & \(A_2A_4\) & 
\(A_2A_1A_1\), \(A_2A_1\), \(A_1A_1A_1\), \(A_1A_1\) & 
\(r\in\ppd(p,5f)\) \\
 \(E_7\) & \(\Pi\setminus\{\alpha_6\}\) & 2 & \(A_1D_5\) & 
\(A_3A_1\) & 
\(r\in\ppd(p,8f)\) \\

 \(E_8\) & \(\Pi\setminus\{\alpha_2\}\) & 10 & \(A_7\) & 
\(A_4A_1\), \(A_3A_2\), \(A_2A_2\) & 
\(r\in\ppd(p,8f)\) \\
 \(E_8\) & \(\Pi\setminus\{\alpha_3\}\) & 76 & \(A_1A_6\) & 
\(A_4A_1\), \(A_4\), \(A_3A_1A_1\), & 
\(r\in\ppd(p,7f)\) \\
&&&& \(A_3A_1\), \(A_2A_2\), \(A_2A_1A_1\),& \\
&&&&\(A_2A_1\), \(A_1A_1A_1A_1\), \(A_1A_1A_1\) & 
 \\
 \(E_8\) & \(\Pi\setminus\{\alpha_4\}\) & 1208 & \(A_4A_2A_1\) & 
\(A_3A_1A_1\), \(A_3A_1\), \(A_3\), \(A_2A_2A_1A_1\), & 
\(r\in\ppd(p,5f)\) \\
&&&& \(A_2A_2A_1\), \(A_2A_2\), \(A_2A_1A_1A_1\), & \\
&&&&  \(A_2A_1A_1\), \(A_2A_1\), \(A_2\), \(A_1A_1A_1A_1\),  &\\&&&& \(A_1A_1A_1\), \(A_1A_1\), \(A_1\) & 
  \\
 \(E_8\) & \(\Pi\setminus\{\alpha_5\}\) & 412 & \(A_4A_3\) & 
\(A_3A_3\), \(A_3A_2\), \(A_3A_1A_1\),  & \(r\in\ppd(p,5f)\) \\
&&&& \(A_3A_1\), \(A_2A_2\), \(A_2A_2A_1\), & \\
&&&& \(A_2A_1A_1\), \(A_2A_1\), \(A_1A_1A_1A_1\), & \\
&&&&  \(A_1A_1A_1\), \(A_1A_1\), \(A_1\) & 
\\
 \(E_8\) & \(\Pi\setminus\{\alpha_6\}\) & 68 & \(D_5A_2\) & 
\(D_4A_1\), \(A_4A_1\), \(A_3A_1\), & \(r\in\ppd(p,8f)\)\\
&&&&  \(A_3\), \(A_2A_1A_1A_1\),\(A_2A_1A_1\), & \\
&&&&  \(A_2A_1\), \(A_1A_1A_1A_1\), \(A_1A_1A_1\) & 
 \\
 \(E_8\) & \(\Pi\setminus\{\alpha_7\}\) & 8 &\(E_6A_1\) & 
\(D_5\), \(A_4A_1\) & 
\(r\in\ppd(p,12f)\) \\
\bottomrule
\end{tabular}
\end{table}

\begin{lemma}\label{lm:para}
Suppose that Hypothesis~\ref{hy:1} holds. Then \(G_v\) is not a maximal parabolic subgroup of \(G\).
\end{lemma}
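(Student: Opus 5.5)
We argue by contradiction, using the homogeneous factorisation of Lemma~\ref{pro:homofac} together with a primitive prime divisor count. Assume \(G_v\) is maximal parabolic. Since \(L\) is vertex-transitive, Lemma~\ref{pro:HsMs-1} shows that \(\Gamma\) is \((L,1)\)-arc-transitive. As explained before the statement, \(L_v=P_J\) and \(L_vgL_v=P_J\dot{x}P_J\) with \(x\in D_{J,J}\) and \(|x|>2\). In particular \((L,J)\) is one of the rows of Table~\ref{tab:typeofK}, and \(L_{vw}=L_v\cap L_v^{g}\) is conjugate in \(L_v\) to \(P_J\cap P_J^{\dot{x}}=Q:L_K\), where \(K=J\cap J^{x^{-1}}\) has one of the listed types.

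Since \(s\geqslant 2\), Lemma~\ref{pro:homofac} gives \(G_v=G_{uv}G_{vw}\), and \(G_{uv}=G_{vw}^{g^{-1}}\) is isomorphic to \(G_{vw}\). Hence
\[
|G_v|\,|G_v\cap G_{vw}^{g^{-1}}|=|G_{vw}|^2 .
\]
Intersecting with \(L\) and comparing \(r\)-parts, with \(r\) the prime in the last column of Table~\ref{tab:typeofK}, we obtain
\[
|L_v|_r\leqslant |G_v|_r\leqslant |G_{vw}|_r^2/|G_{uv}\cap G_{vw}|_r\leqslant |L_{vw}|_r^2\,|G/L|_r^2 .
\]

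First I would check that \(r\nmid |\Out(L)|\). The prime \(r\) lies in \(\ppd(p,kf)\) with \(k\in\{4,5,7,8,12\}\), so \(r>kf\geqslant 4f\). On the other hand, \(|\Out(L)|\) divides \(2f\), so \(|\Out(L)|_r=1\) and \(|G/L|_r=1\). Thus \(|G_v|_r=|L_v|_r\), and likewise for the arc stabilisers. We also need \(r\neq p\), which is automatic.

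Next I would compare \(r\)-parts directly. The unipotent radicals are \(p\)-groups, and the torus \(B\cap N\) has order dividing \((q-1)^{\ell}\), which is coprime to \(r\). So \(|L_v|_r=|M_J|_r\) and \(|L_{vw}|_r=|M_K|_r\). By the choice of \(r\) in Table~\ref{tab:typeofK}, \(r\) divides \(|M_J|\) but not \(|M_K|\). Hence \(|L_{vw}|_r=1\) while \(|L_v|_r>1\).

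This contradicts the factorisation. Indeed, \(L_v\) is normal in \(G_v\) with \(r\nmid|G_v:L_v|\), so a Sylow \(r\)-subgroup of \(G_v\) lies in \(L_v\). But \(|G_{uv}|_r=|G_{vw}|_r=|L_{vw}|_r\cdot|G_{vw}:L_{vw}|_r=1\), since \(G_{vw}/L_{vw}\) embeds in \(G/L\). So \(G_{uv}G_{vw}\) has order coprime to \(r\), whereas \(r\) divides \(|G_v|\), which is impossible.

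The main obstacle is justifying the entries of the last column of Table~\ref{tab:typeofK}. For each row one must check that some \(\Phi_{k}(q)\) divides \(|M_J|\) but no factor \(q^{i}-1\) of \(|M_K|\) with \(k\nmid i\) is divisible by \(r\). For example, for \(J=A_1A_5\) the factor \(q^5-1\) appears in \(|\SL_6(q)|\), while types \(A_3A_1\) and smaller involve only \(q^i-1\) with \(i\leqslant 4\). For \(E_6A_1\) the factor \(q^{12}-1\) appears in \(|E_6(q)|\), while \(D_5\) involves exponents \(2,4,5,6,8\). For \(D_5A_2\) versus \(D_4A_1\), \(\Phi_8\) appears in \(|D_5(q)|\) and in \(|D_4(q)|\) only through \(q^4+1\)\,?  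One must check this row carefully: \(|D_4(q)|\) contains \(q^8-1\) via \((q^4-1)\) and \(q^{6}\)-type factors. In fact \(|\mathrm{Spin}_8^+(q)|\) has factors \(q^2-1,q^4-1,q^6-1,q^4-1\), so there is no \(\Phi_8\), and the row is fine.

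The exceptional Zsigmondy case \((p,kf)=(2,6)\) does not arise, since \(k\neq 6\). With these checks, every row yields the contradiction above, and \(G_v\) cannot be a maximal parabolic subgroup.
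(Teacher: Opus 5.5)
Your proposal is correct and is essentially the paper's argument. Both use the homogeneous factorisation $G_v=G_{uv}G_{vw}$, the primitive prime divisor $r$ from Table~\ref{tab:typeofK} with $r>4f$ (so $r\nmid|\Out(L)|$), and the bound $|L_{vw}|_r\leqslant|Q|_r\,|M_K|_r\,|B\cap N|_r=1$, which contradicts $r\mid|L_v|$.

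There is one slip. Your displayed identity should read $|G_v|\,|G_{uv}\cap G_{vw}|=|G_{vw}|^2$, since $G_v\cap G_{vw}^{g^{-1}}=G_{uv}$. The corrected form is in fact what you use in the next line, so the argument is unaffected.
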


\begin{proof}
    Since $\Gamma$ is $(G,2)$-arc-transitive, we have the homogeneous factorization
\( 
G_v=G_{uv}G_{vw}
\). 
Hence \(|G_v|\) divides \(|G_{vw}|^2\).
For \(r\) recorded in Table \ref{tab:typeofK}, since \(r \mid |M_J|\) and therefore \(r \mid |G_v|\), it follows that \(|G_{vw}|\) is divisible by \(r\).
Observe that
\[
G_{vw}/L_{vw} = G_{vw}/(G_{vw} \cap L) \cong G_{vw}L/L \leqslant G/L \leqslant \mathrm{Out}(L),
\]
so $|G_{vw}|_r \leqslant |L_{vw}|_r |\mathrm{Out}(L)|$.
By Table~\ref{tab:typeofK} and Lemma~\ref{sizeppd}, \(r > 4f\).
From \(|\mathrm{Out}(L)| \mid 2f\) we have \(|\mathrm{Out}(L)|_r = 1\), which forces \(|L_{vw}|_r > 1\).
However, this leads to a contradiction since $|L_{vw}|=|Q|\cdot |L_K|$, where $|Q|$ is a $p$-power, and $|L_K|=|M_K(B\cap N)|$ with $|M_K|_r=1$ and $|B\cap N|_r=(q-1)_r=1$.

We therefore have proved the following result.
\end{proof}

\subsection{Subgroups of maximal rank}
In this subsection, we will address the maximal subgroups of maximal rank of \(E_7(q)\) and \(E_8(q)\). 

\subsubsection{\texorpdfstring{$L=E_7(q)$}{L=E7(q)}}
Here, we will prove the following result.
\begin{proposition}\label{max rk 7}
    Suppose that Hypothesis \ref{hy:1} holds and \(L=E_7(q)\). Suppose that \(L_v\) is a maximal subgroup of maximal rank. Then \(s\leqslant2\).
\end{proposition}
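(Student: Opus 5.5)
The plan is to go through the list of maximal subgroups of maximal rank of $E_7(q)$ from \cite[Table 5.1]{LSS}, one case at a time. In each case we either contradict one of the necessary conditions in Section 2 or, in the few surviving cases, bound $s$ by $2$ using Lemma~\ref{3ATprime}. Suppose $s\geqslant 3$ and argue for a contradiction. Lemma~\ref{pro:HsMs-1} shows that $\Gamma$ is $(L,2)$-arc-transitive. Lemma~\ref{pro:homofac} then gives a homogeneous factorisation $L_v=L_{uv}L_{vw}$ with $L_{vw}=L_{uv}^g$. By Lemma~\ref{factor}, $g$ normalises no proper nontrivial normal subgroup of $L_v$ (or of $G_v$).

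The maximal rank subgroups of $E_7(q)$ fall into three groups.

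\emph{Torus normalisers.} These are $(q-\varepsilon)^7.W(E_7)$ and the other normalisers of maximal tori listed in the table, for example $(q^2+q+1)^{\ldots}$-type tori extended by $3^{1+2}.\SL_2(3)$-type or Weyl-type groups. Here $L_v=\C_m.\mathcal{O}$ or $\C_m^2.\mathcal{O}$ for a suitable cyclic or homocyclic part. Choose $r\in\ppd(p,ef)$ dividing the torus order, with $e\in\{1,2,3,6,\dots\}$ chosen so that $r\nmid|\mathcal{O}|$. Since $r>ef\geqslant|\Out(L)|_r$-relevant bounds, Lemmas~\ref{lm:m.o} and~\ref{lm:m2.o} give $s\leqslant 2$ directly. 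For small $q$, where Zsigmondy fails or $r$ divides $|W(E_7)|=2^{10}3^45\cdot7$, we replace this by an explicit order computation.

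\emph{Reductive subgroups with insoluble part.} These include $(\SL_2(q)\circ\Omega^+_{12}(q)).d$, $(\SL_8(q)/\ldots).\ldots$, $\SU_8(q)$, $(E_6^{\varepsilon}(q)\times(q-\varepsilon)).\ldots$, $(\SL_2(q)^3\circ\Omega_8^+(q)).\ldots$, $\SL_2(q)^7.\mathrm{PSL}_3(2)$, $\SL_3^\varepsilon(q)^{\ldots}$-type, and ${}^3D_4(q)$- and $\SL_2(q^3)$-type. In these cases $L_v$ has a characteristic normal subgroup built from its quasisimple components $T_1,\dots,T_k$, and $L_v$ permutes them. Consider $\overline{L_v}=L_v/\Rad(L_v)$ and its socle $T_1\times\cdots\times T_k$. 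The homogeneous factorisation $\overline{L_v}=\overline{L_{uv}}\,\overline{L_{vw}}$ has core-free factors, by Lemma~\ref{factor}.
\begin{enumerate}
\item[(1)] If $k=1$, the component is quasisimple, and Lemma~\ref{lm:qsimple} rules out $\Omega_{12}^+$, $\SL_8$, $\SU_8$ and $E_6^\varepsilon$ as socle types, since none of these is in the list \eqref{eq:groupsqs}.
\item[(2)] If there are several non-isomorphic components, each one is normal in $L_v$. Then $g$ must fail to normalise each of them, and we project the factorisation onto each factor. This again reduces to Lemma~\ref{lm:qsimple}, which leaves only $\PSL_2$ or $\Omega_8^+$ pieces.
\item[(3)] If the components are permuted transitively (the cases $\SL_2(q)^7$, $\SL_2(q)^3$ and the like), we use orders. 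A homogeneous factorisation forces $|L_{vw}|_r^2\geqslant|L_v|_r$. On the other hand, core-freeness together with Lemma~\ref{lm:qsimple}(b.3) makes each projection of $L_{vw}$ to $\PSL_2(q)$ lie in a Borel subgroup or a dihedral group $\mathrm{D}_{2(q\pm1)/d}$. Taking $r\in\ppd(p,2f)$ or $r=p$ contradicts this.
\end{enumerate}
Whatever survives this step we bound directly with Lemma~\ref{3ATprime}: take $r=p$, compute $|L_v|_p$, and compare with the largest $p$-part that $L_{uv}$ can have under the constraints above.

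I expect the main obstacle to be the cases with several $\PSL_2(q)$ components, that is, $(\SL_2(q)\circ\Omega_{12}^+)$ when the $\Omega_{12}^+$ part is killed but $\PSL_2(q)$ survives, and $\SL_2(q)^7.\PSL_3(2)$. In these cases Lemma~\ref{lm:qsimple} does not immediately give a contradiction, because $\PSL_2(q)$ genuinely admits factorisations, and the normal-subgroup condition has to be combined with the permutation action. My first attempt there is Lemma~\ref{3ATprime} with $r=p$. Each $\PSL_2(q)$ projection of $L_{uv}$ has $p$-part at most $q$, and for $s\geqslant3$ the permuting part $\PSL_3(2)$ must itself factorise homogeneously, for instance as $7{:}3$ times $\Sy_4$. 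I expect the resulting inequality $|L_v|_p^2\leqslant|L_{uv}|_p^3|\Out(L)|_p$ to fail once the bounded $p$-part of $L_{uv}$ is counted against $q^7\cdot|\PSL_3(2)|_p$. The small-$q$ exceptions I would handle by direct order computations.
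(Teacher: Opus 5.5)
Your handling of the quasisimple cases $\PSL_8^\epsilon(q)$ and $E_6^\epsilon(q)$ via Lemma~\ref{lm:qsimple}(a) is the same as the paper's. However, the proposal breaks down on the torus normaliser and on $\PSL_2(q)^7$.

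\textbf{The torus normaliser.} For $E_7(q)$ the only torus normaliser is $L_v=\C_m^7/d.W(E_7)$ with $m=q-\epsilon$; the $(q^2+\epsilon q+1)$-type normalisers belong to $E_6$ and $E_8$. Lemmas~\ref{lm:m.o} and~\ref{lm:m2.o} apply only to a cyclic or rank-$2$ homocyclic normal subgroup, and writing $\C_m^7=\C_m^2.\C_m^5$ violates $|\mathcal{O}|_r=1$. No order count can replace them. For $r\mid m$ coprime to $|W(E_7)|\,|\Out(L)|$, Lemma~\ref{3ATprime} only demands $|L_{uv}|_r\geqslant m_r^{14/3}$, and a subgroup of $\C_m^7$ can satisfy this. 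The paper needs module structure here:
\begin{enumerate}
\item[(a)] In $\overline{G_v}=G_v/\Rad(G_v)\cong\PSp_6(2)$, both factors must be all of $\PSp_6(2)$. This is a computation over factorisations with equal insoluble composition factors.
\item[(b)] For a suitable odd $r\mid m$, $\PSp_6(2)$ is irreducible on $R=\mathbf{\Omega}_r(M)$, so $G_{uv}\cap R\in\{1,R\}$. The first option violates $|G_v|_r\leqslant|G_{uv}|_r^2$, and the second gives $R^g=R$.
\item[(c)] This forces $m=2^a$, which is an infinite family (Fermat and Mersenne primes, and $q=9$), not a small-$q$ leftover. There $\mathbf{\Omega}_2(M)$ is indecomposable with only the submodules $0<U<R$. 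This yields $|G_{uv}\cap M|\leqslant 2^a$, hence $a\leqslant 2$, and a count of $|G_{uvw}|$ against subgroups of $\PSp_6(2)$ finishes the case.
\end{enumerate}

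\textbf{The case $\PSL_2(q)^7$, and your step (2).} In step (3), Lemma~\ref{lm:qsimple}(b.3) is unavailable because $G_v^{(\infty)}$ is not quasisimple. Since $\PSL_3(2)$ is transitive on the seven components, $\PSL_2(q)^7$ is minimal normal in $\overline{G_v}$. So core-freeness says only that a factor does not contain all of it; it constrains no projection. The truth is the reverse. The factor whose image in $\PSL_3(2)$ has order divisible by $7$ is primitive on the components. Its order lower bound together with \cite[Table 10.3]{transitive} forces every projection to be all of $\PSL_2(q)$. Scott's Lemma then leaves $\overline{M}$ or a diagonal $\PSL_2(q)$, and the diagonal is excluded by its $p$-part.

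Your fallback via Lemma~\ref{3ATprime} at $r=p$ also fails. The bound ``each projection has $p$-part at most $q$'' is trivial and is compatible with $L_{uv}\geqslant M$. Moreover, the images of $L_{uv}$ and $L_{vw}$ in $\PSL_3(2)$ need not be isomorphic; your own example $7{:}3$ and $\Sy_4$ is not. The contradiction must come from Lemma~\ref{factor}. For that you must lift $\overline{M}\leqslant\overline{G_{uv}}$ through the centre $Z=d^3$ to get $M=G_v^{(\infty)}\leqslant G_{uv}$. For odd $q$ this uses irreducibility of $\C_7$ on $Z$ and the fact that all involutions of $M$ lie in $Z$. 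The case $q=3$ needs a separate computation.

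Likewise, in step (2) Lemma~\ref{lm:qsimple} does not apply to a projection. The working argument is as follows:
\begin{enumerate}
\item[(a)] Take a primitive prime divisor $r$ of $|M|$ for the large component $M\in\{\Omega_{12}^+(q),\SL_6^\epsilon(q),{}^3D_4(q),\Omega_8^+(q)\}$, with $r$ coprime to $|\C_{G_v}(M)|\,|\Out(L)|$.
\item[(b)] Then both images in $G_v/\C_{G_v}(M)$ have order divisible by $r$, and \cite{LPS} puts the socle into one of them.
\item[(c)] This gives $M^g=M$, contradicting Lemma~\ref{factor}.
\end{enumerate}
So the $\PSL_2$ pieces you worry about in case (1) never need separate treatment.
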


By \cite[Table 4.1]{Craven}, \(L_{v}\) satisfies one of the cases in Table~\ref{tab:E7maxrk}.

\begin{table}[h]
    \caption{Maximal subgroups of maximal rank of \(E_7(q)\). Notation: $d=(2,q-1)$, $\epsilon$ runs over $\{-1,1\}$, and $e_\epsilon=(3,q-\epsilon)$.}
    \label{tab:E7maxrk}
    \centering
    \begin{tabular}{@{}lll@{}}
\toprule
      Case& \(L_v\)& Remarks \\
      \midrule
      (1)&\(d.(\PSL_{2}(q)\times\POmega^{+}_{12}(q)).d\)& \\
      (2)&\(e_{\epsilon}.(\PSL_{3}^{\epsilon}(q)\times\PSL_6^{\epsilon}(q)).e_{\epsilon}.2\)&\\
      (3)&\(\PSL_{2}(q^{3})\times {}^{3}\!D_{4}(q)).3\)&\\
      (4)&\(d^{2}.(\PSL_{2}(q)^{3}\times\POmega^{+}_{8}(q)).d^{2}.\Sy_{3}\)&\\
      (5)& $\frac{(4,q-\epsilon)}{d}.\PSL_{8}^{\epsilon}(q).\frac{(8,q-\epsilon)}{(4,q-\epsilon)}.2$ &\\
      (6)& \(e_{\epsilon}.(E^{\epsilon}_{6}(q)\times (q-1)/de_{\epsilon}).e_{\epsilon}.2\)&\\
      (7)& \(\PSL_{2}(q^7).7\)&\\
      (8)&\(d^{3}.\PSL_{2}(q)^{7}.d^{3}.\PSL_{3}(2)\)&\(q\geqslant3\)\\
      (9)&\((q-\epsilon)^{7}/d.W(E_{7})\)&\(q\geqslant5\) if \(\epsilon=+1\)\\

      \bottomrule
    \end{tabular} 
\end{table}
\begin{lemma}\label{lm:e7cases1-7}
    Suppose that Hypothesis \ref{hy:1} holds and \(L=E_{7}(q)\). Then cases (1)--(4) in Table \ref{tab:E7maxrk} do not occur.
\end{lemma}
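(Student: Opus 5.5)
Throughout, I argue under Hypothesis~\ref{hy:1}, so $\Gamma$ is $(G,2)$-arc-transitive and, by Lemma~\ref{pro:homofac}, $G_v=G_{uv}G_{vw}$ with $G_{uv}=G_{vw}^{g^{-1}}$. The plan is to pass to the characteristic subgroup $N:=G_v^{(\infty)}$, which is a central product of quasisimple groups. In cases (1)--(4) it is respectively $\SL_2(q)\circ\Omega_{12}^+(q)$, $\SL^\epsilon_3(q)\circ\SL^\epsilon_6(q)$, $\SL_2(q^3)\times{}^3D_4(q)$ and $\SL_2(q)^3\circ\Omega_8^+(q)$, with the degenerate small-$q$ cases (where $\PSL_2(q)$ is soluble) treated separately. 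The key tool is Lemma~\ref{factor}: $g$ normalises no proper nontrivial normal subgroup of $G_v$. Each case contains a distinguished simple factor $T$ whose normal closure in $G_v$ is a proper normal subgroup. For case (1) take $T=\POmega_{12}^+(q)$ (or the $\SL_2$ factor); for case (2) take $\PSL_6^\epsilon$; for case (3) take ${}^3D_4(q)$; for case (4) take $\POmega_8^+(q)$, whose preimage is normal because $\Sy_3$ permutes only the three $\SL_2$ factors.

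The main step is to study the factorisation $\overline{G_v}=\overline{G_{uv}}\,\overline{G_{vw}}$ modulo $\Rad(G_v)$ and then project it onto the largest factor $T$. This gives a factorisation of an almost simple group with socle $T$, such as $\POmega_{12}^+(q).\mathcal{O}$ or $\PSL_6^\epsilon(q).\mathcal{O}$, into two subgroups that are conjugate in $G$ and isomorphic. I would first try a primitive prime divisor argument in the style of Lemma~\ref{lm:para}. Pick $r\in\ppd(p,mf)$, where $m$ is maximal among the cyclotomic degrees of $|T|$: $m=10$ for $\Omega_{12}^+$, $m=6$ or $5$ for $\PSL_6^{\epsilon}$, $m=12$ for ${}^3D_4$ (or $6$ for $\SL_2(q^3)$), and $m=6$ for $\Omega_8^+$. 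Then $r\nmid|\Out(L)|$, and $r$ divides neither $|\Rad(G_v)|$ nor $|G_v:N|$. Hence $r$ divides $|G_{uv}\cap N|$ and $|G_{vw}\cap N|$, and the projections of both factors to each simple factor of $N$ have order divisible by $r$. Combining this with the classification of factorisations of almost simple groups (Liebeck--Praeger--Saxl), both factors must contain $T$ itself or a large subgroup such as $\Omega_{11}(q)$, $\SU_5$ or $\Omega_7(q)$. I expect only a short list of such homogeneous factorisations to survive, for example $\Omega_8^+=\Omega_7\Omega_7$ as in Lemma~\ref{lm:qsimple}(b.2).

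Next I would compare orders to show that $G_{vw}$ must contain the full normal closure $\widehat T$ of $T$ in $G_v$. Since $G_v=G_{uv}G_{vw}$, we have $|G_v|\le |G_{vw}|^2$. The orders of the remaining factors (the $\SL_2$ or $\SL_3^\epsilon$ parts, the $\PSL_2(q^3)$ part, or $\SL_2(q)^3$) are too small relative to $|T|$ for either factor to avoid containing $T$, except in the surviving factorisation families. Once $\widehat T\leqslant G_{vw}$, conjugation gives $\widehat T^{g^{-1}}\leqslant G_{uv}\leqslant G_v$. Because $\widehat T^{g^{-1}}$ is perfect and isomorphic to $\widehat T$, it lies in $N$ and must equal $\widehat T$. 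So $g$ normalises $\widehat T$, contradicting Lemma~\ref{factor}.

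The main obstacle is the surviving genuine factorisations, most notably $\Omega_8^+(q)=\Omega_7(q)\Omega_7(q)$ in case (4) (twisted by triality) and possibly factorisations of $\Omega_{12}^+$ with a $\Omega_{11}$ factor. For case (4) I would use the $\SL_2(q)^3$ part with $\Sy_3$ on top. The projection of $G_{uv}$ and $G_{vw}$ to $\SL_2(q)^3.\Sy_3$ must itself factorise, and the ppd $r\in\ppd(p,2f)$ must divide both factors there. This forces $\SL_2(q)^3$ (a proper normal subgroup) into $G_{vw}$, yielding the same contradiction. Otherwise I would fall back on Lemma~\ref{3ATprime} with a suitable prime.
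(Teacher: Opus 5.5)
Your skeleton is the paper's. You pass to $\overline{G_v}=G_v/\C_{G_v}(M)$, which is almost simple with socle $T$. You take primes $r$ with $|\C_{G_v}(M)|_r=|\Out(L)|_r=1$, so that $|\overline{G_{uv}}|_r=|G_{uv}|_r=|G_{vw}|_r=|\overline{G_{vw}}|_r>1$. You then use \cite{LPS} to put $T$ inside one image, lift to $M$, and get $M^g=M$ from the uniqueness of $M$ in $G_v$, contradicting Lemma~\ref{factor}. Two corrections: the images $\overline{G_{uv}},\overline{G_{vw}}$ need not be isomorphic, since only these $r$-parts transfer; and for $\epsilon=-$ in case (2) the prime must be $r_{10}$, because $r_6$ divides $|\SU_3(q)|$ and $r_5\nmid|\SU_6(q)|$.

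The real problem is that you stop exactly where the work is, and neither remedy you offer for the surviving factorisations works. The fallback to Lemma~\ref{3ATprime} is not available. That lemma needs $s\geqslant 3$, whereas the statement asserts that cases (1)--(4) do not occur under Hypothesis~\ref{hy:1}, i.e.\ already for $s=2$.

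Your worry in case (1) is justified: $\POmega_{12}^+(q)=N_1\cdot\widehat{\mathrm{GU}}_6(q).2$ with both factors divisible by $r_{10}$. The paper's single prime $r_{10}$ does not exclude this either, nor $\PSU_6(2)=\PSU_5(2)\cdot\mathrm{M}_{22}$ in case (2). The missing idea is to run the same argument with several primes at once. The prime $r_8\in\ppd(p,8f)$ divides $|N_1|$ and $|T|$ but neither $|\mathrm{GU}_6(q).2|$ nor $|\C_{G_v}(M)|\,|\Out(L)|$, so it must divide both images as well. No maximal factorisation of $\overline{G_v}$ in \cite{LPS} has both factors divisible by $r_8r_{10}$. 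Alternatively, compare the insoluble composition factors of $G_{uv}\cong G_{vw}$, to which $\C_{G_v}(M)$ contributes at most $\PSL_2(q)$.

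Your treatment of $\Omega_7(q)\cdot\Omega_7(q)^{\tau}$ in case (4) fails at each step.
\begin{enumerate}
\item A prime $r\in\ppd(p,2f)$ divides $q+1$, hence $|\Omega_7(q)|$. So nothing forces $r$ into the projections onto the $\SL_2(q)^3.\Sy_3$ part.
\item Even if it did, $r$-divisibility of both factors does not put the base group into one of them; for example, $\PSL_2(q)\wr\Sy_3=(\mathrm{diag}\,\PSL_2(q)\times\Sy_3)\cdot(1\times\PSL_2(q)^2)$.
\item Having $\SL_2(q)^3\leqslant G_{vw}$ would not give $g$-invariance. The $D_4$-component of $G_v^{(\infty)}$ contains further subgroups of the same isomorphism type (from a $4A_1$ subsystem), so your step ``perfect and isomorphic, hence equal'' breaks down there.
\end{enumerate}
What rules out $\Omega_7\cdot\Omega_7^{\tau}$ is triality. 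The $\Sy_3$ in $L_v$ induces graph automorphisms of order $3$ on $T$, so $\overline{G_v}\nleqslant\mathrm{P\Gamma O}_8^+(q)$, which is exactly the obstruction in Lemma~\ref{lm:qsimple}(b.2). The paper combines this with $J=\{r_3,r_4,r_6\}$ and \cite{LPS} to obtain $T\leqslant\overline{G_{uv}}$.

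Finally, the lift from $T\leqslant\overline{G_{uv}}$ to $M\leqslant G_{uv}$ should go via composition factors rather than your order comparison. Since $T$ is not a section of $\C_{G_v}(M)$, it is a composition factor of $G_{uv}\cap M$, and no proper subgroup of the quasisimple group $M$ has this property. The $q=2$ instances of cases (2) (with $\epsilon=+$) and (4) are settled separately in the paper by \textsc{Magma}.
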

\begin{proof}
 Suppose, for a contradiction, that one of the cases (1)--(4) occurs. If \(q=2\) and one of cases (2) with \(\epsilon=+1\) or (4) occurs, then \(G=L\) and \(G_v=L_v\cong (\PSL_3(2)\times\PSL_6(2)).2\) or \((\PSL_2(2)^3\times\POmega_8^+(2)).\Sy_3\), respectively.  We check by \textsc{Magma} \cite{magma} that in these cases there is no homogeneous factorisation for \(L_v=L_{uv}L_{vw}\), which is a contradiction. Thus, we may assume that if case (2) with \(\epsilon=+1\) or (4) occurs, then \(q\geqslant3\).
 
 Note that \(\ppd(p,fi)\neq\varnothing\) for \(i\in\{3,4,6,10,12\}\). For each of such \(i\), choose \(r_i\in\ppd(p,fi)\). Note that \(G_v\) has a normal subgroup \(M\) such that \(J\subset\pi(M)\). Let \(C=\C_{G_v}(M)\) and \(\overline{H}:=HC/C\) for any subgroup \(H\leqslant G_{v}\). Then \(\overline{G_v}\) is almost simple with socle \(T\) where \((M,J,\N_{\Aut(L)}(C),T)\) as listed in Table \ref{tab:E7iv}.  
 \begin{table}[h]
     \centering
          \caption{Possibilities for \((M,J,\N_{\Aut(L)}(C),T)\) for cases (1)--(5)}
     \label{tab:E7iv}
     \begin{tabular}{@{}lllll@{}}
\toprule
       Case   &M&J&\(\N_{\Aut(L)}(C)\)&\(T\)  \\
       \midrule
       (1) & \(\Omega_{12}^+(q)\)&\(\{r_{10}\}\)  & \(\SL_2(q).\Out(L)\)&\(\POmega_{12}^+(q)\)\\
       (2)& \(\SL_{6}^\epsilon(q)\)&\(\{r_{9-\epsilon}\}\)&\(\SL^\epsilon_3(q).\Out(L)\)&\(\PSL^\epsilon_{9-\epsilon}(q)\)\\
       (3)&\({}^3\!D_4(q)\)&\(\{r_{12}\}\)&\(\PSL_2(q^3).\Out(L)\)&\(\PSL_2(q^3)\)\\
       (4)&\(\Omega_8^+(q)\)&\(\{r_3,r_4,r_6\}\)&\(d^2.\PSL_2(q)^3.\Out(L)\)&\(\POmega_8^+(q)\)\\
       \bottomrule
     \end{tabular}
 \end{table}

For cases (1)--(2), note that for \(r\in J\), since \(|\Out(L)|=(2,q-1)f\) and \(r>3f\), we have \(|\Out(L)|_r=1\) and therefore \(|C|_r=|\N_{\Aut(L)}(C)|_r=1\). Thus, \(|\overline{G_{uv}}|_{r}=|\overline{G_{vw}}|_{r}>1\). Then it follows from \cite[Tables 1--3, 5]{LPS} that at least one of \(\overline{G_{uv}}\) or \(\overline{G_{vw}}\) contains \(T\). Without loss of generality, assume that \(\overline{G_{uv}}\geqslant T\). 

For cases (3), from \cite[Table 5]{LPS} it follows that \(\overline{G_{v}}\) does not admit a core-free factorisation. Thus, \({}^3D_4(q)\cong T\leqslant\overline{G_{uv}}\). 

Now, we consider case (4).  Here \(q\geqslant3\) and so \(r_3\neq r_6\). Note that \(\overline{G_{uv}}\,\overline{G_{vw}}=\overline{G_{v}}\cong \Aut(\POmega^{+}_{8}(q))\). Since  \(|\overline{G_{uv}}|_{r}=|\overline{G_{vw}}|_{r}>1\) for \(r\in J\). By \cite[Table 1]{LPS} we deduce that at least one of \(\overline{G_{uv}}\) or \(\overline{G_{vw}}\) contains \(\POmega^{+}_{8}(q)\). Without loss of generality, assume that \(\overline{G_{uv}}\geqslant T\cong \POmega^{+}_{8}(q)\). 
 
In all cases, \(T\leqslant \overline{G_{uv}}\), it then follows that \(M\trianglelefteq G_{uv}\). Since \(G_{uv}^{g}=G_{vw}\), we deduce that \(M^{g}\) is also a normal subgroup of \(G_{vw}\). However, \(G_v\) contains no other subgroup isomorphic to \(M\). Hence, \(M^{g}=M\), contradicting Lemma~\ref{factor}. Thus, the result is proved.
\end{proof}

\begin{lemma}
 Suppose that Hypothesis \ref{hy:1} holds and \(L=E_{7}(q)\). Then cases (5)--(7) in Table \ref{tab:E7maxrk} do not occur. 
\end{lemma}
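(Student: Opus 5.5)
We follow the scheme of Lemma~\ref{lm:e7cases1-7}: first show that the factor $\overline{G_{uv}}$ must contain the socle of an almost simple quotient of $G_v$. Then deduce that a characteristic insoluble normal subgroup $M$ of $G_v$ is normalised by $g$, which contradicts Lemma~\ref{factor}. In each of cases (5)--(7), $G_v^{(\infty)}$ is quasisimple, namely $\SL^{\epsilon}_8(q)/\text{centre}$, $(E^\epsilon_6(q))_{\mathrm{sc}}$-type, or $\PSL_2(q^7)$. So Lemma~\ref{lm:qsimple} can also be brought in as a shortcut. Since $\Gamma$ is $(G,2)$-arc-transitive, Lemma~\ref{pro:homofac} gives $G_v=G_{uv}G_{vw}$ with $G_{uv}\cong G_{vw}$ (conjugate by $g$).

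\textbf{Case (7).} Here $L_v=\PSL_2(q^7).7$. Let $S$ be the socle of $G_v/\Rad(G_v)$; then $S=\PSL_2(q^7)$. By Lemma~\ref{lm:qsimple}(b), $\overline{G_{uv}}\cap S\leqslant \mathrm D_{2(q^7+1)/d}$ and $\overline{G_{vw}}\cap S\leqslant q^7{:}((q^7-1)/d)$, up to swapping; the small exceptional rows of Table~\ref{tb:exceptionalfacs} cannot occur since $q^7\notin\{7,11,23\}$. These two subgroups have different orders, which contradicts $G_{uv}\cong G_{vw}$. To make this rigorous, take $r\in\ppd(p,14f)$. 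Then $r$ divides $|\overline{G_{uv}}|$ but not $|\overline{G_{vw}}|$, because $r\nmid q^7(q^7-1)$ and $r>14f$ exceeds the relevant outer parts ($|\Out(L)|$ and the $7$). So $G_{uv}\not\cong G_{vw}$, a contradiction.

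\textbf{Cases (5) and (6).} Take $M=\SL^{\epsilon}_8(q)/Z$ in case (5), and $M$ the $E^\epsilon_6(q)$ factor in case (6). Let $C=\C_{G_v}(M)$, so that $\overline{G_v}=G_v/C$ is almost simple with socle $T=\PSL^\epsilon_8(q)$, respectively $E^\epsilon_6(q)$. Choose primes $r$ that divide $|T|$ but not $|C|\cdot|\Out(L)|$:
\begin{itemize}
\item in case (5), $r\in\ppd(p,7f)$, together with $r\in\ppd(p,8f)$ or $\ppd(p,6f)$ as needed for $\epsilon=-1$, and $r\in \ppd(p,14f)$ for the unitary case;
\item in case (6), $r\in\ppd(p,9f)$ or $\ppd(p,18f)$ for $\epsilon=\pm1$, and also $r\in\ppd(p,12f)$ and $\ppd(p,8f)$.
\end{itemize}
All such $r$ exceed $|\Out(L)|$ and do not divide the torus $(q-1)/de_\epsilon$, so $r$ divides both $|\overline{G_{uv}}|$ and $|\overline{G_{vw}}|$. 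For $E_6^\epsilon(q)$, the Liebeck--Praeger--Saxl classification \cite{LPS} (Table 5) shows that an almost simple exceptional group has no core-free factorisation. For $\PSL^\epsilon_8(q)$, \cite[Tables 1--3]{LPS} shows that in every core-free factorisation one factor is a parabolic $P_1$ or $P_7$, a $\Sp_8$-type subgroup, or a field-extension subgroup. In each such factorisation the two factors cannot both be divisible by a $\ppd(p,7f)$ prime and a $\ppd(p,8f)$ prime simultaneously; the unitary case is analogous with $\ppd(p,14f)$ and $\ppd(p,8f)$. Hence $T\leqslant\overline{G_{uv}}$, so $M\trianglelefteq G_{uv}$ and likewise $M^g\trianglelefteq G_{vw}$. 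Since $M$ is the unique subgroup of $G_v$ of its isomorphism type, $M^g=M$, contradicting Lemma~\ref{factor}.

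The main obstacle is the $\PSL^\epsilon_8(q)$ factorisation check, together with small $q$ where Zsigmondy primes fail. For $(p,f)$ with $\ppd(2,6)$ we use the convention $\ppd(2,6)=\{7\}$, and we verify that the chosen primes are distinct and satisfy the size bounds. If some small case ($q=2$, $\epsilon=-1$) resists this argument, we would check for a homogeneous factorisation directly in \textsc{Magma}, as was done in Lemma~\ref{lm:e7cases1-7}.
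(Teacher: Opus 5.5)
Your argument is correct. For case (7) it coincides with the paper's. Lemma~\ref{lm:qsimple}(b) forces the $\PSL_2$-type factorisation, since the rows of Table~\ref{tb:exceptionalfacs} are excluded as $q^7\geqslant 128$. Then a prime $r\in\ppd(p,14f)$, which divides $q^7+1$ but neither $|\overline{G_{vw}}|$ nor $|\Rad(G_v)|$, gives $|G_{vw}|_r=1<|G_{uv}|_r$.

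For cases (5) and (6) you take a genuinely longer route. The paper only observes that $G_v^{(\infty)}$ is quasisimple. Lemma~\ref{lm:qsimple}(a) then confines the socle of $G_v/\Rad(G_v)$ to the list \eqref{eq:groupsqs}, and $\PSL^\epsilon_8(q)$ and $E^\epsilon_6(q)$ are not on it, so both cases are dismissed in one line.

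You mention this shortcut but instead rerun the method of Lemma~\ref{lm:e7cases1-7}. You use that there is no core-free factorisation with socle $E^\epsilon_6(q)$. For $\PSL^\epsilon_8(q)$, every maximal factorisation in \cite{LPS} has one factor of order prime to a $\ppd(p,7f)$ (resp.\ $\ppd(p,14f)$) prime and the other of order prime to a $\ppd(p,8f)$ prime. This also covers the graph-type factorisations involving $\mathrm{Stab}(V_1\oplus V_7)$ and the unitary ones with $N_1$ as a factor. You then derive $M^g=M$, contradicting Lemma~\ref{factor}.

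This is valid on two counts:
\begin{itemize}
\item the three Zsigmondy primes exist for every $q$, so no \textsc{Magma} fallback is needed;
\item $C=\C_{G_v}(M)$ is soluble, so $T\leqslant\overline{G_{uv}}$ really does yield $M\leqslant G_{uv}$.
\end{itemize}
Your route avoids the appeal to Lemma~\ref{lm:qsimple}(a), but it effectively reproves, for these two socles, what that lemma gives for free.

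One wording slip: it is not true that almost simple exceptional groups never have core-free factorisations, since $G_2(3^f)$, $G_2(4)$ and $F_4(2^f)$ occur in \cite[Table 5]{LPS}. You only need this fact for socle $E^\epsilon_6(q)$, and only there does it hold.
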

\begin{proof}
 For these three cases, \(G_{v}^{(\infty)}\) is quasisimple, with insoluble composition factor $\PSL_8^{\epsilon}(q)$, $E_6^{\epsilon}(q)$ and $\mathrm{PSL}_2(q^7)$, respectively.
 According to Lemma~\ref{lm:qsimple}(a), the cases (5) and (6) can not happen.

Assume that case (7) holds.
Let $\overline{G_v}=G_v/\mathrm{R}(G_v) $,  $\overline{G_{uv}}=G_{uv} \mathrm{R}(G_{v})/\mathrm{R}(G_{ v})$ and $\overline{G_{vw}}= \overline{G_{vw}} \mathrm{R}(G_{ v})/\mathrm{R}(G_{v})$ be as in Lemma~\ref{lm:qsimple}(b).
Let $r\in \mathrm{ppd}(p,14f)$. Then $r\mid (q^7+1)$ and $r>14f$.
By Lemma~\ref{lm:qsimple}(b.4), interchanging $\overline{G_{uv}}$ and $\overline{G_{vw}}$ if necessary, we have $ |\overline{G_{uv}}|_r\geqslant r$ and $|\overline{G_{vw}}|_r=1$. 
Since $L_v=\mathrm{PSL}_2(q^7).7$ and $|\mathrm{Out}(L)|=(2,q-1)f$, we have $|R(G)|$ divides $14f$.
Hence $|R(G)|_r=1$, and consequently, $|G_{vw}|_r=1$.
This contradicts the homogeneous factorisation $G_v=G_{uv}G_{vw}$.

Hence, the result is proved.
\end{proof}

\begin{lemma}\label{E7weyl}
  Suppose that Hypothesis \ref{hy:1} holds and \(L=E_{7}(q)\). Then case (8) in Table \ref{tab:E7maxrk} does not occur.
\end{lemma}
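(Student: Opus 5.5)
We need to show case (8), $L_v=d^3.\PSL_2(q)^7.d^3.\PSL_3(2)$ with $q\geqslant3$, cannot occur. Assume it does. Let $M=L_v^{(\infty)}$, a central product of seven copies of $\SL_2(q)$ (with $q\geqslant4$; the case $q=3$ is treated separately below). Let $R=\Rad(M)$, so that $M/R\cong\PSL_2(q)^7$. The group $G_v$ permutes the seven factors transitively, and the induced action on the factors is $\PSL_3(2)$ acting on the seven points of the Fano plane, which is $2$-transitive. Since $G_v/L_v$ embeds in $\Out(L)$, which is abelian of order $(2,q-1)f$ and corresponds to diagonal and field automorphisms, it normalises each factor. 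Hence the permutation image of $G_v$ on the factors is exactly $\PSL_3(2)$.

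The plan is to use the homogeneous factorisation $G_v=G_{uv}G_{vw}$ (Lemma~\ref{pro:homofac}), together with $G_{vw}=G_{uv}^g$, to force $G_{uv}$ to contain a normal subgroup of $G_v$ that $g$ must normalise, and then contradict Lemma~\ref{factor}.

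\textbf{Step 1.} Project the factorisation onto the permutation quotient $\PSL_3(2)$ and onto the base. Choose $r\in\ppd(p,2f)$, which divides $q+1$; such an $r$ exists except when $q+1$ is a power of $2$, a case handled separately with $r=p$. Since $r>2f$, we have $|\Out(L)|_r=1$. We also have $r\neq 7$ or $r\nmid|\PSL_3(2)|$ generically; if $r\in\{3,7\}$, switch to another prime (for example $p$ itself). The $r$-part of $|G_v|$ is then essentially $(q+1)_r^7$, coming from the base. The condition that $|G_v|$ divides $|G_{uv}||G_{vw}|=|G_{uv}|^2$ then gives $|G_{uv}\cap M|_r\geqslant (q+1)_r^{7/2}$, so $G_{uv}\cap M$ has large $r$-part. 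Applying Legendre-type counting to the projections onto the individual factors, $G_{uv}$ must project onto at least four factors with image of order divisible by $r$.

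\textbf{Step 2.} Analyse the possible projections. Let $\mathcal{F}\subseteq\{1,\dots,7\}$ be the set of indices where the projection of $G_{uv}\cap M$ is nonsolvable, i.e.\ where it contains $\PSL_2(q)$, using the subgroup structure of $\PSL_2(q)$; otherwise the image lies in $\mathrm{D}_{2(q\pm1)}$ or in a Borel subgroup. The permutation images $P_1,P_2$ of $G_{uv}$ and $G_{vw}$ satisfy $\PSL_3(2)=P_1P_2$. By the known factorisations of $\PSL_3(2)$, namely $7:3\cdot\Sy_4$ and its conjugates, one of $P_1,P_2$ is transitive on the seven factors.

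\textbf{Step 3.} Pass to composition factors. Counting the multiplicity $\m_{G_v}(\PSL_2(q))=7$, the factorisation forces $\m_{G_{uv}}(\PSL_2(q))+\m_{G_{vw}}(\PSL_2(q))\geqslant 7$. Since $G_{uv}\cong G_{vw}$, each has multiplicity at least $4$. Combined with the transitivity of one $P_i$, which we may conjugate so that it is $P_1$ because the two are isomorphic, we get that $G_{uv}$ contains the full base $M$, as the nonsolvable projections are permuted transitively. Then $M\trianglelefteq G_{uv}$ and $M^g\trianglelefteq G_{vw}$. Since $M=G_v^{(\infty)}$ is characteristic, $M^g=M$, which contradicts Lemma~\ref{factor}.

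\textbf{Main obstacle.} Step 3 is the hardest part. One must rule out subgroups with only partial diagonal factors, such as diagonal $\PSL_2(q)$ inside a product of several factors, which carry fewer composition factors than their projections suggest. Our approach is to count $p$-parts: a diagonal subgroup loses a factor $q(q^2-1)$ for each identified pair. We would show that $|G_{uv}|^2\geqslant|G_v|$ forces at least four independent $\PSL_2(q)$ factors. We would also verify that the transitive permutation image rules out a proper invariant set of independent factors. The small case $q=3$, where $\PSL_2(3)$ is soluble and $L_v=2^3.\PSL_2(3)^7.2^3.\PSL_3(2)$, together with the exceptional primes, should be handled using Lemma~\ref{3ATprime} with $r=3$ and direct order comparison.
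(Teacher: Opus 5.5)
\textbf{Gap 1: the multiplicity inequality in Step~3.} The decisive step is the inequality $\m_{G_{uv}}(\PSL_2(q))+\m_{G_{vw}}(\PSL_2(q))\geqslant\m_{G_v}(\PSL_2(q))$, and it does not follow from $G_v=G_{uv}G_{vw}$. A factorisation controls prime divisors, $\pi(G_v)=\pi(G_{uv})\cup\pi(G_{vw})$, but not composition-factor multiplicities. For instance, $\SL_2(2^f)=\mathrm{D}_{2(q+1)}\,(q{:}(q-1))$ with both factors soluble, and you give no argument specific to $G_v$.

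Your fallback of counting $p$-parts cannot replace it. A product of seven Borel subgroups is soluble yet has $p$-part $q^7$, well above $|G_v|_p^{1/2}$. Your Step~2 dichotomy is also false: $\PSL_2(q)$ has proper nonsoluble subgroups ($\A_5$, $\PSL_2(q_0)$, $\PGL_2(q_0)$) and further soluble ones ($\A_4$, $\Sy_4$).

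What is missing is a proof that each projection $\varphi_i(\overline{G_{uv}}\cap\overline{M})$ onto the $i$-th factor is all of $\PSL_2(q)$. Your Step~1 supplies one ingredient, the prime $r\in\ppd(p,2f)$, but you never combine it with the $p$-part on a single projection. The paper proceeds as follows.
\begin{enumerate}
\item[(a)] Take the factor whose image in $\PSL_3(2)$ has order divisible by $7$; it is then primitive on the seven components, so all projections are conjugate. This choice is without loss of generality because $M\leqslant G_{vw}$ would give $M^{g^{-1}}=M$ just as well. It is not because the two permutation images are isomorphic, which $G_{uv}\cong G_{vw}$ does not imply.
\item[(b)] Since $|\overline{G_{uv}}\cap\overline{M}|$ divides $|\varphi_1(\overline{G_{uv}}\cap\overline{M})|^7$, the order bounds force one projection to have order divisible by $r$ and with large $p$-part. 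In the Mersenne case one uses $2$ and $p$ instead; $r=p$ alone, as you propose, still allows a Borel subgroup.
\item[(c)] The list of subgroups of $\PSL_2(q)$ with such orders then shows the projection is all of $\PSL_2(q)$.
\item[(d)] Only now do Scott's Lemma and primitivity give that $\overline{G_{uv}}\cap\overline{M}$ is either $\overline{M}$ or a diagonal $\PSL_2(q)$. The diagonal is excluded because its $p$-part is $q<q^{3/2}$.
\end{enumerate}
Once $\overline{M}\leqslant\overline{G_{uv}}$, your lift to $M\leqslant G_{uv}$ is fine for $q\geqslant 4$ because $M$ is perfect. This is simpler than the paper's Claim~1, but you should state it.

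\textbf{Gap 2: the case $q=3$.} Lemma~\ref{3ATprime} requires $s\geqslant 3$, whereas the statement asserts that case (8) does not occur under Hypothesis~\ref{hy:1}, that is, already for $s=2$. Even granting $s\geqslant3$, it only gives a lower bound on $|L_{uv}|_3$, which yields no contradiction by itself.

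Here $L_v=2^3.\A_4^7.2^3.\PSL_3(2)$ has $\PSL_3(2)$ as its only nonabelian composition factor, so the projection and Scott's Lemma arguments are unavailable. Moreover $M$ is not perfect, so the derived-subgroup lift also fails. The paper handles this case by an explicit computation of the possible factorisations inside candidate subgroups of $\Sy_4\wr\PSL_3(2)$. This shows that both factors contain $\overline{M}$ and map onto $\PSL_3(2)$. It then lifts to $Z\leqslant G_{uv}\cap G_{vw}$ using the action of $\PSL_3(2)$ on $Z\cong\C_2^3$ (its Claim~1). Your proposal contains no substitute for this.
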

\begin{proof}
Suppose, for a contradiction, that case (8) occurs.
Then $L_v=d^3.\mathrm{PSL}_2(q)^7.d^3.\mathrm{PSL}_3(2)$ and \(G_v=L_v.m\), where $d=(2,q-1)$, $q=p^f\geqslant 3$ and \(m\mid f\).
Now $L_v$ is a subgroup of maximal rank, and it arises from a subsystem $\Delta$ of type $ (A_1)^7$ of $E_7$. 
The subsystem $\Delta$ can be taken to be the one with simple roots $\{ \alpha_2,\alpha_3,\alpha_5,\alpha_7, \alpha_0, \beta_1,  \beta_2\}$, where 
\begin{align*}
 &\alpha_0= 2\alpha_1 +2\alpha_2 +3\alpha_3 +4\alpha_4 +3\alpha_5 +2\alpha_6 + \alpha_7,\\
 &\beta_1=  \alpha_2 +\alpha_3 +2\alpha_4 +2\alpha_5 +2\alpha_6 +\alpha_7 , \\
 &\beta_2=\alpha_2 + \alpha_3 + 2\alpha_4 + \alpha_5.
\end{align*}

Since a subsystem $(A_1)^4$ of $\Delta$ is generated by the four simple roots  $\alpha_2,\alpha_3,\alpha_5,\alpha_7$ of $E_7$, it follows from \cite[Proposition 2.6.2]{CFSG} that  $\mathrm{SL}_2(q)^4<E_7(q)_{\mathrm{sc}}=d.E_7(q)$.
Let $C\cong \mathrm{C}_d$ be the centre of $E_7(q)_{\mathrm{sc}}$ and let $D\cong \mathrm{C}_d^4$ be the centre of the group $\mathrm{SL}_2(q)^4$.
If \(q\) is odd and so \(d=2\), by computation on the Weyl group and root data in \textsc{Magma} \cite{magma}, one can verify that the group $ \mathrm{PSL}_3(2)$ acts primitively on the seven $A_1$ factors and acts irreducibly on $D/C\cong\C_2^3$.  

Let $M $ be the normal subgroup $d^3.\mathrm{PSL}_2(q)^7 $ of $H_v$ and $Z=d^3 $ the centre of $M$ (so $Z=D/C$).  
We note that $Z$ is normal in $H_v$ by analysing diagonal and field automorphisms of $L$ contained in $N_{\mathrm{Aut}(L)}(L_v)$ using~\cite[Proposition 2.6.2 and Theorem 2.5.1]{CFSG}. 
Thus, $G_v/\C_{G_v}(Z)\cong \mathrm{PSL}_3(2)$ if $Z>1$.

Denote by \(\overline{A}:=AZ/Z\) for any subgroup \(A\leqslant G_v\). Then \(\overline{G_v}\cong \PSL_2(q)^7.d^3.\PSL_3(2).m\) and \(\overline{M}=\prod_{i=1}^7\overline{M}_i\) where \(\overline{M}_1\cong\cdots\cong\overline{M}_7\cong\PSL_2(q)\). Let \(R\) be the kernel of the induced action of \(\overline{G_v}\) on \(\{\overline{M}_1,\ldots,\overline{M}_7\}\).

\medskip
\textsc{Claim 1.}  Let $A \leqslant G_v$ and suppose that $q$ is odd. If $\overline{A}\geqslant \overline{M}$ and $|\overline{A}\,R/R|_7=7$, then $A \geqslant M$.\quad

Note that \(Z>1\) as \(q\) is odd. Since $\overline{A}\geqslant \overline{M}$, it suffices to show $A \geqslant Z$. 
Suppose, for a contradiction, that $A \cap Z <Z$. 
First, we assume $A\cap Z>1$. Note that \(\overline{A}\,R/R\leqslant \overline{G_v}R/R\cong\PSL_3(2)\). Since $|\overline{A}\,R/R|_7=7$, we deduce that \(\C_7\leqslant \overline{A}\,R/R\). By \textsc{Magma} \cite{magma}
computation, we see that $\mathrm{C}_7$ is an irreducible subgroup of $\mathrm{PSL}_3(2)$ acting on $Z$. This implies that  $A\cap Z=Z$, contradicting the assumption that \(A\cap Z<Z\).
Now, $A\cap Z=1$. Since $ZA/Z=\overline{A}\geqslant \overline{M}=M/Z$, we have $  ZA \geqslant M$, and so $M=Z(A\cap M)=Z\times (A\cap M)$.
Note that $M$ is a central product of seven components $\mathrm{SL}_2(q)$ and that $\mathrm{SL}_2(q)$ has a unique involution as \(q\) is odd.
Thus, all involutions of $M$ are contained in $Z$. However, $M= Z\times (A\cap M)$ implies that $M$ contains an involution not in $Z$, a contradiction.
Therefore, Claim 1 holds.

\smallskip

Next, we divide our analysis into two cases: $q=3$ and $q>3$.

\noindent{\bf Case 1.} Suppose that $q=3$. 

Note that $\mathrm{PGL}_2(3)\cong \mathrm{S}_4$.
Now $G=L$ or $G= L.2$, and $\overline{G_v} $ is a subgroup of $\mathrm{S}_4\wr \mathrm{PSL}_3(2)$ with the index $2^4$ or $2^3$, respectively.
Computation in \textsc{Magma} \cite{magma} shows that $\mathrm{S}_4 \wr \mathrm{PSL}_3(2)$ has four classes of subgroups of indices $2^{3}$ or $2^4$ which have epimorphisms to $\mathrm{PSL}_3(2)$, and we take them as candidates for $\overline{G_v}$. 
Note that $\vert L_v\vert=2^{22}\cdot 3^8\cdot 7$.
Thus, $\vert G_{uv}\vert$ is divisible by $2^{11}\cdot 3^4\cdot 7$ when $G=L$, or by $2^{12}\cdot 3^4\cdot 7$ when $G=L.2$. 
In the factorisation $\overline{G_v}=  \overline{G_{uv}}\,\overline{G_{vw}}$, we may assume that $\vert \overline{G_{uv}} \vert_2\geqslant \vert \overline{G_{vw}} \vert_2$.
Then, two factors satisfy the following conditions:  
\begin{itemize}
\item Both $\vert \overline{G_{uv}} \vert$ and $\vert \overline{G_{vw}} \vert$ are divisible by $2^8\cdot 3^4\cdot7$.
\item $\overline{G_{uv}}/\mathbf{O}_2(\overline{G_{uv}})\cong  \overline{G_{vw}}/\mathbf{O}_2(\overline{G_{vw}})$, as $\overline{G_{uv}}\cong G_{uv}/(G_{uv}\cap Z)$, $\mathbf{O}_2(\overline{G_{uv}}) =\mathbf{O}_2(G_{uv})/(G_{uv}\cap Z)$, and $G_{uv}\cong G_{vw}$.
In particular, the Sylow $3$-subgroups of $ \overline{G_{uv}}$ and $\overline{G_{vw}} $ are isomorphic, and the Sylow $2$-subgroups of $\overline{G_{uv}}/\mathbf{O}_2(\overline{G_{uv}}) $ and $\overline{G_{vw}}/\mathbf{O}_2(\overline{G_{vw}}) $ are isomorphic.  
\item $\vert \overline{G_{uv}} \vert_2 \leqslant 2^3\cdot \vert \overline{G_{vw}} \vert_2$. 
\end{itemize}    
Computation in {\sc Magma} shows that, for all possible factorisations $\overline{G_v}= \overline{G_{uv}}\,\overline{G_{vw}}$, we find that both  $\overline{G_{uv}}$ and $\overline{G_{vw}}$ contain  $\overline{M}$ and  have epimorphisms to $\mathrm{PSL}_3(2)$. 
By Claim 1, we conclude that both $G_{uv}$ and $G_{vw}$ contain $Z$. 
It follows from $G_{uv}^g=G_{vw}$ that $Z$ is normalised by $g$, contradicting Lemma~\ref{factor}.

\smallskip
\noindent{\bf Case 2.} Suppose that $q>3$. 

 Note that $\overline{G_{v}}\leqslant \mathrm{P\Gamma L}_2(q)\wr \PSL_3(2)$.
 Recall that $ \overline{G_{v}}/R \cong \mathrm{PSL}_3(2)$.
We consider the factorisation $ \overline{G_{v}}/R=  (\overline{G_{uv}}\,R/R) (\overline{G_{vw}}\,R/R)$. Note that $|\mathrm{PSL}_3(2)|=2^3\cdot 3\cdot 7$.
Without loss of generality, we assume that $|\overline{G_{uv}}\,R/R|_7=7$, and so $\overline{G_{uv}}$ acts primitively on $\{\overline{M}_1,\ldots,\overline{M}_7\}$. 
Let $\varphi_i$ be the projection of $\overline{M}$ to $\overline{M}_i$ for each $i\in \{1,\ldots,7\}$.
Then \(\varphi_1(\overline{G_{uv}}\cap \overline{M})\cong\cdots\cong \varphi_7(\overline{G_{uv}}\cap \overline{M})\) and so 
\begin{equation}\label{eq:7A1-0}
 \vert \overline{G_{uv}} \cap \overline{M} \vert \text{  divides } \vert \varphi_1(\overline{G_{uv}}\cap \overline{M})\vert^7. 
 \end{equation}
Let $r$ be any prime. Then 
\[
\left(\frac{|\overline{G_{uv}}| }{|\overline{G_{uv}} \cap \overline{M}|}\right)_r=\left(\frac{|\overline{G_{uv}}\, \overline{M}|}{|\overline{M}|}\right)_r \leqslant  \left(\frac{|\overline{G_{v}} |}{|\overline{M}|}\right)_r \leqslant   \left(\frac{|L_{v}|\cdot |\mathrm{Out}(L)|}{|\overline{M}|}\right)_r=\left(d^7\cdot 2^3\cdot 3\cdot 7\cdot f\right)_r.
\]
Moreover, from $\vert G_{v}\vert$ dividing $\vert G_{uv}\vert^2 $, we have
\[
|\overline{G_{uv}}|_r=\frac{|G_{uv} |_r}{|G_{uv}\cap Z|_r} \geqslant \frac{|G_v|_r^{1/2}}{|Z|_r}\geqslant   |L_v|_r^{1/2} d^{-3}_r=|\mathrm{PSL}_2(q)|_r^{7/2}(2^3\cdot 3\cdot 7)^{1/2}_r.
\]
Combined with these two inequalities, we have
\begin{equation}\label{eq:7A1-1}
|\overline{G_{uv}} \cap \overline{M}|_r\geqslant \left(\frac{ |\mathrm{PSL}_2(q)| ^{7/2} }{d^7\cdot f \cdot (2^3\cdot 3\cdot 7)^{1/2}}\right)_r. 
\end{equation}
In particular, since $f_p\leqslant q^{1/p} \leqslant q^{1/2}$ (see~\cite[Lemma 2.2]{ex1}), it follows from Eq.~\eqref{eq:7A1-1} that 
\begin{equation}\label{eq:7A1-2}
|\overline{G_{uv}}|_p\geqslant 
\begin{cases}
2^{(7f-f-3)/2}\geqslant q^{\frac{3}{2}}, & \text{ if $p=2$},\\
q^{5/2}, &\text{ if $p>2$}.
\end{cases}
\end{equation}
Consequently,   $|\overline{G_{uv}}\cap \overline{M}|$ is divisible by $p$ by Eq.~\eqref{eq:7A1-0}.

 \medskip
\textsc{ Claim~2:} $\varphi_1(\overline{G_{uv}}\cap \overline{M})= \overline{M}_1$. 
 
Suppose that $q=p=2^m-1$ is a Mersenne prime. Then $m\geqslant 3$ as \(q\ne 3\). 
By Eq.~\eqref{eq:7A1-1}, $| \overline{G_{uv}}\cap \overline{M} |_2\geqslant 2^{(7m-11)/2}>1$. Thus, $2,p\in\pi( \varphi_1(\overline{G_{uv}}\cap \overline{M}))$.
According to~\cite[Table~10.3]{transitive},  $\varphi_1(\overline{G_{uv}} \cap \overline{M})=\overline{M}_1$.

Suppose that $q$ is not a Mersenne prime. If $q=4$, $8$ or $16$, then by Eq.~\eqref{eq:7A1-1} we have $\pi(\varphi_1(\overline{L_{uv}}\cap \overline{M}))=\{2,3,5\}$, $\{2,3,7\}$  or $\{2,3,5,17\}$, respectively. A \textsc{Magma} \cite{magma} computation shows that $\varphi_1(\overline{L_{uv}}\cap \overline{M})=\overline{M}_1$.
For other $q$,  we have \(\ppd(p,2t)\neq \varnothing\). Let $r \in \mathrm{ppd}(p,2t)$ and \((p^{2t}-1)_r=r^a\). Then $r>2t$, and so $| \overline{G_{uv}}\cap \overline{M}|_r\geqslant 2^{(7a-1 )/2}>1$.
Now, $r,p\in \pi( \phi_1(\overline{G_{uv}}\cap \overline{M}))$ and by~\cite[Table~10.3]{transitive},  $\varphi_1(\overline{G_{uv}} \cap \overline{M} )=\overline{M}_1$. 
Hence, Claim~2 is proved.

\medskip
 
Since $ \overline{G_{uv}} $ acts primitively on $\{\overline{M}_1,\ldots,\overline{M}_7\}$, we conclude from Scott's Lemma \cite[Theorem 4.16]{csaba} that $\overline{G_{uv}} \cap \overline{M}$ is either  $\overline{M}\cong \mathrm{PSL}_2(q)^7$, or a diagonal group isomorphic to $\mathrm{PSL}_2(q)$.
The case  $\overline{L_{uv}}\cap \overline{M} \cong \PSL_2(q)$ contradicts  Eq.~\eqref{eq:7A1-2}.
Therefore, $\overline{G_{uv}} \cap \overline{M}=\overline{M}$.
Now, $\overline{M}\leqslant \overline{G_{uv}}$. Then Claim 1 implies $M\leqslant G_{uv}$. Since $M=G_v^{(\infty)}$, it follows from $G_{uv}^g=G_{vw}$ that $M^g=M$,  which is a contradiction to Lemma \ref{factor}. 
Thus, the lemma is proved. 
\end{proof}

\begin{lemma}\label{lm:e7-qpm1}
  Suppose that Hypothesis \ref{hy:1} holds and \(L=E_{7}(q)\). Then case (9) does not occur.
\end{lemma}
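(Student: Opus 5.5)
We argue by contradiction, assuming case (9) holds: $L_v=(q-\epsilon)^7/d.W(E_7)$ and $G_v=L_v.m$ with $m$ dividing $|\Out(L)|=df$. The plan is to use Lemma~\ref{lm:m2.o}, or a variant of it, on the normal torus $T_0=(q-\epsilon)^7/d$. Recall $|W(E_7)|=2^{10}\cdot 3^4\cdot 5\cdot 7$. We look for a prime $r$ dividing $q-\epsilon$ with $r\geqslant 11$ and $r\nmid f$. Such an $r$ satisfies $|W(E_7)|_r=1$ and $|G/L|_r=1<|T_0|_r$.

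Given such an $r$, we argue directly, in the spirit of Lemma~\ref{lm:m.o}. The Sylow $r$-subgroup $R$ of $G_v$ lies in $T_0$ and is normal in $G_v$; it is homocyclic of rank $7$. From $G_v=G_{uv}G_{vw}$ we get $|G_v|_r\leqslant |G_{uv}|_r\,|G_{vw}|_r$, and $G_{uv}\cong G_{vw}$, so $|G_{uv}|_r\geqslant |R|^{1/2}$. The $r$-parts of the arc stabilisers, $G_{uv}\cap R$ and $G_{vw}\cap R=(G_{uv}\cap R)^g$, are the unique Sylow $r$-subgroups of $G_{uv}$ and $G_{vw}$. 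We then iterate along the digraph, using $s\geqslant 3$ via Lemma~\ref{3ATprime}: it gives $|L_v|_r^2\leqslant |L_{uv}|_r^3$. On the other hand $|L_v|_r=|L_{uv}|_r\,|L_{vw}|_r/|L_{uvw}|_r$ and $|L_{uvw}|_r\geqslant |L_{uv}|_r^2/|L_v|_r$. Chasing the resulting contradiction will likely need the $\Omega_r$ layer $\mathbf{\Omega}_r(R)\cong \C_r^7$, on which $W(E_7)$ acts irreducibly (as $-1\in W(E_7)$ and the reflection module is irreducible mod $r$ for $r\geqslant 11$). The subgroup $\mathbf{\Omega}_r(G_{uv}\cap R)$ is a nonzero proper submodule for $G_{uv}$ of dimension at least $4$. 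The key point to exploit is that $\mathbf{\Omega}_r(G_{uv}\cap R)$ and $\mathbf{\Omega}_r(G_{vw}\cap R)$ must differ by Lemma~\ref{factor}, since otherwise $g$ normalises a proper normal subgroup of $G_v$. I expect the cleanest route is to show $G_{uv}\geqslant \mathbf{\Omega}_r(R)$ unless the projection of $G_{uv}$ to $W(E_7)$ is small. Homogeneity of the factorisation forces both projections to be large, since their product must be $W(E_7)$, and the maximal factorisations of $W(E_7)=2\times \mathrm{Sp}_6(2)$ are very restricted (by \cite{LPS}). So one factor contains a subgroup acting irreducibly on $\C_r^7$, which yields a characteristic subgroup normalised by $g$, contradicting Lemma~\ref{factor}.

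The main obstacle is the case where no such large prime $r$ exists, i.e.\ $q-\epsilon$ has all prime divisors in $\{2,3,5,7\}$ or dividing $f$. Then I would switch to a primitive prime divisor argument on $W(E_7)$ itself. Here $G_v/\Rad(G_v)$ has socle $\mathrm{Sp}_6(2)$, and the factorisation $\overline{G_v}=\overline{G_{uv}}\,\overline{G_{vw}}$ projects to $\mathrm{Sp}_6(2)$ or $\mathrm{Sp}_6(2)\times 2$. The factorisations of $\mathrm{Sp}_6(2)$ have a factor containing $\mathrm{Sp}_6(2)$ or one of $G_2(2)'$, $\mathrm{O}_6^\pm(2)$, $\mathrm{S}_8$, $\mathrm{U}_3(3)$ paired with specific partners. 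Homogeneity requires $|\overline{G_{uv}}|_7=|\overline{G_{vw}}|_7$ and $|\overline{G_{uv}}|_5=|\overline{G_{vw}}|_5$ up to the torus contribution. Since $7$ and $5$ also divide $|T_0|$ only in restricted situations, a careful prime-by-prime count should show that both factors must cover $\mathrm{Sp}_6(2)$. Then both contain $G_v^{(\infty)}$ modulo the torus, and again $g$ normalises $\mathrm{O}_\infty$-type normal subgroups, contradicting Lemma~\ref{factor}.

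Small $q$, namely $q\leqslant 9$ with both signs where $q-\epsilon$ is a $\{2,3,5,7\}$-number, I would treat with explicit \textsc{Magma} checks for homogeneous factorisations of $\overline{G_v}$, as in Case~1 of Lemma~\ref{E7weyl}.
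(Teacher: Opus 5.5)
There is a genuine gap, and it sits in what you call the main obstacle. In Case B you aim to show that both $\overline{G_{uv}}$ and $\overline{G_{vw}}$ cover $\PSp_6(2)$, and then conclude that $g$ normalises a normal subgroup of $G_v$. The second step does not follow. $G_{uv}\Rad(G_v)=G_v$ is compatible with $G_{uv}$ meeting the torus $M=G_v\cap\C_m^7$ (where $m=q-\epsilon$) in a small subgroup, and then $G_{uv}$ need contain no nontrivial normal subgroup of $G_v$.

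In the paper, this covering statement is the \emph{starting point}, not the conclusion. It comes from a {\sc Magma} check of all factorisations of $\PSp_6(2)$ whose two factors have the same insoluble composition factors. That invariant, rather than maximal factorisations from \cite{LPS}, is what $G_{uv}\cong G_{vw}$ actually controls: the images of $G_{uv}$ and $G_{vw}$ in $G_v/M$ need not be isomorphic.

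The contradiction must then come from the torus. Let $r\mid m$ be an odd prime with $r\nmid|\Out(L)|$. Then $\PSp_6(2)$ is irreducible on $R=\mathbf{\Omega}_r(M)\cong\C_r^7$, so $G_{uv}\cap R\in\{1,R\}$. If $G_{uv}\cap R=1$, then $|G_{uv}|_r\leqslant|W(E_7)|_r$, and $|G_v|_r\leqslant|G_{uv}|_r^2$ gives $|W(E_7)|_r\geqslant r^7$, which is false for every odd $r$. If $G_{uv}\cap R=R$, then $R^g=R$, against Lemma~\ref{factor}. So you need neither $r\geqslant 11$ nor a normal Sylow subgroup: work with $\mathbf{\Omega}_r(M)$. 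Everything then reduces to the case $q-\epsilon=2^a$.

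That residual case is not a handful of $q\leqslant 9$. It contains every Fermat prime $q$ with $\epsilon=+1$, every Mersenne prime $q$ with $\epsilon=-1$, and $q=9$. Your $\{2,3,5,7\}$-smooth formulation also contains, e.g., $q=49$ and $q=251$. So small-$q$ computations cannot close it.

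It is also exactly where irreducibility fails. $\mathbf{\Omega}_2(M)\cong\C_2^7$ is a uniserial $\mathbb{F}_2\PSp_6(2)$-module $0<U<\mathbf{\Omega}_2(M)$ with $|U|=2$, so a small $G_{uv}\cap M$ yields no contradiction with Lemma~\ref{factor}. The paper needs a separate argument here:
\begin{enumerate}
\item[(i)] If $|G_{uv}\cap M|>2^a$, then $G_{uv}\cap M$ is non-cyclic and meets $\mathbf{\Omega}_2(M)$ outside $U$. Hence it contains $\mathbf{\Omega}_2(M)$, and one checks that $\mathbf{\Omega}_2(M)^g=\mathbf{\Omega}_2(M)$, a contradiction. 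So $|G_{uv}\cap M|\leqslant 2^a$.
\item[(ii)] Comparing $2$-parts in $|G_v|\leqslant|G_{uv}|^2$ then forces $a\leqslant 2$.
\item[(iii)] Finally, $|G_{uvw}|=|G_{uv}||G_{vw}|/|G_v|$ has odd part divisible by $3^4\cdot5\cdot7$. This forces $\overline{G_{uvw}}=\PSp_6(2)$, whose $2$-part $2^9$ exceeds what the bound on $|G_{uv}|_2$ allows.
\end{enumerate}
Nothing in your proposal plays this role.

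Lastly, Lemma~\ref{3ATprime} assumes $s\geqslant 3$, and Lemma~\ref{lm:m2.o} only concludes $s\leqslant 2$. The statement asserts that case (9) does not occur already for $s\geqslant 2$. Your irreducibility route does not need either lemma, so drop them.
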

\begin{proof}
Suppose, for a contradiction, that case (9) occurs. 
Now \(L_{v}=\C_m^{7}/d.W(E_{7})\), where $d=(2,q-1)$, \(m=q-\epsilon\) with \(\epsilon\in\{1,-1\}\), and $q\geqslant 5$ if $\epsilon=+1$. 
Note that  $W=W(E_7)\cong 2 \times \mathrm{PSp}_6(2)$ with $\vert W\vert= 2^{10} \cdot 3^4 \cdot 5 \cdot 7$ and  $G_v\leqslant \C_m^{7}.W.f$ (see~\cite[Table 5.2]{LSS}). Set  $M=G_{v} \cap \C_m^{7}$. 
Then we have the following useful relation:
\[ G_{uv}/(G_{uv}\cap M)\cong MG_{uv}/M \cong G_v/M \lesssim W.f.\]


Let $\overline{G_{v}}=G_v/\Rad(G_v)$. 
Then $\Soc(\overline{G_{v}})=\mathrm{PSp}_6(2)$.
Since $\mathrm{Out}(\mathrm{PSp}_6(2) )=1$, we have $\overline{G_{v}}=\mathrm{PSp}_6(2) $.
For any subgroup $X$ of $G_v$, set $\overline{X}=X \Rad(G_v)/\Rad(G_v)$.
In the factorisation $\overline{G_{v}}= \overline{G_{uv}}\, \overline{G_{vw}}$, since $G_{uv}\cong G_{vw}$, we have \(\ICF(\overline{G_{uv}})=\ICF(\overline{G_{vw}})\).
We compute all factorisations of $\mathrm{PSp}_6(2)$ in {\sc Magma}\cite{magma} where two factors have the same insoluble composition factor.
The computation shows that the two factors must contain $\mathrm{PSp}_6(2)$. 
Thus, $\overline{G_{uv}}=\overline{G_{vw}}=\overline{G_{v}}$.

\medskip
\textsc{Claim}. \(m=q-\epsilon \) is a power of \(2\).

Suppose, for a contradiction, that \(m\) is not a power of \(2\).
Then $m$ has an odd prime divisor $r>|\mathrm{Out}(L)|=df$; this is true if $f=1$, and for $f\geqslant 2$, we take $r\in  \ppd(p,2f)$.
Let \(R=\mathbf{\Omega}_r(M)\cong \C_r^7\).
By {\sc Magma} \cite{magma} computation (using the same computational method as in \cite[Subsection 2.2 and Lemma 3.31]{ex1}), we conclude that the Weyl subgroup $ \mathrm{PSp}_6(2)$ acts irreducibly on $R$.
Since $\overline{G_{uv}} =\mathrm{PSp}_6(2)$, we conclude that $G_{uv}\cap R=1$ or $R$.  

First, assume that $G_{uv}\cap R=1$. Then $|G_{uv}\cap M|_r=1$. 
It follows from  $f_r=1$ and $r$ is odd that $|G_{uv}|_r =|\mathrm{PSp}_6(2)|_r$.
On the other hand,  $|G_v|_r=|L_v|_r \geqslant  r^7|\mathrm{PSp}_6(2)|_r$. 
Note that $G_v=G_{uv}G_{vw}$, whence $|G_v|_r\leqslant |G_{uv}|_r^2$.
Consequently, $|\mathrm{PSp}_6(2)|_r\geqslant r^7$, which is impossible since $|\mathrm{PSp}_6(2)|=2^{9}\cdot 3^4\cdot 5\cdot 7$.

Assume now that $G_{uv}\cap R=R$ and so $G_{uv}\geqslant R$. 
Note that $R$ is the unique normal subgroup of $G_v$ isomorphic to $\mathrm{C}_r^7$.
Now $G_{vw}=G_{uv}^g\trianglerighteq R^g$.
Since $\overline{G_{vw}}=\mathrm{PSp}_6(2)$, the group $R^g$ is contained in $\Rad(G_{vw})\leqslant M.2.f$, and so $R^g =R$, contradicting Lemma \ref{factor}.
Thus, the Claim is proved.
 
By the above Claim, we let $m=q-\epsilon=2^a$ for some integer \(a\). 
Recall  $M=G_{v} \cap \C_m^{7}\geqslant \C_{2^a}^{7}/2$.
By the condition $q\geqslant 5$ if $\epsilon=+1$, we have $a\geqslant 2$. 
Thus, $\Omega_2(M)\cong \C_{2}^7$.
Set $R=\Omega_2(M)$. 
From the action of $W$ on $M$, we may view $R$ as a $\mathbb{F}_2\mathrm{PSp}_6(2)$-module.
Computation in  {\sc Magma}~\cite{magma} shows that $R$ is reducible with a submodule $U\cong \C_{2} $ while $R$ is not decomposable.
Moreover, the quotient module $R/U\cong \C_{2}^6$ is irreducible.
Therefore, $\langle x^{\mathrm{PSp}_6(2)} \rangle=R$ holds for any $ x\in R\setminus U$. 
Thus, the only submodules of \(\PSp_6(2)\) on \(R\) are \(0,U\) and \(R\).

Suppose that $|G_{uv} \cap M|>2^a$. Then $ G_{uv} \cap R \geqslant \mathrm{C}_2^2 $, and so $G_{uv}\cap R$ contain an element not in $U$.
Since $\overline{G_{uv}}=\mathrm{PSp}_6(2)$, by considering the action of $\mathrm{PSp}_6(2)$ on $R$, we conclude that $R\leqslant G_{uv}$. 
 Now $G_{vw}=G_{uv}^g\trianglerighteq R^g\cong \mathrm{C}_2^7$. 
If $R^g\leqslant M$, then $R^g=R$ as $R=\mathbf{O}_2(M)$, which leads a contradiction to  Lemma~\ref{factor}.
Therefore, $R^g$ is not contained in $M$.
Then 
\[ 
1\neq R^g/(R^g\cap M)\cong R^g(G_{vw}\cap M)/(G_{vw}\cap M) \unlhd G_{vw}/(G_{vw}\cap M)\cong G_{vw}M/M.
\] 
Note that $\PSp_6(2)\leqslant G_{vw}M/M\leqslant W.f$ and so $\mathbf{O}_{2}(G_{vw}M/M)\leqslant \mathrm{C}_2^2$. Since \(R^g/(R^g\cap M)\cong R^gM/M\leqslant\mathbf{O}_2(G_{vw}M/M)\leqslant \C_2^2\), it follows that  $R^g\cap M\geqslant \mathrm{C}_2^5$, which implies $R^g\cap R\geqslant \mathrm{C}_2^5$.
Since $R$ is normal in $G_v$, the group
 $R^g\cap R\trianglelefteq G_{vw}$ and therefore \(R\cap R^g\) is a submodule of \(\overline{G_{vw}}=\PSp_6(2)\) on \(R\). Considering the order of \(R^g\cap R\), we verify that $R^g\cap R=R$ and so \(R^g=R\), contradicting Lemma~\ref{factor}. 
Therefore, $|G_{uv} \cap M| \leqslant 2^a$. This implies that 
 \[2^{7a+10}=|L_v|_2\leqslant|G_v|_2\leqslant|G_{uv}|^2_2\leqslant |G_{uv} \cap M|^2_2\cdot |W|^2_2\cdot f_2=2^{20+2a}.\]

Consequently, $1\leqslant a\leqslant2$ and so \(q-\epsilon=2^a\) for \(a\in\{1,2\}\). Thus, 
\begin{equation}\label{eq:guvw}
 |G_{uvw}|=\frac{|G_{uv}||G_{vw}|}{|G_{v}|}=\frac{2^{20+2a}\cdot 3^8\cdot 5^2\cdot7^2}{2^{7a+10}\cdot 3^4\cdot 5\cdot7}=2^{10-5a}\cdot3^4\cdot 5\cdot7
\end{equation}
and so \(|\overline{G_{uvw}}|\) is divisible by \(3^4\cdot5\cdot7\). By checking \textsc{magma} \cite{magma} all the subgroups of \(\PSp_6(2)\) whose orders are divisible by \(3^4\cdot5\cdot7\), we conclude that \(\overline{G_{uvw}}=\PSp_6(2)\). Thus, \(|G_{uvw}|_2\geqslant |\overline{G_{uvw}}|_2=|\PSp_6(2)|_2=2^9\), contradicting Eq \eqref{eq:guvw} that \(|G_{uvw}|_2\leqslant 2^{10-5a}\leqslant 2^5\). Hence, the result is proved.


\end{proof}

\medskip\noindent
\emph{Proof of Proposition \ref{max rk 7}:} This immediately follows from Lemmas \ref{lm:e7cases1-7}--\ref{lm:e7-qpm1}.
\subsubsection{\texorpdfstring{$L=E_8(q)$}{L=E8(q)}}

In this subsection, we address the cases in which \(L_v\) is a subgroup of maximal rank of \(E_{8}(q)\) and prove the following result. 
\begin{proposition}\label{prop: e8 max rk}
    Suppose that Hypothesis \ref{hy:1} holds with \(L=E_8(q)\). Suppose that \(L_v\) is a maximal subgroup of maximal rank. Then \(s\leqslant2\).
\end{proposition}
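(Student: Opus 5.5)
The plan is to run through the list of maximal subgroups of maximal rank of $E_8(q)$ from \cite[Table 5.1, 5.2]{LSS} (equivalently \cite{Craven}), and show that each case is incompatible with $s\geqslant 2$ under Hypothesis~\ref{hy:1}. As in the $E_7$ case, the list naturally splits into three families. The first contains groups with a unique large normal quasisimple or semisimple factor. These are $d.\POmega_{16}^+(q).d$, $\PSL_2(q)\times E_7(q)$, $e.(\PSL_3^\epsilon(q)\times E_6^\epsilon(q)).e.2$, $\PSL_9^\epsilon(q).e.2$, $\PSU_5(q^2).4$, $\POmega_8^+(q^2)$, $^3D_4(q)^2$, $\PSL_2(q)\times\PSL_2(q^3)\ldots$, and the torus normalisers attached to $\Phi_{15},\Phi_{30},\Phi_{20},\Phi_{24},\Phi_{12}$, and so on. The second contains the wreath-type groups $\PSL_2(q)^8.\AGL_3(2)$, $\PSL_3^\epsilon(q)^4.\GL_2(3)$, $\PSU_3(q^2)^2$, $\POmega_8^+(q)^2$, $\PSL_5^\epsilon(q)^2$, and similar. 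The third consists of the maximal tori normalisers $(q-\epsilon)^8.W(E_8)$.

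For the first family I would copy the argument of Lemma~\ref{lm:e7cases1-7}. Choose a normal subgroup $M$ of $G_v$ whose image modulo its centraliser $C$ is almost simple with socle $T$. Then pick a primitive prime divisor $r\in\ppd(p,if)$ dividing $|T|$ but not $|C|\cdot|\Out(L)|$; such an $r$ exists because $r>if\geqslant |\Out(L)|$ by Lemma~\ref{existppd}. The homogeneous factorisation forces $r$ to divide both factors of $\overline{G_v}=\overline{G_{uv}}\,\overline{G_{vw}}$. Then \cite{LPS} (or Lemma~\ref{lm:qsimple} when $G_v^{(\infty)}$ is quasisimple, e.g. $\PSL_9^\epsilon(q)$, $\POmega_{16}^+(q)$, $\PSU_5(q^2)$, which are not in the list \eqref{eq:groupsqs}) gives $T\leqslant\overline{G_{uv}}$. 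Hence $M\trianglelefteq G_{uv}$ and likewise $M^g\trianglelefteq G_{vw}$. Uniqueness of $M$ in $G_v$ then gives $M^g=M$, contradicting Lemma~\ref{factor}. The cyclic-torus cases such as $\C_{\Phi_{30}(q)}.30$ are handled directly by Lemma~\ref{lm:m.o} or Lemma~\ref{lm:m2.o}, taking $r$ a primitive prime divisor dividing the torus order. Similarly $\PSU_3(q^2)^2$-type or $\C_m^2.\mathcal O$-type cases fall to Lemma~\ref{lm:m2.o}.

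For the wreath-type groups I would imitate Lemma~\ref{E7weyl}. Factor out the centre $Z$ of $M=G_v^{(\infty)}$ and pass to the permutation action on the simple factors (via $\AGL_3(2)$, $\GL_2(3)$, $\mathrm S_2$, etc.). Next, use an order bound of the form \eqref{eq:7A1-1}, coming from $|G_v|\mid|G_{uv}|^2$, to show that one factor, say $\overline{G_{uv}}$, is transitive on the components. The projection of $\overline{G_{uv}}\cap\overline M$ to each component is then a subgroup of $\PSL_2(q)$ (resp.\ $\PSL_3^\epsilon(q)$, etc.) whose order is divisible by $p$ and by a primitive prime divisor. By \cite[Table 10.3]{transitive}, that projection is the whole component. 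Scott's Lemma then yields $\overline{G_{uv}}\geqslant\overline M$, since the diagonal option violates the $p$-part bound. Finally, an irreducibility argument as in Claim 1 lifts this to $M\leqslant G_{uv}$, contradicting Lemma~\ref{factor}. For $\AGL_3(2)$ the normal $2^3$ complicates the transitivity step, so I would use the $7$-part and the affine action. Small $q$ (e.g.\ $q\leqslant 4$) would be settled by \textsc{Magma}.

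The main obstacle is the torus normaliser $(q-\epsilon)^8.W(E_8)$ with $W(E_8)=2.\POmega_8^+(2).2$. I would follow Lemma~\ref{lm:e7-qpm1}. First, use \textsc{Magma} to show that every factorisation of $\overline{G_v}$ with factors having equal insoluble composition factors has both factors containing $\POmega_8^+(2)$. Next, if $q-\epsilon$ has an odd prime $r>|\Out(L)|$, then $\mathbf\Omega_r(M)\cong\C_r^8$ is irreducible, forcing $\Omega_r(M)^g=\Omega_r(M)$ or an $r$-part contradiction. Otherwise $q-\epsilon=2^a$, and the $\mathbb F_2W$-module $\C_2^8$ (irreducible, being the $E_8$ root lattice mod $2$) gives the analogous dichotomy. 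Bounding $|G_{uv}\cap M|$ then yields a $2$-part inequality, and finally I would compare $|G_{uvw}|$ against subgroups of $\POmega_8^+(2)$. The delicate point is the module structure and the handling of the extra $2$ in $W$, so I expect this case to require the most computation.
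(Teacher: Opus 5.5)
Your plan matches the paper for cases (1)--(4), (6)--(8) and (17), and follows the same strategy for (20). The plan as a whole, however, has a real gap. You aim to contradict Hypothesis~\ref{hy:1}, that is $s\geqslant 2$, in \emph{every} case, using only $G_v=G_{uv}G_{vw}$ and $|G_v|\leqslant|G_{uv}|^2$. You never use what $s\geqslant 3$ adds.

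For the cases with few components this information is too weak, and your step ``Scott's Lemma then yields $\overline{G_{uv}}\geqslant\overline M$, since the diagonal option violates the $p$-part bound'' fails.
\begin{itemize}
\item Two components $M_1\times M_2$ (cases (9)--(12) of Table~\ref{tab:E8maximalrank}). Take $G_{uv}$ diagonal, or meeting one $M_i$ in an $r'$-group, with $r$ a primitive prime divisor. Then $|G_{uv}|_r=|M_1|_r$ while $|G_v|_r=|M_1|_r^2$. So $|G_v|_r\leqslant|G_{uv}|_r^2$ holds with equality and gives no contradiction.
\item Four components (case (13)). The option $\overline{G_{uv}}\cap N\cong\PSL_3^\epsilon(q)^2$ permitted by Scott's Lemma is borderline in exactly the same way.
\end{itemize}
The paper instead assumes $s\geqslant 3$, passes to $L_v=L_{uv}L_{vw}$ by Lemma~\ref{pro:HsMs-1}, and applies Lemma~\ref{3ATprime}, $|L_v|_r^2\leqslant|L_{uv}|_r^3|\Out(L)|_r$. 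This turns the equalities above into $|M_1|_r^4\leqslant|M_1|_r^3$.

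The rank-$4$ tori (18) and (19) are omitted from your plan, and Lemmas~\ref{lm:m.o} and~\ref{lm:m2.o} do not cover them. The paper shows by \textsc{Magma} that one factor of $L_v/M$ is irreducible on $\mathbf{\Omega}_r(M)\cong\C_r^4$. Hence $\Gamma$ is not $(L,2)$-arc-transitive, and $s\leqslant 2$.

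These seven cases remain in Table~\ref{tab:Lvcandidate} precisely because $s=2$ cannot be excluded by these methods. In them you can only aim to rule out $s\geqslant3$.

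Case (14) can be excluded outright, but your argument is incomplete there too. For a transitive, imprimitive, soluble factor of $\AGL_3(2)$, Scott's Lemma leaves $\overline{G_{uv}}\cap\overline M\cong\PSL_2(q)^4$. Its $p$-part $q^4$ exceeds the bound $q^3$, so the order argument does not remove it. The paper needs an extra count of the multiplicity of $\PSL_2(q)$ in $G_{uv}\cong G_{vw}$. This uses that the other factor then has order divisible by $7$ and acts primitively on seven of the eight components.

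Several individual assignments are also wrong.
\begin{itemize}
\item Case (5), $d^2.\POmega_8^+(q^2).d^2.(\Sy_3\times 2)$, does not fall to your first-family argument. $\POmega_8^+$ is in the list of Lemma~\ref{lm:qsimple}(a). No primitive prime divisor can force $T\leqslant\overline{G_{uv}}$, since $\POmega_8^+(q^2)=\Omega_7(q^2)\,\Omega_7(q^2)$ with both factors divisible by every prime divisor of $|T|$. The paper uses instead that the $\Sy_3$ induces triality. Thus $\overline{G_v}\not\leqslant\mathrm{P\Gamma O}_8^+(q^2)$, contradicting Lemma~\ref{lm:qsimple}(b.2).
\item Lemma~\ref{lm:m2.o} does not apply to $\PSU_3(q^2)^2.8$, which has no normal $\C_m^2$. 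It does apply to the tori (15) and (16), where it gives $s\leqslant 2$ more cheaply than the paper's irreducibility computations (which exclude those cases entirely).
\item In case (20), your first computational claim is false. $\Omega_8^+(2)=\Sp_6(2)\,\Sp_6(2)$, with intersection $G_2(2)$, gives factorisations with equal insoluble composition factors in which neither factor contains $\POmega_8^+(2)$. The paper allows factors with a normal $\PSp_6(2)$ and checks that at least one factor is still irreducible on $\mathbf{\Omega}_r(M)$, which is all the rest of the argument needs.
\item Your enumeration includes groups not in Table~\ref{tab:E8maximalrank}, such as $\Phi_{20}$- and $\Phi_{24}$-tori and $\PSL_2(q)\times\PSL_2(q^3)$.
\end{itemize}
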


We will use the following notation. Let \(d=(2,q-1)\), \(\epsilon=\pm1\), \(e=(3,q-\epsilon)\), \(t=(q-\epsilon)/e\) and \(h=(5,q-\epsilon)\). By \cite[Tables 5.1 and 5.2]{LSS}, \(L_v\) is one of the following:
\begin{table}[h]
    \centering
    \caption{Maximal subgroups of maximal rank of \(E_8(q)\)} 
    \begin{tabular}{@{}lll@{}}
\toprule
  Case&\(L_v\)       & Remarks \\
  \midrule
  (1)&  $d.\POmega_{16}^+(q).d$     &\\
  (2)&$t.\PSL^\epsilon_{9}(q).e.2$&\\
(3)& \(\SU_{5}(q^2).4\)&\\
(4)&\(\PGU_{5}(q^2).4\)&\\
(5)&\(d^2.\POmega_8^+(q^2).d^2.(\Sy_3\times 2)\)&\\
(6)& \(\PSU_3(q^4).8\)&\\
(7)& $d.(\PSL_2(q)\times E_7(q)).d$&\\
(8)&$e.(\PSL^\epsilon_{3}(q)\times E_6^\epsilon(q)).e.2$& \\
(9)&\(h.(\PSL^\epsilon_{5}(q))^2.h.4\)& \\
(10)&\(d^2.(\POmega_8^+(q))^2.d^2.(\Sy_3\times 2)\)&\\
(11)&\((\PSU_3(q^2))^2.8\)&\\
(12)&\(({}^3D_4(q))^2.6\)&\\
(13)&\(e^2.(\PSL_3^\epsilon(q))^4.e^2.\GL_2(3)\)& \\
(14)&\(d^4.(\PSL_2(q))^8.d^4.\AGL_3(2)\)& \(q>2\)\\
(15)& \((q^4+\epsilon q^3+q^2+\epsilon q+1)^2.(5\times\SL_2(5))\)&\\

(16)& \((q^4-q^2+1)^2.(\C_{12}\circ \mathrm{Q}_8.\mathrm{S}_3)\)&\\
(17)& \((q^8+\epsilon q^7-\epsilon q^5-q^4-\epsilon q^3+\epsilon q+1).\C_{30}\)&\\

(18)&\((q^2+1)^4.(4\circ2^{1+4}).\A_6.2\)&\\
(19)& \((q^2+\epsilon q+1)^4.(2.(3\times\PSU_4(2)))\)& \(q>2\) if \(\epsilon=-1\)\\

(20)& \((q-\epsilon)^8.W(E_8)\)& \(q\geqslant5\) if \(\epsilon=+1\)\\

\bottomrule
    \end{tabular}
    \label{tab:E8maximalrank}
\end{table}

\begin{remark}
In Case (16) of Table~\ref{tab:E8maximalrank}, the structure of $L_v$ in \cite[Table 5.2]{LSS} is given as $(q^4-q^2+1)^2.(\C_{12}\circ\GL_2(3))$.
Using the {\sc Magma} code in \cite[Subsection 2.2]{ex1} one can construct the factor group $L_v/(q^4-q^2+1)^2$. However, computation in {\sc Magma}  shows that the factor group $L_v/(q^4-q^2+1)^2$ contains no normal subgroup isomorphic to $\GL_2(3)$. Rather, it has the structure $\C_{12}\circ (\mathrm{Q}_8.\mathrm{S}_3)$, where the group $\mathrm{Q}_8.\mathrm{S}_3$ is isomorphic to a $\mathcal{C}_6$-subgroup of $\mathrm{SL}_2(7)$ as listed in \cite[Table 8.1]{holt}.
\end{remark}

\begin{lemma}\label{lm:e8case1-6}
    Suppose that Hypothesis \ref{hy:1} holds and \(L=E_8(q)\). Then cases (1)--(6) do not occur.
\end{lemma}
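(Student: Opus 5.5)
Cases (1)--(6) of Table~\ref{tab:E8maximalrank} all have $G_v^{(\infty)}$ quasisimple: the unique insoluble composition factor is $\POmega_{16}^+(q)$, $\PSL_9^\epsilon(q)$, $\PSU_5(q^2)$ (twice), $\POmega_8^+(q^2)$ and $\PSU_3(q^4)$, respectively. So I would treat them exactly as cases (5)--(7) of $E_7(q)$ were treated, using Lemma~\ref{lm:qsimple}. Let $S$ be the socle of $\overline{G_v}=G_v/\Rad(G_v)$. By Lemma~\ref{lm:qsimple}(a), $S$ must lie in the list \eqref{eq:groupsqs}. This immediately excludes cases (1)--(4). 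Indeed, $\POmega_{16}^+(q)$ and $\PSL_9^\epsilon(q)$ are not in the list, and $\PSU_5(q^2)$ is not isomorphic to $\PSU_3(8)$, $\PSU_4(2)$ or any other group there, by order or rank considerations.

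In case (5), $S=\POmega_8^+(q^2)$ does lie in the list, so Lemma~\ref{lm:qsimple}(b.2) applies. It gives $\overline{G_v}\leqslant \mathrm{P\Gamma O}_8^+(q^2)$ and $(\overline{G_{uv}}\cap S,\overline{G_{vw}}\cap S)\cong(\Omega_7(q^2),\Omega_7(q^2))$. The order equation $|S|=|\Omega_7(q^2)|^2/|\Omega_7(q^2)\cap\Omega_7(q^2)^{\sigma}|$ then shows the intersection is $G_2(q^2)$. I would seek a contradiction with a primitive prime divisor. Take $r\in\ppd(p,8f)$, so $r\mid q^4+1$, which divides $|\POmega_8^+(q^2)|$ via the factor $q^8-1$, and $r>8f$. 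Since $|\Omega_7(q^2)|=q^{18}(q^4-1)(q^8-1)(q^{12}-1)/d$, we also get $r\mid|\Omega_7(q^2)|$, so this prime alone does not give a contradiction.

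I would next try the Lemma~\ref{factor} argument. The factorisation $\Omega_7\cdot\Omega_7$ of $\POmega_8^+$ arises from two inequivalent $7$-dimensional representations, interchanged by triality. Since $G_{uv}\cong G_{vw}$ are conjugate via $g$, the element $g$ must normalise $G_v^{(\infty)}=M\cong d^2.\Omega_8^+(q^2)$. This holds because $M$ is characteristic in $G_v$, so $M^g$ is the unique such subgroup of $G_v$, provided $G_{uv}\geqslant M$ or the analogous containment holds for $G_v^{(\infty)}\cap G_{uv}$. Here, however, $G_{uv}$ does not contain $M$. The better route is Lemma~\ref{3ATprime}, applied with $s\geqslant3$ and $r=p$. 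We have $|L_v|_p=q^{24}$ and $|L_{uv}|_p\leqslant q^{18}\cdot|\text{outer part}|_p$. Lemma~\ref{3ATprime} requires $q^{48}\leqslant q^{54}|\Out(L)|_p$, so it gives nothing.

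The strongest input is instead the first statement of Lemma~\ref{lm:qsimple}(b.2). Since $\overline{G_v}\leqslant\mathrm{P\Gamma O}_8^+(q^2)$ must contain elements mapping $\Omega_7$ of one class to the other class, the group $\overline{G_v}$ must induce triality. But $G_v$ contributes only $\Sy_3\times2$ from the $E_8$ structure, and this $\Sy_3$ is exactly triality. So I would examine carefully whether $\overline{G_{uv}}$ and $\overline{G_{vw}}$ can be conjugate in $G$ while not conjugate in $G_v$. I expect this case to be the main obstacle, and I would attempt to contradict it by showing that the $\Sy_3$ part forces $\overline{G_{uv}}\cap S$ to be normalised by some element and hence that $\ICF$ comparisons fail. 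Failing that, I would deduce only $s\leqslant2$ via the analogous bound in the paper's approach.

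In case (6), $S=\PSU_3(q^4)$ is not in the list unless $q^4=8$, which is impossible. So this case is excluded by Lemma~\ref{lm:qsimple}(a).
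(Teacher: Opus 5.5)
Your treatment of cases (1)--(4) and (6) is correct and agrees with the paper. The groups $\POmega_{16}^+(q)$, $\PSL_9^\epsilon(q)$, $\PSU_5(q^2)$ and $\PSU_3(q^4)$ are not in the list \eqref{eq:groupsqs}, so Lemma~\ref{lm:qsimple}(a) excludes them directly. This is in fact cleaner than the paper's detour through $\ICF$ and Lemma~\ref{factor}.

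The gap is case (5). You never reach a contradiction there. Your fallback, ``deduce only $s\leqslant 2$'', is weaker than the statement, which says this case cannot occur at all once $s\geqslant 2$.

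You wrote down both halves of a one-line argument but did not put them together:
\begin{enumerate}
\item Lemma~\ref{lm:qsimple}(b.2) forces $\overline{G_v}=G_v/\Rad(G_v)\leqslant\mathrm{P\Gamma O}_8^+(q^2)$.
\item As you note yourself, the $\Sy_3$ in $L_v=d^2.\POmega_8^+(q^2).d^2.(\Sy_3\times 2)$ induces the full group of graph (triality) automorphisms of $\POmega_8^+(q^2)$. It comes from the $\Sy_3$ in $\N_W(W_\Delta)/W_\Delta$ for $\Delta$ of type $2D_4$, which permutes the three end nodes of each $D_4$.
\end{enumerate}
Hence $\overline{G_v}\geqslant\POmega_8^+(q^2).\Sy_3$, so $\overline{G_v}\nleqslant\mathrm{P\Gamma O}_8^+(q^2)$. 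This contradicts (b.2) outright, and it is exactly the paper's proof. Your own sentence saying that a subgroup of $\mathrm{P\Gamma O}_8^+(q^2)$ ``must induce triality'' is self-contradictory, and that contradiction is the proof.

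The misstep was treating triality as something the factorisation requires, when it is precisely what (b.2) forbids. Nothing forces $\overline{G_{uv}}$ and $\overline{G_{vw}}$ to be conjugate inside $\overline{G_v}$, because the element $g$ with $G_{uv}^g=G_{vw}$ lies outside $G_v$. Once this is seen, the detours through $\ppd(p,8f)$, Lemma~\ref{factor} and Lemma~\ref{3ATprime} are unnecessary; as you found, they were inconclusive anyway.
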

\begin{proof}
    Suppose, for a contradiction, that one of the cases (1)--(6) occurs. Then \(G_v\) has a normal subgroup \(M=G_{v}^{(\infty)}\) and  \(\ICF(G_v)=\{T\}\) such that \((M,T)\) is one of the following:
    \begin{enumerate}
        
        \item  \((M,T)=(\Omega_{16}^+(q),\POmega_{16}^+(q))\);
        \item  \((M,T)=t.\PSL_9^\epsilon(q),\PSL_9^\epsilon(q))\);
        \item  \((M,T)=(\SU_5(q^2),\PSU_5(q^2))\);
        \item  \((M,T)=(\PSU_5(q^2),\PSU_5(q^2))\);
        \item  \((M,T)=d^2\POmega_8^+(q^2).d^2.(\Sy_3\times2),\POmega_8^+(q^2))\);
        \item  \((M,T)=(\PSU_3(q^4),\PSU_3(q^4))\)
    \end{enumerate}
    
   Thus, \cite[Lemma 3.3]{small} applies, and we note that, for cases (1)--(4) and (6),
   \[T\notin\{\A_{6}, M_{11}, M_{12}, \PSp_{4}(2^{f}),\POmega_{8}^{+}(q),\PSU_{3}(8),\PSU_{4}(2)\}.\]
 This implies \(T\in\ICF(G_{uv})\cap\ICF(G_{vw})\). Hence, \(G_{uv}\) and \(G_{vw}\) contain \(M\). Note that there is no other subgroup of \(G_v\) isomorphic to \(M\) and therefore \(M^g=M\), contradicting the lemma \ref{factor}.

   For case (5), note that \(\POmega_8^+(q).\Sy_3\leqslant\overline{G_v}:=G_v/\Rad(G_v)\leqslant\Aut(\POmega_8^+(q))\). In particular, \(\overline{G_v}\) contains a triality automorphism of \(\POmega_8^+(q)\) and so \(\overline{G_v}\nleqslant\mathrm{P}\Gamma\mathrm{O}_8^+(q)\). Then by \cite[Lemma 3.3]{small}, \(\Gamma\) is not \((G,2)\)-arc-transitive. Thus, the result is proved.
\end{proof}
\begin{lemma}
    Suppose that Hypothesis \ref{hy:1} holds and \(L=E_8(q)\). Then cases (7)--(8) do not arise.
\end{lemma}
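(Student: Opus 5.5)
We follow the template of Lemma~\ref{lm:e7cases1-7}. In case (7), $L_v=d.(\PSL_2(q)\times E_7(q)).d$; in case (8), $L_v=e.(\PSL_3^\epsilon(q)\times E_6^\epsilon(q)).e.2$. In each case we set $M=G_v^{(\infty)}\cap(\text{the large factor})$, namely $M=E_7(q)_{\mathrm{sc}}$ in case (7) and $M=E_6^\epsilon(q)_{\mathrm{sc}}$ in case (8). Let $C=\C_{G_v}(M)$ and write $\overline{H}=HC/C$ for $H\leqslant G_v$. Then $\overline{G_v}$ is almost simple with socle $T=E_7(q)$ or $E_6^\epsilon(q)$. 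Moreover, $C$ is contained in the normaliser of the small factor, which has structure $\PSL_2(q).\Out(L)$ or $\PSL_3^\epsilon(q).[\text{small}]$ respectively.

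The first step is to choose a prime $r$ dividing $|T|$ but not $|C|$ and not $|\Out(L)|=f$. For case (7) we take $r\in\ppd(p,18f)$, which divides $q^9+1$ and hence $|E_7(q)|$. For case (8) we take $r\in\ppd(p,9f)$ if $\epsilon=+1$ and $r\in\ppd(p,18f)$ if $\epsilon=-1$; such an $r$ divides $|E_6^\epsilon(q)|$ but not $|\PSL_3^\epsilon(q)|$. By Lemma~\ref{existppd} these sets are nonempty, since $(p,18f)$ and $(p,9f)$ are never exceptional there. Since $r\equiv1\pmod{9f}$, we have $r>9f$, so $|\Out(L)|_r=1$ and $|C|_r=1$. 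The homogeneous factorisation $G_v=G_{uv}G_{vw}$ together with $G_{uv}\cong G_{vw}$ then gives $|\overline{G_{uv}}|_r=|\overline{G_{vw}}|_r>1$.

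The key step is to invoke the classification of maximal factorisations of almost simple exceptional groups \cite[Table 5]{LPS}. The only core-free factorisations of exceptional groups arise for $G_2(q)$ and $F_4(q)$ with $q$ even, and for $G_2(4)$ and small cases. Consequently $\overline{G_v}$, which has socle $E_7(q)$ or $E_6^\epsilon(q)$, admits no factorisation with both factors core-free. Hence one of the factors, say $\overline{G_{uv}}$ after relabelling, contains $T$. This is the step where care is needed: the factorisation $\overline{G_v}=\overline{G_{uv}}\,\overline{G_{vw}}$ need not be maximal, but every factorisation with core-free factors extends to one with core-free maximal factors, so \cite{LPS} applies directly. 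The $r$-divisibility computed above is therefore not even needed here, although it rules out degenerate orderings.

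From $T\leqslant\overline{G_{uv}}$ we obtain $G_{uv}C\geqslant MC$, and hence $G_{uv}^{(\infty)}\geqslant M$, because $M$ is perfect and $(G_{uv}\cap MC)^{(\infty)}$ projects onto $M$ modulo the soluble-by-small $C\cap MC=\mathrm{Z}(M)\times(\text{small factor})$. Some care is required here, since $C\cap M C$ contains a copy of $\PSL_2(q)$ or $\PSL_3^\epsilon(q)$. We argue instead that $(G_{uv}\cap MC)^{(\infty)}$ is a perfect subgroup of the central product $M\circ C^{(\infty)}$ whose projection to $M/\mathrm{Z}(M)$ is surjective. Since $E_7$ and $E_6$ are not isomorphic to sections of the small factor, this subgroup contains $M$. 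It follows that $M$ is the unique subgroup of $G_v$ isomorphic to it and is normal in $G_{uv}$. Then $M^g\trianglelefteq G_{uv}^g=G_{vw}\leqslant G_v$, so $M^g=M$, contradicting Lemma~\ref{factor}. The small cases in which $\PSL_2(q)$ or $\PSL_3^\epsilon(q)$ is soluble or the relevant $\ppd$ fails do not affect this argument, since it relies only on the factorisation property of $T$.
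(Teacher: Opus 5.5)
Your proposal is correct and follows essentially the same route as the paper. You pass to $\overline{G_v}=G_v/\C_{G_v}(M)$, which is almost simple with socle $E_7(q)$ or $E_6^\epsilon(q)$, apply \cite[Table 5]{LPS} to get $T\leqslant\overline{G_{uv}}$, lift this to $M\leqslant G_{uv}$, and contradict Lemma~\ref{factor} using that $G_v$ has no other subgroup isomorphic to $M$. The ppd prime is superfluous, as you note; your perfect-subgroup argument usefully justifies the lift $M\leqslant G_{uv}$, which the paper only asserts.

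There is a minor slip that does not affect anything: as far as I recall, apart from $G_2(4)$ the $G_2$ entries of \cite[Table 5]{LPS} occur for $q=3^a$, not for $q$ even.
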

\begin{proof}
     Suppose for a contradiction that \(L_v\) is of the cases (7)--(8). Then \(L_v\) has a unique subgroup \(M\) isomorphic to \(d.E_7(q)\) or \(e.E_6^\epsilon(q)\) for \(\epsilon=\pm1\). Let \(C:=\C_{G_{v}}(M)\). Then  \(|\Out(L)|=(2,q-1)f\), and \(\SL_2(q)\leqslant C\leqslant \SL_2(q).\Out(L)\) for case (xiv) and \(e.\PSL_3^\epsilon(q)\leqslant C\leqslant e.\PSL_3^\epsilon(q).\Out(L)\) for case (xvii). Let \(\overline{H}:=HC/C\) for any subgroup \(H\leqslant G_{v}\). Then \(\overline{G_{uv}}\,\overline{G_{vw}}=\overline{G_{v}}\) is almost simple with socle \(T=E_7(q)\). By \cite[Table 5]{LPS}, at least one of \(\overline{G_{uv}}\) or \(\overline{G_{vw}}\) contains \(T\). Without loss of generality, assume that \(\overline{G_{uv}}\geqslant T\). Then, \(G_{uv}\triangleright M\). Since \(G_{uv}^{g}=G_{vw}\), we deduce that \(M^{g}\) is also a normal subgroup of \(G_{vw}\). However, \(M\) is the unique subgroup of \(G_{v}\) isomorphic to \(d.E_7(q)\) or \(e.E_6(q)\). Hence, \(M^{g}=M\), contradicting Lemma~\ref{factor}. Thus, the result is proved.   
\end{proof}

\begin{lemma}
    Suppose that Hypothesis \ref{hy:1} holds and \(L=E_8(q)\). Suppose that \(L_v\) is one of the cases (9)--(12). Then \(s\leqslant2\).
\end{lemma}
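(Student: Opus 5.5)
Assume, for a contradiction, that \(s\geqslant 3\), so that in particular \(\Gamma\) is \((G,2)\)-arc-transitive and \(G_v=G_{uv}G_{vw}\) is a homogeneous factorisation (Lemma~\ref{pro:homofac}). In each of cases (9)--(12), \(G_v\) has a normal subgroup \(M=G_v^{(\infty)}\) which is a central product of two isomorphic quasisimple groups \(M_1,M_2\) with simple quotient \(T\). Here \(T=\PSL_5^\epsilon(q)\), \(\POmega_8^+(q)\), \(\PSU_3(q^2)\) or \({}^3D_4(q)\), respectively. The outer part of \(L_v\) is \(h.4\), \(d^2.(\Sy_3\times 2)\), \(8\) or \(6\), and it contains an element swapping the two factors. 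Let \(C\) be the soluble radical of \(G_v\) and set \(\overline{X}=XC/C\). Then \(T^2\leqslant \overline{G_v}\leqslant \Aut(T)\wr \Sy_2\). Let \(R\) be the kernel of the action of \(\overline{G_v}\) on \(\{\overline{M}_1,\overline{M}_2\}\).

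The plan is to push the argument of Lemma~\ref{lm:e7cases1-7} and Lemma~\ref{E7weyl} to this two-factor setting. First, choose a primitive prime divisor \(r\) of \(|T|\) for a large index: \(r\in\ppd(p,8f)\) for \(\PSL_5^\epsilon\) with \(\epsilon=-1\) (resp.\ \(\ppd(p,5f)\) for \(\epsilon=+1\)), \(r\in\ppd(p,6f)\) for \(\POmega_8^+(q)\), \(r\in\ppd(p,12f)\) for \(\PSU_3(q^2)\), and \(r\in\ppd(p,12f)\) for \({}^3D_4(q)\). In each case \(r>4f\), so \(r\nmid |\Out(L)|\cdot|G_v/M|\cdot|\Z(M)|\). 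Hence \(|G_v|_r=|T|_r^2\). The inequality \(|G_v|_r\leqslant |G_{uv}|_r|G_{vw}|_r=|G_{uv}|_r^2\) then gives \(|G_{uv}\cap M|_r\geqslant |T|_r\). Next I would look at the projections \(\varphi_i(\overline{G_{uv}}\cap\overline{M})\leqslant T\). If both projections are proper, then each has \(r\)-part at most \(|T|_r\), and by \cite[Tables 1--3, 5]{LPS} and \cite[Table 10.3]{transitive} a proper subgroup of \(T\) divisible by such \(r\) is quite restricted. For example, it is contained in a field-extension or torus normaliser.

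From here I would use a \(p\)-part bound as in Eq.~\eqref{eq:7A1-1}. We have \(|G_{uv}|_p\geqslant |T|_p\cdot(\text{small})\), because \(|G_v/M|_p\) and \(|\Out(L)|_p\) are tiny. The aim is to force one projection, say \(\varphi_1\), to be surjective. Comparing orders then shows that either \(\overline{G_{uv}}\geqslant \overline{M}_1\) or \(\overline{G_{uv}}\cap\overline{M}\) is a diagonal copy of \(T\). In the second case \(|G_{uv}|_p\leqslant |T|_p|G_v/M|_p\), so \(|G_{uvw}|_p\) is roughly \(|T|_p^0\). I would then combine this with Lemma~\ref{3ATprime}: the inequality \(|L_v|_r^2\leqslant |L_{uv}|_r^3|\Out(L)|_r\) for \(r\) as above requires \(|L_{uv}|_r\geqslant |T|_r^{4/3}\). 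A diagonal or proper-projection subgroup has \(r\)-part at most \(|T|_r\), so this is impossible. This is exactly where \(s\geqslant 3\) is used, and it is why the statement says \(s\leqslant 2\) rather than that the cases do not occur. Diagonal factorisations \(T^2=D\cdot D'\) may genuinely exist, so 2-arc-transitivity alone cannot be excluded.

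It remains to treat the case \(\overline{G_{uv}}\geqslant \overline{M}_1\). If \(\overline{G_{uv}}\) also swaps the factors, then \(\overline{G_{uv}}\geqslant\overline{M}\). Lifting through the centre as in Claim~1 of Lemma~\ref{E7weyl} (via a unique-involution or Schur multiplier argument) gives \(M\leqslant G_{uv}\). Hence \(M^g=M\), contradicting Lemma~\ref{factor}. Otherwise \(G_{uv}\) normalises \(M_1\) and \(G_{uv}\cap M\geqslant M_1\). Then \(G_{vw}=G_{uv}^g\) contains \(M_1^g\), which is one of \(M_1,M_2\) since these are the only subnormal quasisimple subgroups of that type. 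If \(M_1^g=M_1\) we contradict Lemma~\ref{factor}. If \(M_1^g=M_2\), then \(g\) normalises \(M=M_1M_2\), again a contradiction.

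The main obstacle will be the projection analysis: showing that a subgroup of \(T\) of \(r\)-part \(|T|_r\) and large \(p\)-part is all of \(T\). This is especially delicate for \(\POmega_8^+(q)\), where triality in case (10) and the \(d^2\) central parts complicate matters, and for small \(q\) such as \(q=2,3\), where ppd primes may fail and \textsc{Magma} checks will be needed.
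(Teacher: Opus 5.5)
Your toolkit (ppd primes, Lemma~\ref{3ATprime}, Lemma~\ref{factor}) is the right one, but the proposal has two genuine gaps.

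\textbf{The central step.} You defer as the ``main obstacle'' the step of showing that one projection \(\varphi_i(\overline{G_{uv}}\cap\overline{M})\) is all of \(T\). The claim you intend to prove is that a subgroup of \(T\) with full \(r\)-part and large \(p\)-part equals \(T\). This is false in case (10). For \(r\in\ppd(p,6f)\), the subgroup \(\Omega_7(q)<\POmega_8^+(q)\) has the full \(r\)-part and \(p\)-part \(q^9\). Moreover \(\POmega_8^+(q)=\Omega_7(q)\,\Omega_7(q)^{\tau}\), with \(\tau\) a triality automorphism, is a genuine factorisation. So subgroups whose two projections lie in triality images of \(\Omega_7(q)\) satisfy the homogeneous-factorisation bound and Lemma~\ref{3ATprime} for both \(r\) and \(p\). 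Also, for case (9) with \(\epsilon=-1\), \(\ppd(p,8f)\) does not divide \(|\PSU_5(q)|\); you want \(\ppd(p,10f)\).

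The paper avoids projections at this point and applies Lemma~\ref{3ATprime} to the \emph{kernels} \(\overline{L_{uv}}\cap\overline{M}_i\). Indeed, \(|\overline{L_{uv}}\cap\overline{M}_i|_r=1\) gives \(|L_{uv}|_r\leqslant|T|_r\), contradicting \(|L_{uv}|_r^3\geqslant|L_v|_r^2=|T|_r^4\). Doing this for a whole set \(J\) of ppd primes shows that each \(\overline{L_{uv}}\cap\overline{M}_i\) has order divisible by every prime in \(J\). Then \cite[Tables 10.1, 10.3, 10.5]{transitive} give \(\overline{M}_i\leqslant\overline{L_{uv}}\) for both \(i\) at once. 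The only exception is \(\Omega_7(q)\) in case (10), which is removed using a factor that swaps the two components, together with an \(r\)-part count. That count must use \(r\in\ppd(p,4f)\), because \(\Omega_7(q)\) already carries the full \(\ppd(p,3f)\)- and \(\ppd(p,6f)\)-parts of \(\POmega_8^+(q)\).

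\textbf{The non-swapping endgame.} This step is wrong as written. \(M_1\) is not normal in \(G_v\), since \(G_v\) swaps the factors, so \(M_1^g=M_1\) does not contradict Lemma~\ref{factor}. Likewise \(M_1^g=M_2\) does not give \(M^g=M\), because nothing places \(M_2^g\) inside \(G_v\). Both subcases do fail, but for a different reason: \(G_{vw}\) would normalise \(M_1\), so \(G_{uv}G_{vw}\subseteq\N_{G_v}(M_1)\neq G_v\).

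More seriously, \(M_1^g\) is subnormal in \(G_{vw}\), not in \(G_v\), and it can be a diagonal subgroup of \(M\). In fact this is forced. The factorisation \(G_v=G_{uv}G_{vw}\) maps onto \(G_v/\N_{G_v}(M_1)\cong\C_2\), so if \(G_{uv}\) does not swap then \(G_{vw}\) does. Its normal subgroup \(M_1^g\) can then be neither \(M_1\) nor \(M_2\).

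You can close this case with your own tools. \(\overline{G_{vw}}\cap\overline{M}\) contains a diagonal \(D\), which is maximal in \(T^2\).
\begin{itemize}
\item If \(\overline{G_{vw}}\cap\overline{M}=D\), then \(|L_{uv}|_r=|L_{vw}|_r\leqslant|T|_r\), contradicting Lemma~\ref{3ATprime}.
\item If \(\overline{G_{vw}}\cap\overline{M}=\overline{M}\), then \(M\leqslant G_{vw}\), so \(M^g=M\), contradicting Lemma~\ref{factor}.
\end{itemize}
The kernel formulation above makes this whole case analysis unnecessary.
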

\begin{proof} 
    Suppose for a contradiction that \(s\geqslant3\). Then \(\Gamma\) is \((L,2)\)-arc-transitive and so \(L_v=L_{uv}L_{vw}\). Note that \(L_v\) has a unique subgroup \(M\) and \(M/Z(M)\cong M_1\times M_2\) such that \(M_1\cong M_2\) and \((M,M_1)\) is one of the following:
    \begin{itemize} 
        \item Case (9), \((M,M_1)=(h.(\PSL_5^\epsilon(q))^2,\PSL_5^\epsilon(q))\);
        \item Case (10), \((M,M_1)=(d^2.(\POmega_8^+(q))^2,\POmega_8^+(q))\);
        \item Case (11),  \((M,M_1)=((\PSU_3(q^2))^2,\PSU_3(q^2))\);
        \item Case (12), \((M,M_1)=({}^3D_4(q)^2,{}^3D_4(q))\).
    \end{itemize}

    Let \(Z\) be the centre of \(M\) and \(\overline{S}:=SZ/Z\) for any subgroup \(S\leqslant G_v\). Let \(\varphi_i\) be the projection from \(\overline{M}\) to \(M_i\) for \(i=1,2\). Then \(\varphi_i(\overline{L_{v}})=\Aut(\POmega_8^+(q))\) for \(i=1,2\). Note that \(\ppd(p,if)\neq \varnothing\) for \(i\in\{3,4,5,8,10,12\}\). Let \(r_i\in\ppd(p,if)\) for \(i\in\{3,4,5,8,10,12\}\). Let 
    \[
    J:=\begin{cases}
    \{r_3,r_4,r_5\}&\text{case (9), \(\epsilon=+1\);}\\
    \{r_8,r_{10}\}&\text{case (9), \(\epsilon=-1\);}\\
        \{r_3,r_{4},r_6\}&\text{case (10);}\\
        \{p,r_{12}\}&\text{case (11), \(p\geqslant5\);}\\
        \{r_4,r_{12}\}&\text{case (11), \(p=2,3\);}\\
       \{r_3,r_{12}\}&\text{case (12);}\\
    \end{cases}
    \]

    Then \(J\subseteq\pi(M_1)\). 
    Note that \(d\mid2\) and \(h\) divides \(q^2-1=p^{2f}-1\) and so \(d_{r_i}=h_{r_i}=1\) for \(i\in\{3,4,5,8,10\}\). Thus, \(|\overline{M}|_{r_i}=|M|_{r_i}\) for \(i\in\{3,4,5,8,10\}\).

\medskip

    \textsc{Claim:} \(J\subseteq \pi(\overline{L_{uv}}\cap M_i),\pi(\overline{L_{vw}}\cap M_i)\) for \(i=1,2\).

    Assume, for a contradiction, that the assertion is false. Then there exists some \(r\in J\) and \(i\in\{1,2\}\) such that \(|\overline{L_{uv}}\cap M_i|_{r}=1\). Then
    \[
    |L_{uv}|_{r}=|L_{uv}\cap M|_{r}4_{r}=|\overline{L_{uv}}\cap\overline{M}|_{r}\leqslant|\varphi_{3-i}(\overline{L_{uv}}\cap\overline{M})|_{r}=|M_{3-i}|_{r}.
    \]
    On the other hand, \(|L_{v}|_{r_j}=|M_1|^2_{r_j}\). It follows from Lemma \ref{3ATprime} that
    \[
    |M_1|^4_{r}=|L_v|_{r}^2\leqslant |L_{uv}|_{r}^3|\Out(L)|_{r}\leqslant |M_{3-i}|^3_{r}|\Out(L)|_{r}=|M_1|_{r}^3|\Out(L)|_{r}.
    \]
    Thus, \(|\Out(L)|_{r}\geqslant |M_1|_{r}\). However, \(|\Out(L)|_{r}=((2,q-1)f)_{r}=1\). Thus, \(J\subseteq \Pi(\overline{L_{uv}}\cap M_i)\) and with an identical argument, \(J\subseteq \Pi(\overline{L_{vw}}\cap M_i)\). Hence, the claim is proved.

    Therefore, \(|\overline{L_{uv}}\cap M_i|_{r}>1\) for \(r\in J\) and \(i=1,2\). Then, for cases (9), (11)--(12), it follows from \cite[Tables 10.3 and 10.5]{transitive} that \(M_i\leqslant \overline{L_{uv}}\) for \(i=1,2\) and so \(\overline{M}\leqslant\overline{L_{uv}}\). Thus, \(M^g\leqslant L_{vw}\). However, \(M\) is the unique subgroup of \(L_v\) isomorphic to \(d.(M_1\times M_2)\) and so \(M^g=M\), contradicting Lemma \ref{factor}. 
    
    Now, we deal with case (8). Then \(L_v=H.2\) where \(H=\mathrm{Spin}_8^{+}(q) \circ \mathrm{Spin}_8^{+}(q)=d^2.(\POmega_8^+(q))^2.d^2.\Sy_3\). Note that $L_v$ arises from a subsystem $ \Delta=2D_4 $ of $E_8$ (one $D_4$ has simple roots $\{\alpha_2,\alpha_3,\alpha_4,\alpha_5\}$), and in $W_{\Delta}=2\times \mathrm{S}_3$,  the group $2$  swaps two $D_4$s  and the group $\mathrm{S}_3$ induces the full group of graph automorphisms of each $D_4$.
Thus, $L_v$ acts transitively on the two components $\mathrm{Spin}^{+}_8(q)$ and so \(\overline{L_{v}}\) acts transitively on \(M_1\) and \(M_2\). Note that \((L_{uv}H/H)(L_{vw}H/H)=L_{v}/H=\C_2\). Thus, at least one of \(L_{uv}H/H\) or \(L_{vw}H/H\) is \(\C_2\). Without loss of generality, assume that \(L_{uv}H/H=\C_2\) and therefore \(\overline{L_{uv}}\) acts transitively on \(\overline{L_{uv}}\cap M_1\) and \(\overline{L_{uv}}\cap M_2\).

   On the other hand, since \(J\subseteq \Pi(\overline{L_{uv}}\cap M_i)\) for \(i=1,2\), it follows from \cite[Table 10.1]{transitive} that \(\Omega_7(q)\trianglelefteq\overline{L_{uv}}\cap M_i\leqslant \Omega_7(q).2\) or \(M_i\leqslant \overline{L_{uv}}\) for \(i=1,2\). If \(\Omega_7(q)\trianglelefteq\overline{L_{uv}}\cap M_i\) for some \(i\in\{1,2\}\), then since \(\overline{L_{uv}}\) acts transitively on \(\overline{L_{uv}}\cap M_1\) and \(\overline{L_{uv}}\cap M_2\), \(\Omega_7(q)\trianglelefteq\overline{L_{uv}}\cap M_i\) for \(i=1,2\). Note that \(\overline{L_{uv}}\cap M_i\trianglelefteq\varphi_i(\overline{L_{uv}})\) for \(i=1,2\). Thus, \(\Omega_7(q)\trianglelefteq\varphi_i(\overline{L_{uv}})\leqslant \Omega_7(q).2\) for \(i=1,2\). Let \((p^{3f}-1)_{r_3}=r_3^a\). Then \(
   |\POmega^+_8(q)|_{r_3}=(p^{3f}-1)^2_{r_3}=r_3^{2a}\) and \(|\Omega_7(q)|_{r_3}=(p^{3f}-1)_{r_3}=r_3^a\). Thus, \(|L_{v}|_{r_3}=r_3^{4a}\) and \(|L_{uv}|_{r_3}\leqslant|\Omega_7(q)|_{r_3}^2d^4_{r_{3}}|\Sy_3\times 2|_{r_3}=r_3^{2a}\).
   However, since \(|\Out(L)|_{r_3}=((2,q-1)f)_{r_3}=1\), it follows from Lemma \ref{3ATprime} that 
   \[
   r_3^{8a}=|L_{v}|_{r_3}^{2}\leqslant |L_{uv}|_{r_3}^3|\Out(L)|_{r_3}\leqslant r_3^{6a},
   \] 
which is a contradiction as \(a\geqslant 1\). Thus, the lemma is proved.
\end{proof}

\begin{lemma}
Suppose that Hypothesis~$\ref{hy:1}$ holds and \(L=E_8(q)\). Suppose that case (13) occurs. Then \(s\leqslant2\)
\end{lemma}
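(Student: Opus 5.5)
Suppose, for a contradiction, that \(s\geqslant 3\). By Lemma~\ref{pro:HsMs-1}, \(\Gamma\) is \((L,2)\)-arc-transitive, so Lemma~\ref{pro:homofac} gives \(L_v=L_{uv}L_{vw}\) with \(L_{uv}^g=L_{vw}\). Here \(L_v=e^2.(\PSL_3^\epsilon(q))^4.e^2.\GL_2(3)\). Let \(M=e^2.(\PSL_3^\epsilon(q))^4\) be the normal subgroup generated by the four \(A_2\)-components, \(Z=\mathrm{Z}(M)\), and \(\overline{S}=SZ/Z\). Then \(\overline{M}=M_1\times\cdots\times M_4\) with each \(M_i\cong\PSL_3^\epsilon(q)\), and \(\varphi_i\) denotes the projection to \(M_i\). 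I will treat \(q\) with \(\PSL_3^\epsilon(q)\) small (\(q\leqslant 4\), say) separately by a \textsc{Magma} search for homogeneous factorisations, as in Lemma~\ref{lm:e7cases1-7}. For the remaining \(q\), I will pick primes \(J\subseteq\pi(\PSL_3^\epsilon(q))\) coprime to \(|\Out(L)|\), \(e\) and \(|\GL_2(3)|\): for \(\epsilon=+1\) take \(J=\{r_2,r_3\}\) and for \(\epsilon=-1\) take \(J=\{r_2,r_6\}\), with \(r_i\in\ppd(p,if)\), adding \(p\) when \(p\geqslant5\).

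The first step is the analogue of the Claim in the lemma for cases (9)--(12): \(J\subseteq\pi(\overline{L_{uv}}\cap M_i)\) for every \(i\). Suppose \(|\overline{L_{uv}}\cap M_i|_r=1\) for some \(r\in J\). Then \(|L_{uv}|_r\leqslant|M_1|_r^3\), while \(|L_v|_r=|M_1|_r^4\). Lemma~\ref{3ATprime} then gives \(|M_1|_r^8\leqslant|M_1|_r^9|\Out(L)|_r\). This is \emph{not} a contradiction, so the argument for two components does not work directly. I will therefore sharpen the count. Write \(k\) for the number of indices \(i\) with \(r\mid|\overline{L_{uv}}\cap M_i|\). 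Then \(|L_{uv}|_r\leqslant|M_1|_r^{k}\), and since \(|\Out(L)|_r=1\), Lemma~\ref{3ATprime} forces \(8\leqslant 3k\), so \(k\geqslant3\). Next I use the transitivity of \(\GL_2(3)\) on the four components, which is the permutation action on the four points of \(\PSL_2(3)\)'s natural action, hence \(2\)-transitive through \(\A_4\). Its order is \(2^4\cdot3\). From \(L_v/M\cong \GL_2(3)\) (modulo \(e^2\)), the factorisation induces a factorisation of \(\GL_2(3)\) or \(\PSL_2(3)\cong\A_4\). Any factorisation of \(\A_4\) has a factor of order divisible by \(3\), so, interchanging \(u\) and \(w\) if necessary (legitimate since the two stabilisers are conjugate by \(g\)), \(L_{uv}\) permutes the components transitively. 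This upgrades \(k\geqslant3\) to \(k=4\).

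With \(J\subseteq\pi(\overline{L_{uv}}\cap M_i)\) for all \(i\), and \(\overline{L_{uv}}\cap M_i\trianglelefteq\varphi_i(\overline{L_{uv}})\), I will apply \cite[Table 10.3]{transitive} (subgroups of \(\PSL_3^\epsilon(q)\) whose orders are divisible by these ppd primes) to get \(M_i\leqslant\overline{L_{uv}}\). The only alternatives are subfield-type or \(\C_{q^2+\epsilon q+1}{:}3\)-type subgroups, which the prime set \(J\) (in particular \(r_2\) together with \(r_3\) or \(r_6\)) excludes. Hence \(\overline{M}\leqslant\overline{L_{uv}}\). Since \(M\) is perfect, \(M=(L_{uv}\cap M)Z\) gives \(M=M'\leqslant L_{uv}\). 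As \(M=L_v^{(\infty)}\) is characteristic, \(L_{vw}=L_{uv}^g\geqslant M^g\), and \(M\) is the unique such subgroup of \(L_v\), so \(M^g=M\). This contradicts Lemma~\ref{factor}.

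The main obstacle is the transitivity step. The order estimate alone yields only \(k\geqslant3\), so I must make sure the projected factorisation of \(L_v/M\) really gives a factor containing a \(3\)-element acting as a \(3\)-cycle on the components. I also need to control \(|\Out(L)|_3\) and \(e\) when \(r=3\) is involved, which is why \(3\) is kept out of \(J\). If \(3\mid f\), I would first try showing directly that the field automorphisms do not affect the image in \(\GL_2(3)\).
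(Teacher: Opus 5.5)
\paragraph{The counting step fails.} The proposal breaks down at the bound \(|L_{uv}|_r\leqslant|M_1|_r^{k}\), where \(k\) is the number of \(i\) with \(r\mid|\overline{L_{uv}}\cap M_i|\). For \(X\leqslant M_1\times\cdots\times M_4\), the intersection \(X\cap M_i\) is the kernel of the projection of \(X\) onto the product of the other three factors. It therefore only gives \(|X|_r\leqslant|X\cap M_i|_r\,|M_1|_r^3\), and it says nothing about diagonal-type parts of \(X\). Concretely, take an abelian Sylow \(r\)-subgroup \(C\) of \(\PSL_3^\epsilon(q)\) (true for the ppd primes you use), with fixed copies in the four factors. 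Then \(X=\{(x_1,x_2,x_3,x_4)\in C^4 : x_1x_2x_3x_4=1\}\) meets every \(M_i\) trivially and is invariant under all permutations of the coordinates. Yet \(|X|_r=|M_1|_r^3\), which is compatible with Lemma~\ref{3ATprime} since \(8\leqslant 9\). So \(k=0\) is not excluded by orders, and transitivity only tells you \(k\in\{0,4\}\). Consequently the claim \(J\subseteq\pi(\overline{L_{uv}}\cap M_i)\) is unproved, and your use of the prime-divisor tables on \(\overline{L_{uv}}\cap M_i\trianglelefteq\varphi_i(\overline{L_{uv}})\) rests on it.

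\paragraph{The missing idea.} You need to work with the projections \(\varphi_i(\overline{L_{uv}}\cap\overline{M})\) rather than the intersections, as the paper does.
\begin{itemize}
\item Transitivity of \(L_{uv}\) on the components makes all four projections isomorphic. Hence \(|\overline{L_{uv}}\cap\overline{M}|\) divides \(|\varphi_1(\overline{L_{uv}}\cap\overline{M})|^4\), and the two groups have the same prime divisors.
\item The homogeneous factorisation gives \(\pi(L_{uv})=\pi(L_v)\). This passes to \(\overline{L_{uv}}\cap\overline{M}\) directly for primes other than \(2,3\), and by explicit order estimates for \(2\) and \(3\).
\item The list in \cite[Table 10.7]{transitive} of proper subgroups \(H<\PSL_3^\epsilon(q)\) with \(\pi(H)=\pi(\PSL_3^\epsilon(q))\) then gives \(\varphi_1(\overline{L_{uv}}\cap\overline{M})=M_1\). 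The only exceptions, \(\PSL_2(7)<\PSU_3(3)\) and \(\A_7<\PSU_3(5)\), are removed by order counts.
\item Scott's Lemma now gives \(\overline{L_{uv}}\cap\overline{M}\cong\PSL_3^\epsilon(q)^m\) with \(m\mid 4\). Only at this point is your count valid: \(|\overline{L_{uv}}\cap\overline{M}|_r=|M_1|_r^m\), so Lemma~\ref{3ATprime} with \(r\in\ppd(p,\frac{(3-\epsilon)3f}{2})\) forces \(m\geqslant 3\), hence \(m=4\).
\end{itemize}

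\paragraph{Two smaller points.} First, a set \(J\) coprime to \(6\,|\Out(L)|\) and \(e\) need not exist outside your \textsc{Magma} range. For example, \(\ppd(p,2)=\varnothing\) for Mersenne primes \(p\), and \(\ppd(p,2)=\{3\}\) for \(p=5,11,17,\dots\). This is why the primes \(2\) and \(3\) have to be handled by order estimates rather than avoided. Second, \(L_v\) acts on the components as \(\Sy_4\), not \(\A_4\), and a factor of order divisible by \(3\) need not be transitive (e.g.\ \(\A_4=\C_3\,\C_2^2\)). The correct fact is that every factorisation of \(\Sy_4\) has a transitive factor.
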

 
\begin{proof}  
Suppose for a contradiction that case (13) occurs with \(s\geqslant3\). Then \(\Gamma\) is \((L,2)\)-arc-transitive and therefore \(L_v=L_{uv}L_{vw}\).

Now $L_v$ arises from a subsystem $ 4A_2 $ of $E_8$ (two $A_2$ factors have simple roots $\{\alpha_1,\alpha_3\}$ and  $\{\alpha_5,\alpha_6 \}$).
Then $\mathrm{SL}_3(q)^2<L_v$ or $\mathrm{SU}_3(q)^2<L_v$. 
Computation on the Weyl group shows that the centre of $ \mathrm{GL}_2(3)\cong 2.\mathrm{S}_4$ swaps two simple roots in each $A_2$, and $\mathrm{GL}_2(3) $ acts on the four $A_2$s as $\mathrm{S}_4$ on $4$ points. 
Note that, for each factorisation of $ \mathrm{S}_4 $, at least one factor is transitive, and if one factor is transitive but not primitive then the other factor has order divisible by $3$.  
Let $M$ be the normal subgroup $(3,q\pm1)^2.\mathrm{PSL}^\epsilon_3(q)^4$ of $L_v$ and let $Z=(3,q\pm1)^4 $ be the centre of $M$. We denote by \(\overline{S}:=SZ/Z\) for any subgroup \(S\leqslant G_v\).

\smallskip
\textsc{Claim~1:} \(q\neq2\) if \(L_v=3^2.(\mathrm{PSU}_3(2)^4).3^2.\mathrm{GL}_2(3)\).

Assume now that $L_v=3^2.(\mathrm{PSU}_3(2)^4).3^2.\mathrm{GL}_2(3)$. Then $G_v=L_v$.
Since $Z(\mathrm{GL}_2(3))=\C_2$ swaps two simple roots of each $A_2$, it induces a field automorphism of $\mathrm{PSU}_3(2)$.
So $G_v/Z\leqslant \mathrm{P\Gamma U}_3(2) \wr \mathrm{S}_4$.
Notice that $|\overline{G_v}|=2^{16}\cdot 3^{11} $.
In {\sc Magma}, by computing subgroups $K$ of $ (\mathrm{P\Gamma U}_3(2) \wr \mathrm{S}_4)/(\mathrm{PSU}_3(2)^4)$  such that $\vert K\vert= 3^2\cdot\vert  \mathrm{GL}_2(3)\vert=2^4 \cdot 3^3  $ and $K/(K \cap \mathrm{P\Gamma U}_3(2)^4)\cong \mathrm{S}_4$, we can identify the group $\overline{G_v}$.
Note that $\mathrm{PSU}_3(2)\cong 3^2:\mathrm{Q}_8$.
Thus, $\overline{G_v}$ has a normal $3$-subgroup $3^8$, and computation shows that it is exactly $\mathbf{O}_3(\overline{G_v})$.
Then $M_1:=\mathbf{O}_3(\overline{G_v})=Z.3^8$.
In the factorisation $(\mathrm{Q}_8)^4.3^2.\mathrm{GL}_2(3)=G_v/M_1=(G_{uv}M_1/M_1)(G_{vw}M_1/M_1)$, the Sylow $2$-subgroups of two factors are isomorphic and have order at least $2^8$.
By the {\sf LowIndexSubgroups} command of \textsc{Magma} \cite{magma} we can obtain all candidates for the factorisation $G_v/M_1=(G_{uv}M_1/M_1)(G_{vw}M_1/M_1)$,  and it turns out that both factors contains the Sylow $2$-subgroup of $G_v/M_1$ (which is of order $2^{16}$).
Now, in the factorisation $ \overline{G_v}=\overline{G_{uv}}\,\overline{G_{vw}}$, both of two factors \(\overline{G_{uv}}\) and $ \overline{G_{vw}}$  have order divisible by $2^{16}\cdot 3^4$.
Again, by the {\sf LowIndexSubgroups} command, we can obtain all candidates for the factorisation $ \overline{G_v}=\overline{G_{uv}}\,\overline{G_{vw}}$.
There are four such factorisations, and they satisfy the following:
\begin{enumerate}
\item[(1)]  $\mathbf{O}_3(\overline{G_{uv}})=\mathbf{O}_3(\overline{G_{vw}})=\mathbf{O}_3(\overline{G_v})=3^8$.
\item[(2)] In three factorisations, $\overline{G_{uv}}=\overline{G_v}$, and the Sylow $3$-subgroup of \(\overline{G_{vw}}\) has order $3^8$, $3^{9}$ and $3^{10}$, respectively. In the other factorization, $|\overline{G_{uv}})|=2^{16}\cdot 3^{10}$, and $|\overline{G_{vw}})|=2^{16}\cdot 3^{9}$.
\item[(3)] In the homomorphism the $G_v/Z \to \mathrm{GL}_2(3) $,  the image of \(\overline{G_{uv}}\) is $\mathrm{GL}_2(3)$ and the image of \(\overline{G_{vw}}\) is $\mathrm{SD}_{16}$. 
\end{enumerate}

For (2) we see that $G_{vw} \cap Z>1$.
Computation on the Weyl group shows that both $ \mathrm{SD}_{16}$ and $\mathrm{GL}_2(3)$  acts irreducibly on $Z=3^2$ (Note that $Z$ is in a $\sigma$-stable torus. Moreover, computation shows that $\N_W(W(\Delta))$ has a group $\mathrm{GL}_2(3)$ such that $\N_W(W(\Delta))=W(\Delta):\mathrm{GL}_2(3)$, and so we can compute the action of the group $\mathrm{GL}_2(3)$ on $Z$).
Thus, it follows from (3) that $Z \leqslant G_{vw} $. Then (1) implies $ \mathbf{O}_3(G_{vw})=\mathbf{O}_3(G_v)$.
Since $G_{uv}^g=G_{vw}$, the group $ \mathbf{O}_3(G_v)^{g^{-1}} \leqslant \mathbf{O}_3(G_{uv})$ and then (1) implies $\mathbf{O}_3(G_{uv})=\mathbf{O}_3(G_v)=\mathbf{O}_3(G_{vw})$.
It contradicts Lemma~\ref{factor}. 

\medskip

Now, \(q\geqslant3\). For \(1\leqslant i\leqslant 4\), let $N_i\cong \mathrm{PSL}^\epsilon_3(q)$ be the $4$ components of $N:=\mathrm{PSL}^\epsilon_3(q)^4\leqslant \overline{G_{v}}$, and let $\varphi_i$ be the projection from $N$ to \(N_i\). 
Then the induced action of $\overline{L_v}\cong\GL_2(3)$ on $\{N_1,\ldots,N_4\}$ is \(\Sy_4\). Since $L_{v}/M=(L_{uv}M/M)(L_{vw}M/M)$ and at least one factor is transitive, we may assume, without loss of generality, that the induced action of \(L_v/M\) is transitive and therefore \(\varphi_{1}(\overline{L_{uv}}\cap N)\cong\cdots\cong\varphi_{4}(\overline{L_{uv}}\cap N)\). Thus, \(|\overline{L_{uv}}\cap N|\) divides \(|\varphi_1(\overline{L_{uv}}\cap N)|^4\). 

 \medskip
 \textsc{Claim~2:} $\varphi_1(\overline{L_{uv}}\cap N)= N_1$. 
 
Since \(|Z|=(3,q\pm1)\) and \(|\GL_2(3)|=2^4\cdot3\), it follows that \(|L_v|_r=|N|_r\) for any prime \(r\in\Pi(L_v)\setminus\{2,3\}\). Thus
\[
\pi(\overline{L_{uv}}\cap N)\cup\{2,3\}=\pi(\overline{L_{uv}})\cup\{3\}=\pi(L_{uv})=\pi(L_{v}).
\]
On the other hand, note that \(|N_1|_2=|\PSL_3^\epsilon(q)|_2\geqslant 2^8\) and \(|L_v|_2=|N_1|^4_2|\GL_2(3)|_2=2^4|N_1|_2^4\). Thus,
\[
|\overline{L_{uv}}\cap N|_2\geqslant\frac{|\overline{L_{uv}}|_2}{|\GL_2(3)|_2}=
\frac{|\overline{L_{uv}}|_2}{2^4}=\frac{|L_{uv}|_2}{2^4}\geqslant \frac{|L_{v}|_2^{\frac{1}{2}}}{2^4}=\frac{(2^4|N_1|_2^4)^{\frac{1}{2}}}{2^4}=2^{-2}|N_1|_2^2>1\]
and so \(2\in\Pi(\overline{L_{uv}}\cap N)\). Similarly, note that \(|Z|_3^3\leqslant |N_1|_3\), \(|N_1|_3=|\PSL_3^\epsilon(q)|_3\geqslant3\) and
\[
|L_v|_2=|Z|_3^4|N_1|^4_3|\GL_2(3)|_3=3|Z|_3^4|N_1|_3^4.
\]
Thus,
\[
|\overline{L_{uv}}\cap N|_3\geqslant\frac{|\overline{L_{uv}}|_3}{|Z|_3^2|\GL_2(3)|_3}=
\frac{|\overline{L_{uv}}|_3}{3|Z|_3^2}\geqslant\frac{|L_{uv}|_3}{3|Z|_3^4}\geqslant \frac{|L_{v}|_3^{\frac{1}{2}}}{3|Z|_3^4}=\frac{(3|Z|_3^4|N_1|_3^4)^{\frac{1}{2}}}{3|Z|_3^4}=\frac{|N_1|_3^2}{3^{\frac{1}{2}}|Z|_3^4}\geqslant\frac{|N_1|_3^2}{3^{\frac{1}{2}}|N_1|_3^{\frac{4}{3}}}>1\]     
 as \(|N_1|_3>1\) and so \(3\in\Pi(\overline{L_{uv}}\cap N)\). Now, we have
 \[
\Pi(\varphi_1(\overline{L_{uv}}\cap N))= \Pi(\overline{L_{uv}}\cap N)=\Pi(L_v)\supseteq\Pi(N_1).
 \]
 This together with \cite[Table 10.7]{transitive} implies that either (1) \((\epsilon,q,\varphi_1(\overline{L_{uv}}\cap N))=(-,3,\PSL_2(7))\), or (2) \((\epsilon,q,\varphi_1(\overline{L_{uv}}\cap N))=(-,5,\A_7)\), or (3) \(\varphi_1(\overline{L_{uv}}\cap N)=N_1\).
 
 Assume that case (1) occurs. Then 
 \[|L_{uv}|^2_3\leqslant (|\PSL_2(7)|_3^4|\GL_2(3)|_3)^2=3^{10}<|\PSU_3(3)|_3^4|\GL_2(3)|_3=3^{13}=|L_v|_3,\]
 contradicting the fact that \(L_v=L_{uv}L_{vw}\) is a homogeneous factorisation. 

 Then assume that case (2) occurs. Then
 \[|L_{uv}|^2_5\leqslant (|\A_7|_5^4)^2=5^8<|\PSU_3(5)|_5^4=5^{12}=|L_v|_5,\]
 contradicting the fact that \(L_v=L_{uv}L_{vw}\) is a homogeneous factorisation. Thus, \(\varphi_1(\overline{L_{uv}}\cap N)=N_1\) and Claim~2 is proved.

 Note that \(N_1\) is non-abelian simple and  \(\ICF(\overline{L_{uv}}\cap N)=\{\PSL_3^\epsilon(q)\}\). Thus, \(\overline{L_{uv}}\cap N\cong\PSL_3^\epsilon(q)^m\) for some \(m\leqslant4\).

 \medskip
 \textsc{Claim~3:} \(m=4\).

By Scott's Lemma \cite[Theorem 4.16]{csaba}, \(m\) divides \(4\). Suppose for a contradiction that \(m\leqslant2\). Note that \(\ppd(p,\frac{(3-\epsilon)3f}{2})\neq\varnothing\) (recall that \(\ppd(2,6)=\{7\}\)), and so there exists \(r\in \ppd(p,\frac{(3-\epsilon)3f}{2})\).
Then \(r>3fi\) and so \(|\Out(L)|_r=f_r=1\). Note that \(r>3\), it follows that \(|L_v|_r=|N_1|^4_r\) and \(|L_{uv}|_r=|\overline{L_{uv}}|_r\leqslant|N_1|_r^2\).
By Lemma \ref{3ATprime}, we have
\[
|N_1|_r^{8}=|L_v|_r^2\leqslant|L_{uv}|_r^3|\Out(L)|_r\leqslant |N_1|_r^{6},
\]
which is impossible. Thus, Claim 3 is proved.

Now, \(\PSL_3^\epsilon(q)^4\cong N\leqslant \overline{L_{uv}}\) and so \(Z.N=M\leqslant L_{uv}\). Thus, \(M^g\leqslant L_{vw}\). However, \(M\) is the unique subgroup of \(L_v\) isomorphic to \(Z.(\PSL_3^\epsilon(q))^4\). Therefore, \(M^g=M\), contradicting Lemma \ref{factor}. Hence, the result is proved.

\end{proof}

\begin{lemma}\label{lm:E8A18}
Suppose that Hypothesis~$\ref{hy:1}$ holds and  $L=E_8(q)$. Then case (14) does not occur.
\end{lemma}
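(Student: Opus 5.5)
I will follow the strategy of Lemma~\ref{E7weyl}, adapted from the $(A_1)^7$ subsystem of $E_7$ to the $(A_1)^8$ subsystem of $E_8$. Suppose for a contradiction that case (14) occurs. Then $L_v=d^4.\PSL_2(q)^8.d^4.\AGL_3(2)$ and $G_v=L_v.m$ with $m\mid f$ and $q\geqslant 3$. Let $M=d^4.\PSL_2(q)^8=G_v^{(\infty)}$ and let $Z=d^4$ be its centre. Write $\overline{A}=AZ/Z$ for $A\leqslant G_v$, so that $\overline{M}=\prod_{i=1}^8\overline{M}_i$ with $\overline{M}_i\cong\PSL_2(q)$. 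Let $R$ be the kernel of the action of $\overline{G_v}$ on $\{\overline{M}_1,\dots,\overline{M}_8\}$, so that $\overline{G_v}/R\cong\AGL_3(2)$ acting $3$-transitively on $8$ points.

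The first step is an analogue of Claim~1 in Lemma~\ref{E7weyl}. Realise $(A_1)^8$ by explicit roots and compute in \textsc{Magma}, using the Weyl group and root datum, the action of $\AGL_3(2)$ on $Z=D/C\cong\C_2^4$ for $q$ odd. I expect the translation subgroup $2^3$ to act trivially or almost trivially, and $\PSL_3(2)$ to act with its $7$-element acting fixed-point-freely on $Z$ minus a small submodule. From this I will deduce the following: if $\overline{A}\geqslant\overline{M}$ and $|\overline{A}R/R|$ is divisible by $7$, then $A\geqslant M$. The argument is the same as before. Either $A\cap Z$ is a nontrivial $\langle \C_7\rangle$-submodule, giving $A\cap Z=Z$; or the involution argument, using the unique involution of $\SL_2(q)$, rules out the complement case. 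If $Z$ turns out not to be irreducible under $\C_7$, the Claim will be replaced by showing that $A\cap M$ contains all $\SL_2(q)$-components, since it is generated by elements of odd order or by commutators lifting $\overline{M}$. This gives $A\geqslant M^{(\infty)}=M$ directly; that version works for every $q\geqslant4$ and avoids the module computation.

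Next, take the factorisation $\AGL_3(2)=(\overline{G_{uv}}R/R)(\overline{G_{vw}}R/R)$. Since $7$ divides $|\AGL_3(2)|$, one factor, say the one from $G_{uv}$, has order divisible by $7$, and so it acts transitively on the eight components. Indeed, subgroups of $\AGL_3(2)$ of order divisible by $7$ are $\C_7$, $7{:}3$, $\PSL_3(2)$, $2^3{:}7$ and so on; each is transitive except those fixing a point. If it fixes a point, the other factor must be transitive, so I will swap the factors if needed and assume $\overline{G_{uv}}$ is transitive. Then the projections $\varphi_i(\overline{G_{uv}}\cap\overline{M})$ are all isomorphic. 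The same $r$-part estimates as in \eqref{eq:7A1-1} hold with $7$ replaced by $8$ and $2^3\cdot3\cdot7$ replaced by $|\AGL_3(2)|=2^6\cdot3\cdot7$. Combined with $|G_v|\mid|G_{uv}|^2$, they give large $p$-parts and ppd-parts in $\overline{G_{uv}}\cap\overline{M}$. Then \cite[Table 10.3]{transitive}, with the small cases $q\in\{4,8,16\}$ checked in \textsc{Magma} and the Mersenne case handled through the $2$-part, gives $\varphi_1(\overline{G_{uv}}\cap\overline{M})=\overline{M}_1$. Scott's Lemma then forces $\overline{G_{uv}}\cap\overline{M}$ to be a product of full diagonal subgroups over a block system of the transitive action. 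Using Lemma~\ref{3ATprime} is not available here since $s=2$ is assumed, but the $p$-part bound shows that fewer than $8$ diagonal blocks is impossible once $q>3$. This yields $\overline{M}\leqslant\overline{G_{uv}}$, hence $M\leqslant G_{uv}$ and $M^g=M$, which contradicts Lemma~\ref{factor}.

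The case $q=3$ has to be handled separately by computation. There $\overline{G_v}$ is a subgroup of $\Sy_4\wr\AGL_3(2)$ of index $2^4$ or $2^3$. I will enumerate the candidates and search for homogeneous-type factorisations satisfying the divisibility constraints, exactly as in Case~1 of Lemma~\ref{E7weyl}. I expect both factors to contain $\overline{M}$, after which the Claim gives the contradiction.

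I expect the main obstacle to be the block-structure step. Transitive but imprimitive subgroups of $\AGL_3(2)$, such as $2^3$ or $\C_2^2{:}\C_2$, allow $\overline{G_{uv}}\cap\overline{M}\cong\PSL_2(q)^k$ with $k\in\{1,2,4\}$. I will eliminate these with the $p$-part bound: one needs $|\overline{G_{uv}}\cap\overline{M}|_p\gtrsim q^{4}/(\text{small})$, while $\PSL_2(q)^2$ has $p$-part only $q^2$. The case $k=4$ is the delicate one. For it I intend to use an $r\in\ppd(p,2f)$ with the estimate $|L_{uv}|_r\geqslant|L_v|_r^{1/2}=|\PSL_2(q)|_r^{4}$, which is borderline, so I will also need the constraint that $\overline{G_{vw}}$ has order divisible by $7$ or covers the complementary quotient. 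This may require an additional \textsc{Magma} check of factorisations of $\AGL_3(2)$.
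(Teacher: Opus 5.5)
Your outline breaks down exactly at the step you flag. Your sentence claiming that the $p$-part bound excludes fewer than $8$ diagonal blocks once $q>3$ is false for four blocks of size $2$. In that case $\overline{G_{uv}}\cap\overline{M}\cong\PSL_2(q)^4$ has $p$-part $q^4$, and $r$-part $|\PSL_2(q)|_r^4$ for $r\in\ppd(p,2f)$. These meet the lower bounds forced by $|G_v|\mid|G_{uv}|^2$ exactly. So the ppd prime yields $\ell\geqslant4$ and nothing more. Since Lemma~\ref{3ATprime} is unavailable, no order estimate on $G_{uv}$ alone can remove $\ell=4$. You leave this case open, and the observation that $\overline{G_{vw}}R/R$ must then have order divisible by $7$ is not by itself a contradiction: you have to exploit $G_{uv}\cong G_{vw}$.

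The paper closes this with a composition-factor count.
\begin{itemize}
\item A \textsc{Magma} check shows that in every factorisation of $\AGL_3(2)$, after swapping, either the transitive factor is primitive, or it is soluble and the other factor is intransitive of order divisible by $7$.
\item If $\ell=4$ the first alternative fails. Hence $\overline{G_{uv}}R/R$ is soluble and $\m_{G_{uv}}(\PSL_2(q))=\ell=4=\m_{G_{vw}}(\PSL_2(q))$. Moreover $\overline{G_{vw}}R/R$ fixes one component and is primitive on the other seven.
\item The fixed component and the quotient $\overline{G_{vw}}R/R\leqslant\AGL_3(2)$ contribute at most $2$ to $\m_{G_{vw}}(\PSL_2(q))$; the only possible $\PSL_2(q)$-factor of the quotient is $\PSL_3(2)\cong\PSL_2(7)$. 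This forces full projections onto the seven components.
\item Scott's Lemma then gives multiplicity $1$ or $7$ from those seven components. So $\m_{G_{vw}}(\PSL_2(q))\leqslant3$ or $\geqslant7$, a contradiction.
\end{itemize}

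A smaller point concerns your Claim.
\begin{itemize}
\item Your first version cannot work. $\C_7$ has no irreducible $4$-dimensional $\mathbb{F}_2$-module (its irreducibles have dimension $1$ or $3$), so $Z\cong\C_2^4$ is never $\C_7$-irreducible.
\item Your perfectness version excludes $q=3$.
\item Both are repaired by noting that $Z$ lies in the Frattini subgroup of $M$. This holds because $\mathrm{Z}(\SL_2(q))$ lies in the Frattini subgroup of $\SL_2(q)$ for every odd $q$, including $q=3$. The paper handles $q=3$ by a direct \textsc{Magma} check instead.
\end{itemize}
The rest of your outline matches the paper.
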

\begin{proof}
Suppose, for a contradiction, that case (14) occurs. Then $L_v=d^4.\mathrm{PSL}_2(q)^8.d^4.\mathrm{AGL}_3(2)$ and \(G_v=L_v.m\) where \(m\mid f\), where $d=(2,q-1)$ and $q\geqslant 3$.
Now $L_v$ is a subgroup of maximal rank, and it arises from a subsystem $\Delta$ of type $ (A_1)^8$ of $E_8$. 
The subsystem $\Delta$ can be taken to be the one with simple roots $\{ \alpha_2,\alpha_3,\alpha_5,\alpha_7, \alpha_0, \beta_1,  \beta_2,  \beta_3\}$, where 
\begin{align*}
 &\alpha_0= 2\alpha_1 +3\alpha_2 +4\alpha_3 +6\alpha_4 +5\alpha_5 +4\alpha_6 + 3\alpha_7+2\alpha_2,\\
 &\beta_1=2\alpha_1+ 2\alpha_2 +3\alpha_3 +4\alpha_4 +34\alpha_5 +2\alpha_6 +\alpha_7, \\
 &\beta_2=\alpha_2 + \alpha_3 + 2\alpha_4 + 2\alpha_5+2\alpha_6+ \alpha_7,\\
 &\beta_3=\alpha_2 + \alpha_3 + 2\alpha_4 +  \alpha_5.
\end{align*}

Since a subsystem $(A_1)^4$ of $\Delta$ is generated by the four simple roots  $\alpha_2,\alpha_3,\alpha_5,\alpha_7$ of $E_8$, it follows from \cite[Proposition 2.6.2]{CFSG} that there exists $H=\mathrm{SL}_2(q)^4<E_8(q) $.
Let $ Z\cong \mathrm{C}_d^4$ be the centre of the group $H$. 
Let $M $ be the normal subgroup $d^4.\mathrm{PSL}_2(q)^8 $ of $G_v$, and denote by \(\overline{A}:=AZ/Z\) for any subgroup \(A\leqslant G_v\). 
Then $Z\leqslant M$ and $\overline{M}\cong \mathrm{PSL}_2(q)^8 $.

Now, assume that \(q\) is odd and so that \(Z\) is non-trivial. Then the centre of each $\mathrm{SL}_2(q)$ factor in $\mathrm{SL}_2(q)^4$ is not contained in $C$.

\smallskip
\textsc{Claim:}  Let $A \leqslant G_v$ and suppose that $q$ is odd. If $\overline{A}\geqslant\overline{M}$, then $A \geqslant M$.\quad

Since $\overline{A}\geqslant \overline{M}$, it suffices to show that $A \geqslant Z$. 
Since \(\PSL_2(q)^8\leqslant \overline{A}\), it follows that \(\PSL_2(q)^4=\overline{A}\cap\overline{H}\). By checking \textsc{Magma}, we find that there is no proper subgroup of \(\SL_2(3)^4\) containing \(\PSL_2(3)^4\) as a normal quotient. Thus, \(\SL_2(3)^4\cong H\leqslant A\), which is a contradiction. Now, \(q\geqslant5\). Then \(\SL_2(q)\) is quasisimple and \(\PSL_2(q)^4=\overline{A}\cap\overline{H}\), we deduce that \(Z\leqslant H\leqslant A\). Thus, \(Z<A\) and Claim~1 is proved.

\smallskip

Next, we divide our analysis into two cases: $q=3$ and $q>3$.  
 
\smallskip
\noindent{\bf Case 1.} Suppose that $q=3$.

Now $H=L$  and $\overline{G_v} $ is a subgroup of $\mathrm{S}_4\wr \mathrm{AGL}_3(2)$ with the index $2^4$.
Computation in {\sc Magma}\cite{magma} shows that $\mathrm{S}_4 \wr \mathrm{AGL}_3(2)$ has two classes of subgroups of indices $2^{4}$ which have epimorphisms to $\mathrm{AGL}_3(2)$, and they are the candidates for $\overline{G_v}$. 
Note that $\vert G_v\vert=2^{30}\cdot 3^9\cdot 7$.
Thus, $\vert G_{uv}\vert$ is divisible by $2^{15}\cdot 3^5\cdot 7$.  
In the factorisation $\overline{G_v}=  \overline{G_{uv}}\,\overline{G_{vw}}$, we may assume that $\vert \overline{G_{uv}} \vert_2\geqslant \vert \overline{G_{vw}} \vert_2$.
Then two factors satisfy the following conditions:  
\begin{itemize}
\item Both $\vert \overline{G_{uv}} \vert$ and $\vert \overline{G_{vw}} \vert$ are divisible by $2^{11}\cdot 3^5\cdot 7$.
\item $\overline{G_{uv}}/\mathbf{O}_2(\overline{G_{uv}})\cong  \overline{G_{vw}}/\mathbf{O}_2(\overline{G_{vw}})$, and  the Sylow $3$-subgroups of $ \overline{G_{uv}}$ and $\overline{G_{vw}} $ are isomorphic, and the Sylow $2$-subgroups of $\overline{G_{uv}}/\mathbf{O}_2(\overline{G_{uv}}) $ and $\overline{G_{vw}}/\mathbf{O}_2(\overline{G_{vw}}) $ are isomorphic.  
\item $\vert \overline{G_{uv}} \vert_2 \leqslant 2^4\cdot \vert \overline{G_{vw}} \vert_2$. 
\end{itemize}    
Computation in {\sc Magma}\cite{magma} shows that, for all possible factorisations $\overline{G_v}= \overline{G_{uv}}\,\overline{G_{vw}}$, it holds that both  $\overline{G_{uv}}$ and $\overline{G_{vw}}$ contain  $\overline{M}$ and  have epimorphisms to $\mathrm{AGL}_3(2)$. 
By Claim 1, we conclude that  both $G_{uv}$ and $G_{vw}$ contain $Z$. 
It follows from $G_{uv}^g=G_{vw}$ that $Z$ is normalised by $g$, contradicting Lemma~\ref{factor}.

\smallskip
\noindent{\bf Case 2.} Suppose that $q>3$. 
 
 Note that $\overline{G_{v}}\leqslant \mathrm{P\Gamma L}_2(q)\wr \mathrm{AGL}_3(2)$. Now, $M=G_v^{(\infty)}$ and $ \overline{M}\cong \mathrm{PSL}_2(q)^8\leqslant \overline{G_{v}}$.
 Let $\overline{M}_i\cong \mathrm{PSL}_2(q)$, \(1\leqslant i\leqslant 8\), be the $8$ components of $\overline{M}$, and let $R$ be the kernel of $\overline{G_v}$ acting on $\{\overline{M}_1,\ldots,\overline{M}_8\}$.
Then 
  $ \overline{G_{v}}/R \cong \mathrm{AGL}_3(2)$.
We consider the factorisation $ \overline{G_{v}}/R=  (\overline{G_{uv}}\,R/R) (\overline{G_{vw}}\,R/R)$.
Note that $|\mathrm{AGL}_3(2)|=2^6\cdot 3\cdot 7$.
By computation in \textsc{Magma}\cite{magma}, for each factorisation $ \AGL_3(2)=  (\overline{G_{uv}}\,R/R) (\overline{G_{vw}}\,R/R)$ of $\mathrm{AGL}_3(2)$, exchanging \(\overline{G_{uv}}\,R/R\) and \(\overline{G_{vw}}\,R/R\) if necessary, either \(\overline{G_{uv}}\,R/R\) is primitive, or \(\overline{G_{uv}}\,R/R\) is a transitive soluble subgroup of \(\AGL_3(2)\) and \(\overline{G_{vw}}\,R/R\) is intransitive with \(|\overline{G_{vw}}\,R/R|_7=7\). 

Note that \(\overline{G_{uv}}\,R/R\) is also transitive on \(\{\overline{M}_1,\ldots,\overline{M}_8\}\).  Let $\varphi_i$ be the projection of $\overline{M}$ to $\overline{M}_i$ for each $i\in \{1,\ldots,8\}$. 
Then $\varphi_1(\overline{G_{uv}} \cap \overline{M}) \cong \cdots \cong \varphi_8(\overline{G_{uv}} \cap \overline{M})$, and  so 
 \(\vert \overline{G_{uv}} \cap \overline{M} \vert\)   divides  \(\vert \varphi_1(\overline{G_{uv}}\cap \overline{M})\vert^8\). 
 
By similar argument  of Lemma~\ref{E7weyl}, we conclude that 
\begin{equation}\label{eq:8A1-1}
|\overline{G_{uv}} \cap \overline{M}|_r\geqslant \left(\frac{ |\mathrm{PSL}_2(q)| ^{4} }{d^4\cdot f \cdot (2^6\cdot 3\cdot 7)^{1/2}}\right)_r \text{ for any prime $r$}, 
\end{equation} 

and  $\varphi_i(\overline{G_{uv}} \cap \overline{M}) =\mathrm{PSL}_2(q)$ for \(1\leqslant i\leqslant8\).

Let \(\ell=\m_{\overline{G_{uv}} \cap \overline{M}}(\PSL_2(q))\). Then since \(\overline{G_{uv}}\,R/R\) is soluble, it follows that \(\m_{G_{uv}}(\PSL_2(q))=\m_{\overline{G_{uv}}\cap\overline{M}}(\PSL_2(q))+\m_{\overline{G_{uv}}\,R/R}(\PSL_2(q))=\ell\). By Scott's Lemma \cite[Theorem 4.16]{csaba}, \(\ell\) divides \(8\) such that \(\overline{G_{uv}}\,R/R\) preserves a partition of \(\{\overline{M}_1,\ldots,\overline{M}_8\}\) with \(\ell\) parts of size \(8/\ell\). Note that Eq. \eqref{eq:8A1-1} implies that \(
|\overline{G_{uv}} \cap \overline{M}|_p\geqslant q^3\).

Thus, \(q^\ell=|\PSL_2(q)|^\ell=|\overline{G_{uv}} \cap \overline{M}|_p\geqslant q^3\) and therefore \(\ell\geqslant3\). Since \(\ell\) also divides \(8\), we conclude that \(\ell=4\) or \(8\).

If \(\ell=8\), then \(\overline{M}\leqslant \overline{G_v}\) and it follows from Claim that \(M\leqslant G_{uv}\). Thus, \(M^g\leqslant G_{vw}\). However, \(M\) is the unique subgroup of \(G_v\) isomorphic to \(d^4.\PSL_2(q)^8\) and this would imply \(M^g=M\), contradicting Lemma \ref{factor}.

Hence, \(\ell=4\). This implies that \(\overline{G_{uv}}\,R/R\) preserves a partition of \(\{\overline{M}_1,\ldots,\overline{M}_8\}\) with \(4\) parts. Therefore, \(\overline{G_{uv}}\,R/R\) is not primitive and \(|\overline{G_{vw}}\,R/R|_7=7\).
Reordering \(\{\overline{M}_1,\ldots,\overline{M}_8\}\) if necessary, we assume that $\overline{G_{vw}}$ is transitive (and primitive) on $\{\overline{M}_1,\ldots,\overline{M}_7\}$ and so \(\varphi_1(\overline{G_{vw}} \cap \overline{M}) \cong \cdots \cong \varphi_7(\overline{G_{vw}} \cap \overline{M})\). Now, since \(4=\m_{G_{uv}}(\PSL_2(q))=\m_{G_{vw}}(\PSL_2(q))\) we have
\[
\begin{split}
4=\m_{G_{vw}}(\PSL_2(q))&=\m_{\overline{G_{vw}}\,R/R}(\PSL_2(q))+\m_{G_{vw}\cap M}(\PSL_2(q))\\
&=\m_{\overline{G_{vw}}\,R/R}(\PSL_2(q))+\m_{\overline{G_{vw}}\cap \overline{M}}(\PSL_2(q))\\
&\leqslant\m_{\overline{G_{vw}}\,R/R}(\PSL_2(q))+\sum_{i=1}^8\m_{\varphi_i(\overline{G_{vw}}\cap \overline{M})}(\PSL_2(q))\\
&=\m_{\overline{G_{vw}}\,R/R}(\PSL_2(q))+7\m_{\varphi_1(\overline{G_{vw}}\cap \overline{M})}(\PSL_2(q))+\m_{\varphi_8(\overline{G_{vw}}\cap \overline{M})}(\PSL_2(q))\\
\end{split}
\]

Since \(\overline{G_{vw}}\,R/R\leqslant \AGL_3(2)\) and \(\overline{G_{vw}}\cap \overline{M}_8\cong\PSL_2(q)\), it follows that \(\m_{\overline{G_{vw}}\,R/R}(\PSL_2(q))+\m_{\overline{G_{vw}}\cap \overline{M}_8}(\PSL_2(q))\leqslant2\). Thus, \(7\m_{\varphi_1(\overline{G_{vw}}\cap \overline{M})}(\PSL_2(q))\geqslant 2\) and so \(\m_{\varphi_1(\overline{G_{vw}}\cap \overline{M})}(\PSL_2(q))\geqslant1\). This implies that \(\varphi_i(\overline{G_{vw}}\cap \overline{M})=\PSL_2(q)\) for \(1\leqslant i\leqslant 7\). Since the induced action of \(\overline{G_{vw}}\,R/R\) on \(\{\overline{M}_1\ldots,\overline{M}_7\}\) is primitive, we deduce from Scott's Lemma \cite[Theorem 4.16]{csaba} that \(\m_{\overline{G_{vw}}\cap\prod_{i=1}^7\overline{M}_i}(\PSL_2(q))=1\) or \(7\). If \(\m_{\overline{G_{vw}}\cap\prod_{i=1}^7\overline{M}_i}(\PSL_2(q))=1\), then 

\[
4=\m_{G_{vw}}(\PSL_2(q))\leqslant \m_{\overline{G_{vw}}\,R/R}(\PSL_2(q))+\m_{\varphi_8(\overline{G_{vw}}\cap \overline{M})}(\PSL_2(q))+\m_{\overline{G_{vw}}\cap\prod_{i=1}^7\overline{M}_i}(\PSL_2(q))
\leqslant 3
\]
which is impossible.

If \(\m_{\overline{G_{vw}}\cap\prod_{i=1}^7\overline{M}_i}(\PSL_2(q))=7\), then 
\[ 4=\m_{G_{vw}}(\PSL_2(q))\geqslant \m_{\overline{G_{vw}}\cap\prod_{i=1}^7\overline{M}_i}(\PSL_2(q))=7,\]
which is also impossible. Thus, the result is proved.
\end{proof}

\begin{lemma}
    Suppose that Hypothesis \ref{hy:1} holds and \(L=E_8(q)\). Then case (15) does not occur.
\end{lemma}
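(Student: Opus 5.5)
Case (15) has $L_v=\C_m^2.(5\times\SL_2(5))$ with $m=q^4+\epsilon q^3+q^2+\epsilon q+1=\Phi_{10}(q)$ if $\epsilon=-1$ and $\Phi_5(q)$ if $\epsilon=+1$. The plan is to apply Lemma~\ref{lm:m2.o} with $\mathcal{O}=5\times\SL_2(5)$, using $L$ as the vertex-transitive normal subgroup (Hypothesis~\ref{hy:1} and Proposition~\ref{pro:HsMs-1}). It then suffices to find a prime $r\mid m$ with $|\mathcal{O}|_r=1$ and $|G/L|_r<m_r$. This gives $s\leqslant 2$, which contradicts $s\geqslant 3$. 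If the statement ``does not occur'' is meant in the sense of the earlier lemmas (ruling out $(G,2)$-arc-transitivity), I would follow the template of Lemma~\ref{lm:e7-qpm1} as a fallback, as described at the end. The primary route, however, is Lemma~\ref{lm:m2.o}, together with an upgrade argument.

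First I choose $r$. Take $r\in\ppd(p,5f)$ when $\epsilon=+1$, and $r\in\ppd(p,10f)$ when $\epsilon=-1$. Zsigmondy (Lemma~\ref{existppd}) guarantees such $r$ exists, since $(p,5f)$ and $(p,10f)$ are never exceptional pairs. Then $r\mid\Phi_{5}(q)$ or $\Phi_{10}(q)$ respectively, so $r\mid m$. Moreover $r\equiv1\pmod{5f}$, so $r\geqslant 11$. Since $|\mathcal{O}|=2^3\cdot3\cdot5^2$, we get $|\mathcal{O}|_r=1$. Also $|G/L|$ divides $|\Out(L)|=f$ and $r>f$, so $|G/L|_r=1<m_r$. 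Lemma~\ref{lm:m2.o} then yields $s\leqslant 2$.

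To strengthen this to non-existence, I would argue as in Lemma~\ref{lm:e7-qpm1}. Let $R=\mathbf{\Omega}_r(\C_m^2)\cong\C_r^2$, which is the unique normal subgroup of $G_v$ of its isomorphism type. Since $G_v/\C_m^2$ embeds in $(5\times\SL_2(5)).f$, the quotient $\SL_2(5)$ acts faithfully on $R$, so $R$ is an irreducible $2$-dimensional module. The homogeneous factorisation forces $|G_{uv}|_r\geqslant m_r>1$. Hence $G_{uv}\cap R\neq1$, and the $r$-part of $G_{uv}$ must come from $R$ because $r\nmid|\mathcal{O}|f$.

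The main obstacle is passing from $G_{uv}\cap R\neq 1$ to $R\leqslant G_{uv}$. That requires $G_{uv}$ to project onto a subgroup of $5\times\SL_2(5)$ acting irreducibly on $R$. I would first use the factorisation $G_v/\C_m^2=(G_{uv}\C_m^2/\C_m^2)(G_{vw}\C_m^2/\C_m^2)$ of $(5\times\SL_2(5)).f$, together with the fact that $\SL_2(5)$ has no factorisation with two isomorphic proper factors (the index must divide the square of the factor order). This forces both images to contain $\SL_2(5)$. Once $R\leqslant G_{uv}$, we get $R^g\leqslant G_{vw}$ and $R^g\cong\C_r^2$. A Sylow argument shows $R$ is the unique subgroup of $G_v$ of order $r^2$ with exponent $r$. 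Hence $R^g=R$, which contradicts Lemma~\ref{factor}.
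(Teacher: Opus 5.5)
Your primary route does not prove the lemma. Hypothesis~\ref{hy:1} already assumes $s\geqslant 2$, and the claim is that case (15) cannot occur at all, so the bound $s\leqslant 2$ from Lemma~\ref{lm:m2.o} gives no contradiction. The ``upgrade'' is therefore the entire proof, not a fallback. Its set-up agrees with the paper: the same $r$, $R=\mathbf{\Omega}_r(\C_m^2)$, and $|G_{uv}|_r=|G_{vw}|_r\geqslant m_r$, so that both $G_{uv}$ and $G_{vw}$ meet $R$ nontrivially. Likewise $R$ is the only elementary abelian subgroup of order $r^2$ in $G_v$. Irreducibility of $R$ still needs a reason, such as \cite[Lemma 4.6(i)]{LSS}. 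That $G_v/\C_m^2$ embeds in $(5\times\SL_2(5)).f$ does not by itself make $\SL_2(5)$ act faithfully.

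The genuine gap is your claim that both $G_{uv}\C_m^2/\C_m^2$ and $G_{vw}\C_m^2/\C_m^2$ contain $\SL_2(5)$.
\begin{itemize}
\item The isomorphism $G_{uv}\cong G_{vw}$ is conjugation by $g$, which does not normalise $\C_m^2$; that is exactly Lemma~\ref{factor}. So the two quotients need not be isomorphic, and the (true) non-existence of homogeneous factorisations of $\SL_2(5)$ does not apply.
\item Because the kernel of $G_v\to\A_5$ is soluble, you can only conclude that the two images in $\A_5$ are both $\A_5$ or both soluble.
\item The soluble factorisations $\A_5=\A_4\cdot\C_5$ and $\A_5=\A_4\cdot\mathrm{D}_{10}$ are not excluded. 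Comparing $2$- and $3$-parts, using $(m,6)=1$, disposes of them when $6\nmid|G_v:L_v|$, but not in general.
\item A factor with image $\C_5$ acts on $R$ through an abelian group of exponent dividing $r-1$ (recall $r\equiv1\pmod 5$). Hence it acts reducibly, and you cannot hope to make both factors irreducible.
\end{itemize}

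The missing idea, which is what the paper uses, is that one irreducible factor suffices.
\begin{enumerate}
\item Pass to $G_v/N\cong\A_5$; the paper obtains $N$ from $\N_{\PGL_2(r)}(\A_5)=\A_5$.
\item In every factorisation of $\A_5$, one factor is $\A_5$ or $\A_4$. After adjoining the central scalars, its image in $\C_k\circ\SL_2(5)$ contains $\SL_2(5)$ or $\SL_2(3)$, so it acts irreducibly on $R$. The paper verifies this for all factorisations of $\SL_2(5)$ in Magma.
\item Whichever of $G_{uv},G_{vw}$ is irreducible meets $R$ nontrivially, so it contains $R$. This gives $R^g=R$ or $R^{g^{-1}}=R$, and Lemma~\ref{factor} yields the contradiction in either case.
\end{enumerate}
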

\begin{proof}
    In case (15), $L_v$ has a normal subgroup
    \(
    M\cong \C_{m}^2
    \), where $m=q^4+\epsilon q^3+q^2+\epsilon q+1$ with $\epsilon\in \{1,-1\}$, and $L_v/M\cong 5\times\SL_2(5)$.

Note that $m\mid q^5-\epsilon$.
Let $r\in \mathrm{ppd}(p, {5f(3-\epsilon)}/{2})$.  
   Then $r>5$ and $r\mid m$.
   Let \(R=\mathbf{\Omega}_r(M)\cong \mathrm{C}_r^2\) and consider the conjugation of $G_v$ on $R$, with kernel $C=\C_{G_v}(R)$.
   Then $M\leq C$ and $G_v/C\lesssim \mathrm{GL}_2(r)$.
   We view $G_v/C$ as a subgroup of $\mathrm{GL}_2(r)$ acting on $R$.
    According to \cite[Lemma 4.6(i)]{LSS}, \(L_v/C\) is irreducible on \(R\).
    By maximal subgroups of $\mathrm{SL}_2(r)$ and its automorphism groups in~\cite[Tables 8.1 and 8.2]{holt}, we conclude that $L_v/C$ has an irreducible normal subgroup $S=\mathrm{SL}_2(5) $.
    Note that $S\trianglelefteq G_v/C$ since it is characteristic in $L_v/C$.
    Let $\phi$ be the natural homomorphism from $\mathrm{GL}_2(r)$ to $\mathrm{PGL}_2(r)$.
    By \cite[Table 8.2]{holt},  $\N_{\mathrm{PGL}_2(r)}(\phi(S))=\phi(S)\cong \mathrm{A}_5$.
    This implies that $G_v/C$ acts trivially on $S$ by conjugation, and hence $G_v/C=\mathrm{C}_k\circ S$ for some positive integer $k$ such that $k\mid 5f$.
    Therefore, $G_v$ has a normal subgroup $N$ such that $G_v/N\cong  \phi(S)\cong  \mathrm{A}_5 $.

Let \(F=\mathbb{Q}(\zeta_{60})\), where \(\zeta_{60}\) is a primitive \(60\)-th root of unity in $\mathbb{C}$.
Since $60$ is the exponent of $S$, the field $F$ is a split field of $S$ (see~\cite[Theorem 9.2.7]{Webb}).
Let $V$ be a $2$-dimensional irreducible $FS$-module. 
We consider all factorisations \(S=AB\) with \(A, B\leqslant S\). 
For each such factorisation, we check whether the restriction of \(V\) to \(A\) or to \(B\) is irreducible.
Computation in {\sc Magma} shows that either $A$ or $B$ is irreducible on $V$ for all factorisations $S=AB$ with \(A, B\leqslant S\) and all such $2$-dimensional irreducible $ FS$-modules $V$ (there are exactly two such modules).

Consider the factorisation $G_v/N= (G_{uv}N/N)(G_{vw}N/N)$.
Let $Z\cong \mathrm{C}_2$ be the centre of $S=\mathrm{SL}_2(5)=2.\mathrm{A}_5$, and view $G_v/N\cong \mathrm{A}_5$ as the factor group $S/Z$.
For each pair $(G_{uv}N/N, G_{vw}N/N)$, we let $A,B\leqslant S$ such that $A/Z=G_{uv}N/N$ and $B/Z=G_{vw}N/N$.
Then $S=AB$.  
Since \(r \nmid |S| = 120\) (as $r>5$), the irreducible \(FS\)-module \(V\) reduces modulo \(r\) to an irreducible \(\mathbb{F}_r S\)-module (see~\cite[Lemma 9.4.1]{Webb}).
Therefore, the computation result in the last paragraph implies that at least one of $G_{uv}$ and $G_{vw}$ acts irreducibly on $R$.
    Without loss of generality, we assume that $G_{uv}$ acts irreducibly on $R$. 
     Note that $|L_v/M|=|5\times \mathrm{SL}_2(5)|$ and $G_v/L_v\leqslant \mathrm{C}_f$.
     Thus, $ |G_v/M|_r=1$.
     It follows from $G_v=G_{uv}G_{vw}$ that $|G_{uv}\cap M|_r>1$ and hence $G_{uv}\cap R>1$.
     Then the irreducibility of $G_{uv}$ on $R$ implies $R\leqslant G_{uv}$.
     Thus, $R^g\leqslant G_{vw}$ as $G_{vw}=G_{uv}^g$.
     Since $ |G_v/M|_r=1$,  $R$ is the unique subgroup of $G_v$ isomorphic to $\mathrm{C}_r^2$.
     Therefore, $R^g=R$, contradicting Lemma \ref{factor}. Hence, the result is proved.
   \end{proof}

\begin{lemma}
    Suppose that Hypothesis \ref{hy:1} holds and \(L=E_8(q)\). Then case (16) does not occur.
\end{lemma}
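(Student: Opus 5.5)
I will follow the template of the proof for case (15). In case (16), $L_v$ has a normal subgroup $M\cong \C_m^2$ with $m=q^4-q^2+1=\Phi_{12}(q)$, and $L_v/M\cong \C_{12}\circ(\mathrm{Q}_8.\Sy_3)$. Moreover $G_v/L_v$ is cyclic of order dividing $f$. Fix $r\in\ppd(p,12f)$, which exists by Lemma~\ref{existppd}. Then $r\mid m$, $r>12f$, and in particular $r>7$. Hence $|G_v/M|_r=1$, since $|L_v/M|=2^5\cdot 3^2$ and $r\nmid f$.

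Set $R=\mathbf{\Omega}_r(M)\cong \C_r^2$. This is characteristic in $M$, hence normal in $G_v$, and it is the unique subgroup of $G_v$ isomorphic to $\C_r^2$. Let $C=\C_{G_v}(R)\geqslant M$, so that $G_v/C\lesssim \GL_2(r)$. I will invoke \cite[Lemma 4.6]{LSS} (or a direct Weyl group computation as in \cite[Subsection 2.2]{ex1}) to get that $L_v/M$ acts irreducibly on $R$. The irreducible constituent carrying the action should be the group $S=\mathrm{Q}_8.\Sy_3\cong 2.\Sy_4$, which by the Remark is a $\mathcal{C}_6$-type subgroup of $\SL_2(r)$. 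Its image $\phi(S)$ in $\PGL_2(r)$ is $\Sy_4$ (or $\A_4$). Using \cite[Tables 8.1, 8.2]{holt}, $\N_{\PGL_2(r)}(\phi(S))=\phi(S)$ when $\phi(S)\cong\Sy_4$, so $G_v/C$ is a central product $\C_k\circ S'$ with $S'$ the image of $S$. I will then pass to a quotient $G_v/N\cong \phi(S)$.

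Next I consider the factorisation $G_v/N=(G_{uv}N/N)(G_{vw}N/N)$ and lift it to $S=AB$. Working over a splitting field in characteristic $0$ and reducing mod $r$, which is legitimate since $r\nmid|S|=48$, I will check by {\sc Magma} that for every factorisation $S=AB$ and every faithful $2$-dimensional irreducible module, one of $A$, $B$ acts irreducibly. This is plausible because a reducible subgroup of $2.\Sy_4$ in dimension $2$ is abelian, hence cyclic or small. Two such subgroups cannot have product $2.\Sy_4$, since that would need $|A||B|/|A\cap B|=48$ with both $A$ and $B$ cyclic of order at most $8$. I expect this to be checkable even by hand.

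Hence, without loss of generality, $G_{uv}$ is irreducible on $R$. From $|G_v|_r\leqslant|G_{uv}|_r^2$ and $|G_v/M|_r=1$ we get $G_{uv}\cap R\neq1$, so irreducibility forces $R\leqslant G_{uv}$. Then $R^g\leqslant G_{vw}\leqslant G_v$, and uniqueness gives $R^g=R$, contradicting Lemma~\ref{factor}.

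The main obstacle is pinning down the action of $L_v/M$ on $R$, namely its irreducibility and the image in $\PGL_2(r)$. The $\C_{12}$ factor acts by scalars, so the question is whether the image of $\mathrm{Q}_8.\Sy_3$ is indeed $2.\Sy_4$ and self-normalising modulo scalars. If instead the image is only $2.\A_4$, I would redo the factorisation check with $\phi(S)\cong\A_4$ or $\Sy_4$ accordingly. The key observation survives in that case, because two abelian subgroups cannot factorise $\Sy_4$ or $\A_4$ with the needed orders. So some factor is nonabelian and therefore irreducible on $R$.
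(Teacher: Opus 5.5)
Your proposal is correct and follows essentially the same route as the paper. It uses the same prime $r\in\ppd(p,12f)$ with $R=\mathbf{\Omega}_r(M)$, uses the self-normalising $\Sy_4\leqslant\PGL_2(r)$ to get $G_v/N\cong\Sy_4$, checks that every factorisation of $2.\Sy_4$ has an irreducible factor, and then concludes via $R\leqslant G_{uv}$, the uniqueness of $R$ and Lemma~\ref{factor}. Your by-hand replacement for the {\sc Magma} check is valid and cleanest in $\PGL_2(r)$: an $r'$-subgroup fixing a projective point is cyclic, and two cyclic subgroups cannot factorise $\Sy_4$. For the $\A_4$ contingency you must use ``cyclic'' rather than ``abelian'' at the level of the quotient, since $\A_4=\C_2^2\,\C_3$ is a product of abelian subgroups.
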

\begin{proof}
    In case (16), $L_v$ has a normal subgroup
    \(
    M\cong \C_{m}^2
    \), where $m=q^4-q^2+1$, and $L_v/M\cong \mathrm{C}_{12}\circ (\mathrm{Q}_8.\mathrm{S}_3)$.
The argument follows the same strategy as in case (15).   Let \(r\in \mathrm{ppd}(p,12f)\) (so that \(r\mid m\)), and consider the conjugation action of \(G_v\) on 
    \(R:=\mathbf{\Omega}_r(M)\cong \C_r^2\). 
    Note that \(\mathrm{Q}_8.\mathrm{S}_3 \cong 2.\mathrm{S}_4\), and \(\mathrm{S}_4\) is self-normalising in \(\mathrm{PGL}_2(r)\) 
    (see \cite[Table 8.1]{holt}). 
    It follows that \(G_v\) has a normal subgroup \(N\) such that 
    \(G_v/N\cong  \mathrm{S}_4\). 
    We then examine factorisations of \(2.\mathrm{S}_4\) and the irreducibility of the two factors on 
    any \(2\)-dimensional irreducible \(\mathbb{Q}(\zeta_{12})\)-module $V$.  
Computation  in {\sc Magma}   shows that for every group factorisation of \(2.\mathrm{S}_4\), at least one factor 
    acts irreducibly on \(V\). 
    Consequently, at least one of \(G_{uv}\) or \(G_{vw}\) is irreducible on \(R\).Without loss of generality, assume that \(G_{uv}\) acts irreducibly on \(R\). Since \(r>12f=|\Out(L)|\), we have \(|G_v|_r=|L_v|_r=|M|_r\). From \(r\in\Pi(G_v)=\Pi(G_{uv})\) it follows that \(|G_{uv}\cap M|_r=|G_{uv}|_r>1\) and therefore \(G_{uv}\cap R\neq 1\). Since the action of \(G_{uv}\) is irreducible on \(R\), it follows that \(R\leqslant G_{uv}\) and so \(R^g\leqslant G_{vw}\). Note that \(R\) is the unique subgroup of \(G_v\) isomorphic to \(\C_r^2\). Thus, \(R^g=R\), contradicting Lemma \ref{factor}.
   \end{proof}

\begin{lemma}
 Suppose that Hypothesis~$\ref{hy:1}$ holds and \(L=E_8(q)\). Then case (17) does not occur.    
\end{lemma}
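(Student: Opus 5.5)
In case (17), $L_v=\C_m.\C_{30}$ with $m=q^8+\epsilon q^7-\epsilon q^5-q^4-\epsilon q^3+\epsilon q+1$. The plan is to apply Lemma~\ref{lm:m.o} directly, with $\mathcal{O}=\C_{30}$. That lemma gives $s\leqslant 1$, which contradicts Hypothesis~\ref{hy:1} ($s\geqslant 2$). So all we need is a prime $r$ dividing $m$ with $|\mathcal{O}|_r=1$ and $|G/L|_r<m_r$. Since $|G/L|$ divides $|\Out(L)|=f$, it suffices to find $r\mid m$ with $r\nmid 30f$.

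First identify $m$ cyclotomically. For $\epsilon=+1$, $m=\Phi_{30}(q)$; for $\epsilon=-1$, $m=\Phi_{15}(q)$. This follows from $\Phi_{30}(x)=\Phi_{15}(-x)$ and the standard expansion $\Phi_{15}(x)=x^8-x^7+x^5-x^4+x^3-x+1$; I would check it directly against the formula in Table~\ref{tab:E8maximalrank}.

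Now choose $r\in\ppd(p,30f)$ when $\epsilon=+1$, and $r\in\ppd(p,15f)$ when $\epsilon=-1$. Lemma~\ref{existppd} guarantees such $r$ exist, because $30f,15f\neq 2,6$. Since $r$ is a primitive prime divisor of $q^{30}-1$ (respectively $q^{15}-1$) as a power of $p$, it divides $\Phi_{30}(q)$ (respectively $\Phi_{15}(q)$), which equals $m$. Indeed, $r$ divides $q^{30}-1=\prod_{d\mid 30}\Phi_d(q)$ but divides no $q^d-1$ with $d<30$, so it must divide $\Phi_{30}(q)$. Moreover $r\equiv 1\pmod{15f}$, so $r>15f\geqslant 15$. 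Hence $r\nmid 30$, giving $|\mathcal{O}|_r=1$, and $r\nmid f$, giving $|G/L|_r=1<m_r$.

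Lemma~\ref{lm:m.o} therefore yields $s\leqslant 1$, a contradiction. The only real obstacle is the structural input: we need $L_v=\C_m.\C_{30}$ with $\C_m$ normal and cyclic, and $\mathcal{O}$ of order exactly $30$. This is read off \cite[Table 5.2]{LSS}, where the torus is cyclic of order $\Phi_{30}(q)$ or $\Phi_{15}(q)$ and the normaliser quotient is $\C_{30}$. No factorisation analysis is needed.
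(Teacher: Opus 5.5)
Your proposal is correct and is essentially the paper's proof: apply Lemma~\ref{lm:m.o} to $L_v=\C_m.\C_{30}$, taking $r$ to be a primitive prime divisor of $p^{nf}-1$ with $n\in\{15,30\}$ chosen so that $r\mid m$; then $r>15f$ gives $|\C_{30}|_r=|G/L|_r=1$. Your matching ($m=\Phi_{30}(q)$ for $\epsilon=+1$, $m=\Phi_{15}(q)$ for $\epsilon=-1$) is the correct one, whereas the paper's formula $m=\Phi_{15a}(q)$ with $a=(3-\epsilon)/2$ swaps the two cases; this is a harmless slip that your version avoids.
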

\begin{proof}
In case (17), 
 $L_v$ has a normal subgroup
    \(
    M\cong \C_{m} 
    \), where $m=q^8+\epsilon q^7-\epsilon q^5-q^4-\epsilon q^3+\epsilon q+1$ with $\epsilon\in \{1,-1\}$, and $L_v/M\cong \mathrm{C}_{30}$.
    Note that $m=\Phi_{15a}(q)$, where $a=(3-\epsilon)/2$.
    Hence, $m$ is divisible by some $r\in\mathrm{ppd}(p,15af)$. 
    Since $r>15af$,  we have \(|G/L|_r=|\Out(L)|_r=f_r=1\), and therefore the result follows from Lemma~\ref{lm:m.o}. 
    \end{proof}

\begin{lemma}
    Suppose that Hypothesis \ref{hy:1} holds and \(L=E_8(q)\). Suppose that case (18) occurs. Then \(\Gamma\) is not \((L,2)\)-arc-transitive and \(s\leqslant2\).
\end{lemma}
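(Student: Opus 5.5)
Case (18): $L_v=(q^2+1)^4.(4\circ 2^{1+4}).\A_6.2$. By Lemma~\ref{pro:homofac}, showing that $\Gamma$ is not $(L,2)$-arc-transitive amounts to showing $L_v\neq L_{uv}L_{vw}$. The bound $s\leqslant 2$ then follows from Lemma~\ref{pro:HsMs-1}: if $s\geqslant3$ then $\Gamma$ would be $(L,2)$-arc-transitive. So suppose, for a contradiction, that $L_v=L_{uv}L_{vw}$ with $L_{vw}=L_{uv}^g$.

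Set $M=\C_{q^2+1}^4\trianglelefteq L_v$ and pick $r\in\ppd(p,4f)$. Then $r\mid q^2+1$, $r>4f\geqslant4$, and $r\equiv1\pmod 4$, so $r\geqslant5$. The quotient $L_v/M\cong(4\circ2^{1+4}).\A_6.2$ has order $2^{10}\cdot 3^2\cdot5$, so $r$ divides $|L_v/M|$ only when $r=5$. I would first dispose of $r=5$, i.e.\ $q^2\equiv-1\pmod5$ with $5$ the only useful primitive divisor. This is done either by choosing a different prime of $q^2+1$ or by a direct $5$-part count. In the generic case $r\ne5$, the group $R=\mathbf{\Omega}_r(M)\cong\C_r^4$ is the unique subgroup of $L_v$ isomorphic to $\C_r^4$, and $|L_v|_r=|M|_r$.

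Next I study the action of $L_v/M$ on $R$. The group $4\circ2^{1+4}$ acts faithfully and absolutely irreducibly on the $4$-dimensional module, since $r\equiv1\pmod4$ gives a primitive fourth root of unity. Hence any subgroup of $L_v/M$ whose image in $\A_6.2$ covers enough, or which contains the extraspecial part, is irreducible on $R$. The key step is a factorisation analysis of $\overline{L_v}=L_v/\mathbf{O}_2(L_v/M)$-type quotients, following the strategy of cases (15)--(16). I compute in {\sc Magma} all factorisations $X=AB$ of $X=(4\circ2^{1+4}).\A_6.2$ with $A$, $B$ of equal order and with compatible insoluble composition factors (since $L_{uv}\cong L_{vw}$). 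I then check, for each faithful $4$-dimensional representation over $\mathbb{Q}(\zeta_{4})$ or $\mathbb{Q}(\zeta_{60})$, reducing modulo $r$ via \cite[Lemma 9.4.1]{Webb} since $r\nmid|X|$, that at least one factor acts irreducibly. Alternatively, because the factorisations of $\A_6.2$ are very restricted (one factor contains $\A_6$, or the factorisation involves $\A_5$- or $3^2$-type subgroups), it may be simpler to argue by hand that one factor contains $4\circ2^{1+4}$ or acts irreducibly.

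Finally, say $L_{uv}$ is irreducible on $R$. Since $r\in\pi(L_v)=\pi(L_{uv})$ and $|L_v/M|_r=1$, we get $L_{uv}\cap R\ne1$, hence $R\leqslant L_{uv}$ and so $R^g\leqslant L_{vw}\leqslant L_v$. Uniqueness of $R$ forces $R^g=R$, which contradicts Lemma~\ref{factor} applied to $L$; this uses that $L$ is arc-transitive and primitive, and note $L_v$ is maximal in $L$. The main obstacle I expect is the factorisation and irreducibility computation for $(4\circ2^{1+4}).\A_6.2$, together with the exceptional case where $5$ is the only available primitive prime.
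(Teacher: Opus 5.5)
Your overall plan matches the paper's. You take $r\in\ppd(p,4f)$ and put $R=\mathbf{\Omega}_r(M)\cong\C_r^4$. A factorisation check in $L_v/M\cong(4\circ2^{1+4}).\A_6.2$ shows that one of $L_{uv}M/M$, $L_{vw}M/M$ is irreducible on $R$. You then deduce $R\leqslant L_{uv}$ and $R^g=R$, contradicting Lemma~\ref{factor}. However, two steps do not work as you set them up.

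First, the filter ``$A$, $B$ of equal order'' in your {\sc Magma} search is not justified. From $L_{uv}\cong L_{vw}$ you do get $\ICF(L_{uv}M/M)=\ICF(L_{vw}M/M)$, since $M$ is abelian. You also get $|L_{uv}M/M|_s=|L_{vw}M/M|_s$ for primes $s\nmid q^2+1$. But $g$ need not normalise $M$, so $|L_{uv}\cap M|$ and $|L_{vw}\cap M|$ may differ. The images can then differ in their $2$-parts when $q$ is odd, and in their $5$-parts when $5\mid q^2+1$. A search restricted to equal-order pairs may therefore miss the factorisation that actually occurs. You have to run the irreducibility check over all factorisations satisfying only valid constraints. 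The paper restricts to factors of order divisible by $60$, which it obtains from \cite[Lemma 3.3]{small}.

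Second, the case $r=5$ is not disposed of. For $q\in\{2,3,7\}$ we have $q^2+1\in\{5,10,50\}$, so there is no other odd prime to choose. A bare $5$-part count gives nothing: $|L_v|_5=5\,(q^2+1)_5^4$, and $|L_{uv}|_5^2\geqslant|L_v|_5$ is easily satisfied. Both of your tools also break here:
\begin{itemize}
\item lifting from characteristic $0$ via \cite[Lemma 9.4.1]{Webb} needs $5\nmid|L_v/M|$;
\item the step ``$|L_v/M|_r=1$, hence $L_{uv}\cap R\neq1$'' is unavailable.
\end{itemize}
The paper does not separate this case. Once $L_{uv}M/M$ is irreducible on $R$, either $R\leqslant L_{uv}$ or $L_{uv}\cap R=1$. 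In the latter case $L_{uv}\cap M$ has trivial $r$-part, because all elements of order $r$ in $M$ lie in $R$. Hence $|L_{uv}|_r\leqslant|L_v/M|_r\leqslant r$, whereas $|L_v|_r\geqslant r^4>|L_{uv}|_r^2$, contradicting $L_v=L_{uv}L_{vw}$.

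Using this counting step instead of yours removes the need to avoid $r=5$. You would then still need two further arguments:
\begin{itemize}
\item irreducibility over $\mathbb{F}_5$ by a direct modular argument (e.g.\ \cite[Lemma 4.6(i)]{LSS} plus a check of the factors);
\item a proof that $R$ is still the unique $\C_5^4$ in $L_v$, which is no longer automatic once $5$ divides $|L_v/M|$.
\end{itemize}
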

\begin{proof}
   Suppose, for a contradiction, that case (18) occurs and \(\Gamma\) is \((L,2)\)-arc-transitive. Then \(L_v=L_{uv}L_{vw}\). Note that \(L_{v}\cong \C_{q^2+1}^{4}.((4\circ2^{1+4}).\Sy_{6})\). Let us denote by \(M\cong\C_{q^2+1}^{4}\) and \(\overline{S}=SM/M\) for any \(S\leqslant L_{v}\). 
   
   Let \(r\in\ppd(q,4)\). Then \(r>4\) and \(r\mid q^{2}+1\). Note that \(R:=\mathbf{\Omega}_r(M)\) is the unique subgroup of \(M\) isomorphic to \(\C_{r}^{4}\), and from \cite[Lemma 4.6(i)]{LSS} it follows that \((4\circ2^{1+4}).\Sy_{6}\) acts irreducibly on \(K\).

   \medskip
   Observe that \(\ICF(G_{v})=\{\A_6\}\), so that \cite[Lemma 3.3]{small} applies, and it follows that \(\ICF(L_{uv})=\ICF(L_{vw})=\{\A_i\}\) for \(i\in\{5,6\}\). Thus, \(\A_i\in\ICF(\overline{L_{uv}})\cap\ICF(\overline{L_{vw}})\) for \(i\in\{5,6\}\). Thus, \(|\overline{L_{uv}}|\) and \(|\overline{L_{vw}}|\) are divisible by \(60\).  Now, by checking {\sc{magma}}\cite{magma} the factorisations of \((4\circ2^{1+4}).\Sy_6\) satisfying the above numerical condition, we conclude that at least one of \(\overline{L_{uv}}\) or \(\overline{L_{vw}}\) is irreducible on \(R\cong\C_r^4\). Without loss of generality, assume that \(\overline{L_{uv}}\) is irreducible.
  
This implies that \(L_{uv}\cap R=1\) or \(R\). Assume that \(L_{uv}\cap R=1\). Then \(|L_{uv}|_{r}=|\varphi(L_{uv})|_{r}\leqslant |\Sy_{6}|_{r}\leqslant r\). This would imply that \(|L_{v}|_{r}\geqslant r^{4}>r^{2}\geqslant |L_{uv}|_{r}\), which is impossible. Hence \(L_{uv}\geqslant R\) and so \(R^{g}\leqslant L_{vw}\). Since \(R\) is the unique subgroup of \(L_{v}\) isomorphic to \(\C_{r}^{4}\), it follows that \(R^{g}=R\). However, this contradicts Lemma \ref{factor}.
\end{proof}
\begin{lemma}
    Suppose that Hypothesis \ref{hy:1} holds and \(L=E_8(q)\). Suppose that case (19) occurs. Then \(\Gamma\) is not \((L,2)\)-arc-transitive and \(s\leqslant2\).
\end{lemma}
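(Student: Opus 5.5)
Suppose, for a contradiction, that case (19) occurs and that \(\Gamma\) is \((L,2)\)-arc-transitive. Then \(L_v=L_{uv}L_{vw}\) is a homogeneous factorisation, and \(L_v\cong \C_m^4.(2.(3\times\PSU_4(2)))\) with \(m=q^2+\epsilon q+1\). Put \(M\cong\C_m^4\) and \(\overline{S}=SM/M\) for \(S\leqslant L_v\). This is the same situation as case (18), with \(\A_6\) replaced by \(\PSU_4(2)\), so we follow the argument of the previous lemma. The conclusion \(s\leqslant 2\) then follows at once: if \(s\geqslant3\), Lemma~\ref{pro:HsMs-1} would make \(\Gamma\) \((L,2)\)-arc-transitive.

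The key steps are as follows.

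\begin{enumerate}
\item Take \(r\in\ppd(p,(3-\epsilon)f)\). This set is nonempty: for \(\epsilon=-1\) we have \((p,6f)\neq(2,6)\) because \(q>2\), and for \(\epsilon=+1\) the only exceptions are \(q\in\{2,4\}\), which we treat by direct computation or by using \(3\mid m\). Then \(r\mid m\), \(r>3\), and \(r\nmid |2.(3\times\PSU_4(2))|=2^7\cdot3^5\cdot5\) unless \(r=5\). The case \(r=5\) can only happen when \(\epsilon=-1\), \(f\) even, or similar, and we handle it by choosing the ppd appropriately or by an order argument.
\item Let \(R=\mathbf{\Omega}_r(M)\cong\C_r^4\). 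This is characteristic in \(L_v\), and by \cite[Lemma 4.6(i)]{LSS} the group \(L_v/M\) acts irreducibly on \(R\).
\item Since \(\ICF(L_v)=\{\PSU_4(2)\}\), Lemma~\ref{lm:qsimple}(b.4) (Table~\ref{tb:exceptionalfacs}) restricts the images of \(L_{uv}\) and \(L_{vw}\) in \(\PSU_4(2).\mathcal{O}\). They are \(2^4.5\) (or \(2^4{:}\mathrm{D}_{10}\), \(2^4.5{:}4\)) paired with a subgroup of \(3^{1+2}.2{:}\A_4\) or a \(3^3\)-type group.
\item \emph{Irreducibility check (main obstacle).} Lift these factorisations to \(2.(3\times\PSU_4(2))\), which is realised as the complex reflection group \(\mathrm{ST}_{32}\) acting on a \(4\)-dimensional module over \(\mathbb{Q}(\zeta_3)\) (or over \(\mathbb{Q}(\zeta_{12})\)). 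Verify in {\sc Magma} that in every such lifted factorisation at least one factor acts irreducibly on the \(4\)-dimensional module. Reduction modulo \(r\) preserves irreducibility when \(r\nmid|L_v/M|\) \cite[Lemma 9.4.1]{Webb}. I expect this to be the hardest step, since the \(3^{1+2}\) factor can be reducible. If both factors turn out to be reducible in some lifted factorisation, I would fall back on orders instead. The key fact is that \(|L_v|_r=|R|\cdot\)(higher \(r\)-parts) is \(r^{4a}\), so homogeneity forces \(|L_{uv}\cap M|_r\geqslant r^{2a}\). Comparing the submodule lattice of \(R\) under each factor then still forces some nonzero invariant subspace to be normalised by \(g\), contradicting Lemma~\ref{factor}.
\item With the irreducibility in hand, assume \(\overline{L_{uv}}\) is irreducible on \(R\). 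Since \(|L_v/M|_r=1\) and \(|L_v|_r\leqslant|L_{uv}|_r^2\), we get \(L_{uv}\cap R\neq1\), and irreducibility gives \(R\leqslant L_{uv}\).
\item Then \(R^g\leqslant L_{vw}\leqslant L_v\). As \(R\) is the unique subgroup of \(L_v\) isomorphic to \(\C_r^4\), we get \(R^g=R\), contradicting Lemma~\ref{factor}.
\end{enumerate}
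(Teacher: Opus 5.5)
Your overall route is the paper's: take a primitive prime divisor $r$ of $m=q^2+\epsilon q+1$, use the irreducibility of $L_v/M$ on $R=\mathbf{\Omega}_r(M)\cong\C_r^4$, show that one of $L_{uv},L_{vw}$ is irreducible on $R$, and conclude $R\leqslant L_{uv}$ and $R^g=R$, contradicting Lemma~\ref{factor}. Steps 2, 5 and 6 are fine. However, step 1 as written picks the wrong prime. The set $\ppd(p,(3-\epsilon)f)$ is $\ppd(p,2f)$ or $\ppd(p,4f)$. Its elements divide $q+1$ or $q^2+1$, which are coprime to $q^2+q+1$ and $q^2-q+1$ respectively, so your $R$ is trivial. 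You need $r\in\ppd(p,3f)$ for $\epsilon=+1$ and $r\in\ppd(p,6f)$ for $\epsilon=-1$; the paper uses $\ppd(q,3)$ and $\ppd(q,6)$. Then $r\equiv 1\pmod 3$, so $r\geqslant 7$ and $r\nmid|L_v/M|=2^7\cdot3^5\cdot5$. The case $r=5$ never arises (indeed $5\nmid q^2\pm q+1$), and no value of $q$ needs separate treatment. Your suggested escape via $3\mid m$ would not work anyway, since $3$ divides $|L_v/M|$.

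The more substantial gap is step 3. Lemma~\ref{lm:qsimple} requires $L_v^{(\infty)}$ to be quasisimple, and here it is not. $L_v^{(\infty)}$ maps onto $(L_v/M)'\cong\Sp_4(3)$, which acts nontrivially on $R$. Hence $[R,L_v^{(\infty)}]=R$ is a non-central abelian normal subgroup of $L_v^{(\infty)}$. So you may not assume that the images of $L_{uv},L_{vw}$ modulo $\Rad(L_v)$ are core-free and read them off Table~\ref{tb:exceptionalfacs}. Worse, the $\PSU_4(2)$-rows of that table are exactly the factorisations that cannot occur. Since $|\Rad(L_v)|=6|M|$ is prime to $5$ and $5\in\pi(L_v)=\pi(L_{uv})=\pi(L_{vw})$, both images in $L_v/\Rad(L_v)\cong\PSU_4(2)$ have order divisible by $5$. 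By contrast, every such row has a factor of order prime to $5$. So your step 4 check is aimed at the wrong set, and it never examines the factorisations that do occur, namely those in which a factor maps onto $\PSU_4(2)$.

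The paper avoids Lemma~\ref{lm:qsimple} here. It shows that $2^4\cdot5$ divides both $|L_{uv}M/M|$ and $|L_{vw}M/M|$, and then checks all factorisations of $2.(3\times\SU_4(2))$ with this property. You can also finish by hand. Every core-free factorisation of $\PSU_4(2)$ has a factor of $5'$-order (inspect its maximal factorisations in \cite{LPS}). Hence one of $L_{uv}M/M$, $L_{vw}M/M$ maps onto $\PSU_4(2)$, and so contains the quasisimple group $(L_v/M)'\cong\Sp_4(3)$. This group is irreducible on $R$, because it acts nontrivially and has no nontrivial representation of degree less than $4$ in coprime characteristic. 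Finally, drop the order-based fallback in step 4: an $L_{uv}$-invariant subspace of $R$ need not be normal in $L_v$, so $g$ normalising it contradicts nothing.
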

\begin{proof}
  Suppose, for a contradiction, that case (19) occurs, and \(\Gamma\) is \((L,2)\)-arc-transitive. Then \(L_v=L_{uv}L_{vw}\).
  Let us denote by \(M=\prod_{i=1}^4M_i\) where \(M_i\cong \C_{q^2\pm q+1}^{4}\) for \(1\leqslant i\leqslant4\). Note that \(\ppd(q,3i)\neq \varnothing\) for \(i=1,2\). Let \(r_i\in\ppd(q,3i)\) for \(i=1,2\) and
  \[
  r:=\begin{cases}
      r_1&\text{if \(M\cong \C_{q^2+q+1}^4\);}\\
      r_2&\text{if \(M\cong \C_{q^2-q+1}^4\).}\\
  \end{cases}
  \]
  Then \(R=\mathbf{O}_r(M)\cong\C_r^4\) and it follows from \cite[Lemma 4.6(i)]{LSS} that \(2.(3\times\SU_4(2))\) acts irreducibly on \(R\). 
  
Let \(\overline{S}=SM/M\) for any \(S\leqslant L_v\). Then \(\overline{L_v}=\overline{L_{uv}}\,\overline{L_{vw}}\cong 2.(3\times \SU_{4}(2))\). Note that \(|M|_s=1\) for \(s\in\{2,5\}\). Since \(2,5\in\pi(\SU_4(2))\subseteq\pi(L_v)=\pi(L_{uv})=\pi(L_{vw})\), it follows that \(|\overline{L_{uv}}|_s=|L_{uv}|_s=|L_{vw}|_s=|\overline{L_{vw}}|_s\) for \(s\in\{2,5\}\). Moreover, \(|\overline{L_{vw}}|_2=|\overline{L_{uv}}|_2=|L_{uv}|_2\geqslant |L_{v}|^{\frac{1}{2}}_2=|2.(3\times \SU_{4}(2))|_2^{\frac{1}{2}}=2^4\). Thus, \(|\overline{L_{uv}}|\) and \(|\overline{L_{vw}}|\) are divisible by \(2^4\cdot5\). By checking the factorisations of \(2.(3\times \SU_{4}(2))\) satisfying the above numerical conditions, we find that at least one of the factors is irreducible. Without loss of generality,
 \(\overline{L_{uv}}\) is irreducible. Hence, \(L_{uv}\cap R=1\) or \(R\). First assume that \(L_{uv}\cap R=1\). Since \(\mathbf{O}_r(L_{uv}\cap M)\leqslant R\), it follows that \(|L_{uv}\cap M|_r=1\). Note that \(q^2\pm q+1\) is not divisible by \(5\) and so \(r\neq 5\). This implies that \(|2.(3\times\SU_4(2))|_r=1\) and so \(|L_{uv}|_{r}=|L_{uv}\cap M|_{r}=1\), contradicting the fact that \(\pi(L_{v})=\pi(L_{uv})\). Thus \(R\leqslant L_{uv}\) and \(R^{g}\leqslant L_{vw}\). Since \(R\) is the unique subgroup of \(L_{v}\) isomorphic to \(\C_{r}^{4}\), it follows that \(R^{g}=R\). However, this contradicts Lemma \ref{factor}.
 
\end{proof}
\begin{lemma}\label{lm:e8case2324}
    Suppose that Hypothesis \ref{hy:1} holds and \(L=E_8(q)\). Then case (20) does not occur. 
\end{lemma}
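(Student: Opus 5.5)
I will follow the strategy of Lemma~\ref{lm:e7-qpm1}, the analogous $E_7$ case. In case (20) we have $L_v=\C_m^8.W(E_8)$ with $m=q-\epsilon$, and $q\geqslant 5$ when $\epsilon=+1$. Here $W(E_8)\cong 2.\mathrm{O}_8^+(2)$, with $|W(E_8)|=2^{14}\cdot 3^5\cdot 5^2\cdot 7$, and $G_v\leqslant \C_m^8.W(E_8).f$. Put $M=G_v\cap \C_m^8$ and $\overline{G_v}=G_v/\Rad(G_v)$. Then $\Soc(\overline{G_v})=\POmega_8^+(2)$ and $\overline{G_v}\leqslant \POmega_8^+(2).2$; since $L_v/\C_m^8=W(E_8)$ and the field automorphisms act trivially on the Weyl group modulo the torus, in fact $\overline{G_v}=\POmega_8^+(2).2$. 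In the factorisation $\overline{G_v}=\overline{G_{uv}}\,\overline{G_{vw}}$ we have $\ICF(\overline{G_{uv}})=\ICF(\overline{G_{vw}})$, because $G_{uv}\cong G_{vw}$. I would combine this with the order bound $|\overline{G_{uv}}|_r\geqslant |\overline{G_v}|_r^{1/2}/|M|_r$ and use \cite[Table~1]{LPS} or a {\sc Magma} computation of the factorisations of $\mathrm{O}_8^+(2)$. The goal is to show that either both factors contain $\POmega_8^+(2)$, or they are a known pair such as $(\Sp_6(2),\Sp_6(2))$ up to triality. This step is the main obstacle. Unlike $\PSp_6(2)$ in the $E_7$ case, $\POmega_8^+(2)$ has homogeneous factorisations with core-free factors (for example $\Sp_6(2)\cdot\Sp_6(2)^\tau$). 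So I cannot simply claim that $\overline{G_{uv}}=\overline{G_v}$. I would handle those factorisations separately in the steps below.

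Next, as in the Claim of Lemma~\ref{lm:e7-qpm1}, I would show that $m$ is a power of $2$. Suppose not. Take an odd prime $r\mid m$ with $r\nmid f$; use $r\in\ppd(p,2f)$ when $f\geqslant2$, or when $\epsilon=-1$. Then $R=\mathbf{\Omega}_r(M)\cong\C_r^8$ is the reflection module of $W(E_8)$ reduced mod $r$, and it is absolutely irreducible for $r$ odd. For the factorisation pairs found above, the relevant factor still acts irreducibly on $R$: $\Sp_6(2)\times 2\cong W(E_7)$ acts on the $E_8$ lattice with $8=7+1$, but in $W(E_8)$ the preimage of $\Sp_6(2)$ of index $120$ is $2\times\Sp_6(2)$ and is reducible. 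So in that subcase I would instead argue with $r$-parts. If $G_{uv}\cap R=1$, then $|G_{uv}|_r\leqslant|W(E_8)|_r\leqslant r^2\cdot(\text{small})$, while $|G_v|_r\geqslant r^8$, contradicting $|G_v|_r\leqslant|G_{uv}|_r^2$. Otherwise $G_{uv}\cap R$ is a nonzero $\overline{G_{uv}}$-submodule of order at least $r^{4}$, by the same $r$-part count. With submodule lattice $0<U<R$ of dimensions $1$ and $7$, this forces the intersection to be $R$ or the $7$-dimensional submodule $U'$. In either case $G_{uv}$ and $G_{vw}=G_{uv}^g$ contain a characteristic piece of the unique $\C_r^8$ in $G_v$. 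Irreducibility of $\overline{G_{vw}}$ then forces $R^g=R$, or the corresponding equality for $U'$, contradicting Lemma~\ref{factor}.

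Finally, let $m=2^a$ with $a\geqslant 2$, so $q\in\{3,5,7\}$ or a Fermat/Mersenne-type value with $q-\epsilon=2^a$. Set $R=\mathbf{\Omega}_2(M)\cong\C_2^8$. This is the $E_8$ root lattice mod $2$, which is the irreducible orthogonal module for $\mathrm{O}_8^+(2)$. Hence, as in the $E_7$ argument, if $|G_{uv}\cap M|>2^a$ then $R\leqslant G_{uv}$, and also $R^g\leqslant M$, because $\mathbf{O}_2(G_{vw}M/M)$ is tiny. This gives $R^g=R$, contradicting Lemma~\ref{factor}. Thus $|G_{uv}\cap M|\leqslant 2^a$, and
\[
2^{8a+14}=|L_v|_2\leqslant|G_{uv}|_2^2\leqslant 2^{2a}\cdot 2^{28}\cdot f_2^2 ,
\]
so $6a\leqslant 14+2\log_2 f_2$, leaving only $a\leqslant 2$ in practice, that is $q\in\{3,5\}$. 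For these values I would compute $|G_{uvw}|=|G_{uv}|^2/|G_v|$ as in Eq.~\eqref{eq:guvw}. Its $\{3,5,7\}$-part forces $\overline{G_{uvw}}$ to contain $\POmega_8^+(2)$, possibly using a {\sc Magma} check of the subgroups of $\mathrm{O}_8^+(2)$ of that order, and hence $|G_{uvw}|_2\geqslant 2^{12}$. This contradicts the small $2$-part. Where the numerics are tight, I would fall back on Lemma~\ref{3ATprime} with $r=2$ to get $s\leqslant 2$ directly.
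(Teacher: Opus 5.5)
Your architecture matches the paper's: equal-$\ICF$ factorisations of $\overline{G_v}\cong\mathrm{O}_8^+(2)$ via {\sc Magma}, a factor irreducible on $R=\mathbf{\Omega}_r(M)$, reduction to $m=2^a$, then a $2$-part count. However, the stage $m=2^a$ has a genuine gap.

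You justify ``if $|G_{uv}\cap M|>2^a$ then $R\leqslant G_{uv}$'' by irreducibility of $R$ under $\mathrm{O}_8^+(2)$. What is needed is irreducibility under $\overline{G_{uv}}$. In Lemma~\ref{lm:e7-qpm1} these coincide because $\overline{G_{uv}}=\overline{G_v}$ there. Here, as you note yourself, the factors may be of $\Sp_6(2)$-type. In such a pair one factor is the stabiliser $\Sp_6(2)\times 2$ of a nonsingular vector, with preimage $W(E_7)\times W(A_1)$. That group fixes the image $\bar\alpha$ of a root in $E_8/2E_8$, so it preserves $\langle\bar\alpha\rangle$ and the $7$-dimensional subspace $\bar\alpha^{\perp}\ni\bar\alpha$. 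A rank-$2$ intersection $G_{uv}\cap R$ may therefore be $\bar\alpha^{\perp}$, and $R\leqslant G_{uv}$ does not follow. Likewise, ``$R^g\leqslant M$ because $\mathbf{O}_2(G_{vw}M/M)$ is tiny'' only bounds $|R^g\cap M|$ from below. You also need minimal normality of $R^g$ in $G_{vw}$.

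Your closing step also fails as stated. For an $\Sp_6(2)$-type pair, the $\{3,5,7\}$-part of $|G_{uvw}|=|G_{uv}|^2/|G_v|$ is only $3^3\cdot 7$, which does not force $\overline{G_{uvw}}\geqslant\POmega_8^+(2)$. Falling back on Lemma~\ref{3ATprime} requires $s\geqslant 3$ and would only give $s\leqslant 2$. The lemma asserts under Hypothesis~\ref{hy:1} that case (20) does not occur at all.

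The missing idea is to fix, as the paper does, $G_{uv}$ to be a factor that is irreducible on $R$. By the symmetry between $g$ and $g^{-1}$ this costs nothing. Such a factor always exists: either a factor contains $\POmega_8^+(2)$, or the pair is $\{\Sp_6(2)\times 2,\Sp_6(2)^{\tau}\}$. In the latter case the preimage of the triality image $\Sp_6(2)^{\tau}$ in $W(E_8)$ is a nonsplit $2.\Sp_6(2)$, which acts on $R$ through its $8$-dimensional spin representation. With this choice $G_{uv}\cap R\in\{1,R\}$.
\begin{itemize}
\item If $R\leqslant G_{uv}$, then $R$ is minimal normal in $G_{uv}$, so $R^g$ is minimal normal in $G_{vw}$. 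Since $G_{vw}M/M$ has no normal $\C_2^8$, we get $R^g\leqslant M$, hence $R^g=R$, contradicting Lemma~\ref{factor}.
\item If $G_{uv}\cap R=1$, then $G_{uv}\cap M=1$ because $M$ is a $2$-group, so $|G_{uv}|_2\leqslant 2^{14}|G:L|_2$. Then $2^{8a+14}|G:L|_2=|G_v|_2\leqslant|G_{uv}|_2^2$ gives $2^{8a}\leqslant 2^{14}|G:L|_2\leqslant 2^{15}$, since $p^f-\epsilon=2^a$ forces $f\leqslant 2$. This is impossible for $a\geqslant 2$.
\end{itemize}
This disposes of every $a\geqslant 2$ at once, so the residual values $q\in\{3,5\}$, the $G_{uvw}$ computation and the fallback become unnecessary.

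Your odd-$r$ step works the same way. The $r$-part count gives $G_{uv}\cap R\neq 1$, and $\mathbf{O}_r(G_{vw}M/M)=1$ for $r\nmid f$ gives $R^g\leqslant M$. You should straighten out the sentence that first calls the factor irreducible and then exhibits the reducible $2\times\Sp_6(2)$. The claimed bound $|G_{uv}\cap R|\geqslant r^4$ does not follow from the count for $r\in\{3,5\}$, but it is not needed.
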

\begin{proof}
 Suppose, for a contradiction, that case (20) occurs. 
 Now, \(L_{v}=\C_m^{8}.W\), where \(m=q-\epsilon\) with \(\epsilon\in\{1,-1\}\), $W=W(E_8)\cong 2.\mathrm{SO}_8^+(2)$, and \(q\geqslant5\) if \(\epsilon=+1\). 
 Note that  $\vert W\vert= 2^{14} \cdot 3^5 \cdot 5^2 \cdot 7$.
 Take $M$ to be the normal subgroup of $G_v$ such that \(M\cong\C_{q\pm 1}^8\).

Let $\overline{G_{v}}=G_v/\Rad(G_v)$. 
Then $\overline{G_{v}}$ is an almost simple group with socle $\mathrm{P\Omega}_8^{+}(2)$. 
Note that $N_{\mathrm{Aut}(L)}(L_v)=\C_m^{8}.(W\times f)$ (see the proof of \cite[Lemma 3.1]{LSS}).
Therefore, $\overline{G_{v}}\cong \mathrm{SO}_8^+(2)$.
In the factorisation $\overline{G_{v}}= \overline{G_{uv}}\, \overline{G_{vw}}$, since $G_{uv}\cong G_{vw}$, we have \(\ICF(\overline{G_{uv}})=\ICF(\overline{G_{vw}})\).
Computation in {\sc Magma} \cite{magma} shows that both  $\overline{G_{uv}}$ and $\overline{G_{vw}}$ contain a normal subgroup isomorphic to $\mathrm{PSp}_6(2)$ or \(\mathrm{P\Omega}_8^{+}(2)\). 

Let $r\in\pi(m)$ and $R=\Omega_r(M)\cong \mathrm{C}_r^8$.
The group $R$ can be viewed as a $\mathbb{F}_r \mathrm{P\Omega}_8^{+}(2)$-module arising from the action of $W$ on $M$.
Note that \(\POmega_8^+(2)\) acts irreducibly on $R$ for any $r$; On the other hand, if the factorisation $\mathrm{P\Omega}_8^{+}(2)=\overline{G_{uv}}\,\overline{G_{vw}}$ has one factor isomorphic to $\mathrm{PSp}_6(2)$ and \(\ICF(\overline{G_{uv}})=\ICF(\overline{G_{vw}})\), we verify by \textsc{Magma} \cite{magma} that both factors are isomorphic to \(\PSp_6(2)\), and at least one of \(\overline{G_{uv}}\) or \(\overline{G_{vw}}\) acts irreducibly on $R$. 
Thus, without loss of generality, we may assume that  $G_{uv}$ acts irreducibly on $R$.

With a similar argument as in Lemma \ref{lm:e7-qpm1}, we see that \(m=q-\epsilon=2^a \) with $a\geqslant 2$. Let \(r=2\) and \(R=\Omega_2(M)\cong\C_2^8\).  
Since $G_{uv}$ acts irreducibly on $R$, we have $G_{uv}\cap R=1$ or $R$. 

Assume first that $G_{uv}\cap R=R$. Since \(\overline{G_{uv}}\) acts irreducibly on \(R\cong\C_2^8\), it follows that \(R\) is a minimal normal subgroup of \(G_{uv}\). Thus, \(R^g\) is also a minimal normal subgroup of \(G_{uv}^g=G_{vw}\). Since \(G_{vw}\cap M\trianglelefteq G_{vw}\), it follows that \(R^g\leqslant M\) or \(R^g\cap M=1\). If \(R^g\cap M=1\), then \[\C_2^8\cong R^g\cong R^g/(R^g\cap M)\cong R^gM/M\trianglelefteq G_{vw}M/M\leqslant G_v/M\leqslant  W(E_8).f,\]
which is impossible. 
Note that \(\ICF(G_{vw}M/M)=\ICF(\overline{G_{vw}})=\{\PSp_6(2)\}\) or \(\{\POmega^+_8(2)\}\). Thus, \(\PSp_6(2)\leqslant G_{vw}M/M\leqslant \C_2^2\times\Sp_6(2).f\) or \(2.\Omega^+_8(2)\leqslant G_{vw}M/M\leqslant W(E_8).f\). However, none of the above groups contains a normal subgroup isomorphic to \(\C_2^8\), contradicting the claim. 
Thus, \(R^g\leqslant M\). Since \(R\) is the unique subgroup of \(M\) isomorphic to \(R\), it follows that \(R^g=R\), contradicting Lemma \ref{factor}.

Thus, $G_{uv}\cap R=1$. Then  $|G_{uv} \cap M|_2=1$, and so
\[|G_{uv}|_2\leqslant  |G_{uv} \cap M|_2\cdot |W|_2\cdot f_2=|W|_2\cdot f_2=2^{14}f_2.\]
If $f>2$, then $p^f-\epsilon$ is divisible by some $t\in \mathrm{ppd}(p,kf)$, where $k=\frac{3-\epsilon}{2}$.
However, $p^f-\epsilon=2^a$ cannot be divisible by such a prime $t>f>2$. 
Thus, $f\in \{1,2\}$.
Note that $|G_{v}|=|L_v|_2\cdot f_2=2^{8a+14}f_2 $.
Then
\[
\frac{(|G_{uv}|_2)^2}{|G_v|_2}\leqslant \frac{(2^{14}f_2)^2}{|L_v|_2\cdot f_2}=\frac{2^{28}f_2}{2^{8a+14}}\leqslant \frac{2^{28}\cdot 2}{2^{8\cdot 2+14}}=\frac{1}{2},
\]
 contradicting the condition \(|G_{uv}|_2^2\geqslant |G_v|_2\) for the homogeneous factorisation $G_v=G_{uv}G_{vw}$. Hence, the result is proved.
\end{proof}

\medskip\noindent
\emph{Proof of Proposition \ref{prop: e8 max rk}:} This immediately follows from Lemmas \ref{lm:e8case1-6}--\ref{lm:e8case2324}.

\subsection{Other maximal subgroups}\label{ome}
In this section, we deal with the remaining maximal subgroups \(H\) of \(E_7(q)\) and \(E_8(q)\).
\subsubsection{\(L=E_7(q)\)}\label{ome7}
By \cite{Craven} and \cite{mk}, if \(H\) is a maximal subgroup of \(G\) such that \(\Soc(G)=L=E_{7}(q)\) such that \(H\) is neither a parabolic nor maximal rank subgroup, then \(H\) is one of the following:
\begin{enumerate}
    \item [(i)] \(H\cap L=\PSL_2(q)\times F_4(q)\);
    \item[(ii)] \(H\cap L=G_2(q)\times \PSp_6(q)\);
    \item[(iii)] \(H\cap L=\PGL_3^\epsilon(q).2\) with \(\epsilon\in\{-1, 1\}\) and \(p=5\);
    \item[(iv)] \(H\cap L=\N_L(\PGL_3^\epsilon(q).2)\) with \(\epsilon\in\{-1, 1\}\) and \(p\geqslant7\);
    \item[(v)] \(H\cap L=\PGL_3^\epsilon(q).2\) with \(\epsilon\in\{-1, 1\}\) and \(p\geqslant7\);
    \item[(vi)] \(H\cap L=\PSL_2(q)\) with \(p\geqslant17\);
    \item[(vii)] \(H\cap L=\PSL_2(q)\) with \(p\geqslant 19\);
    \item[(viii)] \(H\cap L={}^3\!D_4(q).3\) with \(p\geqslant3\);
    \item[(ix)] \(H\cap L=E_7(q^{\frac{1}{r}}).(r,d)\) where \(d=(2,q-1)\) and \(r\) is a prime;
    \item[(x)] \(H\cap L=(2^2\times\POmega_8^+(q).2^2).T\) where \(T=\Sy_3\) if \(q\equiv1\pmod{8}\) and \(T=\C_3\) if \(q\equiv \pm3\pmod{8}\);
    \item[(xi)] \(H\cap L=\PSL_2(q)\times\PGL_2(q)\).
\end{enumerate}
\begin{lemma}\label{ome7i}
    Suppose that Hypothesis \ref{hy:1} holds and \(L=E_{7}(q)\).Then cases (i)--(ii) do not occur. 
\end{lemma}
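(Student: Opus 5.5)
The plan is to follow the template of Lemma~\ref{lm:e7cases1-7} and the cases (7)--(8) argument for $E_8(q)$. In both cases, $L_v=H\cap L$ is a direct product $M_1\times M_2$ of two quasisimple (indeed simple here) groups of Lie type. These are $(M_1,M_2)=(\PSL_2(q),F_4(q))$ with $q>3$ in case (i), and $(G_2(q),\PSp_6(q))$ with $q>3$ in case (ii). Take $M:=F_4(q)$ in case (i) and $M:=\PSp_6(q)$ in case (ii). Set $C=\C_{G_v}(M)$ and $\overline{X}=XC/C$ for $X\leqslant G_v$. Since $G_v\leqslant \N_{\Aut(L)}(L_v)=L_v.\Out(L)$ and $|\Out(L)|=(2,q-1)f$, the quotient $\overline{G_v}$ is almost simple with socle $T\cong M$. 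Also $C\leqslant M_1.\Out(L)$, where $M_1$ is the other factor. We have the homogeneous factorisation $G_v=G_{uv}G_{vw}$ from Lemma~\ref{pro:homofac}, and it induces $\overline{G_v}=\overline{G_{uv}}\,\overline{G_{vw}}$.

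Next choose a prime $r$ dividing $|T|$ but not $|M_1|\cdot|\Out(L)|$. In case (i) take $r=r_{12}\in\ppd(p,12f)$: it divides $|F_4(q)|$ via $q^{12}-1$, but not $|\PSL_2(q)|$. In case (ii) take $r\in\ppd(p,4f)$: it divides $q^4-1$, hence $|\PSp_6(q)|$, but not $|G_2(q)|=q^6(q^6-1)(q^2-1)$. If needed, $\ppd(p,10f)$ is a backup choice, though $q^{10}$ does not appear in $|\PSp_6(q)|$, so $\ppd(p,4f)$ is the right one. In either case $r>4f$, so $|\Out(L)|_r=1$ and $|C|_r=1$. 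Then $|G_v|_r=|T|_r$, and since $|G_v|$ divides $|G_{uv}||G_{vw}|$ and $G_{uv}\cong G_{vw}$, both $|\overline{G_{uv}}|_r$ and $|\overline{G_{vw}}|_r$ exceed $1$.

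The main step is to apply the classification of maximal factorisations of almost simple groups \cite[Tables 1--5]{LPS}. For $T=F_4(q)$, the only core-free factorisations occur with $q$ even and factors $Sp_8(q)$ and $^3D_4(q).3$ (or $\Omega_8^-$-type). I would check that in each such pair the two factors cannot both have order divisible by $r_{12}$ together with the other order constraint forced by homogeneity. If $r_{12}$ alone does not suffice, I would add $r_8\in\ppd(p,8f)$, which divides $|Sp_8(q)|$ but not $|{}^3D_4(q)|$. For $T=\PSp_6(q)$, the core-free factorisations involve $G_2(q)$, $\Sp_2(q^3)$, $\mathrm{O}_6^-(q)$ and parabolic $P_1$ types. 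Here I would pick primes from $\{r_4,r_6\}$, together with $p$ where needed, so that no factorisation has both factors divisible by all of them; I would also use the homogeneity condition $|\overline{G_{uv}}|_{r}^2\geqslant|T|_r$ for $r=r_6$ to kill pairs like $(G_2(q),\mathrm{O}_6^-(q))$. This bookkeeping, especially for $\PSp_6(q)$ with $q$ even, where many factorisations exist, is the expected obstacle.

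Once the factorisation analysis forces, say, $\overline{G_{uv}}\geqslant T$, we conclude $M\trianglelefteq G_{uv}$, because $M$ is perfect and $C\cap M=1$. Then $M^g\trianglelefteq G_{vw}$. Since $M$ is the unique subgroup of $G_v$ isomorphic to $M$ (the other factor $M_1$ is non-isomorphic), we get $M^g=M$, contradicting Lemma~\ref{factor}.
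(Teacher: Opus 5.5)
Your route is the paper's. You pass to $\overline{G_v}=G_v/\C_{G_v}(M)$, choose a primitive prime divisor $r$ of $|T|$ coprime to $|M_1|\cdot|\Out(L)|$, use \cite{LPS} to force $T\leqslant\overline{G_{uv}}$, and finish with Lemma~\ref{factor}. In case (ii), $r\in\ppd(p,4f)$ is the right prime. Drop the idea of using $r_6$, which also appears in the paper's text: $r_6$ divides $|G_2(q)|$, so $|C|_{r_6}\neq 1$ and homogeneity gives no lower bound on $|\overline{G_{uv}}|_{r_6}$. Case (i) with $r_{12}$ works for every $q$, since $r_{12}\nmid|\Sp_8(q)|$.

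The gap is the \cite{LPS} step for $T=\PSp_6(q)$ when $q$ is small and even. You discard $q\leqslant 3$ using the restriction $q>3$ from the introduction. However, the list in Subsection~\ref{ome7} has no such restriction, and the paper disposes of $q=2$ separately by a {\sc Magma} check.

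Your argument cannot handle $q=2$. Write the quadratic forms polarising to the symplectic form $\beta$ as $Q+\beta(a,\cdot)^2$ with $Q$ of minus type. The plus-type ones then correspond to the $36$ vectors with $Q(a)=1$, on which $\mathrm{O}_6^-(2)$ is transitive. Hence $\Sp_6(2)=\mathrm{O}_6^-(2)\,\mathrm{O}_6^+(2)$, with both factors of order divisible by $5=r_4$, and $5$ is the only prime of $|\Sp_6(2)|$ not dividing $|G_2(2)|$.

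The same obstruction occurs at $q=4$ when $G\neq L$. There $\overline{G_v}=\Sp_6(4).2$. The two $\mathrm{O}_6^-(4)$-orbits on plus-type forms (with $Q(a)=\omega$ or $\omega^2$) are fused by the field automorphism. So $\Sp_6(4).2=(\mathrm{O}_6^-(4).2)(\mathrm{O}_6^+(4).2)$, with both factors of order divisible by $17=r_4$, again the only usable prime.

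In these two cases, ``$r$ divides both factors'' does not force $T\leqslant\overline{G_{uv}}$, so you need an extra argument. For $q=2$ a computation works, as in the paper. For $q=4$ you need something finer that uses $G_{uv}\cong G_{vw}$ together with the $G_2(4)$-part; the paper's written proof does not supply this either. For $q=3$, for $q=4$ with $G=L$, and for $q\geqslant 5$, $r_4$ alone does suffice.

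Finally, to get $M\leqslant G_{uv}$ from $\overline{G_{uv}}\geqslant T$, you need more than $C\cap M=1$: you also need that $M$ is not a section of $C$ (Goursat). This holds by comparing orders.
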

\begin{proof}
    Suppose, for a contradiction, that one of the cases (i)--(ii) occurs. If \(q=2\) and case (ii) occurs, then \(G=L\) and \(G_v=G_2(2)\times\PSp_6(2)\).  We check by \textsc{Magma} \cite{magma} that there is no homogeneous factorisation for \(L_v=L_{uv}L_{vw}\), which is a contradiction. Thus, we may assume that if case (ii) occurs, then \(q\geqslant3\).
 
 Note that \(\ppd(p,fi)\neq\varnothing\) for \(i\in\{4,12\}\). For each such \(i\), choose \(r_i\in\ppd(p,fi)\). Note that \(G_v\) has a normal subgroup \(M\) such that \(r\in\pi(M)\). Let \(C=\C_{G_v}(M)\) and \(\overline{S}:=SC/C\) for any \(S\leqslant G_v\). Then \(\overline{G_v}\) is almost simple with socle \(T\), where \[(M,r,\N_{\Aut(L)}(C),T)=\begin{cases}(F_4(q),r_{12},\PSL_2(q).\Out(L),F_4(q))&\text{case (i);}\\(\PSp_6(q),r_{6},G_2(q).\Out(L),\PSp_6(q))&\text{case (ii).}\end{cases}\] 

For both cases, note that for \(r\in J\), since \(|\Out(L)|=(2,q-1)f\) and \(r>4f\), we have \(|\Out(L)|_r=1\) and therefore \(|C|_r=|\N_{\Aut(L)}(C)|_r=1\). Thus, \(|\overline{G_{uv}}|_{r}=|\overline{G_{vw}}|_{r}>1\). Then it follows from \cite[Tables 1--3, 5]{LPS} that at least one of \(\overline{G_{uv}}\) or \(\overline{G_{vw}}\) contains \(T\). Without loss of generality, assume that \(\overline{G_{uv}}\geqslant T\).

Thus, in both cases, \(T\leqslant \overline{G_{uv}}\), it follows that \(M\trianglelefteq G_{uv}\). Since \(G_{uv}^{g}=G_{vw}\), we deduce that \(M^{g}\trianglelefteq G_{vw}\). However, \(G_v\) contains no other subgroup isomorphic to \(M\). Hence, \(M^{g}=M\), contradicting Lemma~\ref{factor}. Thus, the result is proved.
\end{proof}
\begin{lemma}
    Suppose that Hypothesis \ref{hy:1} holds and \(L=E_7(q)\). Then cases (iii)--(x) do not occur.
\end{lemma}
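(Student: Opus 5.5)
We argue by contradiction, assuming one of cases (iii)--(x) occurs. In each case $G_v^{(\infty)}$ is either quasisimple or a small product of quasisimple groups, and the plan is to feed the socle of $\overline{G_v}=G_v/\Rad(G_v)$ into Lemma~\ref{lm:qsimple} wherever possible. We group the cases as follows:
\begin{itemize}
\item cases (iii)--(v), where $G_v^{(\infty)}$ has unique insoluble composition factor $\PSL_3^\epsilon(q)$;
\item cases (vi)--(vii), where it is $\PSL_2(q)$;
\item case (viii), where it is ${}^3\!D_4(q)$;
\item case (ix), where it is $E_7(q^{1/r})$;
\item case (x), where it is $\POmega_8^+(q)$.
\end{itemize}

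\emph{Cases (viii) and (ix).} Here $G_v^{(\infty)}$ is quasisimple with socle ${}^3\!D_4(q)$ or $E_7(q^{1/r})$. Neither group appears in the list~\eqref{eq:groupsqs}, so Lemma~\ref{lm:qsimple}(a) excludes both immediately.

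\emph{Cases (iii)--(v).} Lemma~\ref{lm:qsimple}(a) forces $\PSL_3^\epsilon(q)$ to be one of $\PSL_3(3),\PSL_3(4),\PSL_3(8),\PSU_3(8)$. This is impossible since $p\geqslant5$: the remaining candidates $\PSL_2(q)$, $\PSL_3(3)$, $\PSL_3(4)$, $\PSL_3(8)$, $\PSU_3(8)$ do not match $\PSL_3^\epsilon(q)$ with $p\geqslant5$. In case (iv), $\N_L(\PGL_3^\epsilon(q).2)$ may have extra soluble pieces, but these lie in $\Rad(G_v)$ and do not change the socle.

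\emph{Cases (vi)--(vii).} The socle is $S=\PSL_2(q)$ with $p\geqslant17$, so we are in Lemma~\ref{lm:qsimple}(b.3) or one of the $\PSL_2(7),\PSL_2(11),\PSL_2(23)$ rows of Table~\ref{tb:exceptionalfacs}. The $\PSL_2(7)$ and $\PSL_2(11)$ rows are excluded outright because $q=p^f$ with $p\geqslant17$. The $\PSL_2(23)$ row, and (b.3) itself, cannot be excluded the same way: $q=23$ is allowed since $p\geqslant17$, and (b.3) holds for every $q$. In these situations one factor lies in $\mathrm{D}_{2(q+1)/d}$, extended by outer automorphisms, and the other in the Borel subgroup $q{:}((q-1)/d)$, again extended. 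Take $r\in\ppd(p,2f)$, which exists unless $q=p$ with $p+1$ a power of $2$; in that exceptional situation use $r=2$ or compare $p$-parts instead. Then $r\nmid|\Out(L)|\cdot|\Rad(G_v)|$, since $\Rad(G_v)\leqslant\Out(L)$ as $L_v$ is simple. So $|G_{uv}|_r>1$ while $|G_{vw}|_r=1$, and this contradicts $G_{uv}\cong G_{vw}$. The same argument applied to the prime $p$ kills the Borel-type factor whenever the dihedral factor has trivial $p$-part. I expect case (vi)--(vii) to be the main obstacle: one must check carefully that the $r$-part and $p$-part bookkeeping works for every $q$ with $p\geqslant17$, including Mersenne-type $q=p$ and the extension by $\Out(L)$.

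\emph{Case (x).} The socle of $\overline{G_v}$ is $\POmega_8^+(q)$, and Lemma~\ref{lm:qsimple}(b.2) requires $\overline{G_v}\leqslant\mathrm{P\Gamma O}_8^+(q)$. When $T=\Sy_3$ or $\C_3$, the group $\overline{G_v}$ contains a triality automorphism, because the $3$-part of $T$ induces triality on $\POmega_8^+(q)$. The argument is the same as in Lemma~\ref{lm:e8case1-6} case (5), which gives a contradiction. To justify this, we need the $\C_3$ in $T$ to act as graph automorphisms rather than inner ones. We expect to read this off the construction of this maximal subgroup in \cite{Craven}, where the normaliser of $2^2$ permutes the three involutions.
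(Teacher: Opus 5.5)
Your proposal is correct and follows essentially the paper's route. Lemma~\ref{lm:qsimple}(a) disposes of cases (iii)--(v), (viii) and (ix), and the $\PSL_2(q)$ cases are excluded by comparing a prime part of the two isomorphic factors. Case (x) falls because the order-$3$ element of $T$ induces a triality automorphism of $\POmega_8^+(q)$, which violates (b.2); the paper justifies this via \cite[p.~35]{CLSS} rather than \cite{Craven}.

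The only difference is in cases (vi)--(vii). The paper treats just the $\PSL_2(23)$ row, using $23$-parts ($23{:}(\cdots)$ versus $\mathrm{S}_4$), and leaves the generic (b.3) factorisation implicit. You also rule out (b.3) explicitly, using $r\in\ppd(p,2f)$ or $p$. Note, however, that in the $\PSL_2(23)$ row the factor $\mathrm{S}_4$ is not contained in the dihedral group. As printed, $3\in\ppd(23,2)$ divides the orders of both factors there, so for that row the safe prime is $p=23$, exactly as in the paper.
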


\begin{proof}
For these cases, $G_v^{(\infty)}$ is quasisimple. Let $S$ be the socle of $G_v/\mathrm{R}(G_v)$. Then  $S $ is one of the following:
\[ \PSL_3^\epsilon(q) \text{ with \(p \geqslant 5\) }, \PSL_2(q) \text{ with \(p \geqslant 17\) }, {}^3\!D_4(q), E_7(q^{\frac{1}{r}}) \text{ with \(r\) prime }, \POmega_8^+(q). \]

Applying Lemma~\ref{lm:qsimple}(a), we only need to consider the groups $\PSL_2(23) $ and $ \POmega_8^+(q)$.

Assume $S=\PSL_2(23)$.
By~\cite[Table 4.1]{Craven}, $L_v=\PSL_2(23)$. Now, $G/L\leqslant \mathrm{Out}(L)=2$.
Hence, $\mathrm{R}(G_v)=1$ or $2$.
If $\mathrm{R}(G_v)=1$, then $G_v$ is almost simple with socle $\mathrm{PSL}_2(23)$, which is impossible by Lemma~\ref{lm:qsimple}(a).
Let $\mathrm{R}(G_v)=2$. Then $G_v/\mathrm{R}(G_v)=\mathrm{PSL}_2(23)$.
By Table~\ref{tb:exceptionalfacs}, we see that $G_{uv}R/\mathrm{R}(G_v)=23:3$ and $\G_{vw}R/\mathrm{R}(G_v)=\mathrm{S}_4$.
This implies that $|G_{uv}|_{23}=23>1=|G_{vw}|_{23}$, contradicting $G_{uv}\cong G_{vw}$.

Assume $S=\POmega_8^+(q)$.
Then $L_v=(2^2\times\POmega_8^+(q).2^2).T$, where \(T=\Sy_3\) if \(q\equiv1\pmod{8}\) and \(T=\C_3\) if \(q\equiv \pm3\pmod{8}\). Note that \(T\) always contain an element \(\tau\) such that \(|\tau|=3\).
By~\cite[p. 35]{CLSS}, \(\tau\) acts as a graph automorphism of $\POmega_8^+(q)$.
Hence, $G_v/\mathrm{R}(G_v) $ is not a subgroup of $\mathrm{P\Gamma O}_8^{+}(q) $, contradicting Lemma~\ref{lm:qsimple}(b.2). 
\end{proof}

\begin{lemma}\label{om7xi}
    Suppose that Hypothesis \ref{hy:1} holds and \(L=E_{7}(q)\). Suppose that case (xi) occurs. Then \(s\leqslant 2\).
\end{lemma}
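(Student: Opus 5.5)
We argue by contradiction, as in the $E_8$ cases (9)--(12): suppose $s\geqslant 3$. Then by Lemma~\ref{pro:HsMs-1}, $\Gamma$ is $(L,2)$-arc-transitive, so $L_v=L_{uv}L_{vw}$ is a homogeneous factorisation. Here $L_v=\PSL_2(q)\times\PGL_2(q)$, and for $q\leqslant 3$ the group is soluble and small, so we first dispose of $q\in\{2,3\}$ (and, if needed, $q=4,5$) by a direct {\sc Magma} check that $L_v$, or $G_v$, has no suitable homogeneous factorisation. Hence we may assume $q\geqslant 4$, and write $M=\PSL_2(q)\times\PSL_2(q)=L_v^{(\infty)}$ with factors $M_1,M_2$ and projections $\varphi_1,\varphi_2$. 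Since the two factors $\PSL_2(q)$ and $\PGL_2(q)$ play structurally different roles (one sits inside $\PGL_2(q)$), no element of $L_v$ swaps them, so $M_1,M_2$ are both normal in $L_v$.

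\textbf{Step 1.} We show that each of $\varphi_i(L_{uv}\cap M)$ and $\varphi_i(L_{vw}\cap M)$ has order divisible by $p$ and by a primitive prime divisor $r\in\ppd(p,2f)$, or by $2$ when $q$ is a Mersenne prime. Use Lemma~\ref{3ATprime} exactly as in the Claim of the lemma for cases (9)--(12). If, say, $|L_{uv}\cap M_i|_r=1$, then $|L_{uv}|_r\leqslant |M_1|_r\cdot 2_r$. On the other hand, $|L_v|_r^2=|M_1|_r^4\cdot 2_r$ (up to the factor $2$ from $\PGL_2$), and $|\Out(L)|_r=1$ for $r>2f$. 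These give $|M_1|_r^4\leqslant |M_1|_r^3$, a contradiction. The Mersenne and $q\in\{4,8,16\}$ exceptions are handled as in Claim~2 of Lemma~\ref{E7weyl}. Because $M_i\trianglelefteq L_v$, the subgroup $L_{uv}\cap M_i$ is normal in $L_{uv}$. By \cite[Table~10.3]{transitive}, a subgroup of $\PSL_2(q)$ whose order is divisible by both $p$ and $r$ (with $q\geqslant 4$) is the whole group, apart from the explicit small exceptions. Hence $M_i\leqslant L_{uv}$ for $i=1,2$, and similarly for $L_{vw}$.

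\textbf{Step 2.} It follows that $M\leqslant L_{uv}$, and therefore $M^g\leqslant L_{vw}\leqslant L_v$. Since $M=L_v^{(\infty)}$ is the unique subgroup of $L_v$ isomorphic to $\PSL_2(q)^2$, we get $M^g=M$, contradicting Lemma~\ref{factor}. Thus $s\leqslant 2$.

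\textbf{Main obstacle.} The hardest step is Step 1's $r$-part inequality. We must check carefully that $|L_v|_r$ equals $|\PSL_2(q)|_r^2$ with no extra contribution from $|\Out(L)|$, and we must handle the case where $q+1$ is a power of $2$. In that case we would switch to $r=2$ and use the $2$-parts $|\PSL_2(q)|_2$ and $|\PGL_2(q)|_2$ directly, absorbing the extra factor $2$. If the inequality turns out borderline there, we would instead use $r=p$ combined with the structure of the subgroups of $\PSL_2(q)$ of $p$-order $q$, namely Borel subgroups. A Borel subgroup in one factor would force, via the homogeneous factorisation, the other factor to supply a dihedral $\mathrm{D}_{2(q+1)/d}$-type complement, and comparing $\ICF$ or $p$-parts of $L_{uv}\cong L_{vw}$ gives the contradiction.
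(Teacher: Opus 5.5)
Your proposal is correct and follows the paper's own argument. Assuming $s\geqslant 3$, Lemma~\ref{3ATprime} applied to $r=p$ and to a primitive prime divisor (or to $r=2$ when $q$ is a Mersenne prime) forces $|L_{uv}\cap M_i|_r>1$, and the subgroup structure of $\PSL_2(q)$ then gives $L_v'\leqslant L_{uv}$; the paper finishes with $|L_v:L_{uv}|\leqslant 2$ contradicting Lemma~\ref{pro:valency} rather than with Lemma~\ref{factor}, which makes no difference. Your choice $r\in\ppd(p,2f)$ is actually sharper than the paper's literal $r\in\ppd(q,2)$, since the latter does not exclude subfield subgroups such as $\PSL_2(5)<\PSL_2(5^3)$. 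Your treatment of even $q$ is unnecessary, because case (xi) has $p>3$; this matters because for $p=2$ the pair $\{p,r\}$ would not exclude $\mathrm{D}_{2(q+1)}$.
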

\begin{proof}
Suppose, for a contradiction, that case (xi) occurs. Then \(L_v\cong M_1\times M_2\) such that \(M_1\cong\PSL_2(q)\) and \(M_2\cong\PGL_2(q)\) with \(q\geqslant 5\). We will split our analysis into three cases.

\medskip
\emph{Case (1):} \(q\) is even.\quad
Then \(p=2\), \(f\geqslant3\) and \(M_1\cong M_2\cong\PSL_2(q)\). If \(f=3\) or \(6\), then \((|L_v|,|\Out(L)|)=(2^6\cdot3^4\cdot7^2,3)\) or \((2^{12}\cdot3^4\cdot5^2\cdot7^2\cdot13^2,6)\), respectively.
 From Lemma \ref{3ATprime} it follows that \(|L_{uv}|\) is divisible by \(2^4\cdot3^3\cdot7^2\) for \(f=3\) and \(2^8\cdot3^3\cdot5^2\cdot7^2\cdot13^2\) for \(f=6\). Then, by \textsc{Magma} \cite{magma} computation, there is no homogeneous factorisation for \(L_v=L_{uv}L_{vw}\) satisfying the numerical condition above.

Now, \(f\neq 3,6\) and therefore \(\ppd(2,fi)\neq \varnothing\) for \(i\in\{1,2\}\). Let \(r_i\in\ppd(2,fi)\) for \(i\in\{1,2\}\). Then \(|\PSL_2(q)|_{r_i}\), \(r_i>fi\) and \(|\Out(L)|_{r_i}=f_{r_i}=1\). We claim that \(|L_{uv}\cap M_i|_{r_j}>1\) for any \(i,j\in\{1,2\}\). Suppose that this assertion is false and there are \(i,j\in \{1,2\}\) such that \(|L_{uv}\cap M_i|_{r_j}=1\). Then
\[
|L_{uv}|_{r_j}\leqslant |L_{uv}\cap M_i|_{r_j}|M_{3-i}|_{r_j}=|M_{3-i}|_{r_j}=|\PSL_2(q)|_{r_j}.
\]

This together with Lemma \ref{3ATprime} implies that
\[
|\PSL_2(q)|^4_{r_j}=|L_v|_r^2\leqslant|L_{uv}|_r^3|\Out(L)|_r=|\PSL_2(q)|_{r_j}^3,
\]
which is impossible. Thus, \(|L_{uv}\cap M_i|_{r_j}>1\) for \(i,j\in\{1,2\}\) and so \(L_v'=\prod_{i=1}^2\Soc(M_i)\leqslant L_{uv}\). This implies that \(|L_v:L_{uv}|=2\), contradicting the condition that the valency of \(\Gamma\) is at least \(3\).

\medskip
\emph{Case (2):} \(q=p=2^a-1\) is a Mersenne prime.\quad
Then \(|\Out(L)|=2\) and \(a\geqslant 5\).  
We claim that \(|L_{uv}\cap M_i|_2\geqslant 2^2\) and \(|L_{uv}\cap M_i|_p>1\) for \(i\in\{1,2\}\). Assume that the assertion is false and \(|L_{uv}\cap M_i|_{2}\leqslant2\) and \(|L_{uv}\cap M_i|_p=1\). Then, for \(r\in\{2,p\}\),
\[|L_{uv}|_r\leqslant |L_{uv}\cap M_i|_r|M_{3-i}|_r\leqslant (2|\PGL_2(q)|)_r\leqslant\begin{cases}
    2^{a+1}&\text{\(r=2\)};\\
    p^{f}&\text{\(r=p\).}
\end{cases}
\]
Then, it follows from Lemma \ref{3ATprime} that
\[
2^{4a+2}=(|\PSL_2(q)|_2|\PGL_2(q)|_2)^2=|L_v|_2^2\leqslant|L_{uv}|^3_2|\Out(L)|_2\leqslant 2^{3a+4}
\]
and  
\[
p^{4f}=(|\PSL_2(q)|_p|\PGL_2(q)|_p)^2=|L_v|_p^2\leqslant|L_{uv}|^3_p|\Out(L)|_p\leqslant p^{3f},
\]
leading to the conclusion that \(a\leqslant 2\) and \(f=0\). This is a contradiction, and so the assertion is correct. Now, since \(|M_i:\Soc(M_i)|\leqslant |\PGL_2(q):\PSL_2(q)|=2\), it follows that \(|L_{uv}\cap\Soc(M_i)|_p=|L_{uv}\cap\Soc(M_i)|_p>1\) and \(|L_{uv}\cap\Soc(M_i)|_2\geqslant 2^{-1}|L_{uv}\cap\Soc(M_i)|_2\geqslant2\) for \(i\in\{1,2\}\). By \cite[Table 10.3]{transitive} we deduce that \(L_{v}'=\prod_{i=1}^2\Soc(M_i)\leqslant L_{uv}\) and so \(|L_{v}:L_{uv}|\leqslant2\), contradicting the condition that the valency of \(\Gamma\) is at least \(3\).

\medskip
\emph{Case (3):} \(q\) is even and \(q\) is not a Mersenne prime.\quad For \(q=5\) and \(9\), we have \((|L_v|,|\Out(L)|)=(2^5\cdot3^2\cdot5^2,2)\) and \((2^7\cdot3^4\cdot5^2,4)\), respectively. It follows from Lemma \ref{3ATprime} that \(|L_{uv}|\) is divisible by \(2^3\cdot3^2\cdot5^2\) and \(2^4\cdot3^3\cdot5^2\) for \(q=5\) and \(9\), respectively. By \textsc{magma} \cite{magma} computation, there is no homogeneous factorisation \(L_v=L_{uv}L_{vw}\) with \(L_v\cong\PSL_2(5)\times\PGL_2(5)\) satisfying the above numerical condition and the only homogeneous factorisation \(L_v=L_{uv}L_{vw}\) for \(L_v\cong\PSL_2(9)\times\PGL_2(9)\), satisfying the condition \(2^4\cdot3^3\cdot5^2\mid|L_{uv}|\), occurs when \(L_{uv}=M_1\times \PSL_2(5).2\) and \(L_{vw}=M_1\times\PSL_2(5).2\). Note that \(M_1^g\leqslant L_{vw}\) and since \(M_1\) is the unique subgroup of \(L_v\) isomorphic to \(\PSL_2(9)\), it follows that \(M_1^g=M_1\), contradicting Lemma \ref{factor}. 

Thus, \(q\neq 5,9\). Note that \(\ppd(q,2)\neq \varnothing\). Let \(q_2\in\ppd(q,2)\). For \(r\in\{p,q_2\}\), we claim that \(|L_{uv}\cap M_i|_r>1\) for \(i\in\{1,2\}\). Assume that the assertion is false and that \(|L_{uv}\cap M_i|_r>1\) for \(i\in\{1,2\}\). Then
\[
|L_{uv}|_r\leqslant |L_{uv}\cap M_i|_r|M_{3-i}|_r=|\PGL_2(q)|_r=\begin{cases}p^f&\text{\(r=p\);}\\
(q^2-1)_{q_2}&\text{\(r=q_2\)}.
\end{cases}
\]

Note that \(|\Out(L)|=2f\). This together with Lemma \ref{3ATprime} implies that
\[
p^{4f}=|L_v|_p^2\leqslant|L_{uv}|_p^3|\Out(L)|_p\leqslant p^{3f}(f)_p\leqslant p^{3f}(f!)_p<p^{4f}
\]
and 
\[
(q^2-1)^2_r=|L_v|_r^2\leqslant|L_{uv}|_r^3|\Out(L)|_r\leqslant (q^2-1)_r(f)_r=(q^2-1)_r,
\]
which is impossible. Thus, the result is proved.
\end{proof}
\subsubsection{\(L=E_8(q)\)}\label{ome8}
By \cite[Theorem 2]{LS}, \cite[Theorem 1(II)]{CLSS} and \cite{C2022}, if \(H\) is a maximal subgroup of \(G\) such that \(\Soc(G)=L=E_{8}(q)\) such that \(H\) is neither a parabolic subgroup nor a maximal-rank subgroup, then \(H\) is one of the following:
\begin{enumerate}
    \item [(i)] \(H=(\A_5\times\Sy_6).2\) or \(\Sy_5\times\PGL_2(q)\) with \(p>5\);
    \item[(ii)] \(H\cap L=\PSL_2(q)\times\PSL_3^\epsilon(q)\) with \(\epsilon\in\{\pm\}\) and \(p>3\);
    \item[(iii)] \(H\cap L=G_2(q)\times F_4(q)\);
    \item[(iv)] \(H\cap L=\PSL_2(q)\times G_2(q^2)\) with \(p>2\) and \(q>3\);
    \item[(v)] \(H\cap L=\PSL_2(q)\times G_2(q)\times G_2(q)\) with \(p>2\) and \(q>3\);
    \item[(vi)] \(H\cap L=E_8(q^{\frac{1}{r}})\) where \(r\) is a prime;
    \item[(vii)] \(H\cap L=F_4(q)\) where \(q=3^f\);
    \item[(viii)] \(H=5^3.\SL_3(5)\);
    \item[(ix)] \(H=2^{5+10}.\SL_5(2)\).
\end{enumerate}

\begin{lemma}\label{lm:e8i}
    Suppose that Hypothesis \ref{hy:1} holds and \(L=E_{8}(q)\). Then case (i) does not occur.
\end{lemma}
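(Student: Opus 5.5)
Case (i) splits into two subcases: $G_v=(\A_5\times\Sy_6).2$ and $G_v=\Sy_5\times\PGL_2(q)$ with $p>5$. We argue by contradiction, assuming Hypothesis~\ref{hy:1}. Then $\Gamma$ is $(G,2)$-arc-transitive, so by Lemma~\ref{pro:homofac} we have a homogeneous factorisation $G_v=G_{uv}G_{vw}$. Also $g$ normalises no proper nontrivial normal subgroup of $G_v$ (Lemma~\ref{factor}), and by Lemma~\ref{pro:valency} we have $|G_v:G_{uv}|\geqslant 3$. The common strategy is to show that one of $G_{uv},G_{vw}$ contains a characteristic direct factor of $G_v$; conjugating by $g$ then gives a contradiction with Lemma~\ref{factor}.

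\emph{Subcase $G_v=(\A_5\times\Sy_6).2$.} Here $G_v$ is a fixed group of order $2^6\cdot3^3\cdot5^2$, independent of $q$, so this is a finite computation. In {\sc Magma}, list all factorisations $G_v=AB$ with $A\cong B$ (or at least with $|A|=|B|$, matching composition factors, and $|G_v|\mid |A|^2$). For each such factorisation, check whether one factor contains $\A_5$ or $\A_6$ as a normal subgroup of $G_v$, or has index at most $2$. The socle factors $\A_5$ and $\A_6$ are the unique subgroups of their isomorphism types that are normal in $G_v$. So if $A\cong B$ and $A$ contains, say, $\A_6\trianglelefteq G_v$, then $B=A^g$ contains $\A_6^g$. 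Since $\A_6^g$ is normal in $B$ and the subnormal $\A_6$'s of $G_v$ are unique, this should force $\A_6^g=\A_6$, contradicting Lemma~\ref{factor}. I expect the computation to show that every homogeneous factorisation has this form or has index at most $2$; the latter contradicts the valency bound.

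\emph{Subcase $G_v=\Sy_5\times\PGL_2(q)$, $p>5$.} Write $M=\PSL_2(q)\trianglelefteq G_v$, and note that $\C_{G_v}(M)\cong\Sy_5$. For a prime $r\in\ppd(p,2f)$ (which exists unless $q=p$ is a Mersenne prime, handled as in Lemma~\ref{om7xi}), we have $r>2f$ and $r\geqslant 7$. Hence $r\nmid|\Sy_5|\cdot|\Out(L)|$, so $|G_v|_r=|M|_r$, and the same holds for $p$ since $p>5$ and $f_p<q$. The homogeneous factorisation gives $r,p\in\pi(G_{uv}\cap M)$ and $r,p\in\pi(G_{vw}\cap M)$, after projecting via $\overline{G_v}=G_v/\C_{G_v}(M)\leqslant\mathrm{P\Gamma L}_2(q)$. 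Applying the factorisation results for almost simple groups with socle $\PSL_2(q)$ (\cite[Table 10.3]{transitive} or Lemma~\ref{lm:qsimple}(b.3)), a subgroup whose order is divisible by both $p$ and a ppd $r$ must contain $\PSL_2(q)$, apart from small exceptions treated by hand or by {\sc Magma}. So $M\leqslant G_{uv}$, hence $M^g\leqslant G_{vw}$, and uniqueness of $M$ in $G_v$ gives $M^g=M$, contradicting Lemma~\ref{factor}. If $q=p$ is a Mersenne prime, replace $r$ by $2$ and use the $2$-part bound from Lemma~\ref{3ATprime} as in Lemma~\ref{om7xi}.

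\emph{Main obstacle.} The hardest step is the second subcase. There, one must control how $G_{uv}$ can sit diagonally across the $\Sy_5$ and $\PGL_2(q)$ factors, and ensure that the $p$- and $r$-parts really force $M\leqslant G_{uv}$ rather than only into the projection. To handle this, I would first bound $|G_{uv}\cap M|_r$ using $|G_{uv}|_r^2\geqslant|G_v|_r$, exactly as in Lemma~\ref{om7xi}. Then I would use that $\Sy_5$ is coprime to $r$ and to $p$.
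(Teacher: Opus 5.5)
There is a genuine gap, in the subcase $G_v=\Sy_5\times\PGL_2(q)$. (Your first subcase is, like the paper's, a finite {\sc Magma} check.)

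Your argument hinges on $r\in\pi(G_{uv}\cap M)$ for some $r\in\ppd(p,2f)$. You deduce this from $r\nmid|G_v/M|$, that is, from the claim $r\geqslant7$. That claim is false for $f\leqslant2$: for $f=1$ you only know $r$ is odd, and for $f=2$ only that $r\equiv1\pmod4$. Indeed $\ppd(11,2)=\ppd(23,2)=\{3\}$, $\ppd(19,2)=\{5\}$ and $\ppd(7,4)=\{5\}$. For $q=11$, say, $|G_v|_3=9$, and the homogeneous factorisation only forces $|G_{uv}|_3\geqslant3$. This can be supplied entirely by the $\Sy_5$ factor, so $3\mid|G_{uv}\cap M|$ does not follow.

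Your Mersenne fallback also fails, for two reasons. First, $2$ divides $|\Sy_5|$, so the same contamination occurs. Second, Lemma~\ref{3ATprime} (which also drives the bounds in Lemma~\ref{om7xi} that you propose to imitate) needs $s\geqslant3$. The lemma, however, asserts that case (i) cannot occur under Hypothesis~\ref{hy:1}, i.e.\ with only $s\geqslant2$. Without $r$ you are left with $p\in\pi(G_{uv}\cap M)\cap\pi(G_{vw}\cap M)$, and your order criterion then gives nothing, since both intersections could lie in Borel subgroups. Lemma~\ref{lm:qsimple}(b.3) is not available either, because $G_v^{(\infty)}=\A_5\times\PSL_2(q)$ is not quasisimple.

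The missing idea is to use the factorisation itself, which makes $r$ unnecessary. This is the paper's argument:
\begin{enumerate}
\item Let $\varphi$ be the projection of $G_v$ onto its $\PGL_2(q)$ factor. Since $p>5$, $p$ divides $|G_{uv}\cap\PGL_2(q)|$ and $|G_{vw}\cap\PGL_2(q)|$, hence also $|\varphi(G_{uv})|$ and $|\varphi(G_{vw})|$. Moreover $\PGL_2(q)=\varphi(G_{uv})\varphi(G_{vw})$.
\item $\PGL_2(q)$ has no factorisation into two core-free subgroups both of order divisible by $p$. This follows from \cite{LPS}, or directly: such a subgroup lies in a Borel subgroup or in a subfield subgroup $\PGL_2(q_0)$. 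If one factor lies in a Borel subgroup, which is a point stabiliser on the projective line, the other factor would have to be transitive on the line, and neither type is. Two subfield subgroups have orders whose product is less than $|\PGL_2(q)|$.
\item Hence, say, $\varphi(G_{uv})\geqslant\PSL_2(q)$. Now $G_{uv}\cap\PGL_2(q)$ is normal in $\varphi(G_{uv})$ and has order divisible by $p$, so it contains $\PSL_2(q)$.
\item Since $\PSL_2(q)$ is the unique subgroup of $G_v$ of its isomorphism type, we get $\PSL_2(q)^g=\PSL_2(q)$, contradicting Lemma~\ref{factor}.
\end{enumerate}
This works uniformly in $q$, with no Mersenne or small-$f$ exceptions.

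A minor point on your first subcase: the uniqueness step works for $\A_6$ but not for $\A_5$. The $\A_5$ factor is not the only subgroup of $G_v$ isomorphic to $\A_5$, since the $\A_6$ factor and diagonal subgroups contain others. This is harmless if, as the paper reports, {\sc Magma} finds no admissible homogeneous factorisation at all.
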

\begin{proof}
    We check by \textsc{Magma}\cite{magma} that \((\A_5\times\Sy_6).2\) admits no homogeneous factorisation. Now, suppose for a contradiction that case (i) occurs. Then \(G_v=\Sy_5\times\PGL_2(q)\). Let \(M\) denote the subgroup of \(G_v\) isomorphic to \(\PGL_2(q)\). Let \(\varphi\) be the projection from \(G_v\) to \(M\). Since \(p>5\), it follows that \(|\Sy_5|_p=1\), and so \(1<|G_{uv}|_p=|G_{uv}\cap M|_p\leqslant |\varphi(G_{uv})|_p\) and \(1<|G_{vw}|_p=|G_{vw}\cap M|_p\leqslant |\varphi(G_{vw})|_p\). On the other hand, \(\PGL_2(q)=\varphi(G_v)=\varphi(G_{uv})\varphi(G_{vw})\). By checking \cite[Tables 1 and 3]{LPS}, there is no core-free factorisation of \(\PGL_2(q)\) such that the orders of both factors are divisible by \(p\). Hence, one of \(\varphi(G_{uv})\) or \(\varphi(G_{vw})\) contains \(\PSL_2(q)\). Without loss of generality, assume that \(\PSL_2(q)\leqslant \varphi(G_{uv})\). Note that \(G_{uv}\cap M\cong\varphi(G_{uv}\cap M)\trianglelefteq\varphi(G_{uv})\). Thus, either \(G_{uv}\cap M=1\) or \(\PSL_2(q)\leqslant G_{uv}\cap M\). Since \(|G_{uv}\cap M|_p>1\), it follows that \(\PSL_2(q)=\Soc(M)\leqslant G_{uv}\cap M\). Thus, \(\Soc(M)^g\leqslant G_{vw}\). Note that \(\Soc(M)\) is the unique subgroup of \(G_v\) isomorphic to \(\PSL_2(q)\). Therefore, \(\Soc(M)^g=\Soc(M)\), which is a contradiction to Lemma \ref{factor}. Hence, the result is proved.
\end{proof}

\begin{lemma}
    Suppose that Hypothesis \ref{hy:1} holds and \(L=E_{8}(q)\). Then cases (ii)--(iv) do not occur.
\end{lemma}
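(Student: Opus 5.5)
We argue by contradiction, following the pattern of Lemma~\ref{ome7i} and the proofs for cases (7)--(8) of the maximal rank subgroups of $E_8(q)$. In each of cases (ii)--(iv), $L_v$ is a direct product of two factors, and we let $M$ be the factor of largest order: $M=\PSL_3^\epsilon(q)$ in case (ii), $M=F_4(q)$ in case (iii), and $M=G_2(q^2)$ in case (iv). The other factor is $\PSL_2(q)$ or $G_2(q)$. Put $C=\C_{G_v}(M)$ and $\overline{S}=SC/C$ for $S\leqslant G_v$. Then $\overline{G_v}$ is almost simple with socle $T\cong M$. We have $\N_{\Aut(L)}(C)\leqslant X.\Out(L)$, where $X$ is the complementary factor.

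Next, choose a prime $r$ dividing $|M|$ but not $|X|\cdot|\Out(L)|$. Take $r\in\ppd(p,6f)$ in case (ii) with $\epsilon=-1$, $r\in\ppd(p,3f)$ in case (ii) with $\epsilon=+1$, $r\in\ppd(p,12f)$ or $\ppd(p,8f)$ in case (iii), and $r\in\ppd(p,12f)$ in case (iv), using $q^2$-structure for $G_2(q^2)$. Lemma~\ref{existppd} guarantees these exist; the exception $\ppd(2,6)$ is handled by the convention $\ppd(2,6)=\{7\}$. Since $r>3f$ and $|\Out(L)|=f$, we get $|C|_r=1$. Because $G_v=G_{uv}G_{vw}$ and $G_{uv}\cong G_{vw}$, both $|\overline{G_{uv}}|_r$ and $|\overline{G_{vw}}|_r$ are greater than $1$.

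We then apply the factorisation tables \cite[Tables 1--5]{LPS} to $\overline{G_v}=\overline{G_{uv}}\,\overline{G_{vw}}$. The groups $F_4(q)$ and $G_2(q^2)$ have only the few core-free factorisations listed there (essentially $F_4(2^a)=\Sp_8(q)\cdot{}^3D_4(q)$-type and $G_2(q)=\SL_3^\pm(q)\cdot\ldots$ in even characteristic, and here $p>2$ in case (iv)). For $\PSL_3^\epsilon(q)$ the core-free factorisations are likewise very restricted. In each of these, one factor has order coprime to the chosen $r$, so the factorisation cannot be core-free. Hence one factor, say $\overline{G_{uv}}$, contains $T$, and therefore $M\trianglelefteq G_{uv}$ because $M$ is simple and $C\cap M=1$. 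Then $M^g\leqslant G_{vw}\leqslant G_v$. Since $M$ is the unique subgroup of $G_v$ isomorphic to $M$, we obtain $M^g=M$, contradicting Lemma~\ref{factor}.

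The main obstacle is case (ii), since $\PSL_3^\epsilon(q)$ is small and $p>3$. First, $\PSL_3(q)$ has core-free factorisations involving $q^2{:}\SL_2(q)$-type parabolics and the Singer normaliser $(q^2+q+1){:}3$. For these, we use instead the prime $r\in\ppd(p,2f)$ together with $p$. The key observation is that a Singer-type factor has order coprime to $p$, while a parabolic factor has order coprime to $\ppd(p,3f)$. So requiring both $p$ and $\ppd(p,3f)$ to divide both factors rules out every core-free factorisation, as done with a two-prime set $J$ in Lemma~\ref{lm:e7cases1-7}. Second, a small number of small-$q$ exceptions from \cite{LPS}, such as $\PSU_3(5)\geqslant \A_7$, must be handled separately. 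For these we compare $p$-parts using Lemma~\ref{3ATprime}, or run a direct {\sc Magma} check for homogeneous factorisations, as in the earlier lemmas.
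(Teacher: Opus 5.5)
Your argument is correct, but it takes a different route from the paper's proof of this lemma. The paper assumes \(s\geqslant 3\) and uses Lemma~\ref{pro:HsMs-1} to get \(L_v=L_{uv}L_{vw}\) inside the direct product \(L_v=M_1\times M_2\). It then works only with factorisations of the simple group \(M_1\): since \(r\) divides both \(|\varphi_1(L_{uv})|\) and \(|\varphi_1(L_{vw})|\), one projection equals \(M_1\). Then \(L_{uv}\cap M_1\trianglelefteq\varphi_1(L_{uv})=M_1\) and \(r\mid|L_{uv}\cap M_1|\) give \(M_1\leqslant L_{uv}\). You instead keep \(G_v=G_{uv}G_{vw}\) with \(s\geqslant 2\), pass to \(G_v/\C_{G_v}(M)\), and use the almost simple tables, as in Lemma~\ref{ome7i}.

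The paper's version avoids outer automorphisms and gets \(M_1\leqslant L_{uv}\) for free from simplicity. Because it needs \(s\geqslant 3\), however, it only shows \(s\leqslant 2\) in cases (ii)--(iv), not that they do not occur. Your version proves the lemma as stated, and that stronger conclusion is what the omission of these cases from Table~\ref{tab:Lvcandidate} requires. The price is the lift from \(\overline{G_{uv}}\geqslant T\) to \(M\leqslant G_{uv}\). This needs that \(\C_{G_v}(M)\) has no section isomorphic to \(M\) (true here by orders), not merely \(\C_{G_v}(M)\cap M=1\). Otherwise \(G_{uv}\cap M\C_{G_v}(M)\) could be a diagonal subgroup.

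A few side remarks need correcting, though none affects the main line.
\begin{enumerate}
\item[(a)] The generic \(G_2\) factorisation in \cite[Table 5]{LPS} occurs in characteristic \(3\), namely \(\SL_3(q).2\cdot{}^2G_2(q)\) with \(q=3^a\), \(a\) odd, not in even characteristic. Also, \(p=3\) is allowed in case (iv). This is harmless: \(q^2=3^{2f}\) has even exponent, and in any case the \(\SL_3\)-factor is coprime to \(r\in\ppd(p,12f)\).
\item[(b)] Your paragraph on the main obstacle is unnecessary and partly wrong. The prime \(r\in\ppd(p,3f)\) alone already rules out \(P_1\cdot(q^2+q+1){:}3\) and its \(\GL_2(q)\)-type variants, since the non-Singer factor has order coprime to \(q^2+q+1\). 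By contrast, you may not demand that \(p\), or a prime in \(\ppd(p,2f)\), divide \(|\overline{G_{uv}}|\): both divide \(|\PSL_2(q)|\), which divides \(|\C_{G_v}(M)|\).
\item[(c)] Similarly, \(\PSU_3(5)=\A_7\cdot 5^{1+2}{:}8\) is already excluded by \(r=7\in\ppd(5,6)\). Do not fall back on Lemma~\ref{3ATprime}, which needs \(s\geqslant 3\) and would reduce you to the paper's weaker conclusion.
\end{enumerate}
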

\begin{proof}
    Suppose for a contradiction that \(s\geqslant3\) and \(L_v\) is one of types (ii)--(iv). Then \(\Gamma\) is \((L,2)\)-arc-transitive. Note that \(\ppd(p,if)\neq \varnothing\) for \(i\in\{3,6,12\}\). Let \(r_i\in\ppd(p,if)\) for \(i\in\{3,6,12\}\). Then  \(L_v=M_1\times M_2\) with \(r\in\pi(M_1)\) falls into one of the following:
    \begin{enumerate}
        \item Case(ii), \((M_1,M_2,r)=(\PSL_3^+(q),\PSL_2(q),r_{3})\);
        \item Case (ii), \((M_1,M_2,r)=(\PSL_3^-(q),\PSL_2(q),r_{6})\);
        \item Case (iii), \((M_1,M_2,r)=(F_4(q),G_2(q),r_{12})\);
        \item Case (iv), \((M_1,M_2,r)=({}^2G_2(q^2),\PSL_{2}(q),r_{12})\).
    \end{enumerate}
           
  Since \(|G_v:L_v|_r\leqslant|\Out(L)|_r=f_r=1\), it follows that \(|G_v|_r=|L_v|_r\). On the other hand, note that \(|M_2|_r=1\) and this implies that \(1<|G_{uv}|_r=|L_{uv}|_r=|L_{uv}\cap M_1|_r\) and \(1<|G_{vw}|_r=|L_{vw}|_r=|L_{vw}\cap M_1|_r\). Let \(\varphi_i\) be the projection from \(L_v\) to \(M_i\) for \(i=1,2\). Then \(1<|L_{uv}\cap M_1|_r\leqslant|\varphi_1(L_{uv})|_r\) and \(1<|L_{vw}\cap M_1|_r\leqslant |\varphi_1(L_{vw})|_r\). Note that \(M_1=\varphi_1(L_v)=\varphi_1(L_{uv})\). By checking \cite[Table 5]{LPS}, we find that there is no core-free factorisation of \(M_1\) such that the orders of the both factors are divisible by \(r\). Hence, one of \(\varphi_1(L_{uv})\) or \(\varphi_1(L_{vw})\) contains \(M_1\). Without loss of generality, assume that \(M_1\leqslant \varphi(L_{uv})\). Note that \(L_{uv}\cap M_1\cong\varphi_1(L_{uv}\cap M_1)\trianglelefteq\varphi(L_{uv})\). Thus, either \(L_{uv}\cap M_1=1\) or \(M_1\leqslant L_{uv}\). Since \(|L_{uv}\cap M_1|_r>1\), it follows that \(M_1\leqslant L_{uv}\). Thus, \(M_1^g\leqslant L_{vw}\). Note that there is no other subgroup of \(G_v\) isomorphic to \(M_1\). Therefore, \(M_1^g=M_1\), which is a contradiction to Lemma \ref{factor}. Hence, the result is proved.

\end{proof}
\begin{lemma}
    Suppose that Hypothesis \ref{hy:1} holds and \(L=E_{8}(q)\). Suppose that case (v) occurs. Then \(s\leqslant2\).
\end{lemma}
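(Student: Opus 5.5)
We argue by contradiction: assume \(s\geqslant 3\). Then Lemma~\ref{pro:HsMs-1} shows that \(\Gamma\) is \((L,2)\)-arc-transitive, so \(L_v=L_{uv}L_{vw}\), and we also have the inequality of Lemma~\ref{3ATprime}. Here \(L_v=M_0\times M_1\times M_2\) with \(M_0\cong\PSL_2(q)\), \(M_1\cong M_2\cong G_2(q)\), \(p>2\) and \(q>3\). Possibly there is an extra outer involution swapping \(M_1\) and \(M_2\); we allow \(L_v\) to contain such an element \(\tau\), so that \(L_v\leqslant (M_0\times M_1\times M_2).2\). We fix \(r\in\ppd(p,6f)\), which exists by Lemma~\ref{existppd} because \(q\neq 2\). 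Then \(r>6f\), so \(|\Out(L)|_r=1\). Moreover \(r\mid q^2-q+1\), hence \(r\nmid |M_0|\) and \(r\) is odd. It follows that \(|L_v|_r=|M_1|_r^2\).

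\textbf{Step 1.} We show that \(|L_{uv}\cap M_i|_r>1\) and \(|L_{vw}\cap M_i|_r>1\) for \(i=1,2\). Suppose instead that \(|L_{uv}\cap M_i|_r=1\). Let \(\varphi_{3-i}\) be the projection onto \(M_{3-i}\), taken on \(L_{uv}\cap(M_0\times M_1\times M_2)\); the swap contributes nothing at the odd prime \(r\). Then \(|L_{uv}|_r\leqslant|M_{3-i}|_r=|M_1|_r\). Lemma~\ref{3ATprime} now gives \(|M_1|_r^4\leqslant|M_1|_r^3\), which is absurd. The same argument applies to \(L_{vw}\).

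\textbf{Step 2.} We promote this to \(M_1\times M_2\leqslant L_{uv}\) (or \(\leqslant L_{vw}\)). Consider the projection \(\varphi_i(L_{uv}\cap(M_0\times M_1\times M_2))\leqslant G_2(q)\) together with \(\varphi_i(L_{vw}\cap\cdots)\). These contain the normal subgroups \(\varphi_i(L_{uv}\cap M_i)\cong L_{uv}\cap M_i\), and similarly for \(L_{vw}\), each of order divisible by \(r\). Also \(M_i=\varphi_i(L_{uv}^\circ)\varphi_i(L_{vw}^\circ)\), up to index \(2\) coming from \(\tau\). By \cite[Table 5]{LPS}, a core-free factorisation of \(G_2(q)\) with \(q\) odd exists only when \(p=3\), where it is \(G_2(q)=\SL_3(q).2\cdot{}^2G_2(q)\) or \(\SU_3(q).2\cdot{}^2G_2(q)\) (or \(\PSL_2(13)\), \(J_2\) cases for \(q=3\), excluded by \(q>3\)). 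In these factorisations \({}^2G_2(q)\) has order coprime to \(r\in\ppd(p,6f)\) unless \(q^2-q+1\) meets \(|{}^2G_2(q)|\); we need to check that \(r\) divides at most one factor, using \(|{}^2G_2(q)|=q^3(q^3+1)(q-1)\). I expect this check to be the main obstacle, because \(r\mid q^3+1\) does divide \(|{}^2G_2(q)|\). If it fails, the fallback is to use a second prime \(r'\in\ppd(p,3f)\), run Step 1 again, and note that \(r'\) divides \(|\SL_3(q)|\) but not \(|\SU_3(q).2|\) or \(|{}^2G_2(q)|\), while \(r\) divides \(|\SU_3(q)|\) and \(|{}^2G_2(q)|\) but not \(|\SL_3(q)|\). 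Requiring both primes in both factors then rules out every core-free factorisation.

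Hence, for each \(i\), one of the projections contains \(M_i\); since \(M_i\) is simple and the intersection with \(M_i\) is normal in the projection and nontrivial, that subgroup itself contains \(M_i\). A possible mixed outcome, with \(L_{uv}\geqslant M_1\) and \(L_{vw}\geqslant M_2\), is handled as follows. We claim that \(M_1\times M_2\) is the unique subgroup of \(L_v\) isomorphic to \(G_2(q)^2\), and that \(M_1\), \(M_2\) are the only subgroups isomorphic to \(G_2(q)\) that are subnormal in \(L_v\). Since \(L_{uv}^g=L_{vw}\), the element \(g\) maps the set of \(G_2(q)\)-components of \(L_{uv}\) to that of \(L_{vw}\). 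By the uniqueness claim, \(g\) normalises the characteristic product \(M_1M_2\) of components isomorphic to \(G_2(q)\) inside \(L_v\), once we show that both \(L_{uv}\) and \(L_{vw}\) contain \(M_1M_2\). This last point follows from comparing the \(r\)-parts, because \(|L_{uv}|_r=|L_{vw}|_r\) must equal \(|M_1|_r^2\), up to \(\Out(L)\) which contributes nothing at \(r\).

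\textbf{Step 3.} Once \(M_1\times M_2\leqslant L_{uv}\), we get \((M_1\times M_2)^g\leqslant L_{vw}\leqslant L_v\). By uniqueness \((M_1\times M_2)^g=M_1\times M_2\), and \(M_1\times M_2\) is a proper nontrivial normal subgroup of \(G_v\). This contradicts Lemma~\ref{factor}, so \(s\leqslant 2\).
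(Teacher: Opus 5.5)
Your Step 1 is correct; it is exactly the paper's Claim for the prime \(r_6\). Your Step 3 matches the paper's conclusion. The gap is in Step 2.

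\textbf{What fails in Step 2.} The appeal to maximal factorisations in \cite{LPS} only tells you that, for each \(i\), \emph{one} of \(\varphi_i(L_{uv})\), \(\varphi_i(L_{vw})\) equals \(M_i\). So the mixed outcome \(M_1\leqslant L_{uv}\), \(M_2\leqslant L_{vw}\), \(M_2\nleqslant L_{uv}\) is left open, and your way of excluding it does not work.
\begin{itemize}
\item Lemma~\ref{3ATprime} gives only \(|L_{uv}|_r\geqslant |M_1|_r^{4/3}\), not \(|L_{uv}|_r=|M_1|_r^2\).
\item Even that equality would not force \(M_2\leqslant L_{uv}\). Since \(r\mid q^2-q+1\), a subgroup \(\SU_3(q).2<M_2\) already contains a Sylow \(r\)-subgroup of \(G_2(q)\). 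Hence \(M_1\times \SU_3(q).2\) has full \(r\)-part without containing \(M_2\).
\item The element \(g\) does not rescue you. From \(M_1^g\trianglelefteq L_{vw}\geqslant M_2\), the most you extract in the genuinely mixed case is \(M_1^g=M_2\). This neither makes \(g\) normalise \(M_1\times M_2\) nor contradicts Lemma~\ref{factor}.
\end{itemize}
Some smaller slips: \(\SU_3(q).2\cdot{}^2G_2(q)\) is not a factorisation, since \(q^2+q+1\) does not divide the product of the orders, and \(J_2\) lives in \(G_2(4)\), so it is irrelevant for \(q\) odd. Also, if a swap \(\tau\) were present, the projected product ``up to index \(2\)'' is not a factorisation, so \cite{LPS} would not apply as stated.

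\textbf{The missing idea.} Work with the subgroup \(L_{uv}\cap M_i\) itself rather than factorising \(M_i\).
\begin{itemize}
\item Run Step 1 for \(r_3\in\ppd(p,3f)\), for \(r_6\in\ppd(p,6f)\), and also for \(p\). For \(p\): if \(|L_{uv}\cap M_i|_p=1\), then \(|L_{uv}|_p\leqslant p^{7f}\), and Lemma~\ref{3ATprime} gives \(f_p\geqslant p^{5f}\), contradicting Lemma~\ref{sizeppd}. So \(L_{uv}\cap M_i\) has order divisible by \(p\,r_3r_6\) for \(i=1,2\).
\item Your observation that \(\SL_3(q).2\), \(\SU_3(q).2\) and \({}^2G_2(q)\) each miss one of \(r_3,r_6\) extends to all proper subgroups of \(G_2(q)\) by \cite[Table 10.5]{transitive}. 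The only exception is \(\PSL_2(13)\) with \(\{r_3,r_6\}=\{7,13\}\).
\item That exception genuinely occurs, e.g.\ \(q=61\) with \(r_3=13\), \(r_6=7\). It is removed by \(p\mid |L_{uv}\cap M_i|\), which forces \(p=3\); then \(q=3^f>3\) gives \(r_6\notin\{7,13\}\).
\end{itemize}
Hence \(M_1\times M_2\leqslant L_{uv}\) outright, no mixed case arises, and your Step 3 finishes the proof. This is precisely the paper's route. It needs no factorisation theory, and it is unaffected by a possible swap \(\tau\).
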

\begin{proof}
    Suppose for a contradiction that \(s\geqslant3\) and case (v) occurs. Then \(\Gamma\) is \((L,2)\)-arc-transitive and \(L_v=G_2(q)\times G_2(q)\times \PSL_2(q)\) with \(p>2\) and \(q>3\). Let \(M_1\cong M_2\cong G_2(q)\) and \(M_3\cong\PSL_2(q)\) such that \(L_v=\prod_{i=1}^3M_i\). Note that \(\ppd(p,if)\neq \varnothing\) for \(i\in\{3,6\}\). Let \(r_i\in\ppd(p,if)\) for \(i\in\{3,6\}\) and \(r_0=p\). Then \(|M_1|_{r_j}=|M_2|_{r_j}>1\) and \(|\Out(L)|_{r_j}|M_3|_{r_j}=1\) for \(j\in\{3,6\}\). Let \(\varphi_i\) be the projection from \(L_v\) to \(M_i\) for \(i\in\{1,2,3\}\).

    \medskip
    \textsc{Claim:} \(|L_{uv}\cap M_i|_{r_j}>1\) and \(|L_{vw}\cap M_i|_{r_j}>1\) for \(i\in\{1,2\}\) and \(j\in\{0,3,6\}\).\quad

    Suppose that the assertion is false. Then there exists \(i\in\{1,2\}\) and \(j\in\{0,3,6\}\) such that \(|L_{uv}\cap M_i|_{r_j}=1\). Then 
    \[|L_{uv}|_{r_j}\leqslant |L_{uv}\cap M_i|_{r_j}|\varphi_{3-i}(L_{uv})|_{r_j}|M_3|_{r_j}\leqslant|M_{3-i}|_{r_j}|M_3|_{r_j}\leqslant \begin{cases}
        |G_2(q)|_{r_j}&\text{if \(j\in\{3,6\}\);}\\
        p^{7f}&\text{if \(j=0\).}
    \end{cases}\]
   If \(j\in\{3,6\}\), then by Lemma \ref{3ATprime}, we have 
    \[
    |G_2(q)|_{r_j}^4=|L_{v}|^2_{r_j}\leqslant |L_{uv}|_{r_j}^3|\Out(L)|_{r_j}\leqslant |G_2(q)|_{r_j}^3,
    \]
    which is a contradiction since \(|G_2(q)|_{r_j}\geqslant1\).
    
    If \(j=0\), then \(r_0=p\) and it follows from Lemma \ref{3ATprime} that
    \[
    (f!)_p\geqslant f_p=|\Out(L)|_p\geqslant |L_{v}|_p^2|L_{uv}|_p^{-3}\geqslant p^{26f}p^{-21f}=p^{5f}.
    \]
    However, this contradicts Lemma \ref{sizeppd}. Hence, for \(i\in\{1,2\}\) and \(j\in\{0,3,6\}\), \(|L_{uv}\cap M_i|_{r_j}>1\) and with the same argument we conclude that \(|L_{vw}\cap M_i|_{r_j}>1\). Therefore, the claim is proved.

    Now, since \(|L_{uv}\cap M_i|_{r_j}>1\) for \(j\in\{3,6\}\), it follows from \cite[Table 10.5]{transitive} that either \(\{r_3,r_6\}=\{7,13\}\) and \(L_{uv}\cap M_i=\PSL_2(13)\), or \(M_i\leqslant L_{uv}\). If the former case occurs, then since \(|L_{uv}\cap M|_{p}=2^2\cdot3\cdot7\cdot13>1\) and \(p>2\), it follows that \(p=3\). Since \(q=3^f>3\), we deduce that \(f\geqslant 2\). If \(f\geqslant3\), then \(r_6>6f>18\) and so \(r_6\notin\{7,13\}\). Thus, \(f=2\) and \(\ppd(3,12)=\{73\}\). This again implies that \(73=r_6\notin\{7,13\}\), which is a contradiction.

    Thus, the latter case occurs and \(M_i\leqslant L_{uv}\) for \(i\in\{1,2\}\). Therefore, \(M_1\times M_2\leqslant L_{uv}\) and \((M_1\times M_2)^g\leqslant L_{vw}\). However, \(M_1\times M_2\) is the unique subgroup of \(L_v\) isomorphic to \(G_2(q)\times G_2(q)\) and thus \(M_1\times M_2=(M_1\times M_2)^g\), contradicting Lemma \ref{factor}.
    \end{proof}

   \begin{lemma}\label{lm:e8viii}
    Suppose that Hypothesis \ref{hy:1} holds and \(L=\E_{8}(q)\), then cases (vi)--(ix) do not occur.
\end{lemma}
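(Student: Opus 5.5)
We split the proof into the four cases (vi)--(ix) and use two tools: Lemma~\ref{lm:qsimple} for (vi)--(vii), and small \textsc{Magma} checks for (viii)--(ix).

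\emph{Cases (vi) and (vii).} Here $G_v^{(\infty)}$ is quasisimple; in fact it is simple, equal to $E_8(q^{1/r})$ or $F_4(q)$ with $q=3^f$. The socle $S$ of $G_v/\Rad(G_v)$ is therefore $E_8(q^{1/r})$ or $F_4(3^f)$. Neither group appears in the list \eqref{eq:groupsqs} of Lemma~\ref{lm:qsimple}(a). By that lemma, $\Gamma$ cannot be $(G,2)$-arc-transitive, which contradicts Hypothesis~\ref{hy:1}. The only point to check is that $G_v^{(\infty)}$ really is quasisimple here. This follows because $\Rad(G_v)$ embeds in $\Out(L)$ or is trivial, since $G_v\cap L$ is (almost) simple in these cases.

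\emph{Cases (viii) and (ix).} Here $G_v=H$ is a fixed finite group, independent of $q$: either $5^3.\SL_3(5)$ or $2^{5+10}.\SL_5(2)$. Since $G_v\leqslant L$ is stated as $H$ itself, we may work with $G_v$ directly. We use Lemma~\ref{pro:homofac}, which gives the homogeneous factorisation $G_v=G_{uv}G_{vw}$ with $G_{uv}\cong G_{vw}$.

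The plan is to reduce modulo the radical $N=\mathbf{O}_r(G_v)$, where $r=5$ or $2$. Then $\overline{G_v}=G_v/N\cong\SL_3(5)$ or $\SL_5(2)$ factorises as $\overline{G_{uv}}\,\overline{G_{vw}}$, and the two factors have the same insoluble composition factors. From the factorisations of these groups (via \cite{LPS} or a direct \textsc{Magma} computation), we expect that a factorisation with equal $\ICF$ and the order constraint $|G_{uv}|^2\geqslant|G_v|$ forces at least one factor, say $\overline{G_{uv}}$, to be the whole of $\overline{G_v}$. For $\SL_5(2)$, for example, the only proper factorisations involve $31{:}5$ and a parabolic, and these have different $\ICF$.

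The isomorphism $G_{uv}\cong G_{vw}$ then forces $\overline{G_{vw}}=\overline{G_v}$ as well. Next we use the module structure of $N$. In case (viii), $N=5^3$ is the natural irreducible $\SL_3(5)$-module. Since $G_{uv}$ covers $\SL_3(5)$, the intersection $G_{uv}\cap N$ is $1$ or $N$. If it is $1$, then $|G_{uv}|_5=|\SL_3(5)|_5=5^3$, which is too small: $|G_v|_5=5^6\leqslant|G_{uv}|_5^2$ only just holds, so we refine this with Lemma~\ref{3ATprime} for the case $s\geqslant3$, or we show directly that complements give no non-self-paired suborbit. Otherwise $N\leqslant G_{uv}$, so $N^g=\mathbf{O}_5(G_{vw})=N$, which contradicts Lemma~\ref{factor}. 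Case (ix) is treated in the same way using the chief series $2^5$, $2^{10}$, which are the natural module and its exterior square.

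I expect the main obstacle to be this last subgroup-chasing step, especially the complement case $G_{uv}\cap N=1$. The cleanest route is probably a direct \textsc{Magma} computation of all homogeneous factorisations of these two explicit groups, as the paper does for $(\A_5\times\Sy_6).2$.
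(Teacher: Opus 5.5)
Cases (vi)--(vii) are handled exactly as in the paper. Your $\ICF$ argument forcing both factors to cover $\SL_3(5)$ (resp.\ $\SL_5(2)$) is also sound. However, as written the proposal does not close (viii) or (ix).

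\emph{Case (viii).} You correctly reach $G_{uv}\cap N\in\{1,N\}$, and $N\leqslant G_{uv}$ already gives $G_{uv}=G_v$. But you leave the complement case $G_{uv}\cap N=1$ open. Your first fallback does not prove the statement. Lemma~\ref{3ATprime} presupposes $s\geqslant 3$ and so only yields $s\leqslant 2$, whereas the lemma asserts that these cases do not occur at all under Hypothesis~\ref{hy:1}, i.e.\ for $s\geqslant 2$. The missing step is short. If $G_{uv}\cap N=1$, then also $G_{vw}\cap N=1$, since it has the same order and also covers $\SL_3(5)$. So $G_v=G_{uv}G_{vw}$ forces
\[
|G_{uv}\cap G_{vw}|=\frac{|G_{uv}|^2}{|G_v|}=\frac{|\SL_3(5)|}{5^3}=2^5\cdot 3\cdot 31 .
\]
Such a subgroup of $G_{uv}\cong\SL_3(5)$ contains an element of order $31$. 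The only proper subgroups of $\SL_3(5)$ of order divisible by $31$ lie in $31{:}3$, a contradiction.

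You also assumed $G_v=5^3.\SL_3(5)$. The paper's proof allows $|G:L|=2$ here (when $q=p^2$), which is why it invokes \cite[Lemma~3.2]{small} and tests $|AB|\in\{|L_v|,|L_v|/2\}$. Your argument adapts to this case. Divisibility by $31$ still forces both factors to cover $\SL_3(5)$. Then $N\leqslant G_{uv}$ gives index at most $2$, contradicting Lemma~\ref{pro:valency}. The complement case again produces a subgroup of order divisible by $31$ that is too large for $31{:}3$ or $31{:}6$. With these repairs you obtain a computer-free proof of (viii), whereas the paper simply enumerates isomorphic pairs $(A,B)$ in \textsc{Magma}.

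\emph{Case (ix).} ``The same way'' hides the one genuinely new step. Write $E=2^5$ and $E_1=2^{5+10}$. Irreducibility gives $G_{uv}\cap E\in\{1,E\}$ and $(G_{uv}\cap E_1)E\in\{E,E_1\}$. The mixed case, where $G_{uv}\cap E=1$ but $(G_{uv}\cap E_1)E=E_1$, cannot be excluded by orders, since $|G_{uv}|_2^2=2^{40}\geqslant 2^{25}=|G_v|_2$. The paper kills it using the fact that $E_1=\C_{G_v}(E)$ is special, hence non-abelian, with $E$ central. Otherwise $E_1=E\times(G_{uv}\cap E_1)$ would be abelian, since $G_{uv}\cap E_1\cong E_1/E$ is elementary abelian. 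You need this fact; without it the argument stalls.

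The remaining cases go as you expect:
\begin{itemize}
\item $G_{uv}\cap E_1=1$ fails, because $|G_{uv}|_2^2\leqslant 2^{20}<2^{25}$.
\item $E_1\leqslant G_{uv}$ gives $G_{uv}=G_v$.
\item Hence $\mathbf{O}_2(G_{uv})=G_{uv}\cap E_1=E$. Since $G_{uv}\cong G_{vw}$, also $\mathbf{O}_2(G_{vw})=E$, as this is the only surviving case with $|\mathbf{O}_2|=2^5$. Then $E^g=E$, contradicting Lemma~\ref{factor}.
\end{itemize}
The brute-force \textsc{Magma} search over $2^{5+10}.\SL_5(2)$ that you suggest as the cleanest route is not what the paper does, and this argument avoids it.
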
 
\begin{proof}
For cases (vi) and (vii), $L_v=E_8(q^{\frac{1}{r}})$ or \(F_4(q)\), and so $G_v^{(\infty)}$ is quasisimple. By Lemma~\ref{lm:qsimple}, these two cases cannot occur.

Assume that case (viii) occurs.
Then $L_v=5^3.\mathrm{SL}_3(5)$, and by~\cite[Table 1]{CLSS}, $L=E_8(p^f)$ with $p\neq 2,5$, and $a=1$ if $5\mid p^f-1$ or $f=2$ if $5\mid p^f+1$.
Moreover, as noted in~\cite[p. 23]{CLSS}, the group $ 5^3.\mathrm{SL}_3(5)$ can be embedded into $\mathrm{PSL}_4(5)$.
We therefore construct $L_v$ in {\sc Magma} \cite{magma} as a subgroup of $\mathrm{PSL}_4(5)$.
Note that $G/L=a\leqslant 2$.
By~\cite[Lemma 3.2]{small}, $|L_{uv}L_{vw}|=|L_v|$ or $|L_{v}|/2$.
We compute all subgroups pairs $(A,B)$ of the group $L_v= 5^3.\mathrm{SL}_3(5)$ such that $A\cong B$, and $A$ is not conjugate to $B$ in $L_v$, and $|L_v|/|A|\geqslant 3$ (as the valency is at least $3$), and $|AB|=|L_v|$ or $|L_v|/2$.
Computation in {\sc Magma} \cite{magma} shows that there is no such pair $(A,B)$. Therefore, the case (vii) cannot occur.

Now, assume that case (ix) occurs.
Then by~\cite[Table 1]{CLSS}, \(q=p\) and so $L=E_8(p)$ and $ L_v=2^{5+10}.\mathrm{SL}_5(2)$. 
Hence, $G=L$ and $G_v=L_v$.
Let \(E=2^5\) and $E_1=2^{5+10}=\C_{G_v}(E)$.
By~\cite[p. 40]{CLSS}, $E_1$ is a special $2$-group, and $E_1/E$ is the skew-square of a $5$-dimensional module of $\mathrm{SL}_5(2)$.

In the factorisation $\mathrm{SL}_5(2)=G_{v}/E_1=(G_{uv}E_1/E_1) (G_{vv_{1}}E_1/E_1)$, two factors must have orders divisible by $31$ (which is in $\mathrm{ppd}(2,5)$).
By checking the maximal factorisations of almost simple groups with socle $\mathrm{PSL}_5(2)$, see ~\cite{LPS}, we conclude that  $G_{uv}E_1/E_1=G_{vw}E_1/E_1\cong \mathrm{SL}_5(2)$. Note that \(\mathbf{O}_2(G_{uv})=G_{uv}\cap E_1\) and therefore $G_{uv}=\mathbf{O}_2(G_{uv}).\mathrm{SL}_5(2)$. 
Since $\vert G_v\vert$ divides $\vert G_{uv}\vert^2$, we have $\vert G_{uv}\vert_2\geqslant 2^{13}$, which implies that $\vert \mathbf{O}_2(G_{uv})\vert \geqslant 2^3$.

Suppose that $\mathbf{O}_2(G_{uv}) \cap E = 1$. 
Since  $\mathrm{SL}_5(2)$ acts irreducibly on $E_1/E=2^{10}$, we conclude that $G_{uv}E/E\cong 2^{10}.\mathrm{SL}_5(2)$ and subsequently $\mathbf{O}_2(G_{uv}) =2^{10}\lesssim E_1/E$ is abelian. 
Now, $E_1\geqslant E\mathbf{O}_2(G_{uv})=E\times \mathbf{O}_2(G_{uv})=2^5\times 2^{10}$. By comparing the orders, we identify \(E_1=E\times \mathbf{O}_2(G_{uv})\). However, this would imply that \(E_1\) is abelian, which is a contradiction.

Therefore, $\mathbf{O}_2(G_{uv}) \cap E \neq 1$.
Since $\mathrm{SL}_5(2)$ acts irreducibly on $E$ (note that $\mathrm{SL}_5(2)\cong G_v/\C_{G_v}(E)=G_v/E \lesssim \mathrm{Aut}(E)\cong \mathrm{SL}_5(2) $), we have $E\leqslant G_{uv}$.
If $\mathbf{O}_2(G_{uv})>E$, then the irreducibility of $\mathrm{SL}_5(2)$  on  $E_1/E$ implies $E_1<G_{uv}$ and hence $G_{uv}=G_{v}$, a contradiction.
Therefore, $\mathbf{O}_2(G_{uv})=E$. 
By similar arguments to $G_{vw}$ we also have $E=\mathbf{O}_2(G_{vw})$.
Since $G_{uv}^g=G_{vw}$, $E^g$ is also a normal  $2$-subgroup of $G_{vw}$ and hence $E^g=\mathbf{O}_2(G_{vw})=E$, contradicting Lemma~\ref{factor}. 
\end{proof}
\medskip\noindent
\emph{Proof of Theorem \ref{mainthm}:} According to Lemma~\ref{lm:qsimple}(a), \(\Gamma\) is at most \(2\)-arc-transitive for \(G_v\in\mathcal{S}\). Moreover, from Lemma \ref{lm:para} it follows that if \(G_v\) is a maximal parabolic subgroup, then \(s\leqslant 1\). Thus, it suffices to consider \(G_v\) so that \(G_v\) is neither in \(\mathcal{S}\) nor parabolic. For the remaining subgroups, if \(G_v\) is a maximal rank subgroup, then by Propositions \ref{max rk 7} and \ref{prop: e8 max rk} that \(s\leqslant2\). If \(G_v\) is not of maximal rank, then \(G_v\) falls into cases (i)--(xi) given in \ref{ome7} for \(L=E_7(q)\), and cases (i)--(ix) given in \ref{ome8} for \(L=E_8(q)\), and these cases were analysed from Lemmas \ref{ome7i} to \ref{lm:e8viii}. Hence, the result follows.

\appendix
\section{Possible \(2\)-arc-transitive digraphs}
In this section, we list the possible $L_v$ for \(G\)-vertex-primitive \((G,2)\)-arc-transitive digraphs with $L=\mathrm{Soc}(G)$ a simple exceptional group of Lie type in Table \ref{tab:Lvcandidate}.
\begin{table}[h]
\centering
\caption{Possible $L_v$ for \(G\)-vertex-primitive \((G,2)\)-arc-transitive digraphs with $L=\mathrm{Soc}(G)$ a simple exceptional group of Lie type. Notation: $d=(2,q-1)$, $\epsilon$ runs over $\{-1,1\}$, and $e_\epsilon=(3,q-\epsilon)$.}\label{tab:Lvcandidate}
\begin{tabular}{lll}
\toprule
 \(L\)  &\(L_v\)& Remarks \\ 
\midrule
 \({}^3\!D_4(q)\)  &\(  (q^{2}+ \epsilon q+1)^{2}:\mathrm{SL}_{2}(3) \)&  $q\geqslant 4$ \\ 
 \(G_2(q)\)  &\( (\mathrm{SL}_{2}(q)\circ \mathrm{SL}_{2}(q)).2 \)&  $q>19$, $q$ odd \\
  &\( \mathrm{SL}_{2}(q)\times \mathrm{SL}_{2}(q)\)&  $q \geqslant 32$, $q$ even \\  
   &\( (q+\epsilon )^{2}.\mathrm{D}_{12}\)&  $q = 3^{f} > 9$ \\  
 \({}^2\!F_4(q)\)  &\( {}^2\!B_2(q)\wr 2 \)& $q\geqslant 8$ \\  
 &\(  (q+1)^2:\mathrm{GL}_2(3) \)&  $q\geqslant 8$ \\ 
   &\( (q+\epsilon \sqrt{2q}+1)^{2}:[96] \)&  $q\geqslant 8$ \\ 
 \( F_4(q) \)  &\( (\mathrm{Sp}_4(q) \times \mathrm{Sp}_4(q)).2 \)&   \\  
   &\( (\mathrm{SL}_3^{\epsilon}(q) \circ \mathrm{SL}_3^{\epsilon}(q)).e_{\epsilon}.2 \)& \\   
   &\( (q^2 +\epsilon q+1)^2.(3 \times \mathrm{SL}_2(3)) \)& $q$ even  \\  
   &\( ( q^2+ 1)^2.(\mathrm{SL}_2(3) : 4) \)&  $q >2$, $q$ even   \\   
   &\(  ( q-1)^4.W(F_4) \)&  $q >4$, $q$ even  \\ 
  \( E_6^{\epsilon}(q)  \)   &\(  (q^2+\epsilon q+1)^3/e_{\epsilon}.(3^{1+2}.\mathrm{SL}_2(3)) \)&   \\  
   &\(  (q-\epsilon)^6/e_{\epsilon}.W(E_6) \)& \(q>3+\epsilon\) \\  
 \( E_7(q) \)    &\( \mathrm{M}_{12}  \)& $q=p=5$  \\  
   &\( \Soc(L_v)\cong \mathrm{A}_6 \)&    \\  

 \(E_8(q)\) & \(\Soc(L_v)\cong \mathrm{A}_6\) &  \\ 
  & \((5,q-1).(\PSL^\epsilon_{5}(q))^2.(5,q-1).4\) & \\
& \(d^2.(\POmega_8^+(q))^2.d^2.(\Sy_3\times 2)\) & \\
& \((\PSU_3(q^2))^2.8\) & \\
& \(({}^3\!D_4(q))^2.6\) & \\
& \(e_{\epsilon}^2.(\PSL_3^\epsilon(q))^4.e_{\epsilon}^2.\GL_2(3)\) &  \\
& \((q^2+1)^4.(4\circ2^{1+4}).\A_6.2\) & \(G\neq L\) \\
& \((q^2+\epsilon q+1)^4.(2.(3\times\PSU_4(2))) \) &  \(G\neq L\),  \(q>2\) if \(\epsilon=-1\)  \\

 \bottomrule
\end{tabular}
\end{table}

\begin{thebibliography}{99}


        


\bibitem{alex} A. Borovik, Structure of finite subgroups of simple algebraic groups, Algebra Logika 28 (1989) 249–279 (Russian), English transl., Algebra Logic 28 (1989) 163–182 (1990).

\bibitem{magma} W. Bosma, J. Cannon, C. Playoust, The MAGMA algebra system I: The user language, J. Symb. Comput. 24 (1997) 235–265.

\bibitem{holt} J.N. Bray, D.F. Holt, C.M. Roney-Dougal, The Maximal Subgroups of the Low-Dimensional Finite Classical Groups, Cambridge University Press, Cambridge, 2013.

\bibitem{DRG} A.E. Brouwer, A.M. Cohen, A. Neumaier, Distance-Regular Graphs, Springer, Berlin, 1989.

\bibitem{BK2025}
T. Burness, M. Korhonen, On fixed-point-free involutions in actions of finite exceptional groups of Lie type, J. Lond. Math. Soc. 112 (2025) e70263.



\bibitem{Carter2} R.W. Carter, Simple Groups of Lie Type: Conjugacy Classes and Complex Characters, John Wiley and Sons, London, 1972.


\bibitem{CGP2023} L. Chen, M. Giudici, C.E. Praeger, Vertex-primitive $s$-arc-transitive digraphs admitting a Suzuki or Ree group, European J. Combin. 112 (2023) 103729.

\bibitem{CGP2024} L. Chen, M. Giudici, C.E. Praeger, Vertex-primitive $s$-arc-transitive digraphs of symplectic groups, J. Algebra 667 (2025) 425–479.

\bibitem{alter2} J. Chen, L. Chen, M. Giudici, J.J. Li, C.E. Praeger, B. Xia, Bounding $s$ for vertex-primitive $s$-arc-transitive digraphs of alternating and symmetric groups, Ars Math. Contemp. (2025), DOI: 10.26493/1855-3974.3325.5e3.

\bibitem{CLSS} A.M. Cohen, M.W. Liebeck, J. Saxl, G.M. Seitz, The local maximal subgroups of exceptional groups of Lie type, finite and algebraic, Proc. Lond. Math. Soc. (3) 64 (1992) 21–48.


\bibitem{C2022} D.A. Craven, D.I.~Stewart, A.R. Thomas, A new maximal subgroup of \(E_8\) in characteristic 3, Proc. Amer. Math. Soc. 150 (2022) 52–69.


\bibitem{Craven} D.A. Craven, On the maximal subgroups of \(E_{7}(q)\) and related almost simple groups, preprint (2022), \url{https://arxiv.org/abs/2201.07081}.

\bibitem{example} M. Giudici, C.H. Li, B. Xia, An infinite family of vertex-primitive 2-arc-transitive digraphs, J. Combin. Theory Ser. B 127 (2017) 1–13.

\bibitem{linear} M. Giudici, C.H. Li, B. Xia, Vertex-primitive s-arc-transitive digraphs of linear groups, J. Pure Appl. Algebra 223 (2019) 5455–5483.

\bibitem{quasi} M. Giudici, B. Xia, Vertex-quasiprimitive 2-arc-transitive digraphs, Ars Math. Contemp. 14 (2018) 67–82.

\bibitem{CFSG} D. Gorenstein, R. Lyons, R. Solomon, The Classification of the Finite Simple Groups, Number 3, Math. Surveys Monogr., vol. 40, Amer. Math. Soc., Providence, RI, 1998.

\bibitem{mk} M.~Korhonen, Structure of an exotic 2-local subgroup in E7(q). Journal of Group Theory, 28 (2025), no. 6, 1285-1296.


\bibitem{LS} M.W. Liebeck, G.M. Seitz, Maximal subgroups of exceptional groups of Lie type, finite and algebraic, Geom. Dedicata 35 (1990) 353–387.

\bibitem{LPS} M.W. Liebeck, C.E. Praeger, J. Saxl, The maximal factorizations of the finite simple groups and their automorphism groups, Mem. Amer. Math. Soc. 86 (432) (1990).

\bibitem{LSS} M.W. Liebeck, J. Saxl, G.M. Seitz, Subgroups of maximal rank in finite exceptional group of Lie type, Proc. Lond. Math. Soc. (3) 65 (1992) 297–325.

\bibitem{transitive} M.W. Liebeck, C.E. Praeger, J. Saxl, Transitive subgroups of primitive permutation groups, J. Algebra 234 (2000) 291–361.

\bibitem{Malle} G. Malle, The maximal subgroups of \({}^2F_4(q^2)\), J. Algebra 139 (1991) 52–69.

\bibitem{mortimer} B. Mortimer, The modular permutation representations of the known doubly transitive groups, Proc. Lond. Math. Soc. (3) 41 (1980) 1–20.

\bibitem{alter} J. Pan, C. Wu, F. Yin, Vertex-primitive s-arc-transitive digraphs of alternating and symmetric groups, J. Algebra 544 (2020) 75–91.




\bibitem{Praeger1989} C.E. Praeger, Finite primitive permutation groups: a survey, in: Groups-Canberra 1989, Lecture Notes in Math., vol. 1456, Springer, Berlin, 1990, pp. 63–84.

\bibitem{Praeger} C.E. Praeger, Highly arc-transitive digraphs, European J. Combin. 10 (1989) 281–292.

\bibitem{csaba} C.E. Praeger, C. Schneider, Permutation Groups and Cartesian Decompositions, Cambridge University Press, Cambridge, 2018.

\bibitem{Webb}
P. Webb, \emph{A Course in Finite Group Representation Theory}, 
Cambridge Studies in Advanced Mathematics, vol. 161, 
Cambridge University Press, Cambridge, 2016.


\bibitem{weiss}R. Weiss, The non-existence of 8-transitive graphs, Combinatorica 1 (1981) 309–311.

\bibitem{Wilson} R.A. Wilson, The Finite Simple Groups, Grad. Texts in Math., vol. 251, Springer-Verlag, London, 2009.

\bibitem{small} F.G. Yin, Y.Q. Feng, B. Xia, The smallest vertex-primitive \(2\)-arc-transitive digraphs, J. Algebra 626 (2023) 1–38.

\bibitem{ex1} F.G. Yin, L. Chen, Exceptional groups and the \(s\)-arc-transitivity of vertex-primitive digraphs, I, J. Algebra 694 (2026) 448–483.

\bibitem{Zsigmondy} K. Zsigmondy, Zur Theorie der Potenzreste, Monatsh. Math. Phys. 3 (1892) 265--284.

\end{thebibliography}
\end{document}